\theoremstyle{plain} 
 \newtheorem{thm}{Theorem}[section]
 \newtheorem{lem}[thm]{Lemma}
 \newtheorem{prop}[thm]{Proposition}
 \newtheorem{claim}[thm]{Claim}
\theoremstyle{definition}
  \newtheorem{defn}[thm]{Definition}
\theoremstyle{remark}
  \newtheorem{rem}[thm]{Remark}
\newcommand{\comm}{{\rm Comm}}
\renewcommand{\mod}{{\rm Mod}}
\renewcommand{\pmod}{{\rm PMod}}
\newcommand{\aut}{{\rm Aut}}
\newcommand{\cal}{\mathcal}
\renewcommand{\frak}{\mathfrak}
\newcommand{\cali}{\mathcal{I}}
\newcommand{\calk}{\mathcal{K}}
\newcommand{\calc}{\mathcal{C}}
\newcommand{\calt}{\mathcal{T}}
\newcommand{\lk}{{\rm Lk}}
\begin{document}

\title[The Torelli complex and the complex of separating curves]{Automorphisms of the Torelli complex and \\the complex of separating curves}
\author{Yoshikata Kida}
\address{Department of Mathematics, Kyoto University, 606-8502 Kyoto, Japan}
\email{kida@math.kyoto-u.ac.jp}
\date{September 25, 2009, revised on December 16, 2010}
\subjclass[2010]{20E36, 20F38.}
\keywords{The Torelli complex, the complex of separating curves, the Torelli group, the Johnson kernel}

\begin{abstract}
We compute the automorphism groups of the Torelli complex and the complex of separating curves for all but finitely many compact orientable surfaces.
As an application, we show that the abstract commensurators of the Torelli group and the Johnson kernel for such surfaces are naturally isomorphic to the extended mapping class group.
\end{abstract}

\maketitle


\section{Introduction}

Let $S=S_{g, p}$ denote a connected, compact and orientable surface of genus $g$ with $p$ boundary components. 
Unless otherwise stated, we assume a surface to satisfy these conditions.
The complex of curves for $S$, denoted by $\calc(S)$, plays an important role in the study of the mapping class group $\mod(S)$ for $S$.
In fact, understanding of automorphisms of $\calc(S)$ leads to the computation of the commensurator of $\mod(S)$ as discussed in \cite{iva-aut} and \cite{kork-aut} (see also \cite{luo} for automorphisms of $\calc(S)$).
The aim of this paper is to compute automorphisms of the Torelli complex for $S$ and the complex of separating curves for $S$, denoted by $\calt(S)$ and $\calc_s(S)$, respectively.
These simplicial complexes are variants of the complex of curves.
When $S$ is closed, they are used in \cite{farb-ivanov} and \cite{bm} to compute the commensurators of the Torelli group and the Johnson kernel for $S$ (see also \cite{mv} for a related work).
We have natural simplicial actions of the extended mapping class group $\mod^*(S)$ for $S$ on $\calt(S)$ and on $\calc_s(S)$.
The following theorems show that the induced homomorphisms from $\mod^*(S)$ into the automorphism groups $\aut(\calt(S))$ and $\aut(\calc_s(S))$ are isomorphisms for all but finitely many surfaces $S$.
As an application, we prove that the commensurators of the Torelli group $\cali(S)$ and the Johnson kernel $\calk(S)$ for $S$ are naturally isomorphic to $\mod^*(S)$.
We refer to Section \ref{sec-comp} for a precise definition of simplicial complexes and groups mentioned above.

We recall a definition of the commensurator of a group $\Gamma$. 
Let $F(\Gamma)$ be the set of all isomorphisms between finite index subgroups of $\Gamma$.
We say that two elements $f$, $h$ of $F(\Gamma)$ are equivalent if there exists a finite index subgroup of $\Gamma$ on which $f$ and $h$ are equal. 
The composition of two elements $f\colon \Gamma_1\rightarrow \Gamma_2$, $h\colon \Lambda_1\rightarrow \Lambda_2$ of $F(\Gamma)$ given by $f\circ h\colon h^{-1}(\Gamma_1\cap \Lambda_2)\rightarrow f(\Lambda_2\cap \Gamma_1)$ induces the product operation on the quotient set of $F(\Gamma)$ by this equivalence relation. 
This makes it into a group, called the {\it (abstract) commensurator} of $\Gamma$ and denoted by $\comm(\Gamma)$. 
Let ${\bf i}\colon \Gamma \rightarrow \comm(\Gamma)$ denote the homomorphism defined by inner conjugation. 
We note that ${\bf i}$ is injective if and only if the center of any finite index subgroup of $\Gamma$ is trivial.

The following theorem computes automorphisms of the Torelli complex and the commensurator of the Torelli group.

\begin{thm}\label{thm-tor-comm}
Let $S=S_{g, p}$ be a surface and assume one of the following three conditions: $g=1$ and $p\geq 3$; $g=2$ and $p\geq 2$; or $g\geq 3$ and $p\geq 0$. 
Then
\begin{enumerate}
\item the homomorphism from $\mod^*(S)$ into $\aut(\calt(S))$ is an isomorphism.
\item the homomorphism ${\bf i}\colon \mod^*(S)\rightarrow \comm(\cali(S))$ defined by conjugation is an isomorphism.
\end{enumerate}
\end{thm}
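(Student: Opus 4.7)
The overall plan is to prove part (i) by reducing automorphisms of $\calt(S)$ to automorphisms of the complex of curves $\calc(S)$, where the Ivanov--Korkmaz--Luo theorem identifies $\aut(\calc(S))$ with $\mod^*(S)$, and then to bootstrap part (ii) from (i) by showing that every element of $\comm(\cali(S))$ induces a simplicial automorphism of $\calt(S)$.

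For (i), injectivity of $\mod^*(S)\to \aut(\calt(S))$ follows from the faithful action of $\mod^*(S)$ on $\calc(S)$ together with a reconstruction lemma: each non-separating simple closed curve $a$ is uniquely determined by the set of bounding pairs containing it, so an extended mapping class fixing every vertex of $\calt(S)$ fixes every vertex of $\calc(S)$ as well. For surjectivity, given $\phi\in \aut(\calt(S))$, I would first show that $\phi$ preserves the two types of vertices, separating curves versus bounding pairs, using combinatorial features of their links in $\calt(S)$ (for instance, the join decomposition of the link of a separating curve coming from the two components of its complement, or the topological type of each complementary piece read off from the dimension and local structure of the link). Using the reconstruction lemma in reverse, $\phi$ then extends canonically to a simplicial automorphism $\tilde\phi$ of $\calc(S)$, and the Ivanov--Korkmaz--Luo theorem supplies an extended mapping class inducing $\tilde\phi$, hence also inducing $\phi$ on the subset of vertices that constitute $\calt(S)$.

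For (ii), injectivity of $\mathbf{i}$ reduces to triviality of the centralizer in $\mod^*(S)$ of a finite-index subgroup of $\cali(S)$, which follows under the stated hypotheses on $S$ from the richness of the $\cali(S)$-action on $\calc(S)$. For surjectivity, given $f\colon \cali_1\to \cali_2$ in $F(\cali(S))$, the key technical step is a group-theoretic characterization inside $\cali(S)$ of the Dehn twists about separating curves and the bounding pair maps, so that $f$ must send nontrivial powers of such elements to nontrivial powers of elements of the same type. Such a characterization yields a well-defined map on the vertex set of $\calt(S)$, and this map preserves adjacency because two vertices of $\calt(S)$ are joined by an edge precisely when suitable powers of their associated twists commute in $\cali(S)$. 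Part (i) then produces an extended mapping class $\gamma$ inducing the same simplicial automorphism, and comparing the action of $\gamma$ and of $f$ on the (abelian) groups generated by these twists shows $\mathbf{i}(\gamma)=f$ in $\comm(\cali(S))$.

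The main obstacle I expect is the group-theoretic characterization of Dehn twists and bounding pair maps inside $\cali(S)$. In the classical setting of $\mod(S)$, Ivanov's argument exploits the rigidity of centralizers of Dehn twists, but when one restricts to $\cali(S)$ these centralizers have a more delicate structure, and must be analyzed either via intrinsic algebraic data (root subgroups, maximal abelian subgroups of prescribed rank, distortion-type invariants) or via the $\cali(S)$-action on $\calt(S)$ itself. Managing this potential circularity, and ensuring that the characterization is stable under passage to finite-index subgroups so that it survives the $\comm$ construction, will be the heart of the argument.
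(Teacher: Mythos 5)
Your high-level strategy matches the paper's: for (i), reduce $\aut(\calt(S))$ to $\aut(\calc(S))$ and apply the Ivanov--Korkmaz--Luo theorem; for (ii), characterize twist elements of $\cali(S)$ algebraically so that a commensuration induces a superinjective map of $\calt(S)$, then apply (i). The injectivity arguments you sketch (a mapping class fixing all separating curves is trivial; triviality of the centralizer of a finite-index subgroup) are also the ones the paper uses. You correctly flag the algebraic characterization of Dehn twists and BP twists inside $\cali(S)$ as a major step, which the paper handles following Vautaw via ranks of abelian subgroups and canonical reduction systems.

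The place where your sketch underestimates the difficulty is the passage from $\aut(\calt(S))$ to $\aut(\calc(S))$. Your ``reconstruction lemma in reverse'' — recover $\tilde\phi(\alpha)$ for a non-separating $\alpha$ as the common curve of $\{\phi(b) : b \text{ a BP containing } \alpha\}$ — requires knowing that this image set actually consists of BPs sharing a common curve (i.e., that $\phi$ preserves ``rooted'' families of BPs), and then that the resulting map is simplicial and injective. For $g=1$ the paper proves exactly this, but only after a delicate analysis: preservation of vertex types for $S_{1,3}$ goes through pentagons and hexagons rather than a quick link-dimension count, well-definedness of the root curve uses connectivity lemmas for certain arc/curve complexes in planar pieces, and the whole thing is bootstrapped up to $S_{1,p}$ by induction. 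For $g\geq 2$ the paper explicitly remarks that your envisioned construction does not straightforwardly yield injectivity of the extension, and instead goes through the complex of separating curves $\calc_s(S)$, using Brendle--Margalit's machinery of sharing pairs and spines to extend an automorphism of $\calc_s(S)$ to one of $\calc(S)$, and only afterwards deduces the $\calt(S)$ statement. So while the destination is the same, the route through $\calc_s(S)$ and sharing pairs (for $g\geq 2$) and through pentagons/hexagons (for $g=1$) is where the real work lies, and your proposal as stated does not supply a substitute for it.

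One smaller point: for (ii), you need the characterization of twist elements to be robust under restriction to finite-index subgroups, and you need the analogous statement for $\calk(S)$ to exclude the possibility that a power of a separating-curve twist is sent to a power of a BP twist; the paper controls this via the pureness of $\cali(S)$-elements (proved in the appendix) and the fact that $\calk(S)$ contains no nontrivial power of a BP twist. You gesture at the ``stability under passage to finite-index subgroups'' issue, which is the right concern, but the concrete mechanism — rank computations for centers of centralizers combined with Lemma \ref{lem-rank}-type arguments — is not visible in the proposal.
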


We note that Farb-Ivanov \cite{farb-ivanov} announce the computation of automorphisms of the Torelli geometry for a closed surface, which is the Torelli complex with a certain marking.
As a consequence of it, they also announce Theorem \ref{thm-tor-comm} (ii) for $S=S_{g, 0}$ with $g\geq 5$.
McCarthy-Vautaw \cite{mv} compute automorphisms of $\cali(S)$ for $S=S_{g, 0}$ with $g\geq 3$.
Brendle-Margalit \cite{bm}, \cite{bm-add} obtain Theorem \ref{thm-tor-comm} for $S=S_{g, 0}$ with $g\geq 3$.
An analogous result on the complex of separating curves and the Johnson kernel is the following:

\begin{thm}\label{thm-jo-comm}
Let $S$ be the surface in Theorem \ref{thm-tor-comm}. 
Then
\begin{enumerate}
\item the homomorphism from $\mod^*(S)$ into $\aut(\calc_s(S))$ is an isomorphism.
\item the homomorphism ${\bf i}\colon \mod^*(S)\rightarrow \comm(\calk(S))$ defined by conjugation is an isomorphism.
\end{enumerate}
\end{thm}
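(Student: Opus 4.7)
The plan is to bootstrap Theorem \ref{thm-tor-comm} to deduce both assertions. For part (i), I observe that $\calc_s(S)$ embeds naturally as the full subcomplex of $\calt(S)$ spanned by separating-curve vertices, giving a restriction homomorphism $r\colon\aut(\calt(S))\to\aut(\calc_s(S))$. Under the stated hypotheses on $(g,p)$, a mapping class that fixes every isotopy class of separating curve is trivial (by the Alexander method applied to a pants decomposition of $S$ built out of separating curves), so $\mod^*(S)\to\aut(\calc_s(S))$ is injective. The real task is surjectivity: given $\phi\in\aut(\calc_s(S))$, I would produce an extension $\widetilde\phi\in\aut(\calt(S))$ and then realize $\widetilde\phi$ by a mapping class via Theorem \ref{thm-tor-comm} (i); the resulting mapping class automatically induces $\phi$ on the subcomplex $\calc_s(S)$.

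The extension step is the main obstacle. It requires a purely combinatorial characterization of each bounding-pair vertex of $\calt(S)$ in terms of separating curves. I would attach to a bounding pair $\{a,b\}$ the configuration of separating curves cobounding small subsurfaces with $a\cup b$: most naturally the separating curve that, together with $a$ and $b$, bounds a four-holed sphere or two-holed torus, enriched by enough additional separating curves in each of the two components of $S\setminus(a\cup b)$ to pin down $\{a,b\}$ uniquely. The assumptions on $g$ and $p$ in Theorem \ref{thm-tor-comm} are precisely what guarantees that both sides of such a bounding pair carry enough separating curves to make this reconstruction work. Applying $\phi$ to this defining configuration then yields a candidate image bounding pair $\widetilde\phi(\{a,b\})$; well-definedness and simpliciality follow because disjointness of two bounding pairs, or of a bounding pair with a separating curve, can be detected at the level of their defining separating-curve simplices, which $\phi$ already preserves.

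For part (ii), I would follow the commensurator framework pioneered by Ivanov and developed for the Torelli group and Johnson kernel in \cite{farb-ivanov} and \cite{bm}. Given $f\in\comm(\calk(S))$ represented by an isomorphism between finite index subgroups, the key step is to show that $f$ sends powers of Dehn twists about separating curves to powers of Dehn twists about separating curves; this is achieved by an algebraic recognition of such twists inside $\calk(S)$ via centralizers, root structure, and commutativity relations, combined with the fact that any finite index subgroup contains a nontrivial power of every separating Dehn twist. The resulting bijection on separating-curve twists descends to an automorphism $f_*\in\aut(\calc_s(S))$, which by part (i) is induced by a unique $h\in\mod^*(S)$. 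Conjugation by $h$ then agrees with $f$ on a finite index subgroup, since any element of $\comm(\calk(S))$ is determined by its effect on separating Dehn twists. Injectivity of ${\bf i}$ reduces to triviality of the center of finite index subgroups of $\calk(S)$, which again relies on the genus and boundary hypotheses.
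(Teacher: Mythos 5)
Your treatment of part (ii) mirrors the paper's: characterize powers of separating Dehn twists inside $\calk(S)$ algebraically (via centralizers and rank of abelian subgroups — the paper's Lemmas \ref{lem-cha-i}--\ref{lem-cha} and Proposition \ref{prop-homo-si}), note by Proposition \ref{prop-jo-bp} that the image cannot be a BP twist, obtain an automorphism of $\calc_s(S)$, and conclude via part (i) that the conjugating element exists and is unique; injectivity of ${\bf i}$ comes down to triviality of the center of finite index subgroups. That matches the paper's proof outline exactly.

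For part (i) you propose a genuinely different route from the paper's, and it contains a real gap. The paper never passes from $\aut(\calc_s(S))$ to $\aut(\calt(S))$; instead it extends a given $\phi\in\aut(\calc_s(S))$ to an automorphism of the \emph{full} curve complex $\calc(S)$ (Theorem \ref{thm-g-1-s} for $g=1$ via hexagons, Theorem \ref{thm-g-2}(ii) for $g\geq 2$ via sharing pairs and spines), and then invokes the classical isomorphism $\aut(\calc(S))\cong \mod^*(S)$ of Theorem \ref{thm-cc}. You instead invoke Theorem \ref{thm-tor-comm}(i), which is a fine black box to use, but that shifts the entire technical burden onto the extension $\aut(\calc_s(S))\to\aut(\calt(S))$ which you only sketch. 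This step is not routine: attaching to a bounding pair a configuration of nearby separating curves is exactly the germ of the sharing-pair / spine machinery, and the hard part is (a) proving the reconstructed BP is well-defined independently of which defining configuration was chosen (in the paper this takes an analysis of moves between spines, Lemma \ref{lem-move-sep}, Lemma \ref{lem-move-well} and Proposition \ref{prop-move}), (b) proving the resulting map is simplicial in \emph{both} directions, including the subtle case where two BPs are disjoint but not BP-equivalent, and (c) handling the genus-one case separately (there are no sharing pairs on $S_{1,p}$, so the paper is forced to build a different apparatus out of hexagons in $\calc_s(S)$). Your phrase ``disjointness of two bounding pairs ... can be detected at the level of their defining separating-curve simplices'' asserts exactly what needs a proof and is false as stated without choosing the defining configurations carefully. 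So while your route would give the theorem if filled in, as written it hides rather than resolves the main difficulty, and it gains nothing over the paper's route through $\calc(S)$, which has the advantage that $\aut(\calc(S))$ was already computed by Ivanov, Korkmaz and Luo.
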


Theorem \ref{thm-jo-comm} for closed surfaces is due to Brendle-Margalit \cite{bm}, \cite{bm-add}. 
Our proof of Theorems \ref{thm-tor-comm} and \ref{thm-jo-comm} for surfaces of genus at least two partly follows their argument using sharing pairs and spines. 
On the other hand, if $S$ is a surface of genus one, then there exists no sharing pair in $S$. 
We examine the case $S=S_{1, 3}$ through pentagons in $\calt(S)$ and hexagons in $\calc_s(S)$, which are cycles of length five and six, respectively.
Afterward, we prove Theorems \ref{thm-tor-comm} and \ref{thm-jo-comm} for $S=S_{1, p}$ with $p\geq 3$ by induction on $p$.

\begin{rem}
Let us describe several facts on surfaces which are not dealt with in Theorems \ref{thm-tor-comm} and \ref{thm-jo-comm}. 

If $S=S_{0, p}$ is a surface of genus zero with $p\geq 5$, then both $\calt(S)$ and $\calc_s(S)$ are equal to $\calc(S)$, and both $\cali(S)$ and $\calk(S)$ are equal to the pure mapping class group $\pmod(S)$ of $S$. 
The same conclusions as Theorems \ref{thm-tor-comm} and \ref{thm-jo-comm} therefore hold for such an $S$ due to \cite{kork-aut}.

The Birman exact sequence shows that $\cali(S_{1, 2})$ is isomorphic to $\pi_1(S_{1, 1})$ and $\calk(S_{1, 2})$ is isomorphic to the commutator subgroup $[\pi_1(S_{1, 1}), \pi_1(S_{1, 1})]$. 
Mess \cite{mess} proved that $\cali(S_{2, 0})=\calk(S_{2, 0})$ is isomorphic to the free group of infinite rank. 
Both $\calt(S)$ and $\calc_s(S)$ are zero-dimensional if $S=S_{1, 2}$ or $S_{2, 0}$. 
It follows that the same conclusions as Theorems \ref{thm-tor-comm} and \ref{thm-jo-comm} are not true for $S_{1, 2}$ and $S_{2, 0}$.

As for $S_{2, 1}$, we know that the homomorphism from $\mod^*(S_{2, 1})$ into $\aut(\calc_s(S_{2, 1}))$ is not surjective.
Indeed, $\calc_s(S_{2, 1})$ consists of countably infinitely many $\aleph_0$-regular trees. 
This is a consequence of the following facts:
\begin{itemize}
\item Let $S=S_{2, 1}$ be a surface, and let $\bar{S}$ be the surface obtained by attaching a disk to the boundary of $S$.
Let $\pi \colon \calc_s(S)\rightarrow \calc_s(\bar{S})$ be the simplicial map associated with the inclusion of $S$ into $\bar{S}$. 
The fiber of $\pi$ over each vertex of $\calc_s(\bar{S})$ is then a tree (see Theorem 7.1 of \cite{kls}).
\item $\calc_s(S_{2, 0})$ is a zero-dimensional simplicial complex consisting of countably infinitely many vertices.
\end{itemize}
On the other hand, $\calt(S_{2, 1})$ is a connected graph, which contains a hexagon because one can embed $\calc_s(S_{1, 3})$ into $\calt(S_{2, 1})$ by gluing any two boundary components of $S_{1, 3}$ (see Figure \ref{fig-hex} for a hexagon in $\calc_s(S_{1, 3})$).
\end{rem}

More generally, we study superinjective maps from the Torelli complex $\calt(S)$ into itself when $S$ is a surface of genus one. 
Superinjectivity of simplicial maps from $\calc(S)$ into itself was introduced by Irmak \cite{irmak1}, \cite{irmak2} to study injective homomorphisms from finite index subgroups of $\mod^*(S)$ into $\mod^*(S)$ (see \cite{be-m}, \cite{bm-ar}, \cite{irmak-ns} and \cite{sha} for related works). 
Superinjectivity of simplicial maps from $\calt(S)$ into itself is also defined similarly (see Section \ref{subsec-comp}). 
As a result, we obtain the following:

\begin{thm}\label{thm-g-1}
Let $S=S_{1, p}$ be a surface with $p\geq 3$. Then
\begin{enumerate}
\item any superinjective map from $\calt(S)$ into itself is induced by an element of $\mod^*(S)$.
\item if $\Gamma$ is a finite index subgroup of $\cali(S)$ and if $f\colon \Gamma \rightarrow \cali(S)$ is an injective homomorphism, then there exists an element $\gamma_0$ of $\mod^*(S)$ with the equality $f(\gamma)= \gamma_0\gamma \gamma_0^{-1}$ for any $\gamma \in \Gamma$. 
In particular, $\Gamma$ is co-Hopfian.
\end{enumerate}
\end{thm}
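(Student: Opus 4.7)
The plan is to prove (i) first, noting that it in particular subsumes Theorem \ref{thm-tor-comm}(i) in the genus-one case since automorphisms are automatically superinjective, and then to deduce (ii) through the standard dictionary between injective homomorphisms into $\cali(S)$ and superinjective simplicial self-maps of $\calt(S)$. For (i) I would follow the pentagon-based strategy for $S = S_{1,3}$ sketched in the introduction, adapted to the weaker hypothesis of superinjectivity, and then induct on $p$. Let $\phi \colon \calt(S_{1,3}) \to \calt(S_{1,3})$ be superinjective. First I distinguish the two topological types of vertices of $\calt(S)$, namely separating simple closed curves and bounding pair simplices, in purely combinatorial terms by comparing links and the structure of maximal simplices, and check that $\phi$ respects this distinction. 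Next I characterize the pentagons (five-cycles) in $\calt(S_{1,3})$ combinatorially, so that $\phi$ permutes them. Since the five vertices of a pentagon correspond to a topological configuration of curves that is rigid up to the action of $\mod^*(S)$, the action of $\phi$ on a sufficiently large family of pentagons reconstructs an element of $\mod^*(S)$ inducing $\phi$.

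For the inductive step $p - 1 \to p$ with $p \geq 4$, I would exploit the link $\lk(\sigma)$ in $\calt(S_{1,p})$ of a carefully chosen simplex $\sigma$, typically a separating curve cutting off a small-complexity subsurface, so that $\lk(\sigma)$ is identified with a Torelli-type complex of a surface of smaller complexity to which the inductive hypothesis applies. Superinjectivity of $\phi$ passes to these restricted subcomplexes, yielding a local mapping class on each chosen subsurface; the remaining task is to glue the local mapping classes into a single element of $\mod^*(S_{1,p})$ inducing $\phi$. The main obstacle will be precisely this coherence step, since curves that lie in several of the chosen subsurfaces must be treated consistently; this requires careful control of how $\phi$ acts on simplices of mixed type whose components straddle two or more subsurfaces, and at this stage the preservation of topological type and the pentagon rigidity from the base case are used most heavily.

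For (ii), let $\Gamma$ be a finite index subgroup of $\cali(S)$ and $f \colon \Gamma \to \cali(S)$ an injective homomorphism. For each vertex $\sigma$ of $\calt(S)$ fix an element $t_\sigma \in \cali(S)$ supported on $\sigma$, namely a Dehn twist when $\sigma$ is separating and a bounding pair map when $\sigma$ is a bounding pair, so that some positive power $t_\sigma^{n_\sigma}$ lies in $\Gamma$. A centralizer analysis in $\cali(S)$, together with the fact that $t_\sigma$ and $t_\tau$ commute precisely when $\sigma$ and $\tau$ admit disjoint representatives, shows that $f(t_\sigma^{n_\sigma})$ is again a nonzero power of such an elementary twist $t_{\phi(\sigma)}$, and that the resulting correspondence $\phi$ is a superinjective simplicial self-map of $\calt(S)$. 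By (i), there is $\gamma_0 \in \mod^*(S)$ inducing $\phi$; after replacing $f$ by $\gamma \mapsto \gamma_0^{-1} f(\gamma) \gamma_0$ we may assume that the induced map on $\calt(S)$ is the identity, and a direct check on a generating set of $\Gamma$ then forces $f$ itself to be conjugation by $\gamma_0$. The co-Hopfian conclusion follows at once, since $f(\Gamma) = \gamma_0 \Gamma \gamma_0^{-1}$ has the same finite index in $\cali(S)$ as $\Gamma$ and is contained in $\Gamma$, so the two coincide.
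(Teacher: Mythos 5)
For part (ii) your outline is essentially the paper's argument: produce a superinjective $\phi$ from $f$ via a centralizer characterization of twist powers, apply (i), and then recover $f$ as a conjugation; the co-Hopfian conclusion follows by comparing indices. This matches the paper's use of Proposition \ref{prop-homo-si} together with Lemmas \ref{lem-cha-i}--\ref{lem-cha}.

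For part (i), however, there is a genuine gap in the final step. You say that pentagon rigidity lets the action of $\phi$ on a large family of pentagons ``reconstruct an element of $\mod^*(S)$ inducing $\phi$,'' but you never explain how to pass from a combinatorial permutation of pentagons to an actual homeomorphism of $S$. Pentagons alone do not give you a mapping class. What the paper actually does is qualitatively different: starting from $\phi \colon \calt(S)\to\calt(S)$, it first manufactures a map $\Phi$ on the \emph{full} curve complex $\calc(S)$, defined by $\Phi=\phi$ on separating curves and, for each non-separating curve $\alpha$, by declaring $\Phi(\alpha)$ to be the root curve of $\{\phi(a),\phi(b)\}$ for a rooted $1$-simplex $\{a,b\}$ whose root curve is $\alpha$. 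Pentagon rigidity enters only to prove that this assignment is well-defined, i.e.\ independent of the choice of rooted pair (Lemmas \ref{lem-pen-root}, \ref{lem-pen-chain}, \ref{lem-bp-a-conn}). One then verifies that $\Phi$ is an injective simplicial self-map of $\calc(S)$ (Lemmas \ref{lem-phi-bp}--\ref{lem-Phi-inj}), invokes Shackleton's Theorem \ref{thm-sha} to conclude that $\Phi$ is an automorphism of $\calc(S)$, and finally invokes Theorem \ref{thm-cc} (Ivanov--Korkmaz--Luo) to realize $\Phi$, and hence $\phi$, by an element of $\mod^*(S)$. This passage through $\calc(S)$ and the two external rigidity theorems is the heart of the proof and is missing from your sketch.

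Your inductive step for $p\ge 4$ also departs from the paper in a way that creates risk. You propose to cut along a separating curve, apply the base case on each complementary piece, and then ``glue local mapping classes,'' acknowledging that the coherence of the gluing is the hard part. The paper avoids exactly this difficulty: for superinjective maps of $\calt(S)$ it continues to work globally with the map $\Phi$ on $\calc(S)$, replacing the pentagon-connectivity lemma by a simpler connectivity statement (Lemma \ref{lem-bp-2-conn}, resting on Proposition \ref{prop-d-conn}) adequate for $p\ge 4$. Induction on $p$ is used in the paper only to establish that $\phi$ preserves the topological types of vertices (Section \ref{sec-basic-torelli}), and a gluing scheme of the kind you describe appears only in the distinct context of automorphisms of $\calc_s(S)$ in Section \ref{sec-aut-sep}, where the stronger bijectivity hypothesis (rather than mere superinjectivity) and Lemma \ref{lem-uni} are used to make the gluing coherent. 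Under the weaker superinjectivity hypothesis, the coherence issue you flag is real and not addressed by your sketch.

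Finally, a smaller but necessary point: even to define $\Phi$ on the non-separating curve $\alpha$ you must know that $\phi$ sends rooted simplices to rooted simplices (Lemma \ref{lem-rooted}), which in turn rests on the characterization of maximal simplices of $\calt(S)$ (Proposition \ref{prop-max}) and on the preservation of topological types. Your sketch asserts that $\phi$ respects the separating/BP dichotomy by ``comparing links and maximal simplices,'' which is the right instinct, but the concrete mechanism (rooted simplices and Lemma \ref{lem-rooted}) needs to be supplied before the pentagon argument can even get started.
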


Recall that a group $\Gamma$ is said to be {\it co-Hopfian} if any injective homomorphism from $\Gamma$ into itself is surjective.

The paper is organized as follows. 
In Section \ref{sec-comp}, we collect definitions of simplicial complexes and groups mentioned above. 
Fundamental properties of them are also reviewed. 
In Section \ref{sec-basic-torelli}, we describe simplices of $\calt(S)$ of maximal dimension and observe topological information on vertices of $\calt(S)$ preserved by a superinjective map $\phi$ from $\calt(S)$ into itself. 
In Section \ref{sec-super-torelli}, when $S$ is a surface of genus one, we construct a simplicial map $\Phi$ from $\calc(S)$ into itself inducing $\phi$.
Applying the result due to \cite{sha} on injective simplicial maps from $\calc(S)$ into itself, we show that $\Phi$ is an automorphism of $\calc(S)$ and obtain Theorem \ref{thm-g-1} (i). 
In Section \ref{sec-aut-sep}, when $S$ is a surface in Theorem \ref{thm-tor-comm}, for any automorphism of $\calc_s(S)$, we construct an automorphism of $\calc(S)$ extending it.
As a consequence of it, for any automorphism of $\calt(S)$, we construct an automorphism of $\calc(S)$ inducing it. 
The argument for surfaces of genus at least two depends on \cite{bm}. 
In Section \ref{sec-twist}, we present an algebraic characterization of twisting elements of $\cali(S)$, following Vautaw's argument for closed surfaces in \cite{v} and \cite{v-t}.
We then associate an automorphism of $\calt(S)$ (resp.\ $\calc_s(S)$) to each isomorphism between finite index subgroups of $\cali(S)$ (resp.\ $\calk(S)$).
As a result, we compute the commensurators of $\cali(S)$ and $\calk(S)$. 
In Section \ref{sec-torus}, the commensurators of the braid groups on the torus are described by using Theorem \ref{thm-tor-comm} for surfaces of genus one and following argument in \cite{lm}. 
In Appendix, we prove that each element of $\cali(S)$ is pure in the sense of Ivanov \cite{iva-subgr}. 
This fact is used in Section \ref{sec-twist}.

\medskip

\noindent {\it Acknowledgements.} The manuscript of this paper was written during the stay at Institut des Hautes \'Etudes Scientifiques. 
The author thanks the institute for giving nice environment and for warm hospitality.


\section{Complexes and groups associated with a surface}\label{sec-comp}

\subsection{Terminology}

Unless otherwise stated, we assume that a surface is connected, compact and orientable, and it may have non-empty boundary. 
Let $S=S_{g, p}$ be a surface of genus $g$ with $p$ boundary components. 
The Euler characteristic of $S$ is denoted by $\chi(S)$ and is equal to $-2g-p+2$. 
A simple closed curve in $S$ is said to be {\it essential} in $S$ if it is neither homotopic to a point of $S$ nor isotopic to a boundary component of $S$.
We denote by $V(S)$ the set of isotopy classes of essential simple closed curves in $S$.
Let $i\colon V(S)\times V(S)\rightarrow \mathbb{Z}_{\geq 0}$ denote the {\it geometric intersection number}, i.e., the minimal cardinality of the intersection of representatives for two elements of $V(S)$.
Given $\alpha, \beta \in V(S)$ and their representatives $A$, $B$, respectively, we say that $A$ and $B$ {\it intersect minimally} if we have $|A\cap B|=i(\alpha, \beta)$.

When there is no confusion, we mean by a curve in $S$ either an essential simple closed curve in $S$ or its isotopy class.
A curve $a$ in $S$ is said to be {\it separating} in $S$ if $S\setminus a$ is not connected, and otherwise $a$ is said to be {\it non-separating} in $S$.
Whether an essential simple closed curve in $S$ is separating in $S$ or not depends only on its isotopy class.
A pair of non-separating curves in $S$, $\{ a, b \}$, is called a {\it bounding pair (BP)} in $S$ if $a$ and $b$ are disjoint and non-isotopic and if $S\setminus (a \cup b)$ is not connected.
These conditions depend only on the isotopy classes of $a$ and $b$.
When we take into account an order of the two curves of a BP $\{ a, b \}$, it is denoted by $(a, b)$ and is called an {\it ordered bounding pair}. 
We often confuse a BP with and without an order if they can be distinguished in the context. 
We say that two non-separating curves in $S$ are {\it BP-equivalent} in $S$ if they either are isotopic or are disjoint and form a BP in $S$.

We mean by a {\it handle} a surface homeomorphic to $S_{1, 1}$ and mean by a {\it pair of pants} a surface homeomorphic to $S_{0, 3}$.
Let $a$ be a separating curve in $S$. 
If $a$ cuts off a handle from $S$, then $a$ is called an {\it h-curve} in $S$.
If $a$ cuts off a pair of pants from $S$, then $a$ is called a {\it p-curve} in $S$.
A curve which is either an h-curve or a p-curve in $S$ is called an {\it hp-curve} in $S$.


\subsection{The complex of curves and its variants}\label{subsec-comp}

We collect definitions of three abstract simplicial complexes associated with simple closed curves in surfaces.
The complex of curves was introduced by Harvey \cite{harvey}.
The complex of separating curves appears in \cite{farb-ivanov}, \cite{mv}, \cite{bm} and \cite{bm-add}.
The Torelli complex (with a certain marking and for a closed surface) was introduced by Farb-Ivanov \cite{farb-ivanov}.
We fix a surface $S$.

\medskip

\noindent {\bf The complex of curves.} Let $\Sigma(S)$ denote the set of non-empty finite subsets $\sigma$ of $V(S)$ with $i(\alpha, \beta)=0$ for any $\alpha, \beta \in \sigma$.
The {\it complex of curves} for $S$, denoted by $\calc(S)$, is defined as the abstract simplicial complex such that the sets of vertices and simplices of it are $V(S)$ and $\Sigma(S)$, respectively.

\medskip

\noindent {\bf The complex of separating curves.} Let $V_s(S)$ denote the subset of $V(S)$ consisting of isotopy classes of separating curves in $S$.
The {\it complex of separating curves} for $S$, denoted by $\calc_s(S)$, is defined as the full subcomplex of $\calc(S)$ spanned by $V_s(S)$.

\medskip

We extend the geometric intersection number $i$ to the symmetric function on $(V(S)\sqcup \Sigma(S))^2$ so that $i(\alpha, \sigma)=\sum_{\beta \in \sigma}i(\alpha, \beta)$ and $i(\sigma, \tau)=\sum_{\beta \in \sigma, \gamma \in \tau}i(\beta, \gamma)$ for any $\alpha \in V(S)$ and $\sigma, \tau \in \Sigma(S)$.
We say that two elements $\sigma$, $\tau$ of $V(S)\sqcup \Sigma(S)$ are {\it disjoint} if $i(\sigma, \tau)=0$, and otherwise we say that they {\it intersect}.

\medskip

\noindent {\bf The Torelli complex.} Let $V_{bp}(S)$ denote the set of isotopy classes of BPs in $S$.
Each element of $V_{bp}(S)$ is often regarded as an edge of $\calc(S)$.
We define $V_t(S)$ as the disjoint union $V_s(S)\sqcup V_{bp}(S)$.
The {\it Torelli complex} for $S$, denoted by $\calt(S)$, is the abstract simplicial complex such that the set of vertices is $V_t(S)$ and a non-empty finite subset $\sigma$ of $V_t(S)$ is a simplex of $\calt(S)$ if and only if any two elements of $\sigma$ are disjoint.
Let $\Sigma_t(S)$ denote the set of simplices of $\calt(S)$.

\medskip

We note that if $S$ is a surface of genus zero, then both $\calc_s(S)$ and $\calt(S)$ are equal to $\calc(S)$ since any essential simple closed curve in $S$ is separating in $S$.

Let us collect here terminology and symbols used throughout this paper. 
Pick $\sigma \in \Sigma(S)$. 
A {\it BP-equivalence class} in $\sigma$ is an equivalence class in the set of all non-separating curves in $\sigma$ with respect to the BP-equivalence relation. 
When all curves of $\sigma$ are non-separating in $S$ and BP-equivalent to each other, we say that $\sigma$ {\it forms a BP-equivalence class}. 
Two elements $b_1$, $b_2$ of $V_{bp}(S)$ are said to be {\it BP-equivalent} if $b_1$ and $b_2$ are disjoint and the set of all curves in $b_1$ and $b_2$ forms a BP-equivalence class. 
An element of $V_{bp}(S)$ is called a {\it BP-vertex}. 
Similarly, an element of $V(S)$ corresponding to an h-curve and a p-curve in $S$ is called an {\it h-vertex} and a {\it p-vertex}, respectively.

Let $X$ be one of the simplicial complexes $\calc(S)$, $\calc_s(S)$ and $\calt(S)$.
We denote by $V(X)$ the set of vertices of $X$. 
Note that a map $\phi \colon V(X)\rightarrow V(X)$ defines a simplicial map from $X$ into itself if and only if we have $i(\phi(a), \phi(b))=0$ for any two vertices $a, b\in V(X)$ with $i(a, b)=0$. 
We mean by a {\it superinjective map} $\phi \colon X\rightarrow X$ a simplicial map $\phi \colon X\rightarrow X$ satisfying $i(\phi(a), \phi(b))\neq 0$ for any two vertices $a, b\in V(X)$ with $i(a, b)\neq 0$. 

Any superinjective map $\phi \colon X\rightarrow X$ is in fact injective.
For if there were two distinct vertices $a, b\in V(X)$ with $\phi(a)=\phi(b)$, then superinjectivity of $\phi$ would imply $i(a, b)=0$.
Since we have $a\neq b$, there exists a vertex $c\in V(X)$ with $i(a, c)=0$ and $i(b, c)\neq 0$.
This is a contradiction because we have $i(\phi(a), \phi(c))=0$ and $i(\phi(b), \phi(c))\neq 0$ by superinjectivity of $\phi$.

For each $\sigma \in \Sigma(S)$, we denote by $S_{\sigma}$ the surface obtained by cutting $S$ along all curves in $\sigma$. 
When $\sigma$ consists of a single curve $a$, we denote it by $S_a$ for simplicity.
We often identify a component of $S_{\sigma}$ with a complementary component in $S$ of a tubular neighborhood of a one-dimensional submanifold representing $\sigma$ if there is no confusion.
If $Q$ is a component of $S_{\sigma}$, then $V(Q)$ is naturally identified with a subset of $V(S)$.


\subsection{The mapping class group and its subgroups}\label{subsec-mcg}

Let $S$ be a surface. 
The {\it extended mapping class group} $\mod^*(S)$ for $S$ is the group consisting of all isotopy classes of homeomorphisms from $S$ onto itself, where isotopy may move points in the boundary of $S$. 
The {\it mapping class group} $\mod(S)$ for $S$ is the subgroup of $\mod^*(S)$ consisting of all isotopy classes of orientation-preserving homeomorphisms from $S$ onto itself. 
The {\it pure mapping class group} $\pmod(S)$ for $S$ is the subgroup of $\mod(S)$ consisting of all isotopy classes of orientation-preserving homeomorphisms from $S$ onto itself that fix each boundary component of $S$ as a set. 
The reader should consult \cite{fm}, \cite{flp} and \cite{iva-mcg} for fundamentals of these groups.

Given the isotopy class $a$ of an essential simple closed curve in $S$, we denote by $t_a\in \pmod(S)$ the {\it (left) Dehn twist} about $a$. 
For an ordered BP $x=(a, b)$, we write $t_x=t_a t_b^{-1}$ and call it the {\it BP twist} about $x$. 
The {\it Torelli group} $\cali(S)$ for $S$ is the subgroup of $\pmod(S)$ generated by Dehn twists about all separating curves in $S$ and BP twists about all BPs in $S$. 
The {\it Johnson kernel} $\calk(S)$ for $S$ is the subgroup of $\pmod(S)$ generated by Dehn twists about all separating curves in $S$.
Both $\cali(S)$ and $\calk(S)$ are normal subgroups of $\mod^*(S)$. 
We refer to \cite{putman-tor} for variants of the definition of the Torelli group. 
Note that if $S$ is a surface of genus zero, then any curve in $S$ is separating in $S$, and thus both $\cali(S)$ and $\calk(S)$ are equal to $\pmod(S)$.

Let $S=S_{g, p}$ be a surface. 
Due to Powell \cite{powell} (for closed surfaces and based on Birman's work \cite {birman-siegel} on $Sp(2g, \mathbb{Z})$) and Johnson \cite{johnson}, if $g\geq 2$ and $p=0, 1$, then $\cali(S)$ is equal to the subgroup of $\mod(S)$ consisting of all elements that trivially act on the homology group $H_1(S, \mathbb{Z})$.  
This description of $\cali(S)$ is the original definition of the Torelli group for $S$ with $p=0, 1$. 
Afterward, Johnson \cite {johnson1} produced a finite generating set for $\cali(S)$ consisting of BP twists when $g\geq 3$ and $p=0, 1$. 
In contrast, if $g=2$ and $p=0$, then $\cali(S)$ is not finitely generated. 
Indeed, $\cali(S)$ is isomorphic to the free group of infinite rank (see \cite{mm}, \cite{mess} and \cite{bbm}). 
The following fact on $\calk(S)$ is fundamental and will be used in Section \ref{sec-twist}.

\begin{thm}[\cite{johnson-abe}]\label{thm-johnson}
Let $S=S_{g, p}$ be a surface with $g\geq 2$ and $p\leq 1$. 
Then $\calk(S)$ contains no non-zero power of a BP twist.
\end{thm}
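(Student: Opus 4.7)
The natural approach is to use Johnson's first homomorphism
\[
\tau \colon \cali(S) \longrightarrow \wedge^3 H / H, \qquad H := H_1(S;\mathbb{Z}),
\]
where $H \hookrightarrow \wedge^3 H$ is the contraction $v \mapsto v \wedge \omega$ and $\omega \in \wedge^2 H$ is the symplectic form. Johnson's theorem identifies $\ker(\tau)$ with $\calk(S)$ when $g \geq 2$ and $p \leq 1$, and $\wedge^3 H / H$ is torsion-free. Hence it suffices to prove $\tau(t_x) \neq 0$ for every BP twist $t_x = t_a t_b^{-1}$; then automatically $\tau(t_x^n) = n\,\tau(t_x) \neq 0$ for all $n \neq 0$, so $t_x^n \notin \calk(S)$.

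To compute $\tau(t_x)$ for a BP $\{a,b\}$, let $R_1, R_2$ denote the two components of $S \setminus (a \cup b)$ and set $c := [a] = [b] \in H$, which is primitive because $a, b$ are non-separating. Johnson's explicit formula for BP twists reads
\[
\tau(t_a t_b^{-1}) \;\equiv\; c \wedge \omega_{R_1} \quad\text{in}\quad \wedge^3 H / H,
\]
where $\omega_{R_1} \in \wedge^2 H$ is the symplectic form of the rank-$2g_1$ symplectic subspace of $H_1(R_1) \subset H$, with $g_1$ the genus of $R_1$. I would verify this either by a direct calculation from the definition of $\tau$ via the action of $\cali(S)$ on the nilpotent quotient $\pi_1(S)/\pi_1(S)_3$ (using a Magnus expansion on a loop that crosses $a$ and $b$ once each), or by simply invoking Johnson's original computation.

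Non-vanishing is then a linear algebra check in $\wedge^3 H$. Since $g \geq 2$ and neither side of the BP is an annulus (which would force $a$ isotopic to $b$), at least one side has positive genus; assume $g_1 \geq 1$. Extending a symplectic basis of the symplectic part of $H_1(R_1)$ to a symplectic basis $\{e_j, f_j\}_{j=1}^{g}$ of $H$ in which $c = e_{g_1+1}$, one gets
\[
c \wedge \omega_{R_1} \;=\; \sum_{j=1}^{g_1} e_{g_1+1} \wedge e_j \wedge f_j.
\]
Comparing this with the form of $v \wedge \omega$ for $v \in H$ — non-zero such elements involve all $g$ of the planes $e_j \wedge f_j$, whereas $c \wedge \omega_{R_1}$ uses only the first $g_1 < g$ — shows $c \wedge \omega_{R_1} \notin H \cdot \omega$, and the theorem follows. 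The main technical obstacle is justifying the explicit formula for $\tau(t_x)$; the much deeper input of Johnson's identification $\ker \tau = \calk(S)$ is taken as given.
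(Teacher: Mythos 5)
The paper does not prove this theorem: it is cited verbatim from Johnson's paper \cite{johnson-abe} as Theorem~\ref{thm-johnson} and used as a black box (e.g.\ in Proposition~\ref{prop-jo-bp}). So there is no internal proof to compare against. Your route via the Johnson homomorphism is indeed the standard way this follows from Johnson's work, and the overall plan is sound, but two steps need repair.

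First, the target of $\tau$ depends on $p$: for $p=1$ the Johnson homomorphism takes values in $\wedge^3 H$, and only for $p=0$ (closed) does one pass to $\wedge^3 H/H$. This matters, because your linear-algebra argument breaks down for $p=1$ if you insist on working modulo $H$. Concretely, when $p=1$ one of the two sides of a BP can be a pair of pants, so $g_1=g-1$ is possible; in that case $c\wedge\omega_{R_1}$ is \emph{equal} to $c\wedge\omega$ and hence vanishes in $\wedge^3 H/H$. It is non-zero in $\wedge^3 H$, which is the correct target for $p=1$, so the theorem is safe --- but your single formula covering both cases would give a wrong conclusion there. You should split into $p=0$ (target $\wedge^3 H/H$, and then $g_1,g_2\geq 1$ forces $g_1\leq g-2$) and $p=1$ (target $\wedge^3 H$, where non-vanishing is immediate).

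Second, the justification for $c\wedge\omega_{R_1}\notin H\cdot\omega$ is not correct as stated. Elements $v\wedge\omega$ with $v$ a symplectic basis vector do \emph{not} involve all $g$ planes $e_j\wedge f_j$; for $v=e_k$ one gets $\sum_{j\neq k} e_k\wedge e_j\wedge f_j$, which involves exactly $g-1$ of them. The correct statement is that $c\wedge\omega_{R_1}$ involves $g_1\leq g-2$ planes (for $p=0$), strictly fewer than $g-1$, and this is why it is not of the form $v\wedge\omega$. A cleaner and safer way to finish is to contract against the symplectic form: the map $C\colon\wedge^3 H\to H$, $x\wedge y\wedge z\mapsto \langle x,y\rangle z-\langle x,z\rangle y+\langle y,z\rangle x$, sends $v\wedge\omega$ to $(g-1)v$ and sends $c\wedge\omega_{R_1}$ to $g_1 c$; since $c$ is primitive and $1\leq g_1\leq g-2$, no $v\in H\otimes\mathbb{Q}$ can make $v\wedge\omega$ equal $c\wedge\omega_{R_1}$, hence the class is non-torsion in $\wedge^3 H/H$. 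Finally, a small economy: you do not need the deep half of Johnson's theorem $\ker\tau=\calk(S)$. Only the easy inclusion $\calk(S)\subseteq\ker\tau$ is used, and that follows at once from $\tau(t_\alpha)=0$ for $\alpha$ separating, since $\calk(S)$ is generated by such twists.
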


\begin{prop}\label{prop-jo-bp}
If $S=S_{g, p}$ is a surface with either {\rm (a)} $g\geq 2$ and $p\geq 0$; or {\rm (b)} $g=1$ and $p\geq 2$, then $\calk(S)$ contains no non-zero power of a BP twist.
\end{prop}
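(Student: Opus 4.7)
The plan is induction on $p$ via the capping homomorphism, reducing to Theorem \ref{thm-johnson} when $g \ge 2$ and to a Birman exact sequence argument when $g = 1$. Let $\bar S$ be obtained from $S$ by gluing a disk to a boundary component $\partial$ of $S$. Capping induces a homomorphism $\pmod(S) \to \pmod(\bar S)$, and because isotopies in $\pmod(\bar S)$ may rotate the remaining boundary circles, Dehn twists about boundary-parallel curves are trivial there. For a separating essential curve $s$ in $S$, its image $\bar s$ in $\bar S$ is still separating; $\bar s$ is either essential (so $t_s$ is sent to a Dehn twist about a separating curve in $\bar S$) or boundary-parallel (so $t_s$ is sent to the identity), the latter occurring precisely when $s$ cuts off a pair of pants containing $\partial$. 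Hence the capping sends $\calk(S)$ into $\calk(\bar S)$. For a BP $x = (a, b)$ in $S$, the curves $\bar a$ and $\bar b$ remain essential and non-separating in $\bar S$, and they become isotopic there exactly when $a, b, \partial$ cobound a pair of pants in $S$; otherwise $\bar x = (\bar a, \bar b)$ is a BP in $\bar S$ and $t_x$ is sent to $t_{\bar x}$.

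Call $\partial$ \emph{bad} for the BP $x = (a, b)$ if $a, b, \partial$ cobound a pair of pants in $S$. Each of the two complementary components of $a \cup b$ in $S$ gives rise to at most one bad boundary (namely the unique boundary component of $S$ in that component, provided the component is a pair of pants with boundary $a \cup b \cup \partial$), so there are at most two bad boundaries overall. This yields a non-bad boundary whenever $p \ge 3$. When $p = 2$, if both boundaries are bad then both complementary components of $a \cup b$ are pairs of pants, and an Euler characteristic computation gives $\chi(S) = -2$, forcing $g = 1$. Consequently a non-bad boundary always exists when $g \ge 2, p \ge 2$ or $g = 1, p \ge 3$. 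Capping off such a boundary reduces $(g, p)$ to $(g, p-1)$, and the induction hypothesis applied to $\bar x$ gives $t_{\bar x}^n \in \calk(\bar S)$ whenever $t_x^n \in \calk(S)$, forcing $n = 0$.

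This leaves two base cases. For $g \ge 2$ and $p \le 1$ the statement is Theorem \ref{thm-johnson}; in particular $g = 2, p = 0$ is vacuous, since the same Euler characteristic count forces one complementary component of a would-be BP in $S_{2,0}$ to be an annulus. For $g = 1, p = 2$ I would invoke the Birman exact sequence
\[
1 \to \pi_1(S_{1,1}) \to \mod(S_{1,2}) \to \mod(S_{1,1}) \to 1,
\]
which identifies $\cali(S_{1,2})$ with $\pi_1(S_{1,1}) \cong F_2$ and $\calk(S_{1,2})$ with the commutator subgroup $[F_2, F_2]$. A BP twist $t_a t_b^{-1}$ corresponds under this identification to point-pushing along a non-separating simple closed curve in $S_{1,1}$, which represents a primitive element of $F_2$. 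Since primitive elements project to generators of $F_2/[F_2, F_2] \cong \mathbb{Z}^2$, no non-zero power lies in $[F_2, F_2]$, finishing the base case.

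The main obstacle I foresee is the $g = 1, p = 2$ base case, where identifying BP twists with primitive elements of $F_2$ under the Birman isomorphism requires careful unpacking of the point-pushing construction. The inductive step itself is routine bookkeeping once the effect of capping on separating and BP twists has been verified.
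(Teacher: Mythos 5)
Your proof is correct and takes essentially the same approach as the paper: reduce by capping boundary components to Theorem~\ref{thm-johnson} when $g\geq 2$ and to the $S_{1,2}$ Birman exact sequence when $g=1$, tracking when the BP survives the capping. The only organizational difference is that you cap one boundary at a time (avoiding the ``bad'' boundaries that would collapse the BP), whereas the paper caps $p-1$ boundaries in a single step after choosing which one to keep.
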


\begin{proof}
We first assume condition (a) and $p\geq 2$. 
Let $x=(a, b)$ be an ordered BP in $S$. 
If $x$ does not cut off a surface of genus zero, then attach disks to any $p-1$ components of $\partial S$. 
Otherwise, choose one component of $\partial S$ contained in the surface of genus zero cut off by $x$, and attach disks to all of the other components of $\partial S$. 
We then obtain the surface $Q$ homeomorphic to $S_{g, 1}$ and the homomorphism $q\colon \pmod(S)\rightarrow \mod(Q)$. 
Note that $x$ can be seen as an ordered BP in $Q$. 
It follows from $q(\calk(S))=\calk(Q)$ and Theorem \ref{thm-johnson} that no non-zero power of $t_{x}$ is contained in $\calk(S)$.

We next assume condition (b). 
Once the conclusion for $S=S_{1, 2}$ is obtained, the conclusion for the other cases is verified along the argument in the previous paragraph. 
Put $S=S_{1, 2}$ and let $R$ be the surface obtained by attaching a disk to one component of $\partial S$, which is homeomorphic to $S_{1, 1}$. 
We then obtain the Birman exact sequence
\[1\rightarrow \pi_1(R)\stackrel{\iota}{\rightarrow} \pmod(S)\rightarrow \mod(R)\rightarrow 1,\]
and $\pi_1(R)$ is generated by two standard non-separating simple loops $a$, $b$ in $R$. 
By the definition of $\iota$, $\iota(a)$ and $\iota(b)$ are BP twists in $\pmod(S)$. 
Thus, $\iota(\pi_1(R))<\cali(S)$. 
Note that $\iota([a, b])$ is the Dehn twist about a separating curve in $S$, where $[a, b]$ denotes the commutator of $a$ and $b$. 
Since the actions of $\pmod(S)$ on the set of all BPs in $S$ and on the set of all separating curves in $S$ are both transitive, the normal closure of $\iota(a)$ in $\pmod(S)$ is equal to $\cali(S)$. 
Thus, $\iota(\pi_1(R))=\cali(S)$. 
The same kind of argument shows that the image of the commutator subgroup $[\pi_1(R), \pi_1(R)]$ via $\iota$ is equal to $\calk(S)$. 
Since no non-zero power of $\iota(a)$ and its conjugate in $\pmod(S)$ lie in $\iota([\pi_1(R), \pi_1(R)])$, the proposition for $S=S_{1, 2}$ follows.
\end{proof}

The following theorem on the automorphism group $\aut(\calc(S))$ of $\calc(S)$, proved in \cite{iva-aut}, \cite{kork-aut} and \cite{luo}, is a fundamental tool to compute commensurators of mapping class groups and their subgroups.

\begin{thm}\label{thm-cc}
Let $S=S_{g, p}$ be a surface with $3g+p-4>0$.
We define
\[\pi \colon \mod^*(S)\rightarrow \aut(\calc(S))\]
as the homomorphism associated with the natural action of $\mod^*(S)$ on $\calc(S)$. 
Then
\begin{enumerate}
\item if $(g, p)\neq (1, 2), (2, 0)$, then $\pi$ is an isomorphism.
\item if $(g, p)=(1, 2)$, then $\ker \pi$ is equal to the center of $\mod^*(S)$.
The image of $\pi$ is equal to the group of automorphisms of $\calc(S)$ preserving vertices corresponding to a separating curve in $S$, which is a finite index subgroup of $\aut(\calc(S))$.
\item if $(g, p)=(2, 0)$, then $\ker \pi$ is equal to the center of $\mod^*(S)$, and $\pi$ is surjective.
\end{enumerate}
\end{thm}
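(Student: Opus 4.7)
The plan is to handle injectivity and surjectivity of $\pi$ separately, with surjectivity constituting the main effort. For injectivity modulo the center, it suffices to show that a mapping class acting trivially on $\calc(S)$ must fix every isotopy class of essential simple closed curve; the Alexander method then forces the class to be trivial, except when the hyperelliptic involution (for $S_{2,0}$) or the elliptic involution (for $S_{1,2}$) acts trivially on all curves. These involutions generate the centers of $\mod^*(S)$ in those cases, which explains the exceptional kernels in (ii) and (iii).

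For surjectivity, given $\Phi \in \aut(\calc(S))$, I would produce a self-homeomorphism of $S$ inducing $\Phi$ in the following steps. First, combinatorially characterize the topological type of each vertex, and in the separating case the topological types of the two complementary subsurfaces, purely in terms of the local combinatorics of $\calc(S)$ (links, stars, dimension data, and adjacency patterns inside top-dimensional simplices). Show that $\Phi$ respects these characterizations, so that in particular $\Phi$ preserves separating vertices. This is precisely where $S_{1,2}$ behaves differently: for this surface no such combinatorial distinction is possible, and one can exhibit explicit automorphisms of $\calc(S)$ that exchange separating with non-separating vertex types, yielding the finite index image described in (ii). Next, show that $\Phi$ sends top-dimensional simplices to top-dimensional simplices, i.e.\ pants decompositions to pants decompositions, and analyze its action on pairs of pants decompositions related by the elementary moves of Hatcher--Thurston (A-moves and S-moves).

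Finally, via the change of coordinates principle, piece together the local data on individual pairs of pants into a global self-homeomorphism $h$ of $S$, and check that the mapping class of $h$ acts on enough vertices in the same way as $\Phi$ to conclude $\pi([h])=\Phi$. The principal obstacle is this reconstruction step: it demands an induction on the complexity $3g+p-4$ together with careful verification that homeomorphisms of adjacent pairs of pants glue coherently along the separating curves, with the action of $\Phi$ on elementary moves used to resolve the gluing ambiguities (in particular the twist ambiguity along a common boundary curve). For $(g,p)=(2,0)$ one checks that the hyperelliptic involution does not obstruct this reconstruction, giving surjectivity of $\pi$ with central kernel, while for $(g,p)=(1,2)$ one enumerates the extra combinatorial symmetries and combines the reconstruction on the stabilizer of separating vertices with these explicit symmetries to identify the image of $\pi$.
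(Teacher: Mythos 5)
Theorem \ref{thm-cc} is not proved in the paper; it is quoted from \cite{iva-aut}, \cite{kork-aut} and \cite{luo}, so there is no internal argument to compare against. Your sketch is a reasonable summary of the strategy in those references: kill the kernel via the Alexander method (a nontrivial class fixing every isotopy class of essential curve is one of the hyperelliptic involutions of $S_{2,0}$ or $S_{1,2}$, accounting for the central $\mathbb{Z}/2\mathbb{Z}$ kernel in parts (ii) and (iii)), and obtain surjectivity by reading off topological type from the combinatorics of $\calc(S)$ and then reconstructing a homeomorphism. One quibble: your opening sentence on injectivity is tautological, since ``acting trivially on $\calc(S)$'' \emph{is} fixing every isotopy class of essential simple closed curve; the content lies entirely in the Alexander step.

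Two remarks on the surjectivity outline. The route through Hatcher--Thurston moves on pants decompositions is not how the cited sources proceed (Ivanov works through ideal triangulations; Korkmaz and Luo argue directly on links and top-dimensional simplices), though it is a plausible alternative, and you rightly flag the gluing-and-reconstruction step as the genuinely technical part. For part (ii), the efficient argument is not an ad hoc enumeration of extra symmetries but the sporadic isomorphism $\calc(S_{1,2})\cong\calc(S_{0,5})$ induced by the hyperelliptic branched cover $S_{1,2}\to S_{0,5}$: separating curves in $S_{1,2}$ correspond exactly to curves in $S_{0,5}$ cutting off a disk containing the single non-branch puncture, so the subgroup of $\aut(\calc(S_{1,2}))\cong\aut(\calc(S_{0,5}))\cong\mod^*(S_{0,5})$ preserving the separating type is the stabilizer of that puncture, a finite index subgroup, as claimed. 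Your proposal leaves this identification vague, which is precisely the one place where the combinatorial type-characterization you rely on elsewhere is unavailable.
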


It is known that if $(g, p)=(1, 2), (2, 0)$, then the center of $\mod^*(S)$ is isomorphic to $\mathbb{Z}/2\mathbb{Z}$. 
Any superinjective map from $\calc(S)$ into itself is shown to be surjective in \cite{be-m}, \cite{bm-ar}, \cite{irmak1}, \cite{irmak2} and \cite{irmak-ns}. 
More generally, the following is obtained.

\begin{thm}[\cite{sha}]\label{thm-sha}
Let $S=S_{g, p}$ be a surface with $3g+p-4>0$. 
Then any injective simplicial map from $\calc(S)$ into itself is surjective.
\end{thm}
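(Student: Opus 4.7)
The plan is to show that any injective simplicial self-map $\phi\colon \calc(S)\to \calc(S)$ is automatically \emph{superinjective}; the classification of superinjective self-maps of $\calc(S)$ cited in the paragraph preceding the theorem then identifies $\phi$ with an element of $\mod^{*}(S)$, which is in particular surjective.

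The crucial preliminary observation is that $\phi$ preserves the dimension of each simplex: if $\sigma\in\Sigma(S)$ has cardinality $d+1$, then $\phi(\sigma)$ is a set of $d+1$ pairwise distinct and pairwise disjoint vertices, hence a $d$-simplex. Consequently $\phi$ carries every pants decomposition of $S$ (each maximal simplex, of dimension $3g+p-4$) to a pants decomposition, and every codimension-one simplex to a codimension-one simplex. For a codimension-one simplex $\sigma$, define
\[ L(\sigma) = \{\, v \in V(\calc(S)) : \sigma \cup \{v\} \text{ is a pants decomposition}\,\}. \]
The cut surface $S_\sigma$ consists of pairs of pants together with one additional component $Q_\sigma$ homeomorphic to either $S_{0,4}$ or $S_{1,1}$, and $L(\sigma)$ is canonically identified with the vertex set of $\calc(Q_\sigma)$. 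Any two distinct vertices of $L(\sigma)$ therefore intersect in $S$, with minimal intersection number $2$ when $Q_\sigma\cong S_{0,4}$ and $1$ when $Q_\sigma\cong S_{1,1}$.

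The flip case of superinjectivity is now immediate. Suppose $a,b\in V(\calc(S))$ are related by an \emph{elementary flip}, meaning there is a pants decomposition $P\ni a$ with $b\in L(P\setminus\{a\})$; topologically this occurs precisely when $i(a,b)=1$, or when $i(a,b)=2$ with vanishing algebraic intersection. Applying $\phi$ gives $\phi(a),\phi(b)\in L(\phi(P\setminus\{a\}))$, which are distinct by injectivity and hence intersect in $S$. For an arbitrary pair with $i(a,b)\neq 0$ one reduces to the flip case by choosing auxiliary curves supported near a regular neighborhood of $a\cup b$ and running an induction on $i(a,b)$: each step produces a curve disjoint from $a$ whose intersection with $b$ is strictly smaller, and the combinatorial constraints coming from preservation of pants decompositions and of the types of the components $Q_{\phi(\sigma)}$ propagate along the chain to force $i(\phi(a),\phi(b))\neq 0$.

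The main obstacle is this last reduction: propagating the flip-case superinjectivity along chains of curves with controlled intersection, while keeping track of which pants-decomposition configurations are available and how the combinatorial distinction between $Q_\sigma\cong S_{0,4}$ and $Q_\sigma\cong S_{1,1}$ is recognized by $\phi$. This is especially delicate in the low-complexity exceptional cases $S_{0,5}$ and $S_{1,2}$, where few flip configurations are available and the combinatorial recognition of separating versus non-separating vertices has to be set up with particular care.
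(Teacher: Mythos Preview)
The paper does not give its own proof of this theorem: it is quoted as a result of Shackleton \cite{sha}, immediately after noting that the weaker statement for \emph{superinjective} self-maps had already been established by Irmak, Behrstock--Margalit, and Bell--Margalit. So there is nothing in the paper to compare your argument against; your proposal is an independent attempt to reprove Shackleton's theorem.

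Your strategy---show that an injective simplicial self-map is automatically superinjective, then invoke the earlier classification of superinjective self-maps---is sound, and your treatment of the flip case is correct: if $a$ and $b$ complete the same codimension-one simplex $\sigma$ to a pants decomposition, then $\phi(a)$ and $\phi(b)$ are distinct elements of $L(\phi(\sigma))$ and therefore intersect.

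The gap is in the reduction of the general case to the flip case. You write that ``one reduces to the flip case by choosing auxiliary curves \dots\ and running an induction on $i(a,b)$,'' but no actual inductive step is given, and the sketch you offer does not make clear how knowing $i(\phi(c),\phi(b))\neq 0$ for some auxiliary $c$ with $i(c,a)=0$ would force $i(\phi(a),\phi(b))\neq 0$. Indeed, passing to a curve $c$ disjoint from $a$ loses all control over $\phi(a)$; an induction on intersection number organized this way cannot close. What is really needed---and what Shackleton's argument supplies---is first to show that $\phi$ preserves the topological type of vertices (separating versus non-separating, and the homeomorphism types of the complementary pieces), using combinatorial invariants of maximal and near-maximal simplices; only then can one propagate disjointness/intersection data along chains of curves. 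You yourself flag this as ``the main obstacle'' and note that the low-complexity cases $S_{0,5}$ and $S_{1,2}$ require particular care, but the proposal stops short of addressing it. As written, then, the argument is an outline rather than a proof: the flip case is fine, but the passage from flips to arbitrary intersecting pairs is missing.
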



\section{Basics of the Torelli complex}\label{sec-basic-torelli}

In this section, we show that any superinjective map from the Torelli complex into itself preserves the topological types of vertices.
The same property for the complex of separating curves is also obtained.

\subsection{Simplices of maximal dimension}

We describe simplices of the Torelli complex of maximal dimension. 
The following observation on BPs will be used many times in the sequel.

\begin{lem}\label{lem-bp}
Let $a$ be a BP in $S$, and let $b$ be either a separating curve in $S$ disjoint from $a$ or a BP in $S$ which is disjoint from $a$ and is not BP-equivalent to $a$. 
Then the two curves in $a$ are contained in a single component of $S_b$.
\end{lem}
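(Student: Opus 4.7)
My plan is to argue by contradiction. Assume $a_1$ and $a_2$ lie in distinct components $Q_1$ and $Q_2$ of $S_b$, and derive that $b$ must violate one of the two hypotheses. The argument goes through mod-$2$ homology and rests on two standard facts about BPs. First, for any BP $\{c_1,c_2\}$ in $S$, the union $c_1\cup c_2$ separates $S$ while each $c_i$ is non-separating, so $[c_1]=[c_2]\neq 0$ in $H_1(S;\mathbb{Z}/2\mathbb{Z})$. Second, two disjoint non-separating simple closed curves $\alpha,\beta$ satisfy $[\alpha]=[\beta]$ in $H_1(S;\mathbb{Z}/2\mathbb{Z})$ if and only if $\alpha\cup\beta$ separates $S$, which in turn is equivalent to $\alpha$ and $\beta$ being BP-equivalent. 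These two facts together show that two disjoint BPs are BP-equivalent precisely when all four underlying curves share one and the same non-zero class in $H_1(S;\mathbb{Z}/2\mathbb{Z})$.

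Write $B=b$ when $b$ is separating and $B=b_1\cup b_2$ when $b=\{b_1,b_2\}$ is a BP, so in both cases the components $Q_1,Q_2$ of $S_b$ meet along a collar of $B$ inside $S$. Apply the Mayer--Vietoris sequence with coefficients in $\mathbb{Z}/2\mathbb{Z}$ to the decomposition $S=Q_1\cup Q_2$ (after the standard thickening):
\[H_1(B;\mathbb{Z}/2\mathbb{Z})\longrightarrow H_1(Q_1;\mathbb{Z}/2\mathbb{Z})\oplus H_1(Q_2;\mathbb{Z}/2\mathbb{Z})\longrightarrow H_1(S;\mathbb{Z}/2\mathbb{Z}).\]
Since $a_1\subset Q_1$ and $a_2\subset Q_2$, the classes $[a_1],[a_2]\in H_1(S;\mathbb{Z}/2\mathbb{Z})$ are the images of classes supported in $Q_1$ and $Q_2$ respectively. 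By the first fact $[a_1]=[a_2]$, so exactness forces the combined lift $([a_1]_{Q_1},[a_2]_{Q_2})$ (signs vanish modulo~$2$) to lie in the image of $H_1(B;\mathbb{Z}/2\mathbb{Z})$. This image is spanned by the classes $([b_j]_{Q_1},[b_j]_{Q_2})$, and pushing the resulting identity back down to $H_1(S;\mathbb{Z}/2\mathbb{Z})$ yields
\[[a_1]=\sum_j \epsilon_j\,[b_j]\quad\text{in }H_1(S;\mathbb{Z}/2\mathbb{Z})\]
for suitable $\epsilon_j\in\mathbb{Z}/2\mathbb{Z}$.

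In the separating case $[b]=0$, so $[a_1]=0$, contradicting that $a_1$ is non-separating. In the BP case the first fact gives $[b_1]=[b_2]$, so $[a_1]\in\{0,[b_1]\}$. The first alternative again contradicts non-separability of $a_1$, while the second forces $[a_1]=[b_1]$, and hence, together with $[a_1]=[a_2]$ and $[b_1]=[b_2]$, that all four curves share one and the same non-zero class in $H_1(S;\mathbb{Z}/2\mathbb{Z})$. By the second fact this makes them pairwise BP-equivalent, so $a$ and $b$ are BP-equivalent, contradicting the hypothesis. The only delicate point is verifying the precise form of the Mayer--Vietoris connecting map on each $[b_j]$, but this is routine once one fixes the collar neighbourhood of $B$; over $\mathbb{Z}/2\mathbb{Z}$ there are no sign ambiguities to track, and the rest of the deduction is a direct exactness computation.
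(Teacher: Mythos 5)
Your route via mod-$2$ homology and Mayer--Vietoris is genuinely different from the paper's proof, which argues directly with connectivity: the paper observes that each $a_j$ is non-separating in its component $Q_j$ of $S_b$ (in the bounding-pair case this uses that a curve that is separating in $Q_j$ but non-separating in $S$ must form a BP with the curves of $b$, contradicting the non-BP-equivalence hypothesis), and then notes that cutting $S$ along $a_1\cup a_2$ would leave each $Q_j$ connected and still attached through the curves of $b$, so $S\setminus(a_1\cup a_2)$ would be connected, contradicting that $a$ is a BP. Your homological reformulation is a reasonable alternative strategy.

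However, as written there is a genuine gap: the two ``standard facts'' you invoke are false in $H_1(S;\mathbb{Z}/2\mathbb{Z})$ once $\partial S\neq\emptyset$. For a $1$-submanifold $z$ in the interior of $S$, ``$z$ separates $S$'' is equivalent to $[z]=0$ in $H_1(\bar S;\mathbb{Z}/2\mathbb{Z})$, where $\bar S$ is the closed surface obtained by capping off every boundary component of $S$; in $H_1(S;\mathbb{Z}/2\mathbb{Z})$ itself one only gets that $[z]$ lies in the subgroup generated by the boundary classes $[\partial_i]$, which is nonzero as soon as $p\geq 2$. Concretely, in $S=S_{1,2}$ every BP $\{c_1,c_2\}$ cuts $S$ into two copies of $S_{0,3}$, one containing each component of $\partial S$, so $[c_1]+[c_2]=[\partial_1]\neq 0$ in $H_1(S;\mathbb{Z}/2\mathbb{Z})$; thus your Fact~1 (``$[c_1]=[c_2]$'') already fails for $S_{1,2}$. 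The same issue sinks Fact~2 and the step ``$[b]=0$ when $b$ is separating.'' Note that the paper's Theorems~\ref{thm-tor-comm}--\ref{thm-g-1} concern precisely such surfaces with boundary. The fix is to cap off: work throughout in $H_1(\bar S;\mathbb{Z}/2\mathbb{Z})$, and run Mayer--Vietoris for the decomposition of $\bar S$ along the same curves, with the pieces replaced by $\bar Q_j = Q_j$ together with the attached disks. There ``separating'' does mean null-homologous mod~$2$, your exactness computation goes through unchanged, and the conclusion $[a_1]\in\{0,[b_1]\}$ in $H_1(\bar S;\mathbb{Z}/2\mathbb{Z})$ gives the two desired contradictions. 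As written, though, the proof is not correct.
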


\begin{proof}
Let $a_1$ and $a_2$ denote the two non-separating curves in $a$. 
Assume $b$ to be a separating curve in $S$. 
For each $j=1, 2$, $a_j$ is non-separating in the component of $S_b$ containing it. 
If $a_1$ and $a_2$ were contained in distinct components of $S_b$, then $S\setminus (a_1\cup a_2)$ would be connected. 
This is a contradiction. 
We next assume $b$ to be a BP in $S$. 
Each separating curve in a component of $S_b$ is either separating in $S$ or forms a BP in $S$ with any curve of $b$. 
Since $a$ and $b$ are not BP-equivalent, each curve in $a$ is non-separating in the component of $S_b$ containing it. 
The conclusion of the lemma then follows as before.
\end{proof}

\begin{lem}\label{lem-bc}
Let $S$ be a surface and let $b$ and $c$ be simplices of $\calc(S)$ such that
\begin{itemize}
\item $|b|\geq 2$, $|c|\geq 2$, $b\cap c=\emptyset$ and $b\cup c\in \Sigma(S)$; and
\item each curve of $b$ and $c$ is non-separating in $S$, and each of $b$ and $c$ forms a BP-equivalence class in the simplex $b\cup c$.
\end{itemize}
Then the following assertions hold:
\begin{enumerate}
\item For each component $Q$ of $S_b$, exactly two curves of $b$ correspond to boundary components of $Q$. 
\item There exists a unique component $R$ of $S_b$ with $c\in \Sigma(R)$. 
Moreover, each curve of $c$ is non-separating in $R$, and $c$ forms a BP-equivalence class as an element of $\Sigma(R)$.
\end{enumerate}
\end{lem}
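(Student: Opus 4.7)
The plan is to prove (i) by induction on $n=|b|$ and then to establish both parts of (ii) by reducing the sub-claims to contradictions with the hypothesis that $b$ and $c$ are distinct BP-equivalence classes in $b\cup c$.

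For (i), the base case $|b|=2$ uses only the definition of a BP: $S_b$ has exactly two components, and if one contained both copies of a single curve of $b$, gluing that curve back would either disconnect $S$ or leave the other curve of $b$ separating $S$, contradicting the non-separation hypothesis. For the inductive step, set $b'=b\setminus\{a_n\}$; by induction $S_{b'}$ has $n-1$ components, each with exactly two distinct $b'$-curves on its boundary. Let $Q'$ denote the component containing $a_n$ and $a_i,a_j$ the two $b'$-curves on $\partial Q'$. Applying the base case to the BP $\{a_n,a_i\}$, each component of $S\setminus(a_n\cup a_i)$ carries exactly one copy of $a_n$, which rules out $a_n$ being non-separating in $Q'$ (for otherwise both sides of $a_n$ would lie in the same component of $S\setminus(a_n\cup a_i)$). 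A parallel argument forbids $a_i,a_j$ from being on the same side of $a_n$ in $Q'$: the opposite side would then be bounded only by $a_n$ and by pieces of $\partial S$, forcing $a_n$ to separate $S$.

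For the first claim of (ii), pick $c_k,c_l\in c$. Applying Lemma \ref{lem-bp} with any two curves of $b$ (viewed as the BP $a$) against the BP $\{c_k,c_l\}$ (non-BP-equivalent to $a$ by hypothesis), together with transitivity, places every curve of $b$ in a single component of $S_{\{c_k,c_l\}}$. The other component $W$ of $S\setminus(c_k\cup c_l)$ is therefore disjoint from $b$, connected, and has $c_k,c_l$ on its boundary, so it lies inside a single component $R$ of $S_b$ with $c_k,c_l$ in its interior; running this over all pairs in $c$ puts all of $c$ inside the same $R$.

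To show each $c_k\in c$ is non-separating in $R$, suppose for contradiction that $c_k$ separates $R$ into $R_1\sqcup R_2$, and let $a_i,a_j$ be the two $b$-curves on $\partial R$ given by (i). If both $a_i,a_j$ lie on the same side, the opposite side has boundary consisting only of $c_k$ and pieces of $\partial S$, so $c_k$ separates $S$, contradicting $c_k$ being non-separating in $S$. If $a_i,a_j$ lie on opposite sides, I would observe that the dual graph of $b$ is a single $n$-cycle (which follows from (i) together with the connectedness of $S$: multiple cycles would split $S$ into disjoint subsurfaces), and the dual graph of $b\cup\{c_k\}$ is then the $(n+1)$-cycle obtained by subdividing $R$ along the new edge $c_k$; contracting every $b$-edge except $a_i$ yields a graph on two vertices joined by the two edges $a_i$ and $c_k$. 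This says $\{a_i,c_k\}$ is a BP in $S$, contradicting that $b$ and $c$ are distinct BP-equivalence classes. Finally, for the claim that $c$ is a BP-equivalence class in $\Sigma(R)$: if some pair $\{c_k,c_l\}\subset c$ failed to be a BP in $R$, i.e.\ if $c_l$ were non-separating in $R_{c_k}$, then $c_k$ and $c_l$ would each appear as a loop at $R$ in the dual graph of $b\cup\{c_k,c_l\}$. Contracting all $b$-edges collapses the $n$-cycle of $b$ to a single vertex, leaving a graph with one vertex and two loops, which forces $S\setminus(c_k\cup c_l)$ to be connected — contradicting $\{c_k,c_l\}$ being a BP in $S$. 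The main obstacle I anticipate is the combinatorial bookkeeping of the dual-graph computations above — in particular, cleanly articulating the cyclic incidence structure of the components of $S_b$ that is used implicitly throughout.
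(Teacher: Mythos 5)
Your proof is correct, but it takes a noticeably different route from the paper's, most visibly in part (i). The paper proves (i) directly: given three $b$-curves $b_1,b_2,b_3$ on $\partial Q$, it takes a p-curve $a$ in $Q$ cutting off a pair of pants with $b_1,b_2$ on its boundary; since $\{b_1,b_2\}$ is a BP, $a$ is separating in $S$, and then the BP $\{b_1,b_3\}$ has its two curves on opposite sides of $a$, contradicting Lemma~\ref{lem-bp}. This is a two-line argument that avoids any induction. You instead induct on $|b|$, which works but requires the somewhat delicate inductive step you give (showing $a_n$ separates $Q'$ and that $a_i,a_j$ land on opposite sides of it); the paper's p-curve trick sidesteps all of that bookkeeping. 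For (ii), the underlying ideas coincide: both proofs use Lemma~\ref{lem-bp} to place the $b$-curves in one component of $S_{\{c_k,c_l\}}$, both show $c_k$ is non-separating in $R$ by the same two-case dichotomy, and both detect that the ``opposite sides'' case produces the BP $\{a_i,c_k\}$ and hence a contradiction. You cast the last two steps in the language of contractions of the reduction-system (dual) graph, whereas the paper argues directly on the surface (e.g., for the final claim it just observes that two curves of $b$ in distinct components of $S_b$ can always be joined by an arc missing $c_1\cup c_2$, so $S\setminus(c_1\cup c_2)$ is connected). Your dual-graph phrasing is more explicit and arguably more systematic, at the cost of requiring the auxiliary observation that the dual graph of $b$ is a single cycle with no loops; that observation is true (it follows from (i) together with the $|b|=2$ case ruling out both copies of one $b$-curve landing in the same component), but you should state it carefully rather than leave it implicit, since it is exactly the point your closing remark flags as a worry. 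Overall: correct, with a genuinely different and longer proof of (i), and an equivalent but more graph-theoretic proof of (ii).
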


\begin{proof}
If $Q$ is a component of $S_b$, then the number of curves of $b$ corresponding to boundary components of $Q$ is at least two because each curve of $b$ is non-separating in $S$. 
Assume that the number is at least three. 
Choose three curves $b_1$, $b_2$ and $b_3$ of $b$ corresponding to boundary components of $Q$, and take a p-curve $a$ in $Q$ cutting off a pair of pants whose boundary contains $b_1$ and $b_2$. 
Since the pair $\{ b_1, b_2\}$ separates $S$ into two components, $a$ is separating in $S$. 
This contradicts Lemma \ref{lem-bp} because both components of $S_a$ contain a curve of $b$. 
Assertion (i) is proved.

Take a curve $c_1$ of $c$ and a component $R$ of $S_b$ with $c_1\in V(R)$. 
We first claim that $c_1$ is non-separating in $R$. 
Assume otherwise, and let $b_1, b_2\in b$ be the curves corresponding to boundary components of $R$. 
If both $b_1$ and $b_2$ were contained in a single component of $R_{c_1}$, then $c_1$ would be separating in $S$. 
This is a contradiction. 
Otherwise, $b_1$ and $c_1$ form a BP in $S$, and this is also a contradiction. 
The curve $c_1$ is therefore non-separating in $R$.

Suppose that a curve $c_2$ of $c$ distinct from $c_1$ is contained in a component $R'$ of $S_b$ distinct from $R$. 
The curve $c_2$ is then non-separating in $R'$, and $S\setminus (c_1\cup c_2)$ is connected because any two curves of $b$ can be connected by an arc in $S$ which does not intersect $c_1$ and $c_2$. 
Since $\{ c_1, c_2\}$ is a BP in $S$, this is a contradiction and proves that each curve of $c$ is contained in $R$. 
Since any two curves in $c$ are BP-equivalent in $S$, $c$ forms a BP-equivalence class as an element of $\Sigma(R)$.
\end{proof}

\begin{lem}\label{lem-sep-number}
Let $S=S_{g, p}$ be a surface and let $\sigma$ be a simplex of $\calc(S)$ consisting of separating curves in $S$. 
Then the following assertions hold:
\begin{enumerate}
\item The inequality $|\sigma |\leq 2g+p-3$ holds and this equality can be attained. 
In particular, $\dim(\calc_s(S))=2g+p-4$. 
\item If $|\sigma|=2g+p-3$, then $S_{\sigma}$ consists of $g$ handles and $g+p-2$ pairs of pants.
\end{enumerate}
\end{lem}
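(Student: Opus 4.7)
The plan is to run an Euler characteristic argument, with a bit of surface topology to rule out degenerate components. First I would show that $S_{\sigma}$ has exactly $|\sigma|+1$ connected components. Form the dual graph $G$ whose vertices are the components of $S_{\sigma}$ and whose edges are the curves of $\sigma$, each edge joining the two components lying on the two sides of the corresponding curve. Each edge of $G$ is a bridge: given $c \in \sigma$, the curves of $\sigma \setminus \{c\}$ all lie on one side of $c$ in the disconnected set $S \setminus c$, so after deleting the edge $c$ from $G$ no remaining edge joins the two sides. Since $S$ is connected, $G$ is connected, hence $G$ is a tree with $|\sigma|+1$ vertices.

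Next I would show that every component $Q_j$ of $S_{\sigma}$ satisfies $\chi(Q_j) \le -1$. The only compact orientable surfaces with $\chi \ge 0$ are the sphere, the disk, and the annulus. A sphere component is impossible because every $Q_j$ has non-empty boundary whenever $\sigma$ is non-empty (a component with no $\sigma$-boundary would force $Q_j = S$ and $\sigma = \emptyset$). A disk component would have its single boundary circle coming from $\sigma$, but an essential curve cannot bound a disk. An annulus component is ruled out by case analysis on the sources of its two boundary circles: both in $\sigma$ would make two distinct curves of $\sigma$ isotopic; one in $\sigma$ and one in $\partial S$ would make a curve of $\sigma$ boundary-parallel; both in $\partial S$ would force $S$ itself to be an annulus. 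Additivity of $\chi$ then yields
\[
2 - 2g - p \;=\; \chi(S) \;=\; \sum_{j=1}^{|\sigma|+1} \chi(Q_j) \;\le\; -(|\sigma|+1),
\]
so $|\sigma| \le 2g+p-3$, and the dimension statement for $\calc_s(S)$ follows.

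To realize the bound, I would exhibit an explicit $\sigma$: take $g$ pairwise disjoint separating curves each cutting off a handle from $S$, reducing the remainder to a planar piece $P \cong S_{0,\,g+p}$, and then adjoin a pants decomposition of $P$ using $g+p-3$ additional disjoint essential curves. Every essential simple closed curve in a planar surface is separating, and each such curve remains separating in $S$: its two complementary pieces in $P$ are both non-empty, and re-attaching the handles to $P$ keeps both pieces non-empty. The total is $g + (g+p-3) = 2g+p-3$ separating curves. Degenerate cases (such as $g=2,\,p=0$, where one simply takes a single separating curve splitting $S$ into two handles) are handled by omitting the pants step.

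For part (ii), the equality $|\sigma| = 2g+p-3$ forces $\chi(Q_j)=-1$ for every $j$, so each $Q_j$ is homeomorphic to either $S_{1,1}$ or $S_{0,3}$. Since cutting along separating curves preserves the sum of genera across components, the number of handles equals the genus $g$ of $S$, and the remaining $|\sigma|+1-g = g+p-2$ components are pairs of pants. The main obstacle in the whole argument is the case check ruling out annulus components in the second step, where both essentiality and the pairwise non-isotopy of the curves of $\sigma$ must be invoked.
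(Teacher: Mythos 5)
Your proof is correct, and it takes a somewhat different route from the paper's. You establish the component count $|S_\sigma|=|\sigma|+1$ by showing the dual graph is a tree (every edge is a bridge because the curves are separating), then combine this with the bound $\chi(Q_j)\le -1$ for each component and additivity of $\chi$ to get the inequality directly. The paper instead reduces assertion (i) to the case where every component of $S_\sigma$ is a handle or a pair of pants — if some component $Q$ were neither, it would contain a curve separating in $Q$ and hence in $S$, so $\sigma$ could be enlarged — then reads off the number of components as $|\chi(S)|=2g+p-2$ and deduces $|\sigma|$ by counting boundary circles of the pieces. Both arguments rest on the same surface-topology facts (essentiality and pairwise non-isotopy rule out $\chi\ge 0$ pieces), but your dual-tree observation yields the inequality without the reduction step and without the boundary-circle bookkeeping, and it makes the ``$+1$'' in the component count conceptually transparent; the paper's approach is terser and folds the realization of the bound and part (ii) into the same maximality reduction. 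Minor nitpick: in ruling out a boundaryless (sphere) component, the cleanest justification that every component meets $\sigma$ is simply that the dual graph $G$ is a connected tree with $|\sigma|\ge 1$ edges, so no vertex can be isolated — your phrasing ``would force $Q_j=S$ and $\sigma=\emptyset$'' reaches the same conclusion but is a bit indirect.
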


\begin{proof}
The sum of the genera of components of $S_{\sigma}$ is $g$.
If there exists a component $Q$ of $S_{\sigma}$ which is neither a pair of pants nor a handle, then any separating curve in $Q$ is separating in $S$. 
It follows that once assertion (i) is proved, assertion (ii) follows. 
To prove assertion (i), we may assume that each component of $S_{\sigma}$ is either a pair of pants or a handle. 
The number of components of $S_{\sigma}$ is then equal to $|\chi(S)|=2g+p-2$.
The equality $|\sigma|=2g+p-3$ is obtained by the counting argument on the numbers of boundary components of components of $S_{\sigma}$.
\end{proof}

\begin{prop}\label{prop-max}
Let $S=S_{g, p}$ be a surface with $g\geq 1$ and $g+p\geq 3$. Then
\[\dim(\calt(S))=(g-1)+\left(\begin{matrix}
g+p-1 \\
2
\end{matrix}\right)
-1.\]
Moreover, if $g+p\geq 4$, then for any simplex $\sigma$ of $\calt(S)$ of maximal dimension, there exists a unique simplex $\{ \alpha_1,\ldots, \alpha_{g-1}, \beta_1,\ldots, \beta_{g+p-1}\}$ of $\calc(S)$ such that
\begin{enumerate}
\item[(a)] each of $\alpha_1,\ldots, \alpha_{g-1}$ is an h-curve in $S$;
\item[(b)] each of $\beta_1,\ldots, \beta_{g+p-1}$ is a non-separating curve in $S$, and any two of them are BP-equivalent in $S$; and
\item[(c)] $\sigma$ consists of $\alpha_1,\ldots, \alpha_{g-1}$ and all BPs of two of $\beta_1,\ldots, \beta_{g+p-1}$.  
\end{enumerate}
\end{prop}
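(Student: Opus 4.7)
The plan is to prove the dimension formula by exhibiting a maximum simplex and confirming the upper bound inductively, with the uniqueness statement following from the equality cases.

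For the lower bound I would construct a simplex of the claimed form: pick $g-1$ disjoint h-curves $\alpha_1,\ldots,\alpha_{g-1}$ in $S$ cutting off $g-1$ handles, so the residual component is $S_{1,g+p-1}$. Inside it I would select $g+p-1$ pairwise disjoint non-isotopic non-separating curves $\beta_1,\ldots,\beta_{g+p-1}$ arranged so that the complement is a cyclic chain of $p$ pairs of pants, each absorbing one boundary component of $S$. Every pair $\{\beta_i,\beta_j\}$ disconnects $S$ and is therefore a BP, so the $\alpha_i$ together with the $\binom{g+p-1}{2}$ BPs among the $\beta_j$ form a simplex of $\calt(S)$ of the claimed size.

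For the upper bound I would argue by induction on $|\chi(S)|=2g+p-2$. Let $\sigma\in\Sigma_t(S)$ and let $\tilde\sigma\in\Sigma(S)$ be the simplex of $\calc(S)$ collecting all curves appearing in vertices of $\sigma$. If $\sigma$ contains a separating vertex $a$, Lemma~\ref{lem-bp} forces every other vertex of $\sigma$ to lie in a single component of $S_a=S_1\sqcup S_2$, so $\sigma\setminus\{a\}=\sigma_1\sqcup\sigma_2$ with $\sigma_i\in\Sigma_t(S_i)$; the induction hypothesis applied to each $S_i$ (and Lemma~\ref{lem-sep-number} if some $S_i$ has genus $0$) reduces the inequality to the identity
\[
\binom{g+p-1}{2}=\binom{g_1+p_1-1}{2}+\binom{g_2+p_2-1}{2}+(g_1+p_1-2)(g_2+p_2-2),
\]
whose cross term is nonnegative and vanishes exactly when $a$ is an h-curve. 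If $\sigma$ has no separating vertex, the non-separating curves of $\tilde\sigma$ partition into BP-equivalence classes $E_1,\ldots,E_r$ of sizes $m_i$; I would pick $E_1$ of maximum size and use Lemma~\ref{lem-bc} to confine every other class of $\sigma$ to a unique component of $S_{E_1}$. An Euler characteristic count on the $m_1$ pieces of $S_{E_1}$ forces $m_1\le g+p-1$ and $\sum g_l=g-1$, $\sum p_l=p$, after which induction on each piece closes the bound using the same algebraic identity.

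Uniqueness in the regime $g+p\ge 4$ follows by tracking the equality cases: equality at each separating vertex forces it to be an h-curve, and equality in the no-separating case forces $r=1$, $m_1=g+p-1$, and the pieces of $S_{E_1}$ to be $g-1$ copies of $S_{1,2}$ together with $p$ pairs of pants $S_{0,3}$; the h-curves of $\sigma$ must then cut each $S_{1,2}$ further into $S_{1,1}\cup S_{0,3}$. This determines the simplex $\{\alpha_1,\ldots,\alpha_{g-1},\beta_1,\ldots,\beta_{g+p-1}\}$ uniquely from $\sigma$, with the hypothesis $g+p\ge 4$ ruling out the degenerate maximum configurations present in small cases such as $S_{2,1}$. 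The step I expect to be the main obstacle is the no-separating-vertex case with $r\ge 2$: one must verify via Lemma~\ref{lem-bc}(ii) that the subordinate BP-equivalence classes restrict correctly to the cut pieces, and balance $\binom{m_1}{2}$ plus the inductive bounds on those pieces against $(g-1)+\binom{g+p-1}{2}$; the identity $(a-1)(b-1)\ge 0$ applied at each splitting is what ultimately allows the induction to close.
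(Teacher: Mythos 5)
Your approach is genuinely different from the paper's. The paper first shows, by a direct exchange argument (Claim 1 of the proof), that a maximal simplex has exactly one BP-equivalence class $\{b_1,\ldots,b_m\}$; it then cuts along a single curve $b_m$, observes that every remaining curve becomes \emph{separating} in $T=S_{b_m}$, and finishes with a count using Lemma~\ref{lem-sep-number} on $T$. You instead run an induction on $|\chi(S)|$ with a case split on whether $\sigma$ contains a separating vertex, invoking the induction hypothesis for the pieces $Q_l$ of $S_{E_1}$ in the no-separating case. That scheme is reasonable for proving the \emph{inequality} $|\sigma|\le (g-1)+\binom{g+p-1}{2}$, and your algebraic identity for the separating-vertex split is correct.

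However, the uniqueness part has a real gap in the no-separating-vertex case with $r\ge 2$. The bound $\dim\calt(Q_l)+1$ you apply to a piece $Q_l$ is \emph{not tight} for the quantity you actually need: a BP of $Q_l$ is a BP of $S$ only when the two cut boundary circles of $Q_l$ lie on the same side of it, and this can fail identically. For instance, if $Q_l\cong S_{1,2}$ with both boundary circles coming from $E_1$ (so $g_l=1$, $p_l=0$), then every BP in $Q_l$ separates its two boundary circles, hence no BP of $Q_l$ lifts to a BP of $S$, and $\sigma$ contributes nothing in $Q_l$ even though $\dim\calt(Q_l)+1=1$. This over-count produces spurious equality cases: for $S_{3,0}$ and $m_1=2$ with both pieces $\cong S_{1,2}$, your inequality reads $\binom{2}{2}+1+1=3=(g-1)+\binom{g+p-1}{2}$, suggesting a maximal simplex with no separating vertex, which does not exist. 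So the claim that equality ``forces $r=1$'' (and hence the proposition's form) does not follow from the inductive inequality alone; you would need an extra observation that pieces with $p_l=0$ and $g_l\ge 1$ contribute zero, after which the case $r\ge 2$, $g\ge 2$ can be excluded. Relatedly, your uniqueness paragraph speaks of ``the h-curves of $\sigma$'' cutting the $S_{1,2}$ pieces, but in the no-separating case $\sigma$ has no h-curves by hypothesis; that case should only arise for $g=1$, where all pieces of $S_{E_1}$ are pairs of pants. The paper's route of cutting along a \emph{single} curve $b_m$ and counting separating curves on $T$ via Lemma~\ref{lem-sep-number} avoids the BP-lifting problem entirely, which is why the equality analysis there is clean.
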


\begin{proof}
A simplex $\{ \alpha_1,\ldots, \alpha_{g-1}, \beta_1,\ldots, \beta_{g+p-1}\}$ of $\calc(S)$ satisfying conditions (a) and (b) is easily found.
One can verify that the dimension of the simplex of $\calt(S)$ containing all $\alpha_j$ and all BPs of two of $\beta_1,\ldots, \beta_{g+p-1}$ is equal to the right hand side of the equality in the proposition.
It follows that $\dim (\calt(S))$ is not smaller than this number.

We prove the equality in the proposition by induction on $g+p$. 
When $g+p=3$, we have $(g, p)=(1, 2), (2, 1), (3, 0)$, and the cardinality of BP-vertices in a simplex of $\calt(S)$ is at most one.
We then obtain $\dim(\calt(S))=g-1$.

In what follows, we assume $g+p\geq 4$. 
Let $\sigma$ be a simplex of $\calt(S)$ of maximal dimension. 
Let
\[s=\{ a_1,\ldots, a_k, b_{11},\ldots, b_{1m_1}, b_{21},\ldots, b_{lm_l}\}\]
be the collection of all curves in $\sigma \cap V_s(S)$ and all curves in BPs in $\sigma$ so that
\begin{itemize}
\item each of $a_1,\ldots, a_k$ is separating in $S$; and
\item each of $b_{11},\ldots, b_{1m_1}, b_{21},\ldots, b_{lm_l}$ is non-separating in $S$, and the family $b_j=\{ b_{j1},\ldots, b_{jm_j}\}$ forms a BP-equivalence class in $s$ for each $j$.
\end{itemize}
Since $\dim \sigma$ is maximal, $\sigma$ contains the BP of any two curves in $b_j$ for any $j$.
We thus have
\[|\sigma |=k+\sum_{j=1}^l\left(\begin{matrix}
m_j\\
2
\end{matrix}\right)\]
The following two claims show that $s$ contains exactly one BP-equivalence class.

\begin{claim}
We have $l\leq 1$.
\end{claim}

\begin{proof}
We suppose $l>1$ and deduce a contradiction. 
By Lemma \ref{lem-bc}, there exists a unique component $R$ of $S_{b_1}$ containing all curves of $b_2$. 
After exchanging the indices, we may assume that
\begin{itemize}
\item $b_{11}$ and $b_{12}$ are the two curves of $b_1$ corresponding to boundary components of $R$; and
\item we have numbers $k'$, $l'$ with $0\leq k'\leq k$ and $2\leq l'\leq l$ such that $a_1,\ldots, a_{k'}$ and all curves in $b_2,\ldots, b_{l'}$ form the family of all curves in $s$ belonging to $V(R)$. 
Note that $a_1,\ldots, a_{k'}$ and all BPs of two curves in $b_j$ with $2\leq j\leq l'$ form the simplex $\sigma_R$ of $\calt(R)$.
\end{itemize}
Since $b_{11}$ and $b_{12}$ are contained in a single component of the surface obtained by cutting $R$ along each element of $\sigma_R$, there exists a p-curve $c$ in $R$ disjoint from $\sigma_R$ and cutting off a pair of pants whose boundary contains $b_{11}$ and $b_{12}$. 
Note that $c$ is separating in $S$ and has to be contained in $\sigma_R$ since $\dim \sigma$ is maximal. 
It follows that $c$ is equal to one of $a_1,\ldots, a_{k'}$. 
Let $R'$ be the component of $R_c$ which does not contain $b_{11}$ and $b_{12}$ as its boundary components. 
The simplex $\sigma_R\setminus \{ c\}\in \Sigma_t(R)$ can be seen as an element of $\Sigma_t(R')$.

Let $g_1$ denote the genus of $R$ and $p_1$ denote the number of boundary components of $R$. 
Note that $g_1$ is positive because $b_2$ is contained in $R$. 
Since $R'$ is a surface of genus $g_1$ with $p_1-1$ boundary components, the hypothesis of the induction implies the inequality
\[|\sigma_R\setminus \{ c\}|=(k'-1)+\sum_{j=2}^{l'}\left(\begin{matrix}
m_j\\
2
\end{matrix}\right)\leq (g_1-1)+\left(\begin{matrix}
g_1+p_1-2 \\
2
\end{matrix}\right).\]
Choose a simplex $\{ c_1,\ldots, c_{g_1+p_1-3}\} \in \Sigma(R)$ such that for each $j$, $c_j$ is separating in $R$, and $b_{11}$ and $b_{12}$ are not contained in a single component of $R_{c_j}$. 
We can then find a simplex $\{ d_1,\ldots, d_{g_1}\} \in \Sigma(R)$ consisting of h-curves in $R$ disjoint from any $c_j$. 
After deleting $a_1,\ldots, a_{k'}$ and all curves in $b_2,\ldots, b_{l'}$ from $s$, add all $c_j$ and $d_{j'}$ to it. 
This new collection of curves is denoted by $s_1$ and associates the simplex $\sigma_1\in \Sigma_t(S)$ consisting of all separating curves in $s_1$ and all BPs of two curves in $s_1$. 
We then obtain the equality
\[|\sigma_1 |=k-k'+g_1+\left(\begin{matrix}
m_1+g_1+p_1-3 \\
2
\end{matrix}\right)+\sum_{j=l'+1}^l\left(\begin{matrix}
m_j \\
2
\end{matrix}\right)\]
and the inequality
\[|\sigma_1 |-|\sigma| \geq (m_1-1)(g_1+p_1-3)\]
by using the inequality shown above. 
If $g_1+p_1=3$, then we have $g_1=1$ and $p_1=2$, and $b_2$ cannot be in $R$. 
The right hand side in the last inequality is thus positive. 
This contradicts maximality of $\dim \sigma$.
\end{proof}

\begin{claim}
We have $l>0$.
\end{claim}

\begin{proof}
If $l=0$, then $\sigma$ would consist of separating curves in $S$. Lemma \ref{lem-sep-number} implies that $|\sigma|\leq 2g+p-3$. The inequality
\[(g-1)+\left(\begin{matrix}
g+p-1 \\
2
\end{matrix}\right)-(2g+p-3)=\frac{(g+p-2)(g+p-3)}{2}>0\]
then holds when $g+p\geq 4$. This is a contradiction.
\end{proof}

These two claims show that $s$ is of the form
\[s=\{ a_1,\ldots, a_k, b_1,\ldots, b_m\},\]
where each $a_j$ is separating in $S$ and any two curves in the family $b=\{ b_1,\ldots, b_m\}$ are BP-equivalent in $S$. 
We put $T=S_{b_m}$ and $r=s\setminus \{ b_m\}$. 
Any curve in $r$ is then separating in $T$. It follows from maximality of $\dim \sigma$ that the surface $T_r$ consists of $g-1$ handles and $g+p-1$ pairs of pants. 
Applying Lemma \ref{lem-sep-number} to $T$, we obtain the equality $k+(m-1)=2(g-1)+(p+2)-3$. 
Since any $b_j$ cannot be the boundary of a handle in $T_r$, we have the inequality $k\geq g-1$. 
We thus have $m\leq g+p-1$. 
The inequality $g+p-4\geq 0$ then implies the inequality
\begin{align*}
|\sigma|&=k+\left(\begin{matrix}
m \\
2
\end{matrix}\right)=2g+p-2+\frac{m(m-3)}{2}\\
&\leq 2g+p-2+\frac{(g+p-1)(g+p-4)}{2}= g-1+\left(\begin{matrix}
g+p-1 \\
2
\end{matrix}\right).
\end{align*}  
The equality in the proposition is proved. 
The equality in the last inequality holds if and only if $k=g-1$ and $m=g+p-1$. 
The above argument shows that each $a_j$ is an h-curve in $S$. 
We therefore proved that $s$ satisfies conditions (a), (b) and (c). 
Uniqueness of $s$ is obvious. 
\end{proof}

\begin{lem}\label{lem-bp-s}
Let $S=S_{g, p}$ be a surface with $g\geq 1$. 
Suppose either $g+p\geq 4$ or $(g, p)= (3, 0)$. 
Let $\phi \colon \calt(S)\rightarrow \calt(S)$ be a superinjective map. 
Then the following assertions hold:
\begin{enumerate}
\item If $g\geq 2$, then the inclusions $\phi(V_{bp}(S))\subset V_{bp}(S)$ and $\phi(V_s(S))\subset V_s(S)$ hold.
\item If $g=1$, then the inclusion $\phi(V_{bp}(S))\subset V_{bp}(S)$ holds.
\item If $b_1$ and $b_2$ are disjoint BPs in $S$ and are BP-equivalent, then $\phi(b_1)$ and $\phi(b_2)$ are also BP-equivalent. 
\end{enumerate}
\end{lem}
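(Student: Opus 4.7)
Since $\phi$ is superinjective, it is injective (as noted in Section~\ref{subsec-comp}), and hence sends simplices bijectively to simplices of the same dimension; in particular it preserves simplices of maximal dimension. The strategy is to couple this with Proposition~\ref{prop-max}, which (whenever $g+p\geq 4$) identifies every maximal simplex $\sigma$ of $\calt(S)$ with a unique simplex $\{\alpha_1,\dots,\alpha_{g-1},\beta_1,\dots,\beta_{g+p-1}\}$ of $\calc(S)$, whose h-curves $\alpha_i$ supply the $g-1$ separating vertices of $\sigma$ and whose pairwise BP-equivalent non-separating curves $\beta_j$ supply, via all their unordered pairs, the $\binom{g+p-1}{2}$ BP-vertices of $\sigma$. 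The exceptional case $(g,p)=(3,0)$ falls outside the hypothesis of Proposition~\ref{prop-max}, but a direct enumeration there yields the analogous description: every maximal simplex consists of exactly two h-vertices and one BP-vertex. Modulo this ad hoc input, the whole lemma will follow from this rigidity together with the preservation of maximal simplices.

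\textbf{Vertex-type preservation (assertions (i) and (ii)).} For (ii), that is $g=1$, the Proposition~\ref{prop-max} decomposition has no h-curves, so a maximal simplex $\sigma$ consists purely of BP-vertices. Given $b\in V_{bp}(S)$, I extend $b$ to such a maximal simplex $\sigma$; then $\phi(\sigma)$ is maximal of the same form and contains only BP-vertices, so $\phi(b)\in V_{bp}(S)$. For (i), when $g\geq 2$, the first step is to show $\phi(V_{bp}(S))\subset V_{bp}(S)$. Place a BP-vertex $b$ in a maximal simplex $\sigma$ and suppose for contradiction that $\phi(b)$ is a sep-vertex of $\phi(\sigma)$. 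Since $\phi|_\sigma$ is a bijection and both $\sigma$ and $\phi(\sigma)$ have exactly $g-1$ sep- and $\binom{g+p-1}{2}$ BP-vertices, at least one sep-vertex of $\sigma$ is also sent to a BP-vertex. The plan is to exploit the rigidity of the Proposition~\ref{prop-max} data by producing a second maximal simplex $\sigma'$ sharing a codimension-one face with $\sigma$ (obtained by replacing one of the $\beta_j$ with another BP-equivalent curve, or by changing one of the $\alpha_i$), and then extracting the decompositions of $\phi(\sigma)$ and $\phi(\sigma')$; a type-swap at $b$ in $\sigma$ will force two incompatible assignments of the $\alpha'_i$ and $\beta'_j$ of $\phi(\sigma)$ (resp.\ $\phi(\sigma')$), contradicting uniqueness. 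Once BP-vertices are preserved, cardinality in a single maximal simplex forces sep-vertex preservation as well: $\phi|_\sigma$ injects the $\binom{g+p-1}{2}$ BP-vertices of $\sigma$ into the BP-vertices of $\phi(\sigma)$, which are of the same cardinality, so the bijection must send sep-vertices to sep-vertices.

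\textbf{BP-equivalence preservation (assertion (iii)) and main obstacle.} Let $b_1,b_2\in V_{bp}(S)$ be disjoint and BP-equivalent, so that the three or four curves of $b_1\cup b_2$ form a BP-equivalence class. Extend this class to a maximal BP-equivalence class of $g+p-1$ curves (possible because $g+p-1$ is exactly the maximal size by Proposition~\ref{prop-max}), adjoin $g-1$ disjoint h-curves when $g\geq 2$, and form the corresponding maximal simplex $\sigma$; both $b_1$ and $b_2$ lie in the BP-part of $\sigma$. By (i) and (ii), $\phi(b_1),\phi(b_2)\in V_{bp}(S)$ and they are BP-vertices of the maximal simplex $\phi(\sigma)$; by the uniqueness of the Proposition~\ref{prop-max} decomposition of $\phi(\sigma)$, they must arise as pairs from a single BP-equivalence class of $g+p-1$ curves, so $\phi(b_1)$ and $\phi(b_2)$ are themselves BP-equivalent. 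The main technical obstacle is the BP-vertex preservation step for $g\geq 2$: a naive count inside one maximal simplex does not preclude exchanging one sep- and one BP-vertex, so the argument must exploit the uniqueness part of Proposition~\ref{prop-max} across an appropriate family of maximal simplices to rule out any type-swap.
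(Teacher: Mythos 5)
Your outline for assertion (ii) and for the derivation of (iii) from BP-vertex preservation follows the paper, but two pieces of your plan have genuine gaps and a third contains an error.

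First, the crux of assertion (i) — showing $\phi(V_{bp}(S))\subset V_{bp}(S)$ when $g\geq 2$ — is only sketched, and you yourself flag it as ``the main technical obstacle.'' Your proposed strategy (compare the Proposition~\ref{prop-max} decompositions of two maximal simplices sharing a codimension-one face and derive a contradiction from a type swap) is never actually executed, and it is not clear that it closes: moving one $\beta_j$ to another BP-equivalent curve changes $\binom{g+p-2}{1}$ BP-vertices of $\sigma$ at once, so the combinatorics of how $\phi$ matches the two decompositions is not a simple ``one vertex swapped'' analysis. The paper instead produces a superinjectivity-stable criterion inside a single maximal simplex: $a\in\sigma$ is an h-vertex if and only if there is a vertex $b$ of $\calt(S)$ with $i(a,b)\neq 0$ and $i(c,b)=0$ for all $c\in\sigma\setminus\{a\}$. (For an h-vertex, $b$ can be chosen to enter the cut-off handle; for a BP-vertex $\{\beta_i,\beta_j\}$, any $b$ disjoint from the remaining BP-vertices is already disjoint from every $\beta_k$, hence from $a$.) Superinjectivity carries this criterion across $\phi$, so h-vertices go to h-vertices, and then the cardinality count inside $\sigma$ forces BP-vertices to BP-vertices. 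This is the missing ingredient your plan needs.

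Second, even granting BP-preservation, your cardinality argument only proves that the sep-vertices \emph{of a maximal simplex} (which are h-curves) map to sep-vertices. It says nothing about separating curves that are not h-curves — and those are never contained in a maximal simplex of $\calt(S)$, by Proposition~\ref{prop-max}. So the inclusion $\phi(V_s(S))\subset V_s(S)$ is not established by your argument. The paper handles a non-h separating curve $b$ by choosing $g$ disjoint h-curves $b_1,\dots,b_g$ so that $\{b,b_1,\dots,b_g\}$ is a $g$-simplex of $\calc_s(S)$; if $\phi(b)$ were a BP, Lemma~\ref{lem-bp} forces its two curves to lie in the genus-zero complement of the $g$ handles $H_{\phi(b_j)}$, where every curve is separating in $S$ — a contradiction.

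Third, your enumeration for $(g,p)=(3,0)$ is wrong: you assert every maximal simplex consists of two h-vertices and one BP-vertex, but three disjoint h-curves (cutting off three handles) already give a maximal simplex with no BP-vertex at all. The paper's case analysis distinguishes these two shapes of maximal simplex, uses a different intersection criterion to tell them apart (``the all-h type occurs iff for each $\alpha\in\sigma$ there is $\beta$ disjoint from $\alpha$ but meeting every other element of $\sigma$''), and then rules out $\phi$ sending a BP to an h-curve by an Euler-characteristic count. Your incorrect enumeration would make the rest of your $(3,0)$ argument collapse.
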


\begin{proof}
Assertion (ii) is a direct consequence of Proposition \ref{prop-max}. 
Assume $g\geq 2$ and $g+p\geq 4$. 
Proposition \ref{prop-max} also implies that if $a$ is an h-curve in $S$, then $\phi(a)$ is either an h-curve in $S$ or a BP in $S$. 
Note that for any simplex $\sigma$ of $\calt(S)$ of maximal dimension, an element $a$ of $\sigma$ is an h-curve in $S$ if and only if there exists a vertex $b$ of $\calt(S)$ with $i(a, b)\neq 0$ and $i(c, b)=0$ for any $c\in \sigma \setminus \{ a\}$. 
It follows that if $a$ is an h-curve in $S$, then so is $\phi(a)$. 
We thus have the inclusion $\phi(V_{bp}(S))\subset V_{bp}(S)$ and obtain assertion (iii) by Proposition \ref{prop-max}.

Let $b$ be a separating curve in $S$ which is not an h-curve in $S$. 
Choose h-curves $b_1,\ldots, b_g$ in $S$ such that $\{ b, b_1,\ldots, b_g\}$ is a $g$-simplex of $\calc_s(S)$. 
If $\phi(b)$ were a BP in $S$, then this would contradict the fact that each of $\phi(b_1),\ldots, \phi(b_g)$ is an h-curve in $S$. 
We thus have $\phi(b)\in V_s(S)$. 
The inclusion $\phi(V_s(S))\subset V_s(S)$ is obtained.

When $(g, p)=(3, 0)$, any simplex $\sigma$ of $\calt(S)$ of maximal dimension consists of either three h-curves or two h-curves and one BP. 
Note that the former occurs if and only if for each $\alpha \in \sigma$, there exists a vertex $\beta$ of $\calt(S)$ with $i(\alpha, \beta)=0$ and $i(\gamma, \beta)\neq 0$ for any $\gamma \in \sigma \setminus \{ \alpha \}$. 
It follows that $\phi$ preserves h-curves in $S$. 
If we have a BP $b$ in $S$ with $\phi(b)$ an h-curve in $S$, then choose four mutually distinct h-curves $c_1$, $c_2$, $c_3$ and $c_4$ in $S$ disjoint from $b$ such that $c_1$ and $c_2$ are contained in a single component of $S_b$ and $c_3$ and $c_4$ are contained in another component of $S_b$. 
It then follows that $\phi(c_1)$, $\phi(c_2)$, $\phi(c_3)$ and $\phi(c_4)$ are h-curves in the component of $S_{\phi(b)}$ homeomorphic to $S_{2, 1}$, denoted by $Q$.
Let $R$ be the subsurface of $Q$ filled by $\phi(c_1)$ and $\phi(c_2)$, and let $R'$ be the one filled by $\phi(c_3)$ and $\phi(c_4)$.
We then have $|\chi(R)|\geq 2$ and $|\chi(R')|\geq 2$, and $R$ and $R'$ are disjoint. 
This contradicts $|\chi(Q)|=3$.
\end{proof}


\subsection{The case $g=1$ and $p=3$}

We put $S=S_{1, 3}$ throughout this subsection.
We mean by a {\it pentagon} in $\calt(S)$ the full subgraph of $\calt(S)$ spanned by five vertices $v_1,\ldots, v_5$ with $i(v_j, v_{j+1})=0$ and $i(v_j, v_{j+2})\neq 0$ for each $j$ mod $5$ (see Figure \ref{fig-pen}).
\begin{figure}
\begin{center}
\includegraphics[width=12cm]{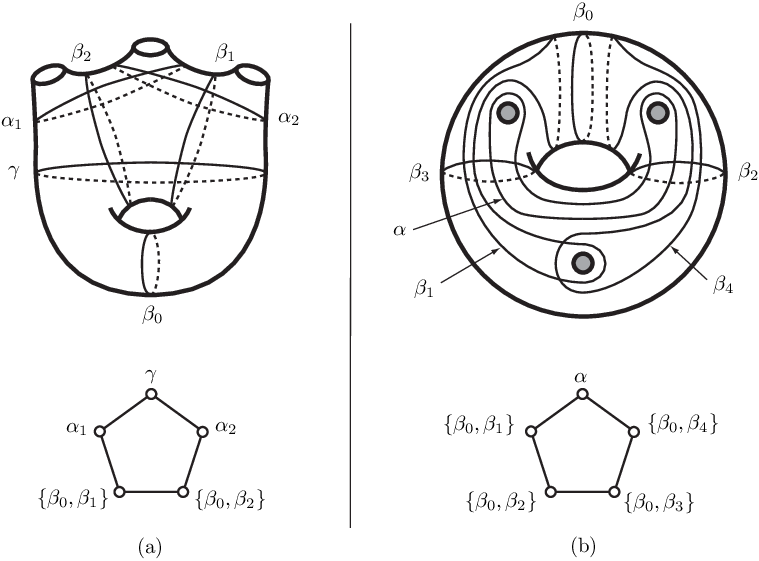}
\caption{Two pentagons in $\calt(S_{1, 3})$}\label{fig-pen}
\end{center}
\end{figure}
In this case, we say that the pentagon is defined by the 5-tuple $(v_1,\ldots, v_5)$.

\begin{lem}\label{lem-pen-bp}
There exists no pentagon in $\calt(S)$ consisting of only BP-vertices.
\end{lem}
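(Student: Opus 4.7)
\textit{Plan.} I argue by contradiction. Suppose $v_1,\ldots,v_5$ form a pentagon of BP-vertices in $\calt(S)$, with $S=S_{1,3}$. Let $\bar S=S_{1,0}$ denote the torus obtained by capping off the three boundary components of $S$ with disks $D_1,D_2,D_3$. Any non-separating essential simple closed curve $c$ in $S$ remains essential and non-separating in $\bar S$: otherwise $c$ would bound a disk in $\bar S$ containing some of the $D_i$, forcing $c$ to be null-homotopic, boundary-parallel, or separating in $S$. Define the \emph{slope} of such a curve $c$ to be its isotopy class in $\bar S$, equivalently a primitive class in $H_1(\bar S)\cong\mathbb Z^2$ up to sign.

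The first step is to show that all ten curves appearing in $v_1\cup\cdots\cup v_5$ share a single common slope $\lambda$. For consecutive $v_j,v_{j+1}$, the hypothesis $i(v_j,v_{j+1})=0$ allows the four curves in $v_j\cup v_{j+1}$ to be realized as pairwise disjoint in $S$, hence in $\bar S$. Any two disjoint essential simple closed curves on the torus are isotopic: the complement of a tubular neighborhood of one is an open annulus, and the other, being essential there, is isotopic to its core. Thus these four curves all share a slope, and propagating along the 5-cycle yields the common slope $\lambda$.

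The second step is to bound the number of slope-$\lambda$ isotopy classes in $S$ by three. Fix a slope-$\lambda$ foliation $\mathcal F$ of $\bar S$ in which each $D_i$ is crossed transversely by the leaves. The parameters $t\in\mathbb R/\mathbb Z$ (in the circular leaf space) whose leaf $\ell_t$ avoids all three $D_i$ form the complement of three arcs---the projections of the $D_i$---and therefore fall into at most three connected components. Two such ``good leaves'' lying in the same component co-bound an annulus in $S$ and are isotopic in $S$, while any slope-$\lambda$ curve $c$ in $S$ is isotopic in $S$ to a good leaf by approximating $c$ within a thin tubular neighborhood of itself that lies in $S$. Hence there are at most three isotopy classes of slope-$\lambda$ curves in $S$. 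Combining with the first step, the five distinct BP-vertices $v_1,\ldots,v_5$ must arise as $2$-element subsets of a set of at most three isotopy classes, giving at most $\binom{3}{2}=3$ distinct BP-vertices---a contradiction. The technical hurdle is in the last step: ensuring that an isotopy in $\bar S$ between a slope-$\lambda$ curve and a leaf of $\mathcal F$ can be chosen to lie entirely within $S$, which requires a careful local argument in a tubular neighborhood disjoint from the $D_i$.
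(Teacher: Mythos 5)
The first step of your argument is sound: propagating disjointness around the five-cycle does show that all ten curves appearing in the BP-vertices become pairwise isotopic in the closed torus $\bar S$, so they share a common slope $\lambda$. The fatal gap is in the second step: the claim that $S=S_{1,3}$ contains at most three isotopy classes of slope-$\lambda$ curves is false, and in fact there are infinitely many. To see this, fix one slope-$\lambda$ curve $c$ in $S$. Any non-separating curve $c'$ in $S$ disjoint from $c$ is isotopic to $c$ in $\bar S$ (two disjoint essential curves on a torus are parallel), hence also has slope $\lambda$. Cutting $S$ along $c$ gives a five-holed sphere, and the curves in it separating the two copies of $c$ --- equivalently, the curves $c'$ forming a BP with $c$ --- fall into infinitely many isotopy classes, as one sees by twisting about a separating curve of $S$ that meets one of them while missing $c$ (such a twist fixes $c$, hence preserves slope, but has infinite order on the isotopy class of $c'$). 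So the count is infinite, and the pigeonhole conclusion $\binom32=3<5$ has no force. The precise point at which the foliation argument breaks is the sentence claiming every slope-$\lambda$ curve in $S$ is isotopic \emph{in $S$} to a ``good leaf''; the isotopy in $\bar S$ from such a curve to a leaf of $\mathcal F$ may be forced to sweep across the disks $D_i$, and the ``approximation in a thin tubular neighborhood'' does not repair this, since the leaves of $\mathcal F$ near a given curve $c$ are not parallel copies of $c$ unless $c$ is already (isotopic in $S$ to) a leaf --- which is what one is trying to show. The ``technical hurdle'' you flag at the end is thus not a technicality but the whole difficulty, and it is not fillable along these lines.

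For comparison, the paper avoids capping all the way down to the torus precisely because slope is too coarse an invariant. Instead it $3$-colors BP-vertices by which component of $\partial S$ lies in the pair of pants cut off, observes that adjacent BP-vertices receive distinct colors (forcing, up to relabeling, the pattern $3,1,2,1,2$ around the pentagon), and then caps only the boundary component carrying color $3$, landing in $R=S_{1,2}$. The induced map $\pi\colon\calc(S)\to\calc^*(R)$ retains enough structure --- it collapses the color-$1$/color-$2$ edges of the pentagon to a single edge while sending the color-$3$ vertex to a single curve and preserving the relevant intersections --- to yield a contradiction. The lesson is that $S_{1,2}$ distinguishes curves that the torus identifies.
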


\begin{proof}
We assume that there exists a 5-tuple $(v_1,\ldots, v_5)$ defining a pentagon in $\calt(S)$ with $v_j\in V_{bp}(S)$ for each $j$. 
Let $\partial_1$, $\partial_2$ and $\partial_3$ denote the three components of $\partial S$.
We define a map $\theta \colon V_{bp}(S)\rightarrow \{ 1, 2, 3\}$ as follows.
For each BP $b$ in $S$, the number $\theta(b)\in \{ 1, 2, 3\}$ is defined so that $\partial_{\theta(b)}$ is contained in the pair of pants cut off by $b$ from $S$.
Since distinct numbers are associated to two adjacent BP-vertices in $\calt(S)$, we may assume
\[\theta(v_1)=3,\quad \theta(v_2)=\theta(v_4)=1,\quad \theta(v_3)=\theta(v_5)=2.\]
Let $R$ denote the surface obtained by attaching a disk to $\partial_3$. 
We define $\calc^*(R)$ as the simplicial cone over $\calc(R)$ with the cone point $\ast$. 
We have the simplicial map $\pi \colon \calc(S)\rightarrow \calc^*(R)$ associated with the inclusion of $S$ into $R$, where $\pi^{-1}(\{ \ast \})$ consists of all p-curves in $S$ cutting off a pair of pants containing $\partial_3$. 
The following facts then hold:
\begin{itemize}
\item If $u, v\in V_{bp}(S)$ satisfy $i(u, v)=0$, $\theta(u)=1$ and $\theta(v)=2$, then we have $\pi(u)=\pi(v)$ and it is an edge of $\calc(R)$.
\item If $w\in V_{bp}(S)$ satisfies $\theta(w)=3$, then $\pi(w)$ consists of exactly one vertex of $\calc(R)$ corresponding to a non-separating curve in $R$. 
\item If $a, b\in V(S)$ and $w\in V_{bp}(S)$ satisfy $i(a, b)\neq 0$, $i(a, w)=i(b, w)=0$ and $\theta(w)=3$, then we have $\pi(a), \pi(b)\in V(R)$ and $i(\pi(a), \pi(b))\neq 0$.
\end{itemize}
The first fact implies the equality $\pi(v_2)=\pi(v_3)=\pi(v_4)=\pi(v_5)$. 
We put $v_1=\{ \alpha_0, \alpha_1\}$ and $v_2=\{ \alpha_0, \alpha_2\}$. 
Note that any two adjacent BP-vertices in $\calt(S)$ have a common curve. 
One of the two curves in $v_5$, say $\alpha$, belongs to $v_1$, and the other curve $\beta$ in $v_5$ intersects $v_2$ and is disjoint from $v_1$. 
Thus, $\beta$ intersects $\alpha_2$. 
The third fact implies $i(\pi(\alpha_2), \pi(\beta))\neq 0$. 
This contradicts $\pi(v_2)=\pi(v_5)$.
\end{proof}

\begin{lem}\label{lem-si-pre-p}
Let $\phi \colon \calt(S)\rightarrow \calt(S)$ be a superinjective map, and let $a$ be a p-vertex of $\calt(S)$. 
Then $\phi(a)$ is also a p-vertex of $\calt(S)$.
\end{lem}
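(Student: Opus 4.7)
The approach proceeds in two main steps. First, I will rule out the possibility that $\phi(a)$ is a BP-vertex, by exhibiting, for the given p-vertex $a$, a pentagon $(a, b_1, b_2, b_3, b_4)$ in $\calt(S)$ whose other four vertices are BP-vertices. To construct such a pentagon, I plan to pick $b_1$ and $b_4$ as BP-vertices lying in the $S_{1,2}$-component of $S_a$ (hence disjoint from $a$) and intersecting each other, and then choose $b_2$ and $b_3$ as BP-vertices in $S$ crossing $a$ that complete the required cyclic disjointness and intersection pattern (realizing the pentagon structure akin to those of Figure \ref{fig-pen}). Since $\phi$ is a superinjective simplicial map and injective, it preserves both edges (by simpliciality) and non-edges (by superinjectivity) of the pentagon, so $(\phi(a), \phi(b_1), \phi(b_2), \phi(b_3), \phi(b_4))$ is again a pentagon in $\calt(S)$. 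By Lemma \ref{lem-bp-s}(ii), each $\phi(b_i)$ is a BP-vertex, so if $\phi(a)$ were also a BP-vertex, this would yield an all-BP pentagon in $\calt(S)$, contradicting Lemma \ref{lem-pen-bp}. Consequently $\phi(a) \in V_s(S)$.

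Second, I will show that this separating vertex is a p-vertex, not an h-vertex, via the criterion that a vertex $v \in V_s(S)$ is a p-vertex if and only if some BP-vertex of $\calt(S)$ is disjoint from $v$. For a p-curve $v$, the non-trivial component of $S_v$ is $S_{1,2}$, which contains a BP produced by any two non-isotopic disjoint non-separating curves splitting $S_{1,2}$ into two pairs of pants; such a BP is also a BP of $S$ and is disjoint from $v$. For an h-curve $v$, the components of $S_v$ are $S_{1,1}$ and $S_{0,4}$: the surface $S_{0,4}$ has genus zero and so admits no non-separating curve, while $S_{1,1}$ contains no BP because cutting $S_{1,1}$ along any essential curve yields a pair of pants, in which any second disjoint essential simple closed curve must be boundary-parallel---hence either isotopic to the cut curve (violating non-isotopy) or to the boundary of $S_{1,1}$ (violating essentiality). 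Applying the criterion to $a$ produces a BP-vertex $b$ disjoint from $a$; Lemma \ref{lem-bp-s}(ii) gives $\phi(b) \in V_{bp}(S)$, and simpliciality gives $i(\phi(a), \phi(b)) = 0$. Hence $\phi(a)$ is a p-vertex.

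I expect the main obstacle to be the explicit construction of the pentagon in the first step: four BP-vertices of $S_{1,3}$ must fit into the precise cyclic incidence pattern of a pentagon in $\calt(S)$ with controlled interactions with $a$, and verifying all the required disjointness and intersection relations calls for a careful geometric choice of curves. Once the pentagon is produced, the remainder of the argument is a direct combination of Lemmas \ref{lem-bp-s} and \ref{lem-pen-bp} with the topological analysis of the cut surfaces $S_v$.
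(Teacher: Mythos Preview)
Your proposal is correct and follows essentially the same approach as the paper. The paper's proof invokes the pentagon in Figure~\ref{fig-pen}(b), which has exactly the structure you describe (one p-vertex and four BP-vertices), applies Lemmas~\ref{lem-bp-s}(ii) and~\ref{lem-pen-bp} to conclude $\phi(a)\in V_s(S)$, and then uses the observation that any separating curve in $S$ disjoint from a BP is a p-curve---the contrapositive of your criterion in the second step.
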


\begin{proof}
The image of the pentagon in Figure \ref{fig-pen} (b) via $\phi$ contains four BP-vertices by Lemma \ref{lem-bp-s} (ii). 
Lemma \ref{lem-pen-bp} implies that the other vertex is not a BP-vertex. 
Since any separating curve in $S$ disjoint from a BP in $S$ is a p-curve in $S$, we obtain the lemma.  
\end{proof}

\begin{lem}\label{lem-pen-h}
Suppose that we are given a 5-tuple $(v_1,\ldots, v_5)$ of vertices of $\calt(S)$ defining a pentagon in $\calt(S)$ such that both $v_3$ and $v_4$ are BP-vertices and both $v_2$ and $v_5$ are p-vertices. 
Then $v_1$ is an h-vertex. 
\end{lem}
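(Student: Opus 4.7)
The plan is to rule out every possibility for $v_1\in V_t(S)=V_s(S)\sqcup V_{bp}(S)$ other than being an h-vertex by means of a three-coloring invariant attached to the boundary components $\partial_1, \partial_2, \partial_3$ of $S=S_{1, 3}$. As in the proof of Lemma \ref{lem-pen-bp}, define $\theta\colon V_{bp}(S)\to \{1, 2, 3\}$ so that $\partial_{\theta(b)}$ is the boundary component of $S$ sitting in the pair of pants cut off by a BP-vertex $b$; analogously, for a p-vertex $c$ in $S=S_{1, 3}$, let $\theta_p(c)\subset \{1, 2, 3\}$ be the two-element set of indices of the boundary components of $S$ contained in the pair of pants cut off by $c$.

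The first and main step would be to show that the two disjoint BP-vertices $v_3$ and $v_4$ are BP-equivalent and share exactly one curve, so that the curves of $v_3\cup v_4$ form a BP-equivalence class $\{a, b, c\}$ of size three. For the BP-equivalence, I would argue by contradiction: Lemma \ref{lem-bp} would force both curves of $v_3$ into a single component of $S_{v_4}$, necessarily the $S_{0, 4}$-piece (the pair-of-pants piece carries no essential curve of $S$), after which a short pair-of-pants decomposition argument inside $S_{0, 4}$ leaves no room for a second essential curve of $v_3$ disjoint from the first. Once BP-equivalence is known, the fact that the maximum size of a BP-equivalence class in $S_{1, 3}$ equals $g+p-1=3$ (by Proposition \ref{prop-max}) forces a shared curve; computing directly from the decomposition $S\setminus (a\cup b\cup c)$ into three pairs of pants, the three BPs in the class $\{a, b, c\}$ realize all three values of $\theta$, so in particular $\theta(v_3)\neq \theta(v_4)$.

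Next, I would record the basic compatibility between $\theta$ and $\theta_p$ under disjointness. If a BP-vertex $b$ is disjoint from a p-vertex $c$, then Lemma \ref{lem-bp} places both curves of $b$ in the $S_{1, 2}$-component of $S_c$, and regluing the pair-of-pants component of $S_c$ along $c$ shows that the pair of pants cut off by $b$ in $S$ contains precisely the boundary component of $S$ lying on the $S_{1, 2}$-side, i.e., $\theta(b)$ is the unique element of $\{1, 2, 3\}\setminus \theta_p(c)$. Applying this to the disjoint pairs $(v_3, v_2)$ and $(v_4, v_5)$, the inequality $\theta(v_3)\neq \theta(v_4)$ from the previous step forces $\theta_p(v_2)\neq \theta_p(v_5)$.

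Finally, I would case-split on the type of $v_1$. If $v_1$ is a BP-vertex, then the same disjointness principle applied to $(v_1, v_2)$ and $(v_1, v_5)$ gives $\{\theta(v_1)\}=\{1, 2, 3\}\setminus \theta_p(v_2)=\{1, 2, 3\}\setminus \theta_p(v_5)$, contradicting the conclusion $\theta_p(v_2)\neq \theta_p(v_5)$. If $v_1\in V_s(S)$, then $v_1$ must lie in the $S_{1, 2}$-component $Q_2$ of $S_{v_2}$, and since a non-separating curve in $Q_2$ would remain non-separating in $S$, $v_1$ is separating in $Q_2$ as well; an Euler characteristic and boundary-component count then shows that $v_1$ decomposes $Q_2$ into a handle $S_{1, 1}$ and a pair of pants containing both components of $\partial Q_2$, and regluing the pair-of-pants component of $S_{v_2}$ along $v_2$ exhibits $v_1$ as an h-curve in $S$. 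The main obstacle is the BP-equivalence claim in the second paragraph; once that geometric input is in place, the rest is bookkeeping with $\theta$ and $\theta_p$.
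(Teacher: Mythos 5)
Your argument is correct and takes the same approach as the paper: a boundary-component coloring of BP- and p-vertices is propagated around the pentagon (same color across a BP--p edge, different colors across the BP--BP edge $v_3$--$v_4$) to force $v_2$ and $v_5$ into different colors and thereby rule out $v_1$ being a BP-vertex. The paper's version is tighter in two places: it disposes of the p-vertex case for $v_1$ at the outset (no two p-vertices are adjacent in $\calt(S)$, and every separating curve in $S_{1,3}$ is either an h-curve or a p-curve since $|\chi(S)|=3$), and it simply asserts that disjoint BP-vertices carry distinct colors, rather than detouring through the BP-equivalence of $v_3$ and $v_4$ -- a true fact in $S_{1,3}$, but more than what is needed, and the step you flag as the ``main obstacle'' is not an obstacle the paper's argument actually confronts.
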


\begin{proof}
Note that $v_1$ is not a p-vertex since there is no p-vertex in the link of a p-vertex in $\calt(S)$. 
We number components of $\partial S$ from one to three as in the proof of Lemma \ref{lem-pen-bp}. 
For each BP or p-curve $a$ in $S$, let $Q$ be the component of $S_a$ containing exactly one component of $\partial S$. 
We associate to $a$ the number labeled the component of $\partial S$ contained in $Q$.
Distinct numbers are associated to two adjacent BP-vertices in $\calt(S)$, and the same number is associated to a p-vertex and a BP-vertex which are adjacent in $\calt(S)$. 
We may therefore assume that the number 1 is associated to $v_2$ and $v_3$ and that the number 2 is associated to $v_4$ and $v_5$. 
It follows that $v_1$ is not a BP-vertex, and thus it is an h-vertex.   
\end{proof}

Using the pentagon in Figure \ref{fig-pen} (a) and Lemma \ref{lem-pen-h}, we obtain the following:

\begin{prop}\label{prop-sep-p3}
Put $S=S_{1, 3}$ and let $\phi \colon \calt(S)\rightarrow \calt(S)$ be a superinjective map. 
Then $\phi$ preserves p-vertices and h-vertices of $\calt(S)$, respectively. 
In particular, the inclusion $\phi(V_s(S))\subset V_s(S)$ holds.
\end{prop}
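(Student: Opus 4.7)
The plan is to combine Lemma \ref{lem-si-pre-p}, Lemma \ref{lem-bp-s}(ii), and Lemma \ref{lem-pen-h} via the pentagon depicted in Figure \ref{fig-pen}(a). By Lemma \ref{lem-si-pre-p} we already know that $\phi$ sends p-vertices to p-vertices, so the real content is to show that h-vertices are sent to h-vertices. Once this is done, the final assertion follows immediately: a separating essential simple closed curve in $S_{1,3}$ splits $S$ into two pieces whose Euler characteristics sum to $-3$ and whose boundary counts sum to $5$, and the only admissible decomposition yields either an $S_{1,1}$ or an $S_{0,3}$ on one side, so every separating curve is an h-curve or a p-curve.

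To preserve h-vertices, let $a$ be an arbitrary h-vertex. I would realize $a$ as the vertex $v_1$ of a pentagon $(v_1,v_2,v_3,v_4,v_5)$ in $\calt(S)$ with $v_2$ and $v_5$ being p-vertices and $v_3$ and $v_4$ being BP-vertices. Figure \ref{fig-pen}(a) exhibits exactly such a pentagon for one specific h-curve, and since $\mod^*(S)$ acts transitively on the set of h-curves in $S$ (any two h-curves bound topologically identical subsurfaces), applying a suitable element of $\mod^*(S)$ transports this pentagon into one containing the given $a$.

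Now I would apply $\phi$. Because $\phi$ is simplicial and superinjective, it preserves both the relation $i=0$ and the relation $i\neq 0$ on vertices; combined with the injectivity of $\phi$ noted in Section \ref{subsec-comp}, this forces $(\phi(v_1),\ldots,\phi(v_5))$ to again define a pentagon in $\calt(S)$. Lemma \ref{lem-si-pre-p} guarantees $\phi(v_2)$ and $\phi(v_5)$ remain p-vertices, and Lemma \ref{lem-bp-s}(ii) guarantees $\phi(v_3)$ and $\phi(v_4)$ remain BP-vertices. Lemma \ref{lem-pen-h} then forces $\phi(v_1)=\phi(a)$ to be an h-vertex, as required.

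The only delicate point is confirming that Figure \ref{fig-pen}(a) really depicts a pentagon of the claimed combinatorial type, i.e.\ that one can exhibit, for a given h-curve in $S_{1,3}$, two p-curves and two BPs arranged cyclically so that consecutive members are disjoint and non-consecutive members intersect, with the additional topological labels on $v_2,\ldots,v_5$ as specified. Once one trusts this picture, the remainder of the argument is a direct bookkeeping application of the three lemmas cited above, and no further surface-topological analysis is needed.
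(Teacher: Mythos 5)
Your proof is correct and follows essentially the same route the paper takes: the paper's entire justification for this proposition is the one-line sentence preceding it, ``Using the pentagon in Figure \ref{fig-pen} (a) and Lemma \ref{lem-pen-h}, we obtain the following,'' and you have spelled out precisely that argument (transport the pentagon to a given h-vertex by transitivity of $\mod^*(S)$, apply $\phi$, invoke Lemmas \ref{lem-si-pre-p}, \ref{lem-bp-s}(ii), and \ref{lem-pen-h} to identify the image vertices, and finish by noting that every separating curve in $S_{1,3}$ is an h-curve or a p-curve).
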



\subsection{The case $g=1$ and $p\geq 4$}

The aim of this subsection is to prove that any superinjective map from $\calt(S)$ into itself preserves vertices corresponding to separating curves in the case of $S=S_{1, p}$ with $p\geq 4$. 
We first introduce rooted simplices of $\calt(S)$ to prove it.

\begin{defn}
Let $S$ be a surface and let $\sigma$ be a simplex of $\calt(S)$ consisting of BP-vertices. 
We say that $\sigma$ is {\it rooted} if any two BPs in $\sigma$ are BP-equivalent and there exists a non-separating curve $\alpha$ in $S$ contained in any BP of $\sigma$. 
If $|\sigma|\geq 2$, then $\alpha$ is uniquely determined and called the {\it root curve} for $\sigma$.
\end{defn}

\begin{lem}\label{lem-rooted}
Let $S=S_{g, p}$ be a surface with $g\geq 1$, and suppose either $g+p\geq 4$ or $(g, p)= (3, 0)$. 
Let $\phi \colon \calt(S)\rightarrow \calt(S)$ be a superinjective map, and let $\sigma$ be a simplex of $\calt(S)$ consisting of BP-vertices. 
If $\sigma$ is rooted, then so is $\phi(\sigma)$.
\end{lem}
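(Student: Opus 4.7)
The plan is to induct on $k = |\sigma|$, with the case $k = 1$ vacuous. For $k \geq 2$, I would extend $\sigma$ to a maximal simplex $\tau$ of $\calt(S)$. By Proposition~\ref{prop-max}, $\tau$ consists of $g - 1$ h-vertices together with all $\binom{n}{2}$ BPs formed from a BP-equivalence class $\{\beta_1, \ldots, \beta_n\}$ with $n = g + p - 1$; after relabeling, $\sigma = \{\{\beta_1, \beta_j\} : j \in J\}$ is a ``star'' of $k$ edges in the complete graph $K_n$ on $\{\beta_1, \ldots, \beta_n\}$, centered at the root $\beta_1$. Since $\phi$ is injective and simplicial, $\phi(\tau)$ has the same dimension as $\tau$, hence is itself maximal; by Lemma~\ref{lem-bp-s} and Proposition~\ref{prop-max} it has the same structural description, with BP-equivalence class $\{\beta'_1, \ldots, \beta'_n\}$. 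The task therefore reduces to showing that the image of $\sigma$ under the induced edge-bijection of $K_n$ is again a star. (The case $(g,p) = (3,0)$, where $n = 2$, forces $k \leq 1$ and is trivial.)

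When $n = 3$ (equivalently $g + p = 4$), all three BPs of $\tau$ form a triangle, and any two of them share a curve, so $\phi(\sigma)$ is automatically rooted. For $n \geq 4$ I would use a secondary induction on $k$: once the base case $k = 2$ is in hand, every pair $\{b_i, b_j\} \subset \sigma$ is a rooted $2$-simplex whose image shares a curve, so $\phi(\sigma)$ is a subgraph of $K_n$ in which every two edges share a vertex. Any such subgraph with at least four edges is necessarily a star, handling $k \geq 4$. For $k = 3$, the residual ``triangle on three curves'' image is ruled out by extending $\sigma$ to a rooted $4$-simplex when $n \geq 5$ and applying the $k = 4$ case, or, when $n = 4$, by a direct count of the three BPs of $\phi(\tau)$ not lying in $\phi(\sigma)$ and using that $\phi$ restricts to a bijection on BPs of $\tau$.

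The decisive step is the base case $k = 2$ with $n \geq 4$. Here I would consider the auxiliary BP $c = \{\beta_2, \beta_3\}$ so that $\{b_1, b_2, c\} \subset \tau$ is a triangle using only three curves, and show that its image $\{\phi(b_1), \phi(b_2), \phi(c)\}$, a $3$-simplex of pairwise BP-equivalent BPs by Lemma~\ref{lem-bp-s}(iii), again uses only three curves, hence is a triangle, yielding $\phi(b_1) \cap \phi(b_2) \neq \emptyset$. To prove this, I would extend the triangle inside $\tau$ to the full $K_4$-subsimplex on $\{\beta_1, \beta_2, \beta_3, \beta_4\}$ (possible when $n \geq 4$) and analyze the image as an edge-permutation of $K_4$ on four curves in the BP-equivalence class of $\phi(\tau)$. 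Since Whitney's theorem fails for $n = 4$, not every such permutation is induced by a vertex bijection; but the exceptional permutations can be excluded by tracking how $\phi$ acts on separating curves and BPs of $\calt(S)$ lying in the complement of $\{\beta_1, \ldots, \beta_4\}$, which provide combinatorial witnesses pinning down the image of the root $\beta_1$.

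The main obstacle is precisely this base case: within a single maximal simplex $\tau$, the combinatorics of edge-bijections of $K_n$ does not preserve star structure on its own, and the argument must leverage superinjectivity of $\phi$ on the whole complex $\calt(S)$, not merely on $\tau$, in order to extract the root of $\sigma$ as an invariant preserved by $\phi$.
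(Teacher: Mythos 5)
Your reduction to maximal simplices and your realization that the combinatorics of edge-bijections of $K_n$ alone cannot preserve stars are both sound, but the proposal has a genuine gap precisely where you locate it: the $k=2$ base case is never resolved. The plan to ``analyze the image as an edge-permutation of $K_4$'' presupposes that the six BPs of the $K_4$-subsimplex on $\{\beta_1,\beta_2,\beta_3,\beta_4\}$ map to six BPs supported on only four curves, but you have not shown this---the image could a priori be spread over five or more of the $n+1$ curves of $\phi(\tau)$, in which case there is no edge-permutation of $K_4$ to analyze. The subsequent appeal to ``ruling out exceptional permutations'' by unspecified combinatorial witnesses does not resolve this; it is an acknowledgment that the key lemma is still missing. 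The tertiary sub-case $k=3$, $n=4$ (direct count of complementary BPs) also does not close because it effectively requires applying the statement being proved to $\phi^{-1}$, which is not available as a superinjective map of $\calt(S)$.

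The paper bypasses all of this with a much shorter argument that does not induct on $k$. It proves the claim directly for a rooted simplex of \emph{maximal} size, $\sigma = \{b_1,\ldots,b_n\}$ with $n = g+p-2$, consisting of all BPs through the root curve $\alpha$; the general case follows by extending any rooted simplex into such a $\sigma$. The key trick you are missing: for each $j$ one can find a vertex $a_j \in V_t(S)$ intersecting $b_j$ and disjoint from every $b_k$, $k \neq j$. Superinjectivity then forces $\phi(b_j)$ to contain a curve $c_j$ appearing in no other $\phi(b_k)$. Together with the other curve $c_0$ of $\phi(b_1)$, this gives $n+1$ pairwise-distinct, pairwise BP-equivalent curves, which by Proposition~\ref{prop-max} must be \emph{all} of the non-separating curves appearing in BPs of $\phi(\sigma)$; a trivial count then forces $\phi(b_j) = \{c_0, c_j\}$ for every $j$, so $\phi(\sigma)$ is rooted at $c_0$. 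Note how this argument already ``leverages superinjectivity on the whole complex'' via the $a_j$'s---which is the step your outline correctly identifies as necessary, but never supplies.
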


\begin{proof}
We first prove that $\phi$ preserves rooted simplices consisting of $g+p-2$ BP-vertices. 
Let $\sigma =\{ b_1,\ldots, b_n\}$ be such a simplex, where we put $n=g+p-2$. 
It then follows that for each $j$, there exists a vertex $a_j\in V_t(S)$ such that $i(a_j, b_j)\neq 0$ and $i(a_j, b_k)=0$ for any $k$ distinct from $j$. 
This implies that for each $j$, there exists a non-separating curve $c_j$ contained in $\phi(b_j)$, but not in $\phi(b_k)$ for any $k$ distinct from $j$. 
Let $c_0$ be the curve in $\phi(b_1)$ that is not equal to $c_1$. 
Note that $c_0, c_1,\ldots, c_n$ are pairwise distinct curves and BP-equivalent to each other by Lemma \ref{lem-bp-s} (iii). 
Proposition \ref{prop-max} implies that there exist at most $n+1$ non-separating curves in $S$ such that each BP of $\phi(\sigma)$ consists of two of them. 
We therefore obtain the equality $\phi(b_j)=\{ c_0, c_j\}$ for each $j$. 
It follows that $\phi(\sigma)$ is rooted.

The lemma now follows because a simplex of $\calt(S)$ consisting of BP-vertices is rooted if and only if it is contained in a rooted simplex consisting of $g+p-2$ BP-vertices.
\end{proof}

\begin{lem}\label{lem-simp-ind}
Let $S$ be a surface in Lemma \ref{lem-rooted}, and let $\phi \colon \calt(S)\rightarrow \calt(S)$ be a superinjective map. 
Suppose that we are given two distinct and disjoint BPs $b_1=\{ \alpha_0, \alpha_1\}$ and $b_2=\{ \alpha_0, \alpha_2\}$ in $S$ with the common curve $\alpha_0$. 
We put $\phi(b_j)=\{ \beta_0, \beta_j\}$ for each $j=1, 2$. 
Then we have $\phi(\{ \alpha_1, \alpha_2\})=\{ \beta_1, \beta_2\}$.
\end{lem}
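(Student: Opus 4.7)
The plan is to first verify that $b_3:=\{\alpha_1,\alpha_2\}$ is itself a BP, then to constrain $\phi(b_3)$ to one of two configurations using rooted simplices, and finally to eliminate the unwanted configuration by a counting argument. I would begin by observing that $\alpha_1,\alpha_2$ are disjoint (since $b_1,b_2$ are disjoint as simplices of $\calt(S)$) and non-isotopic (since $b_1\neq b_2$ while both contain $\alpha_0$), and that they are BP-equivalent to each other because both are BP-equivalent to $\alpha_0$; hence $b_3\in V_{bp}(S)$. Consequently each of $\{b_1,b_2\}$, $\{b_1,b_3\}$, $\{b_2,b_3\}$ is a rooted simplex, with roots $\alpha_0$, $\alpha_1$, $\alpha_2$ respectively.

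Next I would apply Lemma \ref{lem-bp-s} (ii) to see that $\phi(b_3)$ is again a BP, and Lemma \ref{lem-rooted} to conclude that each of the three pairs $\{\phi(b_1),\phi(b_2)\}$, $\{\phi(b_1),\phi(b_3)\}$, $\{\phi(b_2),\phi(b_3)\}$ is rooted, so $\phi(b_3)$ shares a curve with each of $\phi(b_1)=\{\beta_0,\beta_1\}$ and $\phi(b_2)=\{\beta_0,\beta_2\}$. Since $\phi$ is injective, $\phi(b_3)$ is neither $\phi(b_1)$ nor $\phi(b_2)$, and only two possibilities remain: either $\phi(b_3)=\{\beta_1,\beta_2\}$, which is the desired conclusion, or $\phi(b_3)=\{\beta_0,\gamma\}$ for some curve $\gamma\notin\{\beta_0,\beta_1,\beta_2\}$.

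To rule out the second possibility I first would note that for $(g,p)=(3,0)$ the hypothesis of the lemma is vacuous, since a BP-equivalence class in $S_{3,0}$ has size at most $g+p-1=2$ by Proposition \ref{prop-max}, so no two distinct BPs share a curve; hence I may assume $g+p\geq 4$. I would then extend $\{\alpha_0,\alpha_1,\alpha_2\}$ to a BP-equivalence class $\{\alpha_0,\alpha_1,\ldots,\alpha_{g+p-2}\}$ of the maximal size $g+p-1$, and for each $j\in\{0,1,2\}$ let $\sigma_j$ be the rooted simplex of size $g+p-2$ consisting of all BPs $\{\alpha_j,\alpha_k\}$ with $k\neq j$. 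Inclusion-exclusion, using $\sigma_i\cap\sigma_j=\{\{\alpha_i,\alpha_j\}\}$ for $i\neq j$ and $\sigma_0\cap\sigma_1\cap\sigma_2=\emptyset$, gives $|\sigma_0\cup\sigma_1\cup\sigma_2|=3(g+p-2)-3$. Under the contrary assumption $\phi(b_3)=\{\beta_0,\gamma\}$, any two of $\phi(b_1),\phi(b_2),\phi(b_3)$ have $\beta_0$ as their only common curve, so each $\phi(\sigma_j)$ is rooted with root $\beta_0$ and every BP in the simplex $\phi(\sigma_0\cup\sigma_1\cup\sigma_2)$ contains $\beta_0$. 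The second curves of these BPs, together with $\beta_0$, then form a BP-equivalence class of size at most $g+p-1$, so $|\phi(\sigma_0\cup\sigma_1\cup\sigma_2)|\leq g+p-2$. Injectivity of $\phi$ thus forces $3(g+p-2)-3\leq g+p-2$, contradicting $g+p\geq 4$. The main technical point will be the existence of the maximal extension of $\{\alpha_0,\alpha_1,\alpha_2\}$, which I expect to dispatch by a change-of-coordinates argument inside the complement of $\alpha_0\cup\alpha_1\cup\alpha_2$ in $S$.
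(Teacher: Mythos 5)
Your proof is correct and takes essentially the same route as the paper's terse sketch: pin $\phi(b_3)$ down to two configurations via Lemma \ref{lem-rooted}, then rule out $\phi(b_3)=\{\beta_0,\gamma\}$ by observing that too many BPs would be forced into a single rooted simplex inside a simplex of maximal dimension, contradicting the size bound $g+p-2$. The ``maximal extension'' you flag as the remaining technical point is left equally implicit in the paper's appeal to ``a simplex of $\calt(S)$ of maximal dimension containing $b_1$ and $b_2$,'' and your separate treatment of $(g,p)=(3,0)$ is needed but rests on the assertion, made in the \emph{proof} (not the statement) of Proposition \ref{prop-max}, that a simplex of $\calt(S_{3,0})$ contains at most one BP-vertex; your inclusion-exclusion over three rooted simplices is a little heavier than strictly necessary (two already give $2(g+p-2)-1>g+p-2$ for $g+p\geq 4$) but is arithmetically sound.
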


\begin{proof}
Note that Lemma \ref{lem-rooted} implies that $\phi(b_1)$ and $\phi(b_2)$ have the common curve $\beta_0$. 
If the conclusion of the lemma were not true, then $\phi(\{ \alpha_1, \alpha_2\})$ would be a BP containing $\beta_0$ by Lemma \ref{lem-rooted} since for each $j=1, 2$, $b_j$ and $\{ \alpha_1, \alpha_2\}$ form a rooted 1-simplex. 
In general, the maximal dimension of rooted simplices in any simplex of $\calt(S)$ of maximal dimension is equal to $g+p-3$. 
By choosing a simplex of $\calt(S)$ of maximal dimension containing $b_1$ and $b_2$ and by using injectivity of $\phi$, we can deduce a contradiction.
\end{proof}

\begin{lem}\label{lem-p-curve}
Let $S=S_{1, p}$ be a surface with $p\geq 4$, and let $\phi \colon \calt(S)\rightarrow \calt(S)$ be a superinjective map. 
If $a$ is a p-curve in $S$, then we have $\phi(a)\in V_s(S)$ and it is a p-curve in $S$.
\end{lem}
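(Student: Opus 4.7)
The plan is to compare the maximum size of a rooted simplex in $\lk(a)$ with the maximum size allowed in $\lk(v)$ for each possible $v=\phi(a)$, using Lemma \ref{lem-rooted} to transport a rooted simplex in $\lk(a)$ into $\lk(\phi(a))$. Let $R\cong S_{1,p-1}$ be the component of $S_a$ that is not the pair of pants cut off by $a$. Applying Proposition \ref{prop-max} to $R$ (whose hypothesis is satisfied because $1+(p-1)=p\geq 4$) produces $p-1$ pairwise BP-equivalent non-separating curves $\beta_1,\ldots,\beta_{p-1}$ in $R$. The BPs $\{\beta_1,\beta_j\}$ for $j=2,\ldots,p-1$ form a rooted simplex $\sigma\subset \lk(a)$ of size $p-2$, so by Lemma \ref{lem-rooted}, $\phi(\sigma)$ is a rooted simplex of the same size in $\lk(\phi(a))$, with root $\alpha$ and remaining curves $\gamma_2,\ldots,\gamma_{p-1}$; altogether these $p-1$ curves form a BP-equivalence class in $S$, all disjoint from $\phi(a)$.

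The first step is to rule out $\phi(a)=b=\{\beta_0',\beta_1'\}$ being a BP. Since $g(S)=1$, both components of $S_b$ are planar. Each curve among $\alpha,\gamma_2,\ldots,\gamma_{p-1}$ is non-separating in $S$ and disjoint from $b$, so lies in one component of $S_b$ and separates that component so that $\beta_0'$ and $\beta_1'$ lie on opposite sides. A direct cut-and-paste verification then shows that every pair in $\{\alpha,\gamma_2,\ldots,\gamma_{p-1},\beta_0',\beta_1'\}$ forms a BP in $S$, producing a BP-equivalence class of size $p+1$. Proposition \ref{prop-max} caps the size of a BP-equivalence class in $S_{1,p}$ at $p$, which gives a contradiction, and hence $\phi(a)\in V_s(S)$.

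It remains to exclude separating curves that are not p-curves. Write $S_{\phi(a)}=U\sqcup V$ with $U\cong S_{0,k}$ and $V\cong S_{1,p-k+2}$; if $\phi(a)$ is not a p-curve, then $k\geq 4$. A standard connectedness argument forces any essential curve in $U$ to be separating in $S$, so $\alpha,\gamma_2,\ldots,\gamma_{p-1}$ must all lie in $V$, be non-separating in $V$, and form a BP-equivalence class of size $p-1$ in $V$. When $k\leq p$, Proposition \ref{prop-max} applied to $V$ bounds this by $p-k+2\leq p-2$, while when $k=p+1$ the surface $V\cong S_{1,1}$ contains no BPs at all; either way we get a contradiction. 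Thus $\phi(a)$ must be a p-curve. The main obstacle I anticipate is the BP case: verifying that $\beta_0'$ and $\beta_1'$ join the BP-equivalence class of $\alpha$ and the $\gamma_j$'s relies crucially on the planarity of the components of $S_b$, a feature available precisely because $g(S)=1$.
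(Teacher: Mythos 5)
Your second step, ruling out $\phi(a)$ being a separating curve that is not a p-curve, is correct and close in spirit to the paper's one-line observation that any separating curve disjoint from the rooted simplex $\phi(\sigma)$ is forced to be a p-curve by the cap on BP-equivalence class size.

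The first step, however, has a genuine gap. You implicitly assume that the two curves $\beta_0',\beta_1'$ of $b=\phi(a)$ are distinct from all of $\alpha,\gamma_2,\ldots,\gamma_{p-1}$. This assumption is used twice: once when you say each $\gamma_j$ (and $\alpha$) ``lies in one component of $S_b$'' (false if the curve in question is $\beta_0'$ or $\beta_1'$, since then it lies on the cut boundary rather than in a component), and once when you claim the set $\{\alpha,\gamma_2,\ldots,\gamma_{p-1},\beta_0',\beta_1'\}$ has size $p+1$. Nothing rules out, a priori, that $\phi(a)$ shares a vertex of $\calc(S)$ with some BP of $\phi(\sigma)$: two distinct BP-vertices of $\calt(S)$ may very well have a common curve, and injectivity of $\phi$ only prevents $\phi(a)$ from coinciding with a BP of $\phi(\sigma)$ as a pair. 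In fact the $p+1$ count that you want to use as a contradiction is exactly what the paper uses to prove the \emph{opposite}: since at most $p$ non-separating curves can appear among the BPs of a simplex of $\calt(S_{1,p})$ (by Proposition \ref{prop-max}), $\phi(a)$ is \emph{forced} to share a curve with some $\phi(b)$, $b\in\sigma$. So the case you dismiss as self-evidently contradictory is the one that never occurs, and the case that actually needs to be excluded is left untouched.

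The paper's proof then does the nontrivial work precisely in this shared-curve case: it first shows the common curve must be the root curve of $\phi(\sigma)$ (using a curve $c$ that intersects $b$ but is disjoint from $a$ and another $b'\in\sigma$, together with superinjectivity), and then re-roots $\sigma$ at each of its $p-1$ curves, invoking Lemma \ref{lem-simp-ind} to conclude that $\phi(a)$ would have to contain all $p-1$ images $\phi_\sigma(\alpha_j)$ --- impossible for a BP when $p\geq 4$. This re-rooting argument is the missing ingredient; without it, the case where $\phi(a)$ meets $\phi(\sigma)$ in a common curve is unaddressed. As a minor remark, the ``cut-and-paste'' verification you describe is redundant even in the distinct-curve case, since in a genus-one surface any two disjoint, non-isotopic, non-separating curves automatically form a BP.
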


\begin{proof}
Let $a$ be a p-curve in $S$ and choose a rooted simplex $\sigma$ consisting of $p-2$ BPs in $S$ disjoint from $a$. 
Once it is shown that $\phi(a)$ belongs to $V_s(S)$, it follows that $\phi(a)$ is a p-curve in $S$ because any separating curve in $S$ disjoint from the rooted simplex $\phi(\sigma)$ is a p-curve in $S$.

We assume $\phi(a)\in V_{bp}(S)$. 
It then follows that $\phi(a)$ and each BP in $\phi(\sigma)$ are BP-equivalent since the genus of $S$ is equal to one. 
Note that $p-1$ non-separating curves in $S$ appear in BPs of $\phi(\sigma)$ and that at most $p$ non-separating curves in $S$ appear in BPs of any simplex of $\calt(S)$. 
There thus exists a BP $b$ in $\sigma$ such that $\phi(a)$ and $\phi(b)$ have a common curve.

Assume that this common curve is not equal to the root curve for $\phi(\sigma)$. 
Let $b'$ be a BP of $\sigma$ distinct from $b$. 
We then have $\phi(b)\subset \phi(a)\cup \phi(b')$. 
This contradicts the existence of a vertex $c$ of $\calt(S)$ with $i(c, b)\neq 0$ and $i(c, a)=i(c, b')=0$. 
The common curve of $\phi(a)$ and $\phi(b)$ is therefore the root curve for $\phi(\sigma)$.

Let $\alpha_0, \alpha_1,\ldots, \alpha_{p-2}$ be all non-separating curves in BPs of $\sigma$. 
By Lemma \ref{lem-simp-ind}, $\phi$ induces the map $\phi_{\sigma}$ from the set of all curves of BPs in $\sigma$ into the set of all curves of BPs in $\phi(\sigma)$ such that $\phi(\{ \alpha_j, \alpha_k\})=\{ \phi_{\sigma}(\alpha_j), \phi_{\sigma}(\alpha_k)\}$ for any distinct $j$, $k$. 
Since for each $j$, the family $\{\, \{ \alpha_j, \alpha_k\} \mid k\in \{ 0, 1,\ldots, p-2\}\setminus \{ j\}\,\}$ is a rooted simplex disjoint from $a$, the argument of the previous paragraph shows that $\phi(a)$ contains $\phi_{\sigma}(\alpha_j)$ for each $j$. 
This is a contradiction.
\end{proof}

\begin{prop}
Let $S=S_{1, p}$ be a surface with $p\geq 3$, and let $\phi \colon \calt(S)\rightarrow \calt(S)$ be a superinjective map. 
Then $\phi(V_s(S))\subset V_s(S)$.
\end{prop}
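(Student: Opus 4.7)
The plan is to induct on $p$. The base case $p=3$ is Proposition \ref{prop-sep-p3}, so I would assume $p\geq 4$ and that the statement holds for $S_{1,p-1}$. Fix a separating curve $a\in V_s(S)$. If $a$ is a p-curve, Lemma \ref{lem-p-curve} already gives $\phi(a)\in V_s(S)$. Otherwise, writing $S_a=Q_1\sqcup Q_2$ with $Q_1$ of genus one, the component $Q_2$ has genus zero and at least $4$ boundary components (one being $a$); this is the only case that needs work.

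The key step is to reduce to the inductive hypothesis by cutting along a p-curve disjoint from $a$. First, I would exhibit such a p-curve $c\subset Q_2$: pick two components of $\partial S$ lying in $Q_2$ and let $c$ bound the pair of pants in $Q_2$ containing exactly those two components. Since $|\partial Q_2|\geq 4$, $c$ is essential in $Q_2$, hence in $S$, and by construction it is a p-curve of $S$ disjoint from $a$; Lemma \ref{lem-p-curve} then guarantees that $\phi(c)$ is also a p-curve. Let $S'$ and $S''$ denote the non-pants components of $S_c$ and $S_{\phi(c)}$, each homeomorphic to $S_{1,p-1}$. Since the pair of pants cut off by $c$ carries no essential interior curves, every vertex of $\calt(S)$ disjoint from $c$ is represented by a multicurve in $S'$; moreover, a curve in $S'$ is separating in $S$ if and only if it is separating in $S'$, and a pair of disjoint non-separating curves in $S'$ forms a BP in $S$ if and only if it forms a BP in $S'$. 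These identifications yield $\lk(c)\cong\calt(S')$, and similarly $\lk(\phi(c))\cong\calt(S'')$. After fixing homeomorphisms $S', S''\cong S_{1,p-1}$, the restriction of $\phi$ induces a simplicial map $\tilde\phi\colon\calt(S_{1,p-1})\to\calt(S_{1,p-1})$, which is superinjective because intersection numbers between curves lying in $S'$ agree with those in $S$.

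Applying the inductive hypothesis to $\tilde\phi$ yields $\tilde\phi(V_s(S_{1,p-1}))\subset V_s(S_{1,p-1})$. Since $a$ is separating in $S'$, translating back through the identifications shows that $\phi(a)$ is separating in $S''$, and hence in $S$. The main technical hurdle will be the identification $\lk(c)\cong\calt(S')$ with the correct matching of separating and BP vertices; this rests on the observation that attaching a pair of pants to $S'$ along one boundary component neither changes whether an interior curve separates $S'$ nor whether two disjoint interior curves of $S'$ have connected complement.
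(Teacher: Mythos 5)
Your proposal follows the paper's proof essentially exactly: both reduce the case $p\geq 4$ to $p-1$ by choosing a p-curve disjoint from the given separating curve, cutting along it, and observing that $\phi$ restricts to a superinjective self-map of the Torelli complex of the smaller $S_{1,p-1}$, to which the inductive hypothesis applies. The extra detail you supply on identifying $\lk(c)$ with $\calt(S')$ (matching separating vertices and BP-vertices across the cut) is correct and simply makes explicit what the paper asserts in a single sentence.
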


\begin{proof}
By Lemma \ref{lem-p-curve}, it suffices to show that if $a$ is a separating curve in $S$ which is not a p-curve in $S$, then $\phi(a)$ belongs to $V_s(S)$. 
We prove this claim by induction on $p$. 
When $p=3$, this is proved in Proposition \ref{prop-sep-p3}. 
Assume $p\geq 4$ and let $a$ be a separating curve in $S$ which is not a p-curve in $S$. 
We can then find a p-curve $\alpha$ in $S$ disjoint from $a$. 
The map $\phi$ induces a superinjective map $\phi_{\alpha}\colon \calt(Q)\rightarrow \calt(R)$, where $Q$ and $R$ denote the components of $S_{\alpha}$ and $S_{\phi(\alpha)}$, respectively, that are not a pair of pants. 
Since $Q$ and $R$ are homeomorphic and since the number of boundary components of $Q$ is less than that of $S$, the hypothesis of the induction implies that we have $\phi_{\alpha}(V_s(Q))\subset V_s(R)$. 
Since $a$ is an element of $V_s(Q)$, $\phi(a)$ is an element of $V_s(R)$.
We thus conclude that $\phi(a)$ belongs to $V_s(S)$.
\end{proof}


\subsection{Superinjective maps from the complex of separating curves into itself}

We have proved that any superinjective map $\phi$ from $\calt(S)$ into itself preserves $V_s(S)$. 
It follows that $\phi$ induces a superinjective map from $\calc_s(S)$ into itself. 
In this subsection, we deal with general superinjective maps from $\calc_s(S)$ into itself.

Let $\sigma$ be a simplex of $\calc_s(S)$ of maximal dimension.
We say that two curves $a$, $b$ in $\sigma$ are {\it adjacent with respect to} $\sigma$ if there exists a component of $S_{\sigma}$ containing $a$ and $b$ as boundary components. 
We define the {\it adjacency graph} $\cal{G}(\sigma)$ for $\sigma$ as the simplicial graph consisting of vertices in $\sigma$ and edges corresponding to adjacency with respect to $\sigma$. 
Adjacency graphs for simplices of $\calc(S)$ of maximal dimension are introduced by Irmak \cite{irmak1} in the work on superinjective maps from $\calc(S)$ into itself.

\begin{lem}\label{lem-ad}
Let $S=S_{g, p}$ be a surface with $|\chi(S)|\geq 4$, and let $\phi \colon \calc_s(S)\rightarrow \calc_s(S)$ be a superinjective map. 
Then for any simplex $\sigma$ of $\calc_s(S)$ of maximal dimension, $\phi$ induces an isomorphism between the adjacency graphs of $\sigma$ and $\phi(\sigma)$.
\end{lem}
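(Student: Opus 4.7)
The plan is to characterize adjacency with respect to $\sigma$ by a property stated purely in terms of intersection numbers with vertices of $\calc_s(S)$, and then to invoke superinjectivity of $\phi$.

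First, since $\phi$ is superinjective it is injective (Section~\ref{subsec-comp}), so $|\phi(\sigma)|=|\sigma|$; thus $\phi(\sigma)$ is also a maximal simplex of $\calc_s(S)$, and by Lemma~\ref{lem-sep-number}(ii) both $S_\sigma$ and $S_{\phi(\sigma)}$ decompose into $g$ handles and $g+p-2$ pairs of pants. In particular $\phi$ restricts to a bijection from $\sigma$ onto $\phi(\sigma)$. It therefore suffices to show that, for distinct $a,b\in\sigma$, $a$ is adjacent to $b$ in $\cal{G}(\sigma)$ if and only if $\phi(a)$ is adjacent to $\phi(b)$ in $\cal{G}(\phi(\sigma))$.

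The core step is the following characterization: for distinct $a,b\in\sigma$, the curves $a$ and $b$ are adjacent with respect to $\sigma$ if and only if there exists $c\in V_s(S)$ with $i(c,a)\ne 0$, $i(c,b)\ne 0$, and $i(c,d)=0$ for every $d\in\sigma\setminus\{a,b\}$. For the \emph{if} direction, after isotoping $c$ to be disjoint from $\sigma\setminus\{a,b\}$, the curve $c$ lies in a single component $T$ of $S_{\sigma\setminus\{a,b\}}$; since $c$ meets both $a$ and $b$, the curves $a,b$ both sit in $T$, and as $T$ is obtained from $S_\sigma$ by regluing along $a$ and $b$, non-adjacency of $a,b$ in $S_\sigma$ would produce two separate components, a contradiction. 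For the \emph{only if} direction, since a separating curve of $S$ has its two copies on different components of $S_\sigma$, the curves $a,b$ cannot share a handle and must therefore share a common pair of pants $P$; letting $R_a,R_b$ be the components of $S_\sigma$ opposite $P$ across $a,b$, I would construct $c$ inside the subsurface $T=P\cup R_a\cup R_b$ as a curve meeting both $a$ and $b$, and verify by case analysis on the topological types of $R_a,R_b$ and on the third boundary of $P$ that the hypothesis $|\chi(S)|\ge 4$ allows $c$ to be chosen separating in $S$.

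Once the characterization is in place, the result follows immediately: superinjectivity of $\phi$ is precisely the preservation of both $i(\cdot,\cdot)=0$ and $i(\cdot,\cdot)\ne 0$ on vertices, so $c$ witnesses adjacency of $(a,b)$ with respect to $\sigma$ if and only if $\phi(c)$ witnesses adjacency of $(\phi(a),\phi(b))$ with respect to $\phi(\sigma)$. Applying the characterization in both directions yields an isomorphism $\cal{G}(\sigma)\to\cal{G}(\phi(\sigma))$. The main obstacle is the \emph{only if} direction of the characterization: separation in $S$ is a global property, but $c$ is built locally inside $T$. The configurations where $R_a$ or $R_b$ is a handle, where the third boundary of $P$ lies in $\partial S$ rather than in $\sigma$, or where $R_a=R_b$ must each be inspected individually, and in every case one must check that $|\chi(S)|\ge 4$ leaves enough free boundary on either side of $c$ to ensure that $c$ genuinely separates $S$.
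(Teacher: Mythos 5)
Your characterization of adjacency (``$a,b$ adjacent with respect to $\sigma$ iff there is a separating $c$ meeting $a$ and $b$ and disjoint from the rest of $\sigma$'') is exactly the one the paper uses, and your deduction that superinjectivity transports a witness $c$ to a witness $\phi(c)$ is correct. But that only establishes that $\phi$ \emph{preserves adjacency}. Your final sentence, ``applying the characterization in both directions yields an isomorphism,'' is where the argument breaks: to preserve \emph{non}-adjacency you would need to argue that if $\phi(a),\phi(b)$ are adjacent in $\cal{G}(\phi(\sigma))$ then $a,b$ are adjacent in $\cal{G}(\sigma)$, but the witness $c'$ for the adjacency of $\phi(a),\phi(b)$ has no reason to lie in the image of $\phi$, and a superinjective map is not surjective, so there is no $\phi^{-1}$ to pull $c'$ back. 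This is a genuine gap, and it is precisely the half of the statement that requires the most work.

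The paper closes this gap with a separate, different-flavored argument. If $a_1,b_1\in\sigma$ are non-adjacent, one produces separating curves $a_2,b_2$ so that $(a_1,b_1,a_2,b_2)$ is a square in $\calc_s(S)$ with all four curves disjoint from $c=\sigma\setminus\{a_1,b_1\}$, and so that each $\{a_j,b_k\}\cup c$ is a maximal simplex. Superinjectivity carries this to a square on $\phi(a_1),\phi(b_1),\phi(a_2),\phi(b_2)$ disjoint from $\phi(c)$. If $\phi(a_1),\phi(b_1)$ were adjacent with respect to $\phi(\sigma)$, then the component $Q$ of $S_{\phi(c)}$ containing both would have $|\chi(Q)|=3$, and, since $\phi(a_2)$ and $\phi(b_2)$ intersect $\phi(a_1)$ and $\phi(b_1)$ respectively while being disjoint from $\phi(c)$, the entire image square would lie in $\calc_s(Q)$; one then uses that $\calc_s(Q)$ contains no square when $|\chi(Q)|=3$ to get a contradiction. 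You would need to supply something equivalent to this.

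A secondary point: for the ``only if'' direction of your characterization, the paper avoids the case analysis you anticipate via a short trick. Take any curve $\gamma$ (not assumed separating) meeting $a$ and $b$ and disjoint from $\sigma\setminus\{a,b\}$, which clearly exists when $a,b$ are adjacent with respect to $\sigma$. If $\gamma$ is non-separating, replace it by $t_\gamma(a)$: this is separating, is disjoint from $\sigma\setminus\{a,b\}$ because $t_\gamma$ fixes those curves, and meets both $a$ and $b$ by the standard lower bound $i(t_\gamma(\alpha),\beta)\geq i(\gamma,\alpha)i(\gamma,\beta)-i(\alpha,\beta)$ (here all intersection numbers of $\gamma$ with separating curves are at least $2$). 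This produces the witness uniformly, with no inspection of the types of $R_a$, $R_b$, or of the third boundary of $P$.
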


\begin{proof}
We follow the proof of Lemma 5.1 in \cite{be-m}. 
It suffices to show that $\phi$ preserves adjacency and non-adjacency with respect to $\sigma$. 
We claim that two curves in $\sigma$ are adjacent with respect to $\sigma$ if and only if there exists a separating curve in $S$ which intersects both of them and is disjoint from any other curve of $\sigma$.
The ``if" part is clear.
Assume that $\alpha$ and $\beta$ are curves in $\sigma$ adjacent with respect to $\sigma$.
Pick a curve $\gamma$ in $S$ with $i(\alpha, \gamma)\neq 0$, $i(\beta, \gamma)\neq 0$ and $i(\delta, \gamma)=0$ for any $\delta \in \sigma \setminus \{ \alpha, \beta\}$.
If $\gamma$ is non-separating in $S$, then $t_{\gamma}(\alpha)$ is a separating curve in $S$ and satisfies $i(\alpha, t_{\gamma}(\alpha))\neq 0$, $i(\beta, t_{\gamma}(\alpha))\neq 0$ and $i(\delta, t_{\gamma}(\alpha))=0$ for any $\delta \in \sigma \setminus \{ \alpha, \beta\}$.
The ``only if" part of the claim thus follows.
The claim implies that $\phi$ preserves adjacency.

Let $\sigma =\{ a_1, b_1, c_1,\ldots, c_n\}$, where $n=2g+p-5$. 
If $a_1$ and $b_1$ are not adjacent with respect to $\sigma$, then one can find two separating curves $a_2$, $b_2$ in $S$ such that $\{ a_j, b_k, c_1, \ldots, c_n\}$ is a simplex of $\calc_s(S)$ of maximal dimension for any $j, k\in \{ 1, 2\}$; and the four vertices $a_1$, $b_1$, $a_2$ and $b_2$ form a square in $\calc_s(S)$ in this order.

Conversely, if $a_1$ and $b_1$ are adjacent with respect to $\sigma$, then there exists a component $Q$ of $S_c$ containing $a_1$ and $b_1$, where we put $c=\{ c_1,\ldots, c_n\}$. 
We then have $|\chi(Q)|=3$. 
Since there exists no square in $\calc_s(Q)$, this proves that $\phi$ preserves non-adjacency.
\end{proof}

\begin{lem}\label{lem-chi}
Let $S=S_{g, p}$ be a surface with $|\chi(S)|\geq 2$ and let $\phi \colon \calc_s(S)\rightarrow \calc_s(S)$ be a superinjective map. 
Then the following assertions hold:
\begin{enumerate}
\item Pick $a\in V_s(S)$.
Let $Q_1$ and $Q_2$ denote the two components of $S_a$, and let $R_1$ and $R_2$ denote the two components of $S_{\phi(a)}$. 
Then after exchanging the indices if necessary, we have $\phi(V_s(Q_j))\subset V_s(R_j)$ for each $j=1, 2$.  
\item $\phi$ is $\chi$-preserving, i.e., $\chi(Q_j)=\chi(R_j)$ for each $j=1, 2$ in assertion {\rm (i)}.
\end{enumerate}
\end{lem}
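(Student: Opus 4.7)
The plan is to prove (i) by extending $\{a,b,b'\}$ to a maximal simplex of $\calc_s(S)$ and applying Lemma \ref{lem-ad}, and to deduce (ii) by counting dimensions of maximal simplices in the induced maps $\calc_s(Q_j)\rightarrow \calc_s(R_j)$.

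For (i), any $b\in V_s(S_a)$ satisfies $i(a,b)=0$, so $\phi(b)\in V_s(R_1)\sqcup V_s(R_2)$. The heart is to show $\phi(V_s(Q_j))$ lies entirely on one side of $\phi(a)$. Given $b,b'\in V_s(Q_j)$: if $i(b,b')\neq 0$, superinjectivity gives $i(\phi(b),\phi(b'))\neq 0$, so $\phi(b)$ and $\phi(b')$ share a component of $S_{\phi(a)}$. If $i(b,b')=0$, extend $\{a,b,b'\}$ to a maximal simplex $\sigma$ of $\calc_s(S)$. By Lemma \ref{lem-sep-number}, the restriction $\sigma_j:=\sigma\cap V(Q_j)$ is a maximal simplex of $\calc_s(Q_j)$ cutting $Q_j$ into handles and pairs of pants, and its adjacency graph is the line graph of the resulting dual tree, hence connected. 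Since cutting $S$ along $\sigma$ includes cutting along $a$, adjacency of elements of $\sigma_j$ with respect to $\sigma$ in $S$ coincides with their adjacency with respect to $\sigma_j$ in $Q_j$. Lemma \ref{lem-ad} then transfers this connectedness to $\phi(\sigma_j)\subset \cal{G}(\phi(\sigma))$, and any two curves of $\phi(\sigma)$ adjacent there share a component of $S_{\phi(\sigma)}$, so they lie on a common side of $\phi(a)$. Thus $\phi(\sigma_j)$, and in particular $\phi(b)$ and $\phi(b')$, sits in a single $R_{\pi(j)}$, giving $\phi(V_s(Q_j))\subset V_s(R_{\pi(j)})$ for a well-defined $\pi(j)\in\{1,2\}$.

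The subtle point is verifying that $\pi$ is a bijection whenever both $V_s(Q_j)$ are nonempty, equivalently when $a$ is neither an h- nor a p-curve. Since $\cal{G}(\sigma)$ is the line graph of the pants-decomposition tree $T$ of $S_\sigma$, the graph $\cal{G}(\sigma)\setminus\{a\}$ decomposes as the disjoint union of the line graphs of the two subtrees of $T\setminus a$, and so has exactly two components iff each subtree has at least one edge, iff each side of $a$ carries a further curve of $\sigma$, iff both $V_s(Q_j)\neq\emptyset$. Lemma \ref{lem-ad} transfers the component count to $\cal{G}(\phi(\sigma))\setminus\{\phi(a)\}$; applying the same characterization to $\phi(a)$ then forces both $V_s(R_j)\neq\emptyset$, and the isomorphism of adjacency graphs pairs the component containing $\phi(\sigma_1)$ with the component on one side of $\phi(a)$, yielding the bijection.

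For (ii), part (i) provides a superinjective simplicial map $\calc_s(Q_j)\rightarrow\calc_s(R_j)$ for each $j$. A maximal simplex of $\calc_s(Q_j)$ has size $2g_j+p_j-3$ by Lemma \ref{lem-sep-number}, and its image is a simplex of $\calc_s(R_j)$ of the same size, forcing $2g_j+p_j-3\leq 2g'_j+p'_j-3$, i.e.\ $|\chi(Q_j)|\leq|\chi(R_j)|$; the degenerate case $V_s(Q_j)=\emptyset$ means $Q_j$ is a handle or pair of pants, giving $|\chi(Q_j)|=1\leq|\chi(R_j)|$ since $\phi(a)$ is essential. Summing over $j$ and using $\chi(Q_1)+\chi(Q_2)=\chi(S)=\chi(R_1)+\chi(R_2)$ forces both inequalities to be equalities. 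The main technical obstacle I anticipate is the cut-vertex analysis of $\cal{G}(\sigma)$: establishing the equivalence between the component count of $\cal{G}(\sigma)\setminus\{a\}$ and the depth of $a$, and then ensuring that the isomorphism supplied by Lemma \ref{lem-ad} correctly matches the two components with the two sides of $a$.
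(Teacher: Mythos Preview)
Your proof is correct and takes a genuinely different route from the paper's.

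The paper first isolates hp-curves: it shows directly that if $a$ is an hp-curve then so is $\phi(a)$ (by comparing $|\chi|$ via Lemma~\ref{lem-sep-number}), and then for a non-hp-curve $a$ it argues by contradiction---if assertion~(i) failed, both $\phi(V_s(Q_j))$ would land in a single $R_k$, forcing $\phi(a)$ to be an hp-curve; a contradiction is then obtained by a case split on the parities of $|Q_j\cap\partial S|$, counting hp-curves in a well-chosen simplex in the even case and invoking Lemma~\ref{lem-ad} only for a specific three-curve configuration (Figure~\ref{fig-si}(a)) in the odd/odd case. Assertion~(ii) then falls out from the hp-curve preservation and Lemma~\ref{lem-sep-number}.

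Your argument is more structural: you use Lemma~\ref{lem-ad} globally, exploiting that for a maximal $\sigma\ni a$ the adjacency graph $\mathcal{G}(\sigma)$ is the line graph of the dual tree of $S_\sigma$, so that removing the vertex $a$ disconnects it into exactly the two pieces $\sigma_1,\sigma_2$; the graph isomorphism $\mathcal{G}(\sigma)\to\mathcal{G}(\phi(\sigma))$ then forces $\phi(\sigma_1),\phi(\sigma_2)$ to be the two components of $\mathcal{G}(\phi(\sigma))\setminus\{\phi(a)\}$, which by the same dual-tree description on the target are exactly $\phi(\sigma)\cap V(R_1)$ and $\phi(\sigma)\cap V(R_2)$. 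This handles (i) uniformly with no parity cases, and your additive $|\chi|$ argument for (ii) is clean. One minor point worth making explicit: the disjoint case $i(b,b')=0$ with $b,b'\in V_s(Q_j)$ forces $|\chi(Q_j)|\ge 3$ and hence $|\chi(S)|\ge 4$, so Lemma~\ref{lem-ad} is indeed available whenever you invoke it; the remaining small cases are covered by the intersecting case or are vacuous. The paper's approach has the side benefit of establishing hp-preservation along the way (used again in Lemma~\ref{lem-top-pre-s}), whereas your approach trades that for a cleaner, case-free argument.
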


\begin{proof}
The lemma in the case $|\chi(S)|\leq 3$ is obvious. 
Suppose $|\chi(S)|\geq 4$. 
Recall that we refer as an {\it hp-curve} in $S$ a curve in $S$ which is either an h-curve or a p-curve in $S$. 
We first show that if $a$ is an hp-curve in $S$, then so is $\phi(a)$. 
Let $Q$ be the component of $S_a$ that is neither a handle nor a pair of pants. 
Superinjectivity of $\phi$ implies the inclusion $\phi(V_s(Q))\subset V_s(R)$ for some component $R$ of $S_{\phi(a)}$. 
By Lemma \ref{lem-sep-number}, we have $|\chi(R)|=|\chi(Q)|$, and thus $\phi(a)$ is an hp-curve in $S$.

Let $a$ be a separating curve in $S$ which is not an hp-curve in $S$. 
Let $Q_1$ and $Q_2$ denote the two components of $S_a$, and let $R_1$ and $R_2$ denote the two components of $S_{\phi(a)}$. 
Superinjectivity of $\phi$ implies that for each $j$, there exists $k$ with $\phi(V_s(Q_j))\subset V_s(R_k)$. 
If assertion (i) were not true, then we would have $\phi(V_s(Q_1)\cup V_s(Q_2))\subset V_s(R_j)$ for some $j$. 
By Lemma \ref{lem-sep-number}, we have $|\chi(R_j)|=|\chi(S)|-1$, and thus $\phi(a)$ is an hp-curve in $S$.

Note that any simplex of $\calc_s(S)$ contains at most $g+\lfloor p/2\rfloor$ hp-curves in $S$. 
If the number of components of $\partial S$ contained in one of components of $S_a$ is even, then by choosing a simplex of $\calc_s(S)$ containing $a$ and $g+\lfloor p/2\rfloor$ hp-curves in $S$, we can deduce a contradiction.

Suppose that the numbers of components of $\partial S$ contained in both components of $S_a$ are odd. 
Choose a simplex $\sigma$ of $\calc_s(S)$ of maximal dimension containing the two curves $a_1$ and $a_2$ described in Figure \ref{fig-si} (a).
\begin{figure}
\begin{center}
\includegraphics[width=12cm]{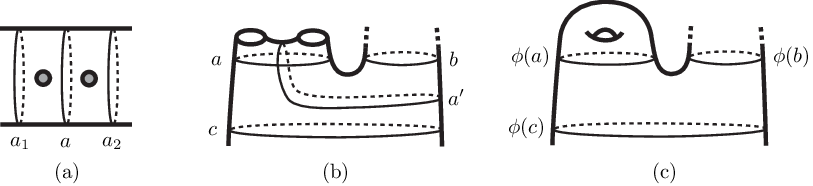}
\caption{}\label{fig-si}
\end{center}
\end{figure}
In the adjacency graph $\cal{G}(\sigma)$, $a$ and $a_j$ are adjacent for each $j=1, 2$, and $a_1$ and $a_2$ are not adjacent. 
Let $P$ denote the pair of pants in $S_{\phi(\sigma)}$ that contains the hp-curve $\phi(a)$ and is distinct from the one cut off by $\phi(a)$ from $S$. 
Since $\phi(a)$ and $\phi(a_j)$ are adjacent for each $j=1, 2$ in the graph $\cal{G}(\phi(\sigma))$ by Lemma \ref{lem-ad}, $\phi(a_1)$ and $\phi(a_2)$ are boundary components of $P$. 
This is a contradiction because $\phi(a_1)$ and $\phi(a_2)$ are not adjacent in $\cal{G}(\phi(\sigma))$ by the same lemma.

We thus proved assertion (i). 
Assertion (ii) follows from Lemma \ref{lem-sep-number}.
\end{proof}

\begin{lem}\label{lem-top-pre-s}
Let $S=S_{g, p}$ be a surface with $|\chi(S)|\geq 4$. 
Then any superinjective map $\phi \colon \calc_s(S)\rightarrow \calc_s(S)$ preserves the topological types of vertices of $\calc_s(S)$, that is, $Q_j$ and $R_j$ are homeomorphic for each $j=1, 2$ in the notation in Lemma \ref{lem-chi}.
\end{lem}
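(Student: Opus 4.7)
The plan is to refine Lemma \ref{lem-chi}(ii) from Euler characteristic preservation to full topological type preservation. Since a compact orientable surface with boundary is determined up to homeomorphism by its Euler characteristic together with its genus, and the equality $\chi(Q_j)=\chi(R_j)$ has already been established, it suffices to show $g(Q_j)=g(R_j)$ for each $j$.

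The key combinatorial invariant is that genus is detected by h-curves: $g(Q_j)$ equals the maximum size of a pairwise disjoint family of h-curves contained in $Q_j$, and an h-curve of $Q_j$ is automatically an h-curve of $S$ since the handle cut off is the same whether one works inside $Q_j$ or inside $S$. Conditional on the principal claim that $\phi$ preserves h-curves (sends h-curves of $S$ to h-curves of $S$), the lemma follows easily: pick a maximal pairwise disjoint family $\{c_1,\ldots,c_k\}$ of h-curves in $Q_j$, so $k=g(Q_j)$; by Lemma \ref{lem-chi}(i) each $\phi(c_i)$ lies in $V_s(R_j)$, and by injectivity of $\phi$ together with h-curve preservation, $\{\phi(c_1),\ldots,\phi(c_k)\}$ is a pairwise disjoint family of h-curves in $R_j$, witnessing $g(R_j)\ge g(Q_j)$. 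Since $g(Q_1)+g(Q_2)=g(S)=g(R_1)+g(R_2)$, the two inequalities force equality, whence $Q_j\cong R_j$.

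The main obstacle is therefore to prove that $\phi$ preserves h-curves. From the proof of Lemma \ref{lem-chi} we already know that $\phi$ sends hp-curves to hp-curves, so the task reduces to ruling out that an h-curve is sent to a p-curve. The difficulty is that locally around an hp-curve the two types look alike in $\calc_s$: the handle and the pair of pants both have empty $\calc_s$, and their big sides have the same Euler characteristic. I would argue by induction on $|\chi(S)|$. The base case $|\chi(S)|=4$ leaves only $S_{3,0}$, $S_{2,2}$, $S_{1,4}$, $S_{0,6}$ to consider; the first and last are trivial since every separating curve has a uniquely determined hp-type there, and for $S_{2,2}$ and $S_{1,4}$ I would extend the given hp-curve $a$ to a maximal simplex $\sigma\in\calc_s(S)$ and analyze the dual tree of the pants decomposition via the adjacency graph isomorphism of Lemma \ref{lem-ad}, combined with the preservation of the $\chi$-split of each curve from Lemma \ref{lem-chi}(ii). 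These two pieces of data, together with a count of external boundary components at leaves of the dual tree, distinguish handle leaves from pants leaves and so pin down the type of $a$.

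For the inductive step with $|\chi(S)|\ge 5$, given an h-curve $a$ I would choose an auxiliary hp-curve $b$ disjoint from $a$ and lying in the big side of $a$ (which has $|\chi|=|\chi(S)|-1\ge 4$ and therefore contains hp-curves), and use the inductive hypothesis applied to $\phi$ restricted to the subcomplex associated with the smaller subsurface cut off by $b$ to propagate the h-curve type back to $\phi(a)$. The most delicate point, which I expect to be the crux of the entire proof, is that the restricted superinjective maps generally run between $\calc_s$-complexes of distinct surfaces of equal Euler characteristic but a priori possibly different genus; the inductive reduction must be engineered so that, after using preservation of adjacency graphs and $\chi$-splits, the induction hypothesis for self-maps can be applied to properly chosen subsurfaces.
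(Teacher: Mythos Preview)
Your reduction is sound and matches the paper's first move: once $\chi$-preservation is known, preserving h-curves (equivalently, p-curves) among hp-curves is exactly what is needed, and your genus-counting argument from h-curve preservation to $g(Q_j)=g(R_j)$ is correct.

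The genuine gap is in your inductive step. You want to cut along an auxiliary hp-curve $b$ and apply the induction hypothesis to the restricted map on the big side. But the induction hypothesis is stated for \emph{self}-maps $\calc_s(S')\to\calc_s(S')$, whereas the restriction of $\phi$ is a superinjective map $\calc_s(Q')\to\calc_s(R')$ between the big sides of $b$ and of $\phi(b)$, and $Q'\cong R'$ is precisely what you do not yet know (you only know $\chi(Q')=\chi(R')$, not that their genera agree). You explicitly flag this circularity but do not resolve it; ``must be engineered'' is not an argument. Your base cases for $S_{2,2}$ and $S_{1,4}$ are also only sketched: the adjacency graph of a maximal simplex is a tree, but the tree together with the $\chi$-splits does not by itself label which leaves are handles versus pairs of pants without further input.

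The paper sidesteps induction entirely with a short direct contradiction. It proves instead that $\phi$ preserves p-curves; then a counting argument (the maximum number of pairwise disjoint hp-curves is $g+\lfloor p/2\rfloor$, and any h-curve can be completed by $\lfloor p/2\rfloor$ disjoint p-curves) forces h-curves to h-curves. For the p-curve step: assume some p-curve $a$ has $\phi(a)$ an h-curve. Pick a disjoint hp-curve $b$ and then choose separating curves $c$ and $a'$ so that $a'$ separates $a\cup b$ from $c$ and both sides of $a'$ have $|\chi|\ge 2$. On the image side, $\phi(a)$, $\phi(b)$, $\phi(c)$ are hp-curves with $\phi(a)$ an h-curve, and $\phi(a')$ is forced to lie in the $S_{1,2}$ region trapped between the handle cut off by $\phi(a)$ and $\phi(c)$; but every separating curve in $S_{1,2}$ is an h-curve, so $\phi(a')$ is an hp-curve, contradicting $\chi$-preservation since $a'$ was chosen with $|\chi|$-split $(2,|\chi(S)|-2)$ rather than $(1,|\chi(S)|-1)$. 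This one configuration replaces your entire induction.
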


\begin{proof}
Since the maximal number of disjoint and distinct hp-curves in $S$ is equal to $g+\lfloor p/2\rfloor$, once we show that $\phi$ preserves p-curves in $S$, then $\phi$ preserves h-curves in $S$ and consequently $\phi$ is shown to preserve the topological types of vertices of $\calc_s(S)$.
To prove it, we may assume $g\geq 1$.

When $p=0$ or $1$, the claim immediately follows because there is no p-curve in $S$. 
In what follows, we assume $p\geq 2$ and that there exists a p-curve $a$ in $S$ such that $\phi(a)$ is an h-curve in $S$. 
We can find an hp-curve $b$ in $S$ distinct and disjoint from $a$. 
Lemma \ref{lem-chi} implies that $\phi(b)$ is also an hp-curve in $S$. 
Choose $c, a'\in V_s(S)$ as in Figure \ref{fig-si} (b). 
Since any separating curve in $S_{1, 2}$ is an h-curve, $\phi(a')$ is an h-curve in $S$ (see Figure \ref{fig-si} (c)). 
This contradicts the fact that $\phi$ is $\chi$-preserving.
It follows that $\phi$ preserves p-curves in $S$.
\end{proof}

Summarizing the argument in this section, we obtain the following:

\begin{prop}\label{prop-top-pre}
Let $S=S_{g, p}$ be a surface satisfying either $g=1$ and $p=3$; or $g\geq 1$ and $|\chi(S)|\geq 4$. 
Let $\phi \colon \calt(S)\rightarrow \calt(S)$ be a superinjective map. 
Then the inclusions $\phi(V_{bp}(S))\subset V_{bp}(S)$ and $\phi(V_s(S))\subset V_s(S)$ hold, and the restriction of $\phi$ to $\calc_s(S)$ preserves the topological types of vertices of $\calc_s(S)$.
\end{prop}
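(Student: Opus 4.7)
The statement is a summary that assembles the principal results proved earlier in the section, so my plan is to unpack it case-by-case according to the hypotheses.

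First, I would establish the BP inclusion $\phi(V_{bp}(S)) \subset V_{bp}(S)$ by invoking Lemma~\ref{lem-bp-s}. That lemma requires $g \geq 1$ together with either $g+p \geq 4$ or $(g,p) = (3,0)$. Both clauses of the proposition satisfy this: if $g=1$ and $p=3$ then $g+p = 4$; if $g \geq 1$ and $|\chi(S)| = 2g+p-2 \geq 4$, then $2g+p \geq 6$, which forces $g+p \geq 4$ whenever $g \leq 2$, and for $g \geq 3$ includes the listed exceptional case $(3,0)$. Parts (i) and (ii) of Lemma~\ref{lem-bp-s} then cover $g \geq 2$ and $g=1$ respectively.

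Next, I would establish $\phi(V_s(S)) \subset V_s(S)$. For $g \geq 2$ this is again part of Lemma~\ref{lem-bp-s}(i). For $g = 1$ and $p \geq 3$, this is the (final) proposition of Section 3.3, which is proved by induction on $p$ with base case $p=3$ supplied by Proposition~\ref{prop-sep-p3}. Having both inclusions, the restriction $\phi|_{\calc_s(S)}$ is a well-defined simplicial self-map of $\calc_s(S)$, and it is superinjective because $\phi$ is.

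Finally, I would verify that this restriction preserves topological types. When $|\chi(S)| \geq 4$, this is exactly the conclusion of Lemma~\ref{lem-top-pre-s}. The only case not directly covered is $S = S_{1,3}$, where $|\chi(S)| = 3$ and Lemma~\ref{lem-top-pre-s} does not apply; here I would fall back on Proposition~\ref{prop-sep-p3}, noting that every separating curve in $S_{1,3}$ is either an h-curve or a p-curve, so these are the only topological types in $V_s(S_{1,3})$, and Proposition~\ref{prop-sep-p3} says each class is preserved.

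Because the proof is a synthesis of earlier results, there is no genuinely hard step; the main care required is in matching the case distinctions of the proposition to the hypotheses of each cited result, and in particular isolating the exceptional surface $S_{1,3}$ (where $|\chi(S)| = 3$) from the arguments that need $|\chi(S)| \geq 4$.
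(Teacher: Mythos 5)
Your proof is correct and follows essentially the same route as the paper, which presents Proposition~\ref{prop-top-pre} as a summary of the preceding section; your careful matching of each hypothesis to the applicable lemma is exactly the unpacking the paper leaves implicit. One small refinement: the phrase ``preserves the topological types of vertices'' as made precise in the sentence following the proposition includes not only that the components $Q_j$ and $R_j$ are homeomorphic but also the containment $\phi(V_s(Q_j))\subset V_s(R_j)$ after a suitable indexing; for the case $S=S_{1,3}$ you cite only Proposition~\ref{prop-sep-p3}, which gives the homeomorphism types, but you should also note that Lemma~\ref{lem-chi} still applies (its hypothesis is only $|\chi(S)|\geq 2$) and supplies the required inclusions in this case too.
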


The latter assertion in the proposition means that for each $a\in V_s(S)$, if $Q_1$ and $Q_2$ denote the two components of $S_a$ and if $R_1$ and $R_2$ denote the two components of $S_{\phi(a)}$, then after exchanging the indices if necessary, for each $j=1, 2$, $Q_j$ and $R_j$ are homeomorphic and the inclusion $\phi(V_s(Q_j))\subset V_s(R_j)$ holds.



\section{Superinjective maps from the Torelli complex into itself}\label{sec-super-torelli}

Let $S$ be a surface of genus one. 
Given a superinjective map $\phi \colon \calt(S)\rightarrow \calt(S)$, we construct a simplicial map $\Phi \colon \calc(S)\rightarrow \calc(S)$ inducing $\phi$. 
This map $\Phi$ will be defined as follows: Pick $\alpha \in V(S)$. 
If $\alpha$ is separating in $S$, then we put $\Phi(\alpha)=\phi(\alpha)$. 
If $\alpha$ is non-separating in $S$, then we choose two BPs $a$, $b$ in $S$ such that the pair $\{ a, b\}$ is a rooted 1-simplex of $\calt(S)$ whose root curve is equal to $\alpha$. 
We then define $\Phi(\alpha)$ as the root curve for the rooted 1-simplex $\{ \phi(a), \phi(b)\}$ of $\calt(S)$. 
Sections \ref{subsec-si-p3} and \ref{subsec-si-p4} are devoted to showing that our construction of $\Phi$ is well-defined. 
In Section \ref{subsec-simp-inj}, we prove that $\Phi$ is an injective simplicial map. 
We then conclude that $\Phi$ is an automorphism of $\calc(S)$ by using Theorem \ref{thm-sha}.

\subsection{The case $g=1$ and $p=3$}\label{subsec-si-p3}

We put $S=S_{1, 3}$ throughout this subsection.

\begin{lem}\label{lem-pent-share}
Suppose that we are given a pentagon in $\calt(S)$ whose vertices are labeled as in Figure \ref{fig-pent} (a).
\begin{figure}
\begin{center}
\includegraphics[width=12cm]{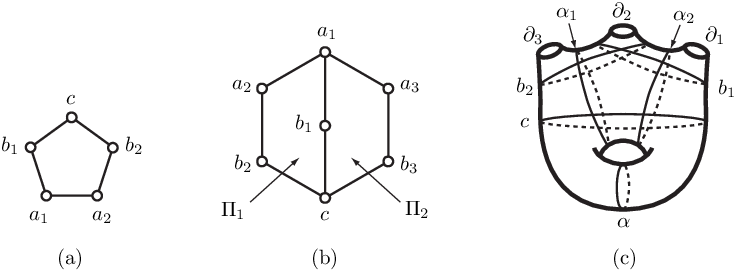}
\caption{}\label{fig-pent}
\end{center}
\end{figure}
Assume that $a_1$ and $a_2$ are BP-vertices, $b_1$ and $b_2$ are p-vertices and $c$ is an h-vertex. 
Let $\alpha$ denote the root curve for the rooted simplex $\{ a_1, a_2\}$. 
Then we have $i(\alpha, b_1)=i(\alpha, b_2)=i(\alpha, c)=0$.
\end{lem}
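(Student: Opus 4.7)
The plan is to split the lemma into two parts. The equalities $i(\alpha,b_1)=i(\alpha,b_2)=0$ will be immediate from the pentagon's combinatorics, while $i(\alpha,c)=0$ requires a topological analysis inside $S_{1,3}$.

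Reading Figure~\ref{fig-pent}~(a), the cyclic order of the pentagon is $c,b_1,a_1,a_2,b_2$; this is exactly the configuration treated in Lemma~\ref{lem-pen-h}. In particular $a_1$ is adjacent to $b_1$ and $a_2$ is adjacent to $b_2$, so these pairs are disjoint. Because $\alpha$ is a curve of both $a_1$ and $a_2$, this yields $i(\alpha,b_1)=0$ and $i(\alpha,b_2)=0$ at once.

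For $i(\alpha,c)=0$ I would cut $S=S_{1,3}$ along the h-curve $c$, producing a handle $H\cong S_{1,1}$ and a four-holed sphere $P\cong S_{0,4}$ whose boundary is $c\cup\partial_1\cup\partial_2\cup\partial_3$. Since $S_{1,1}$ admits no essential separating curve, both p-curves $b_1,b_2$ live in $P$. The $\theta$-labelling used in the proof of Lemma~\ref{lem-pen-h} applies: after relabelling the $\partial_j$ one may assume $\theta(b_1)=\theta(a_1)=1$ and $\theta(a_2)=\theta(b_2)=2$. Translated into $P$ this forces $b_1$ to realise the $2$-$2$ partition $\{c,\partial_1\}\sqcup\{\partial_2,\partial_3\}$ and $b_2$ to realise $\{c,\partial_2\}\sqcup\{\partial_1,\partial_3\}$. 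These partitions disagree, so after isotoping to minimal position $b_1$ meets $b_2$ in exactly two points.

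The key step is then the component analysis of $P\setminus(b_1\cup b_2)$. Viewing $P$ as a disc with three inner holes $\partial_1,\partial_2,\partial_3$ and outer boundary $c$, a straightforward planar picture shows that this complement has exactly four components, each an open annulus containing precisely one of $c,\partial_1,\partial_2,\partial_3$. Regluing $H$ to the $c$-annulus, the component of $S\setminus(b_1\cup b_2)$ containing $H$ becomes a copy of $S_{1,1}$, and the remaining three components are annuli surrounding $\partial_1,\partial_2,\partial_3$. Since $i(\alpha,b_j)=0$ for $j=1,2$, a standard argument (isotope $\alpha$ off $b_1$ first, then off the arcs of $b_2$ inside $S\setminus b_1$) lets us place $\alpha$ in $S\setminus(b_1\cup b_2)$. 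The three annular components support only curves isotopic to a boundary component of $S$, which are not essential in $S$, so $\alpha$ lies in the $S_{1,1}$ component; it can then be pushed through the $c$-collar into $H$, forcing $i(\alpha,c)=0$. The principal obstacle is this component count, which rests on putting $b_1,b_2$ in minimal position and drawing the resulting planar picture of two essential curves in $S_{0,4}$ realising the two distinct $2$-$2$ partitions above.
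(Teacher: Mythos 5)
Your handling of $i(\alpha,b_1)=i(\alpha,b_2)=0$ is exactly the paper's first step: adjacency of $a_j$ and $b_j$ in the pentagon gives disjointness, and $\alpha$ lies on each $a_j$. For $i(\alpha,c)=0$ you take the same underlying decomposition (cut along $c$, locate $\alpha$ in the complement of $b_1\cup b_2$), but there is a gap at the step where you assert that in minimal position $b_1$ meets $b_2$ in \emph{exactly} two points. Realizing different $2$-$2$ partitions of the four boundary components of $P\cong S_{0,4}$ only forces $i(b_1,b_2)\geq 2$; in $S_{0,4}$ the geometric intersection number of two curves realizing distinct partitions ranges over $\{2,6,10,\ldots\}$, and nothing in the pentagon's combinatorics pins it down to $2$. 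Consequently your claim that $P\setminus(b_1\cup b_2)$ has exactly four components, all annular, is not justified; for larger intersection numbers the complement also contains disk components.

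The repair is short, and is what the paper's wording encodes: since $b_1$ and $b_2$ realize different partitions they cannot both be isotoped into a single pair of pants in $P$, so they \emph{fill} $P$. Filling says exactly that $P\setminus(b_1\cup b_2)$ consists of disks together with one peripheral annulus around each of $c,\partial_1,\partial_2,\partial_3$, irrespective of $i(b_1,b_2)$. From there your conclusion goes through: $\alpha$ is essential in $S$ and disjoint from $b_1\cup b_2$, so it cannot sit in a disk, it cannot sit in a $\partial_j$-annulus (such curves are inessential), and it cannot be isotopic to $c$ (it is non-separating while $c$ is separating), so it lies in the handle side and $i(\alpha,c)=0$. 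The paper states this in one line: the subsurface of $S$ filled by $b_1$ and $b_2$ is $\cong S_{0,4}$, contains $\partial S$, and has $c$ among its boundary curves, so $\alpha$ can be isotoped off it, hence off $c$.
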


\begin{proof}
Since $a_j$ and $b_j$ are disjoint for each $j=1, 2$, we have $i(\alpha, b_1)=i(\alpha, b_2)=0$. 
Note that the subsurface of $S$ filled by the two p-curves $b_1$ and $b_2$ is homeomorphic to $S_{0, 4}$ and contains $\partial S$. 
The h-curve $c$ is then a boundary curve of this subsurface. 
Since $\alpha$ is disjoint from $b_1$ and $b_2$, it is also disjoint from $c$. 
\end{proof}

\begin{lem}\label{lem-pen-root}
Let $\Pi_1$ and $\Pi_2$ be pentagons in $\calt(S)$ sharing two edges which share a vertex as described in Figure \ref{fig-pent} (b). 
Assume that $a_1$, $a_2$ and $a_3$ are BP-vertices, $b_1$, $b_2$ and $b_3$ are p-vertices and $c$ is an h-vertex. 
Then the root curves for the simplices $\{ a_1, a_2\}$ and $\{ a_1, a_3\}$ are equal.
\end{lem}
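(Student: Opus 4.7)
The plan is to apply Lemma \ref{lem-pent-share} separately to $\Pi_1$ and $\Pi_2$ in order to control both root curves through the h-vertex $c$, and then to show by a direct topological argument that only one of the two non-separating curves comprising $a_1$ can be disjoint from $c$; both root curves must therefore coincide with that unique curve.

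First, noting that both pentagons share the h-vertex $c$, Lemma \ref{lem-pent-share} applied to each of $\Pi_1$ and $\Pi_2$ yields $i(\alpha_{12}, c) = 0$ and $i(\alpha_{13}, c) = 0$, where $\alpha_{12}$ and $\alpha_{13}$ denote the root curves for $\{a_1, a_2\}$ and $\{a_1, a_3\}$, respectively. By definition, both $\alpha_{12}$ and $\alpha_{13}$ are non-separating curves lying in the BP $a_1$.

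Writing $a_1 = \{\gamma, \delta\}$, the key step is to establish that exactly one of $\gamma, \delta$ is disjoint from $c$. The h-curve $c$ decomposes $S = S_{1,3}$ into a handle $S_{1,1}$ and a subsurface $S_{0,4}$. Any non-separating curve in $S$ disjoint from $c$ must lie in a single component of $S_c$; it cannot lie in $S_{0,4}$, because every essential simple closed curve there is separating in $S_{0,4}$, and since $c$ sits on only one side of such a curve in $S_{0,4}$, regluing the handle along $c$ keeps it separating in $S$. So a non-separating curve of $S$ disjoint from $c$ must lie inside the handle $S_{1,1}$. But $S_{1,1}$ admits no bounding pair: cutting $S_{1,1}$ along a non-separating essential curve gives a pair of pants, and in a pair of pants every essential simple closed curve is boundary-parallel, so a putative second disjoint non-separating essential curve in $S_{1,1}$ would be forced to be isotopic either to the first curve or to $\partial S_{1,1}$ (hence to $c$ in $S$), each a contradiction. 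Thus at most one of $\gamma, \delta$ is disjoint from $c$; combined with the existence of $\alpha_{12}$, exactly one of them, say $\gamma$, is disjoint from $c$, and consequently $\alpha_{12} = \gamma = \alpha_{13}$.

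The main obstacle I expect is pinning down the argument that $S_{1,1}$ contains no bounding pair cleanly enough that both $\gamma$ and $\delta$ cannot simultaneously lie in the handle side of $c$; once that topological fact is in hand, the lemma follows immediately from Lemma \ref{lem-pent-share}.
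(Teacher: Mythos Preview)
Your proof is correct and follows the same overall strategy as the paper: apply Lemma~\ref{lem-pent-share} to each pentagon to see that both candidate root curves are disjoint from the h-vertex $c$, and then argue that only one curve of the BP $a_1$ can be disjoint from $c$.

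The only difference is in how that last point is established. The paper simply invokes the pentagon structure: since $a_1$ and $c$ are non-adjacent vertices of $\Pi_1$, we have $i(a_1,c)\neq 0$; hence once $i(\alpha,c)=0$ for one curve $\alpha$ of $a_1$, the other curve must intersect $c$. You instead give an independent topological argument (a non-separating curve disjoint from $c$ must lie in the handle cut off by $c$, and a handle cannot contain a BP). Your argument is valid and in fact proves the general statement that no BP in $S_{1,3}$ is disjoint from an h-curve, but it is longer than necessary here: the pentagon relation $i(a_1,c)\neq 0$ already gives you what you need in one line.
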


\begin{proof}
Let $\alpha$ denote the root curve for $\{ a_1, a_2\}$. 
If the root curve for $\{ a_1, a_3\}$ were not equal to $\alpha$, then we could put
\[a_1=\{ \alpha, \alpha_1\},\quad a_2=\{ \alpha, \alpha_2\},\quad a_3=\{ \alpha_1, \alpha_3\}\]
with $\alpha_3\neq \alpha$. 
Applying Lemma \ref{lem-pent-share} to $\Pi_1$, we have $i(\alpha, c)=0$. Since $i(a_1, c)\neq 0$, we have $i(\alpha_1, c)\neq 0$. 
On the other hand, applying Lemma \ref{lem-pent-share} to $\Pi_2$, we obtain $i(\alpha_1, c)=0$. 
This is a contradiction. 
\end{proof}

\begin{lem}\label{lem-pen-chain}
Let $\alpha$ be a non-separating curve in $S$, and let $a_1$, $a_2$ and $a_3$ be BPs in $S$ such that each of the pairs $\{ a_1, a_2\}$ and $\{ a_1, a_3\}$ is a rooted simplex of $\calt(S)$ whose root curve is $\alpha$. 
Then there exists a sequence of pentagons in $\calt(S)$, $\Pi_1,\ldots, \Pi_n$, satisfying the following three conditions:
\begin{enumerate}
\item[(a)] For each $j$, the pentagon $\Pi_j$ consists of h-, p-, BP-, BP- and p-vertices in this order. 
\item[(b)] $a_2\in \Pi_1$, $a_3\in \Pi_n$ and $a_1\in \Pi_j$ for each $j$.
\item[(c)] For each $j$, $\Pi_j$ and $\Pi_{j+1}$ have two common edges which share either a BP-vertex or a p-vertex. 
\end{enumerate}  
\end{lem}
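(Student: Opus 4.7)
The plan is to cut $S$ along $\alpha$ and translate the problem into the combinatorics of curves in $R=S_{\alpha}\cong S_{0,5}$. Write the boundary of $R$ as $\{\alpha^{+},\alpha^{-},\partial_{1},\partial_{2},\partial_{3}\}$, where $\alpha^{\pm}$ are the two copies of $\alpha$ and the $\partial_{i}$'s come from $\partial S$. Every essential simple closed curve in $R$ is determined up to isotopy by the partition it induces on these five boundary components into sets of sizes $\{2,3\}$. Under this correspondence, a BP of $S$ with root $\alpha$ corresponds to a curve whose partition separates $\alpha^{+}$ from $\alpha^{-}$; a p-curve of $S$ disjoint from $\alpha$ corresponds to a curve separating two of $\partial_{1},\partial_{2},\partial_{3}$ from the rest; and there is a unique h-curve $c$ of $S$ disjoint from $\alpha$, namely the one separating $\{\alpha^{+},\alpha^{-}\}$ from $\{\partial_{1},\partial_{2},\partial_{3}\}$.

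The construction of the pentagons hinges on two observations. First, for any BP $a=\{\alpha,\beta\}$ with root $\alpha$, the partition of $\beta$ splits the three $\partial_{i}$'s into a singleton and a pair; a direct containment check shows that there is a unique p-curve of $S$ disjoint from $a$, namely the one separating off the pair, which I denote $p(a)$. Second, two disjoint essential simple closed curves in a planar surface have nested partitions. Using these, for any two disjoint BPs $a,a'$ with common root $\alpha$, I will verify that the $5$-tuple
\[
\Pi(a,a')\;=\;(c,\,p(a),\,a,\,a',\,p(a'))
\]
defines a pentagon in $\calt(S)$ of the type required by condition (a): the five adjacency and five non-adjacency relations all reduce to partition containments. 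Moreover, writing the partition of $\beta_{1}$ so that $\alpha^{+}$ is grouped with a single $\partial_{i}$ and $\alpha^{-}$ with the other two (the opposite case is symmetric), any BP $\{\alpha,\gamma\}$ disjoint from $a_{1}$ must have $\gamma$ lying in the non-pants component of $R\setminus\beta_{1}$ and separating $\alpha^{-}$ from $\beta_{1}$ on that side; this leaves exactly two choices for $\gamma$, so both $\beta_{2}$ and $\beta_{3}$ lie in this two-element set.

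If $\beta_{2}=\beta_{3}$, take $n=1$ and $\Pi_{1}=\Pi(a_{1},a_{2})$. Otherwise take $n=2$ with $\Pi_{1}=\Pi(a_{1},a_{2})$ and $\Pi_{2}=\Pi(a_{1},a_{3})$. Since $p(a_{1})$ appears in both pentagons, $\Pi_{1}$ and $\Pi_{2}$ share the three vertices $c$, $p(a_{1})$, and $a_{1}$, which form the path $c$--$p(a_{1})$--$a_{1}$ in each; hence the two shared edges $\{c,p(a_{1})\}$ and $\{p(a_{1}),a_{1}\}$ meet at the p-vertex $p(a_{1})$, verifying condition (c). Conditions (a) and (b) are immediate from the construction. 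The main technical task is the partition bookkeeping needed to verify the pentagon relations for $\Pi(a,a')$ and to confirm that the two possible choices for $\gamma$ really exhaust the BPs disjoint from $a_{1}$; once that is set up, the chain of length at most two drops out.
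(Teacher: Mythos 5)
Your proposal breaks at the very first step. You write that ``every essential simple closed curve in $R\cong S_{0,5}$ is determined up to isotopy by the partition it induces on the five boundary components,'' but this is false: in $S_{0,5}$ there are infinitely many isotopy classes of essential simple closed curves inducing any fixed $\{2,3\}$-partition of the boundary (they form a single $\pmod(S_{0,5})$-orbit, and the stabilizer of any one of them has infinite index). For the same reason $\calc(S_{0,4})$ already has infinitely many vertices, which is why it is the Farey graph rather than a finite graph. Everything downstream of this premise collapses: there is no unique h-curve $c$ in $S$ disjoint from $\alpha$, there is no unique p-curve $p(a)$ disjoint from a BP $a$, and there are infinitely many (not ``exactly two'') BPs $\{\alpha,\gamma\}$ disjoint from $a_1$ with root curve $\alpha$ --- they correspond bijectively to vertices of the link of $\beta_1$ in the infinite complex $\mathcal{D}(R;\alpha^+,\alpha^-)$ of Proposition~\ref{prop-d-conn}. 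Consequently your chain of length at most two cannot work; indeed, the assertion that $\Pi_1$ and $\Pi_2$ share the vertices $c$ and $p(a_1)$ presupposes canonical choices of $c$ and $p(a_1)$ that do not exist.

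The real content of the lemma is precisely to bridge this infinite family by a chain of pentagons, and that requires an actual argument, not finite bookkeeping. The paper's proof builds a single concrete pentagon $\Pi_1$ containing $a_1$ and $a_2$, then considers the subgroup $\Gamma<\mod(S)$ generated by a half-twist $h$ and a BP twist $x$, shows that the induced image in $\mod(R;\alpha,\alpha_1)$ is all of it, writes $\alpha_3=t_1\cdots t_k(\alpha_2)$ with each $t_i\in\{h^{\pm1},x^{\pm1}\}$, and takes the chain $\Pi_1,\ t_1(\Pi_1),\ldots,\ t_1\cdots t_k(\Pi_1)$. Each generator $t_i$ is chosen so that it fixes two edges of the previous pentagon sharing a vertex, which is exactly what condition (c) demands, and every pentagon in the chain contains $a_1$ because $\Gamma$ fixes both $\alpha$ and $\alpha_1$. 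Your pentagon template $(c,p(a),a,a',p(a'))$ has the right combinatorial shape (it is the same cyclic pattern as Figure~\ref{fig-pen}(a) once one fixes compatible choices), but without the group-theoretic connectivity mechanism --- or, equivalently, without Proposition~\ref{prop-d-conn} controlling the set of $\gamma$'s --- the statement that ``the chain of length at most two drops out'' is simply not true.
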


\begin{proof}
We put $a_j=\{ \alpha, \alpha_j\}$ for $j=1, 2, 3$. 
Choose two p-curves $b_1$, $b_2$ and an h-curve $c$ in $S$ as described in Figure \ref{fig-pent} (c). 
We then have the pentagon $\Pi_1$ in $\calt(S)$ consisting of the vertices $a_1$, $a_2$, $b_2$, $c$ and $b_1$ in this order. 
Label components of $\partial S$ as $\partial_1$, $\partial_2$ and $\partial_3$ as in Figure \ref{fig-pent} (c). 
Let $R$ be the component of $S_{a_1}$ that is not a pair of pants. 
It then follows that $\alpha_3$ is an element of $V(R)$.

Let $h\in \mod(S)$ be the half twist about $b_1$ exchanging $\partial_1$ and $\partial_2$ and being the identity on the component of $S_{b_1}$ that is not a pair of pants.
Let $x\in \mod(S)$ be the BP twist about $a_2$. 
We denote by $\Gamma$ the subgroup of $\mod(S)$ generated by $h$ and $x$. 
Since $\Gamma$ fixes $\alpha$ and $\alpha_1$, we obtain the natural homomorphism $q\colon \Gamma \rightarrow \mod(R)$. 
We denote by $\mod(R; \alpha, \alpha_1)$ the subgroup of $\mod(R)$ consisting of all elements that fix the two components of $\partial R$ corresponding to $\alpha$ and $\alpha_1$. 
We claim that $q(\Gamma)$ is equal to $\mod(R; \alpha, \alpha_1)$. 
The element $q(h)$ is the half twist about $b_1\in V(R)$, and $q(x)$ is the Dehn twist (or its inverse) about $\alpha_2\in V(R)$. 
Hence, $q(\Gamma)<\mod(R; \alpha, \alpha_1)$. 
Since the Dehn twists about $b_1$ and $\alpha_2$ generate $\pmod(R)$ and since $q(h)$ exchanges $\partial_1$ and $\partial_2$, we have the inclusion $\mod(R; \alpha, \alpha_1)<q(\Gamma)$. 
The claim is proved. 

When we regard $\alpha_2$ and $\alpha_3$ as elements of $V(R)$, we see that $\alpha_2$ and $\alpha_3$ lie in the same orbit for the action of $\mod(R; \alpha, \alpha_1)$ on $V(R)$ because $\alpha$ and $\alpha_1$ are contained in distinct components of $R_{\alpha_2}$ and the same holds for $R_{\alpha_3}$. 
The claim in the previous paragraph shows that there exist $t_1,\ldots, t_k\in \{ h^{\pm 1}, x^{\pm 1}\}$ with $\alpha_3=t_1\cdots t_k(\alpha_2)$.
The sequence of pentagons in $\calt(S)$,
\[\Pi_1,\ t_1(\Pi_1),\ t_1t_2(\Pi_1),\ldots,\ t_1\cdots t_k(\Pi_1),\]
then satisfies conditions (a), (b) and (c) in the lemma.
\end{proof}

Let $R$ be a surface of genus zero, and let $\partial_1$ and $\partial_2$ be distinct components of $\partial R$. 
We say that a curve $a$ in $R$ {\it separates} $\partial_1$ and $\partial_2$ if $\partial_1$ and $\partial_2$ are contained in distinct components of $R_a$.

\begin{prop}\label{prop-d-conn}
Let $R$ be a surface homeomorphic to $S_{0, p}$ with $p\geq 5$, and choose two distinct components $\partial_1$, $\partial_2$ of $\partial R$. 
Then the full subcomplex $\mathcal{D}=\mathcal{D}(R; \partial_1, \partial_2)$ of $\calc(R)$ spanned by all vertices corresponding to curves in $R$ which separate $\partial_1$ and $\partial_2$ is connected.
\end{prop}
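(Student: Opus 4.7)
The plan is to translate the problem into combinatorics of subsets of $\partial R$, then exhibit an explicit path between any two vertices.

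First, I would set up the following standard dictionary. Label the components of $\partial R$ as $\partial_1,\ldots,\partial_p$. Every essential simple closed curve in a sphere with holes determines, and is determined up to isotopy by, a partition of $\partial R$ into two subsets each of cardinality at least $2$. A curve separates $\partial_1$ from $\partial_2$ precisely when these two boundary components lie in different parts of the partition. Thus vertices of $\mathcal{D}$ correspond bijectively to subsets
\[
A\subseteq \{\partial_1,\ldots,\partial_p\}\quad\text{with}\quad \partial_1\in A,\ \partial_2\notin A,\ 2\le |A|\le p-2,
\]
and two such vertices $A$, $A'$ span an edge of $\calc(R)$ (hence of $\mathcal{D}$) exactly when one is contained in the other, since disjoint curves in a planar surface always yield nested partitions.

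Next I would show that every vertex is connected by a path in $\mathcal{D}$ to some vertex of the form $\{\partial_1,\partial_k\}$ with $k\in\{3,\ldots,p\}$. Given a vertex $A$ with $|A|\ge 3$, pick any $\partial_k\in A$ with $k\ne 1$. Then $A\setminus\{\partial_k\}$ still contains $\partial_1$, still avoids $\partial_2$, has cardinality at least $2$, and has cardinality at most $p-3\le p-2$, so it is again a vertex of $\mathcal{D}$, adjacent to $A$ by inclusion. Iterating reduces $|A|$ to $2$.

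Finally I would connect any two minimal vertices. Given $\{\partial_1,\partial_i\}$ and $\{\partial_1,\partial_j\}$ with $i,j\in\{3,\ldots,p\}$ and $i\ne j$, the subset $\{\partial_1,\partial_i,\partial_j\}$ contains $\partial_1$, avoids $\partial_2$, has cardinality $3\ge 2$, and its complement has cardinality $p-3\ge 2$ (this is the only place where the hypothesis $p\ge 5$ is genuinely used). So $\{\partial_1,\partial_i,\partial_j\}$ is a vertex of $\mathcal{D}$ adjacent to both, giving a path of length $2$. Combining the last two steps produces a path in $\mathcal{D}$ between any two vertices, proving connectedness. There is no substantive obstacle here; the only subtlety is checking that the size bounds $2\le|A|\le p-2$ are preserved at each step, which is where $p\ge 5$ enters.
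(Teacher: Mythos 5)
Your proposed dictionary is false, and this is a fatal gap. In a sphere with $p$ holes (for $p\geq 4$), an essential simple closed curve determines a partition of the boundary components, but the partition does \emph{not} determine the curve up to isotopy. Already in $S_{0,4}$ there are infinitely many isotopy classes of essential simple closed curves (they are parametrized by $\mathbb{Q}\cup\{\infty\}$, the vertices of the Farey graph), all inducing one of only three partitions. So the set of vertices of $\mathcal{D}$ is \emph{not} in bijection with subsets $A\subseteq\partial R$; for each admissible $A$ there are infinitely many vertices, and two vertices with the same partition can intersect. Your reduction replaces a genuinely infinite complex by a finite poset, and the connectivity of the latter says nothing about the former. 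Concretely, in your final step you produce a \emph{subset} $\{\partial_1,\partial_i,\partial_j\}$ adjacent to both $\{\partial_1,\partial_i\}$ and $\{\partial_1,\partial_j\}$, but you never produce a single \emph{curve} with that partition disjoint from \emph{both} of the two given curves; a curve disjoint from the first can be chosen, a curve disjoint from the second can be chosen, but there is no reason one curve works for both, since the two given curves may intersect each other arbitrarily badly.

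The paper's proof takes a fundamentally different route precisely because of this. Following Putman's trick for proving connectivity of complexes with a group action, it fixes a basepoint $\alpha_0\in\mathcal{D}$, uses a standard finite generating set of Dehn twists $t_{jk}$ for $\pmod(R)$, and proves two things: (a) every vertex of $\mathcal{D}$ can be connected in $\mathcal{D}$ to a point of the $\pmod(R)$-orbit of $\alpha_0$ (this is where the combinatorics of partitions legitimately enters, because orbits of $\pmod(R)$ on curves \emph{are} classified by partitions), and (b) for each generator $t_{jk}$, the vertices $\alpha_0$ and $t_{jk}(\alpha_0)$ can be connected in $\mathcal{D}$. These two facts together give connectivity. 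If you want to repair your proof, you would need to incorporate a mechanism of this kind to handle the infinitely many curves realizing each partition; the partition calculus alone cannot do it.
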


\begin{proof}
We follow the idea in Lemma 2.1 of \cite{putman-conn} proving connectivity of a simplicial complex on which $\pmod(R)$ acts.
Let $\partial_1,\ldots, \partial_p$ denote components of $\partial R$ and put $J=\{ 1,\ldots, p\}$.
\begin{figure}
\begin{center}
\includegraphics[width=8cm]{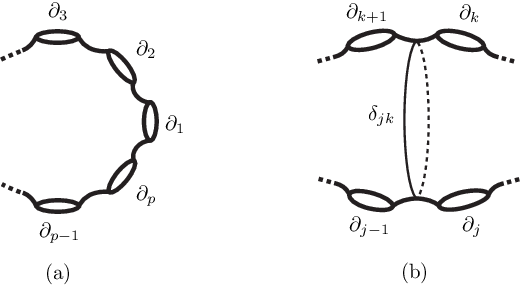}
\caption{(b) If $k=p$, then $\partial_{k+1}$ is replaced with $\partial_1$.}\label{fig-braid}
\end{center}
\end{figure}
When components of $\partial R$ are denoted as in Figure \ref{fig-braid} (a), it is known that the family of Dehn twists $t_{jk}$ about the simple closed curve $\delta_{jk}$ described in Figure \ref{fig-braid} (b) for any two integers $j, k\in J$ with $2\leq j<k\leq p$ and $1\leq k-j\leq p-3$ generates the pure mapping class group $\pmod(R)$ (see Chapters 1 and 4 in \cite{birman}).
Let us denote by $N\subset J^2$ the set of all pairs $(j, k)\in J^2$ satisfying these two inequalities. 
We put $\alpha_0=\delta_{23}$.

Given a curve $a$ in $R$ and a decomposition $J=J_1\sqcup J_2$ with $|J_1|, |J_2|\geq 2$, let us say that $a$ {\it decomposes} $J$ into $J_1$ and $J_2$ if one component of $R_a$ contains $\partial_j$ for each $j\in J_1$ and another component of $R_a$ contains $\partial_k$ for each $k\in J_2$.

\begin{claim}
Let $J=J_1\sqcup J_2$ be a decomposition of $J$ into two subsets such that $1\in J_1$, $2\in J_2$ and both $J_1$ and $J_2$ contain at least two elements. 
Then one can find a path in $\mathcal{D}$ connecting $\alpha_0$ and a vertex $\alpha$ of $\mathcal{D}$ which decomposes $J$ into $J_1$ and $J_2$.
\end{claim}

\begin{proof}
If $3\in J_2$, then one can readily find a curve $\alpha$ such that $i(\alpha, \alpha_0)=0$ and $\alpha$ decomposes $J$ into $J_1$ and $J_2$. 
Assume $3\in J_1$. 
If $|J_1|\geq 3$, then one can find a path of vertices in $\cal{D}$, $\alpha_0$, $\beta$, $\alpha$, such that $\beta$ decomposes $J$ into $J_1\setminus \{ 3\}$ and $J_2\cup \{ 3\}$ and $\alpha$ decomposes $J$ into $J_1$ and $J_2$. 
If $|J_1|=2$, then $J_1=\{ 1, 3\}$. 
One can then find a path $\alpha_0$, $\alpha_1$, $\alpha_2$, $\alpha_3$ in $\cal{D}$ consisting of p-vertices and satisfying the following: For each $j=1, 2, 3$, let $P_j$ denote the pair of pants cut off by $\alpha_j$ from $R$. 
Then $P_1$ contains $\partial_1$ and $\partial_4$, $P_2$ contains $\partial_2$ and $\partial_5$, and $P_3$ contains $\partial_1$ and $\partial_3$.
\end{proof}

\begin{claim}
For each $(j, k)\in N$, there exists a path in $\mathcal{D}$ connecting $\alpha_0$ and $t_{jk}(\alpha_0)$.
\end{claim}

\begin{proof}
Pick $(j, k)\in N$. If $j\neq 3$, then $i(\alpha_0, \delta_{jk})=0$, and thus $t_{jk}(\alpha_0)=\alpha_0$. 
If $j=3$ and $k\leq p-1$, then $i(\delta_{3k}, \delta_{2, p-1})=0$, and thus $i(t_{3k}(\alpha_0), \delta_{2, p-1})=0$. 
Since $\delta_{2, p-1}$ is a vertex of $\mathcal{D}$ and since $i(\alpha_0, \delta_{2, p-1})=0$, one can connect $\alpha_0$ and $t_{3k}(\alpha_0)$.
\begin{figure}
\begin{center}
\includegraphics[width=12cm]{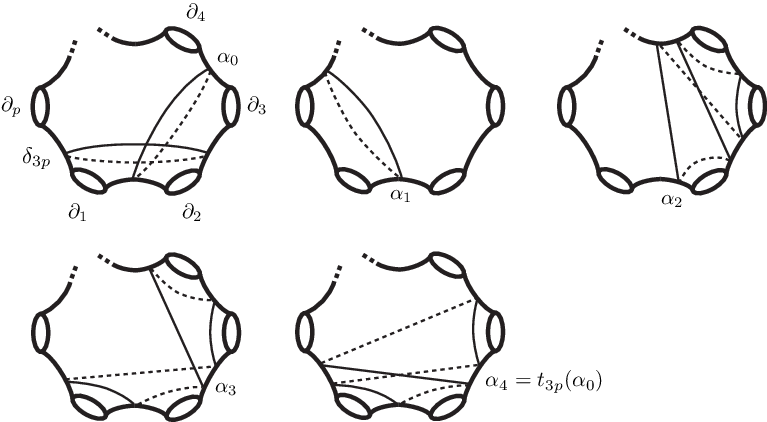}
\caption{}\label{fig-conn}
\end{center}
\end{figure}
Figure \ref{fig-conn} shows that $\alpha_0$ and $t_{3p}(\alpha_0)$ can be connected in $\mathcal{D}$ via $\alpha_1$, $\alpha_2$ and $\alpha_3$.
\end{proof}

The second claim implies that each point of the orbit for the action of $\pmod(R)$ on $\mathcal{D}$ containing $\alpha_0$ can be connected with $\alpha_0$.
The first claim then implies that $\mathcal{D}$ is connected. 
\end{proof}

\begin{lem}\label{lem-bp-a-conn}
Let $S=S_{1, p}$ be a surface with $p\geq 3$, and let $\alpha$ be a non-separating curve in $S$. 
Then the full subcomplex of $\calt(S)$ spanned by all vertices corresponding to BPs in $S$ containing $\alpha$ is connected.
\end{lem}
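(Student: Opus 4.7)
The plan is to reduce this to Proposition \ref{prop-d-conn} by cutting $S$ along $\alpha$. Since $\alpha$ is non-separating of genus one, $S_{\alpha}$ is homeomorphic to $S_{0, p+2}$. Let $\partial_1$ and $\partial_2$ denote the two boundary components of $S_{\alpha}$ created by cutting along $\alpha$; the remaining $p$ boundary components come from $\partial S$.

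First I would establish a natural bijection between BPs in $S$ containing $\alpha$ and vertices of $\mathcal{D}(S_{\alpha}; \partial_1, \partial_2)$. Given a BP $\{\alpha, \beta\}$, the curve $\beta$ is disjoint from $\alpha$, so it descends to a simple closed curve in $S_{\alpha}$; it must separate $\partial_1$ from $\partial_2$, for otherwise regluing shows that $\beta$ is either separating in $S$ or $S \setminus (\alpha \cup \beta)$ is connected, neither of which happens for a BP. Conversely, any essential curve in $S_{\alpha}$ separating $\partial_1$ from $\partial_2$ yields a non-separating curve in $S$, disjoint and non-isotopic to $\alpha$, such that $S \setminus (\alpha \cup \beta)$ is disconnected; hence $\{\alpha, \beta\}$ is a BP. One should check that essentiality in $S_{\alpha}$ is equivalent to $\beta$ being non-isotopic to $\alpha$ or to any component of $\partial S$.

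Second I would check that the adjacency structures match. Two BPs $\{\alpha, \beta_1\}$ and $\{\alpha, \beta_2\}$ are joined by an edge in $\calt(S)$ precisely when they are disjoint as vertices of $\calt(S)$, i.e.\ when $i(\beta_1, \beta_2) = 0$. Expanding the extended intersection number and using $i(\alpha, \alpha) = i(\alpha, \beta_j) = 0$, this reduces to disjointness of $\beta_1$ and $\beta_2$ in $S_{\alpha}$, which is exactly adjacency in $\mathcal{D}(S_{\alpha}; \partial_1, \partial_2)$. Thus the full subcomplex of $\calt(S)$ spanned by BPs containing $\alpha$ is isomorphic, as a simplicial complex, to $\mathcal{D}(S_{\alpha}; \partial_1, \partial_2)$.

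Finally, since $p \geq 3$, the surface $S_{\alpha} \cong S_{0, p+2}$ has at least five boundary components, so Proposition \ref{prop-d-conn} applies and $\mathcal{D}(S_{\alpha}; \partial_1, \partial_2)$ is connected; the desired connectivity follows. The main potential obstacle is bookkeeping the correspondence between essential curves in $S_{\alpha}$ and non-trivial BP partners for $\alpha$ in $S$ — in particular excluding the cases where $\beta$ is isotopic in $S_{\alpha}$ to $\partial_1$ or $\partial_2$ (hence to $\alpha$ in $S$) — but this is straightforward once the cutting picture is fixed.
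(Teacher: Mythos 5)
Your proposal is correct and matches the paper's argument exactly: cut along $\alpha$, identify the BPs containing $\alpha$ with the vertices of $\mathcal{D}(S_{\alpha}; \partial_1, \partial_2)$, and invoke Proposition \ref{prop-d-conn}. The paper simply states the bijection and adjacency-matching without spelling out the verification you include.
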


\begin{proof}
Let $R$ be the surface obtained by cutting $S$ along $\alpha$. 
We denote by $\partial_1$ and $\partial_2$ the two components of $\partial R$ corresponding to $\alpha$. 
There is a natural one-to-one correspondence between vertices of the complex $\mathcal{D}=\mathcal{D}(R; \partial_1, \partial_2)$ and BP-vertices of $\calt(S)$ containing $\alpha$. 
Proposition \ref{prop-d-conn} then shows the lemma. 
\end{proof}

Let $\phi \colon \calt(S)\rightarrow \calt(S)$ be a superinjective map. 
We define a map $\Phi \colon V(S)\rightarrow V(S)$ as follows: Pick $\alpha \in V(S)$. 
If $\alpha$ is separating in $S$, then we put $\Phi(\alpha)=\phi(\alpha)$. 
If $\alpha$ is non-separating in $S$, then we choose two BPs $a$, $b$ in $S$ such that the pair $\{ a, b\}$ is a rooted 1-simplex of $\calt(S)$ whose root curve is equal to $\alpha$. 
We then define $\Phi(\alpha)$ as the root curve for the rooted 1-simplex $\{ \phi(a), \phi(b)\}$ of $\calt(S)$. 
Lemma \ref{lem-rooted} implies that the pair $\{ \phi(a), \phi(b)\}$ is rooted. 
By Lemmas \ref{lem-pen-chain} and \ref{lem-bp-a-conn}, any two rooted 1-simplices $\{ a_1, b_1\}$, $\{ a_2, b_2\}$ of $\calt(S)$ with the same root curve $\alpha$ can be connected by a sequence of pentagons in $\calt(S)$ such that any two successive pentagons in it have two common edges which share either a BP-vertex or a p-vertex. 
Lemma \ref{lem-pen-root} then shows that the root curves for $\{ \phi(a_1), \phi(b_1)\}$ and $\{ \phi(a_2), \phi(b_2)\}$ are equal. 
It follows that $\Phi$ is well-defined.


\subsection{The case $g=1$ and $p\geq 4$}\label{subsec-si-p4}

We put $S=S_{1, p}$ with $p\geq 4$ throughout this subsection.

\begin{lem}\label{lem-bp-2-conn}
Let $a$, $b$ and $c$ be BPs in $S$ such that both of the pairs $\{ a, b\}$ and $\{ b, c\}$ are rooted 1-simplices of $\calt(S)$ and the root curves for them are equal. 
Let $\alpha$ denote the root curve. 
Then there exists a sequence $a_1, a_2,\ldots, a_n$ of BPs in $S$ such that for each $j=0, 1,\ldots, n$, $a_j$ contains $\alpha$ and $\{ b, a_j, a_{j+1}\}$ is a 2-simplex of $\calt(S)$ with $a_0=a$ and $a_{n+1}=c$.  
\end{lem}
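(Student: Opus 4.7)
The plan is to cut $S$ along $\alpha$ and translate the statement into a connectivity problem inside the complex $\mathcal{D}=\mathcal{D}(R;\partial_1,\partial_2)$ introduced in Proposition \ref{prop-d-conn}. Writing $R=S_{0,p+2}$ for the surface obtained by cutting $S$ along $\alpha$, and letting $\partial_1,\partial_2$ be the two components of $\partial R$ coming from $\alpha$, BPs in $S$ containing $\alpha$ are in bijection with vertices of $\mathcal{D}$. Writing $a=\{\alpha,\alpha_a\}$, $b=\{\alpha,\alpha_b\}$ and $c=\{\alpha,\alpha_c\}$, the hypotheses say that $\alpha_a,\alpha_b,\alpha_c$ are pairwise disjoint vertices of $\mathcal{D}$, and producing the required BP-sequence $a_0=a,a_1,\ldots,a_n,a_{n+1}=c$ is equivalent to producing a path in $\mathcal{D}$ from $\alpha_a$ to $\alpha_c$ all of whose vertices are disjoint from $\alpha_b$, i.e., a path inside the link $L=\lk_{\mathcal{D}}(\alpha_b)$.

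The next step is to describe $L$ explicitly. Let $R_1,R_2$ be the two components of $R$ cut along $\alpha_b$, with $\partial_i\subset R_i$; since $\alpha_b$ is essential in $R$, each $R_i$ has at least three boundary components. A curve in $R$ disjoint from $\alpha_b$ lies in one of $R_1,R_2$, and separates $\partial_1$ from $\partial_2$ in $R$ if and only if it separates $\partial_i$ from $\alpha_b$ inside $R_i$. Hence the vertex set of $L$ partitions as $L_1\sqcup L_2$, where $L_i$ is the vertex set of $\mathcal{D}(R_i;\partial_i,\alpha_b)$. Moreover any $u\in L_1$ and $v\in L_2$ lie in different components of $R\setminus\alpha_b$, so they are automatically disjoint and span an edge of $L$.

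I would then finish by a short case analysis on where $\alpha_a,\alpha_c$ lie. If they lie on opposite sides of $\alpha_b$, they are already adjacent in $L$, giving $n=0$. If they both lie in the same side, say $L_1$, and $L_2\neq\emptyset$, then picking any $d\in L_2$ gives the path $\alpha_a,d,\alpha_c$ in $L$, so $n=1$ works. The only genuinely delicate subcase is $\alpha_a,\alpha_c\in L_1$ with $L_2=\emptyset$; but $L_2$ being empty forces $R_2$ to be a pair of pants $S_{0,3}$, which in turn forces $R_1\cong S_{0,p+1}$ to have $p+1\geq 5$ boundary components. Proposition \ref{prop-d-conn} applied to $R_1$ with the marked pair $(\partial_1,\alpha_b)$ then shows $L_1=\mathcal{D}(R_1;\partial_1,\alpha_b)$ is connected, producing the desired path.

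The main obstacle is precisely this last subcase: when $R_1$ is as small as $S_{0,4}$, two distinct vertices of $L_1$ need not be disjoint in $R_1$ (any two non-isotopic essential simple closed curves in $S_{0,4}$ intersect), so one genuinely needs a bridge through the opposite side to reroute. The hypothesis $p\geq 4$ is exactly what ensures that whenever no such bridge is available (i.e.\ whenever $R_2$ is a pair of pants), $R_1$ is automatically large enough for Proposition \ref{prop-d-conn} to apply directly.
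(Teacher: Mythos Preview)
Your proof is correct and follows essentially the same approach as the paper: both reduce the claim to connectivity of the link of $\alpha_b$ in $\mathcal{D}(R;\partial_1,\partial_2)$, argue via the join structure $L_1 * L_2$ when neither side is a pair of pants, and invoke Proposition~\ref{prop-d-conn} for $R_1\cong S_{0,p+1}$ (using $p\geq 4$) when one side degenerates. One small slip: the hypotheses do not give that $\alpha_a,\alpha_b,\alpha_c$ are \emph{pairwise} disjoint, only that $\alpha_a$ and $\alpha_c$ are each disjoint from $\alpha_b$; but this is exactly what you need for $\alpha_a,\alpha_c\in L$, so the argument is unaffected.
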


\begin{proof}
Let $R$ be the surface obtained by cutting $S$ along $\alpha$. 
We denote by $\partial_1$ and $\partial_2$ the two components of $\partial R$ corresponding to $\alpha$. 
The surface $R$ is homeomorphic to $S_{0, p+2}$, and we have $p+2\geq 6$. 
There is a natural one-to-one correspondence between vertices of the complex $\mathcal{D}=\mathcal{D}(R; \partial_1, \partial_2)$ in Proposition \ref{prop-d-conn} and BP-vertices of $\calt(S)$ containing $\alpha$. 
It therefore suffices to prove that the link of each vertex of $\mathcal{D}$ is connected. 
Let $\beta$ be a vertex of $\mathcal{D}$. 
If $\beta$ is not a p-curve in $R$, then it is clear that the link of $\beta$ in $\mathcal{D}$ is connected. 
If $\beta$ is a p-curve in $R$, then the component of $R_{\beta}$ that is not a pair of pants is homeomorphic to $S_{0, p+1}$. 
Since we have $p+1\geq 5$, Proposition \ref{prop-d-conn} implies that the link of $\beta$ in $\mathcal{D}$ is connected.
\end{proof}

Let $\phi \colon \calt(S)\rightarrow \calt(S)$ be a superinjective map. 
We define a map $\Phi \colon V(S)\rightarrow V(S)$ in the same manner as in the previous subsection. 
Namely, we define $\Phi =\phi$ on $V_s(S)$ and if $\alpha$ is a non-separating curve in $S$, then we choose two BPs $a$, $b$ in $S$ such that the pair $\{ a, b\}$ is a rooted 1-simplex of $\calt(S)$ whose root curve is equal to $\alpha$. 
We then define $\Phi(\alpha)$ as the root curve for the rooted 1-simplex $\{ \phi(a), \phi(b)\}$ of $\calt(S)$. 
Using Lemmas \ref{lem-bp-a-conn} and \ref{lem-bp-2-conn}, we can find a sequence of rooted 2-simplices between any two given rooted 1-simplices with the same root curve $\alpha$. 
Since $\phi$ preserves rooted simplices of $\calt(S)$, the map $\Phi$ is well-defined.


\subsection{Simplicity and injectivity}\label{subsec-simp-inj}

In this subsection, we fix a surface $S=S_{1, p}$ with $p\geq 3$ and fix a superinjective map $\phi \colon \calt(S)\rightarrow \calt(S)$. 
Let $\Phi \colon V(S)\rightarrow V(S)$ be the map constructed in Sections \ref{subsec-si-p3} and \ref{subsec-si-p4}. 
We first prove that $\Phi$ is in fact a map inducing $\phi$.

\begin{lem}\label{lem-phi-bp}
The equality $\phi(\{ \alpha, \beta \})= \{ \Phi(\alpha), \Phi(\beta)\}$ holds for each BP $\{ \alpha, \beta \}$ in $S$.
\end{lem}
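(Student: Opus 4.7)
My plan is to exhibit a $2$-simplex $\{a,c,d\}$ of $\calt(S)$ whose three edges are rooted $1$-simplices with roots $\alpha$, $\beta$, and an auxiliary non-separating curve $\alpha_1$; applying the definition of $\Phi$ to each edge will then pin down $\phi(c) = \{\Phi(\alpha),\Phi(\beta)\}$. To produce $\alpha_1$, observe that cutting $S=S_{1,p}$ along the BP $c=\{\alpha,\beta\}$ yields two genus-zero components, and since $c$ is a BP the $p\geq 3$ components of $\partial S$ distribute into two nonempty groups of sizes $k_1,k_2\geq 1$ with $k_1+k_2=p$. Hence at least one component $Q$ of $S_c$ has at least four boundary components, and inside $Q$ one may choose a simple closed curve $\alpha_1$ separating the boundary circle coming from $\alpha$ from the one coming from $\beta$, with at least one component of $\partial S$ on each side of it. Such an $\alpha_1$ is essential and non-separating in $S$, disjoint from $\alpha$ and $\beta$ and non-isotopic to either. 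Because $S$ has genus one, any two disjoint non-isotopic non-separating curves in $S$ automatically form a BP, so $a:=\{\alpha,\alpha_1\}$ and $d:=\{\beta,\alpha_1\}$ are BPs and $\{a,c,d\}$ is a $2$-simplex of $\calt(S)$ whose edges $\{a,c\}$, $\{c,d\}$, $\{a,d\}$ are rooted with root curves $\alpha$, $\beta$, $\alpha_1$, respectively.

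By the very construction of $\Phi$ in Sections \ref{subsec-si-p3} and \ref{subsec-si-p4}, for each rooted $1$-simplex above, Lemma \ref{lem-rooted} guarantees that the image under $\phi$ is again rooted, and the root of the image is the $\Phi$-image of the original root. Therefore $\Phi(\alpha)$ is the unique curve common to $\phi(a)$ and $\phi(c)$, the curve $\Phi(\beta)$ is the unique curve common to $\phi(c)$ and $\phi(d)$, and $\Phi(\alpha_1)$ is the unique curve common to $\phi(a)$ and $\phi(d)$. In particular $\phi(c)$ contains both $\Phi(\alpha)$ and $\Phi(\beta)$, and since a BP consists of exactly two distinct curves, the lemma will follow as soon as we show $\Phi(\alpha)\neq\Phi(\beta)$.

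The main (and only delicate) step is this distinctness. Suppose for contradiction that $\Phi(\alpha)=\Phi(\beta)=\gamma$. Then $\phi(a)$ must contain $\gamma$ (being the common curve with $\phi(c)$) and also $\Phi(\alpha_1)$ (being the common curve with $\phi(d)$), so $\phi(a)=\{\gamma,\Phi(\alpha_1)\}$; the same reasoning gives $\phi(d)=\{\gamma,\Phi(\alpha_1)\}$. Hence $\phi(a)=\phi(d)$, which contradicts the injectivity of the superinjective map $\phi$ since $a\neq d$ (as $\alpha\neq\beta$). Thus $\Phi(\alpha)\neq\Phi(\beta)$ and we conclude $\phi(c)=\{\Phi(\alpha),\Phi(\beta)\}$, completing the plan.
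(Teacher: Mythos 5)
Your overall strategy --- constructing an auxiliary non-separating curve $\alpha_1$ disjoint from $\alpha$ and $\beta$, forming the triangle of BPs $\{a,c,d\}$, and reading off $\Phi(\alpha),\Phi(\beta)$ as roots of its edges --- mirrors the paper's short proof, which likewise chooses such a curve (called $\gamma$ there) and then invokes Lemma \ref{lem-simp-ind}. Your construction of $\alpha_1$ is correct, the three edges of $\{a,c,d\}$ are indeed rooted with root curves $\alpha$, $\beta$, $\alpha_1$, and the reduction of the lemma to showing $\Phi(\alpha)\neq\Phi(\beta)$ is sound.

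The gap is in the final contradiction. After assuming $\Phi(\alpha)=\Phi(\beta)=\gamma$, you assert that $\phi(a)$ contains both $\gamma$ and $\Phi(\alpha_1)$ and ``therefore'' $\phi(a)=\{\gamma,\Phi(\alpha_1)\}$; this identification of the BP $\phi(a)$ is only forced when $\gamma\neq\Phi(\alpha_1)$, which you have not established. In the remaining case $\Phi(\alpha)=\Phi(\beta)=\Phi(\alpha_1)=\gamma$, the only conclusion available is that $\gamma$ lies in each of $\phi(a)$, $\phi(c)$, $\phi(d)$, i.e., that $\{\phi(a),\phi(c),\phi(d)\}$ is a rooted $2$-simplex with root $\gamma$; nothing in your argument then forces $\phi(a)=\phi(d)$, and no contradiction follows. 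Note that Lemma \ref{lem-rooted} only asserts that images of rooted simplices are rooted, not the converse, so the fact that $\{a,c,d\}$ itself is not rooted does not by itself rule this out. Excluding this case is precisely the content of Lemma \ref{lem-simp-ind}, whose proof extends $\{a,c,d\}$ to a simplex of $\calt(S)$ of maximal dimension and compares the size of rooted subsimplices on both sides. You should invoke that lemma rather than attempt to reprove its conclusion: applied with $b_1=a$, $b_2=d$, and common curve $\alpha_1$, it gives $\phi(\{\alpha,\beta\})=\{\beta_1,\beta_2\}$, and since this is a BP one has $\beta_1\neq\beta_2$; unwinding the definition of $\Phi$ then identifies $\beta_1=\Phi(\alpha)$ and $\beta_2=\Phi(\beta)$.
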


\begin{proof}
By the definition of $\Phi$, both $\Phi(\alpha)$ and $\Phi(\beta)$ are contained in $\phi(\{ \alpha, \beta \})$.  
After choosing a non-separating curve $\gamma$ in $S$ such that both $\{ \beta, \gamma \}$ and $\{ \gamma, \alpha \}$ are BPs in $S$, we conclude that $\Phi(\alpha)$, $\Phi(\beta)$ and $\Phi(\gamma)$ are mutually distinct by Lemma \ref{lem-simp-ind}.
\end{proof}

\begin{lem}
The map $\Phi$ defines a simplicial map from $\calc(S)$ into itself.
\end{lem}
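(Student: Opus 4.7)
The plan is to verify $i(\Phi(\alpha),\Phi(\beta))=0$ for every pair of distinct disjoint vertices $\alpha,\beta$ of $\calc(S)$, by case analysis on the topological types of $\alpha$ and $\beta$. First, when both are separating, $\Phi$ agrees with $\phi$ on $V_s(S)$ and simpliciality of $\phi$ on $\calt(S)$ gives the result. Second, when both are non-separating, I use that in the genus-one surface $S=S_{1,p}$ any two disjoint non-isotopic non-separating curves automatically form a BP: cutting $S$ along $\alpha$ yields $S_{0,p+2}$ in which $\beta$ is separating, and the assumption that $\beta$ is non-separating in $S$ forces the two copies of $\alpha$ to lie on opposite sides of $\beta$, which is precisely the BP condition. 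Lemma \ref{lem-phi-bp} then gives $\phi(\{\alpha,\beta\})=\{\Phi(\alpha),\Phi(\beta)\}$, a BP whose two curves are disjoint.

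The mixed case is where $\alpha$ is non-separating and $\beta$ is separating (or vice versa), with $\alpha$ in the positive-genus component $Q$ of $S_\beta$. When $\beta$ is not an h-curve, $Q=S_{1,p_1}$ with $p_1\geq 2$ admits BPs, so I would choose a BP $\{\alpha,\gamma\}$ inside $Q$; it is a BP of $S$ disjoint from $\beta$, simpliciality of $\phi$ on $\calt(S)$ makes $\phi(\{\alpha,\gamma\})$ disjoint from $\phi(\beta)$, and since $\Phi(\alpha)$ is by construction one of the two curves of $\phi(\{\alpha,\gamma\})$, it is disjoint from $\phi(\beta)=\Phi(\beta)$.

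The hard part will be the remaining subcase where $\beta$ is an h-curve: then $Q=S_{1,1}$ contains no BP at all, so no BP of $S$ is disjoint from $\beta$ and the previous recipe collapses. I would handle this by induction on $p\geq 3$. For the base $p=3$, take the three pairwise intersecting p-curves $\beta'_{12},\beta'_{13},\beta'_{23}$ of $S$ lying in the $S_{0,4}$-side $R$ of $\beta$, each cutting off the pair of pants containing a distinct pair of the three components of $\partial S$. For every $\{i,j\}$ the curve $\alpha$ lies in the $S_{1,2}$-side of $\beta'_{ij}$, which admits a BP $\{\alpha,\gamma_{ij}\}$ disjoint from $\beta'_{ij}$; hence $i(\Phi(\alpha),\phi(\beta'_{ij}))=0$. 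By Proposition \ref{prop-top-pre}, Lemma \ref{lem-chi}(i) and Lemma \ref{lem-ad}, the three $\phi(\beta'_{ij})$ are three pairwise intersecting p-curves inside the $S_{0,4}$-side $R'$ of $\phi(\beta)$, arranged as the three pairings of three $\partial S$ components. A direct Farey-graph analysis in $R'=S_{0,4}$ then shows that the complement in $S$ of these three p-curves has a unique component meeting $\phi(\beta)$, and this component deformation-retracts onto the $S_{1,1}$-side $Q'$ of $\phi(\beta)$; any essential non-separating curve of $S$ disjoint from all three can therefore be isotoped into $Q'$. Applying this to $\Phi(\alpha)$ yields $i(\Phi(\alpha),\phi(\beta))=0$.

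For the inductive step $p\geq 4$, I would pick a p-curve $\delta$ of $S$ in $R$ disjoint from $\beta$ (which exists since $p\geq 4$), chosen so that $\alpha$ and $\beta$ both lie in the $S_{1,p-1}$-side of $\delta$. Lemma \ref{lem-chi}(i) applied to $\delta$ then lets $\phi$ restrict, after identifying the two sides by a homeomorphism, to a superinjective map $\phi_\delta\colon\calt(S_{1,p-1})\to\calt(S_{1,p-1})$; the associated map $\Phi_\delta$ coincides with $\Phi$ on $V(S_{1,p-1})$ (separating curves inherit their image directly, and a root curve can be recomputed from a BP lying inside $S_{1,p-1}$). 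The inductive hypothesis then supplies simpliciality of $\Phi_\delta$ on $\calc(S_{1,p-1})$, which gives $i(\Phi(\alpha),\Phi(\beta))=0$. The technical heart of the argument is the Farey-graph computation in the base case $p=3$, which replaces the missing direct BP witness by the three pairwise intersecting p-curves in $R$.
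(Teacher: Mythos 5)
Your first three cases coincide with the paper's treatment: separating-separating, nonseparating-nonseparating (where the genus-one hypothesis forces a BP, and Lemma \ref{lem-phi-bp} finishes), and the mixed case with $\beta$ not an h-curve (witnessed by a BP $\{\alpha,\gamma\}$ in the genus-one side of $\beta$). For the h-curve case you diverge from the paper: the paper picks, once and for all $p\geq3$, two separating curves $\gamma_1,\gamma_2$ and two nonseparating curves $\alpha_1,\alpha_2$ (Figure \ref{fig-simp}) so that $\beta$ is the unique h-curve disjoint from $\gamma_1,\gamma_2$, and uses the BPs $\{\alpha,\alpha_1\},\{\alpha,\alpha_2\}$ to drive $\Phi(\alpha)$ into disjointness with $\phi(\gamma_1),\phi(\gamma_2)$; you instead induct on $p$ with a $p=3$ base case built from three p-curves $\beta'_{ij}$ disjoint from $\beta$.

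There is a genuine gap in your $p=3$ base case, at the step asserting that the three images $\phi(\beta'_{12}),\phi(\beta'_{13}),\phi(\beta'_{23})$ realize the three distinct pairings of the components of $\partial S$ inside $R'$. None of the cited results yields this: Proposition \ref{prop-top-pre} and Lemma \ref{lem-chi} only preserve the \emph{topological} type of the complementary pieces (so $\phi(\beta'_{ij})$ is again a p-curve), not which pair of boundary labels is enclosed; and Lemma \ref{lem-ad} is stated under $|\chi(S)|\geq4$ and so does not even apply to $S_{1,3}$ — moreover for $S_{1,3}$ every maximal simplex of $\calc_s(S)$ is an edge whose adjacency graph is always $K_2$, so it would carry no information anyway. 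Without the distinct-type conclusion, your cut-and-isotope argument in $R'$ collapses: two image p-curves of the \emph{same} pairing type can leave room for an essential arc from $\phi(\beta)$ to itself that misses all three, and then $\Phi(\alpha)$ could a priori cross $\phi(\beta)$. The conclusion you want is in fact true, but it needs a separate combinatorial argument — for instance, using Lemma \ref{lem-bp-s}(iii) together with connectivity of each BP-equivalence class to show $\phi$ permutes the three BP-types, then observing that a p-curve of type $\{i,j\}$ is disjoint only from BPs of type $\partial_k$ with $k\notin\{i,j\}$, which forces $\phi$ to act on p-curve pairing types by the same permutation. Once that is supplied, your arc computation in $R'=S_{0,4}$ (for which two image p-curves of distinct type already suffice) does close the base case, and the inductive step via a p-curve $\delta$ is sound.
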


\begin{proof}
Let $\alpha$ and $\beta$ be two distinct curves in $S$ with $i(\alpha, \beta)=0$. 
If both $\alpha$ and $\beta$ are separating in $S$, then $i(\Phi(\alpha), \Phi(\beta))=0$ because we have $\Phi =\phi$ on $V_s(S)$ and $\phi$ is simplicial. 
If both $\alpha$ and $\beta$ are non-separating in $S$, then $\{ \alpha, \beta \}$ is a BP in $S$ because $S$ is of genus one. 
Lemma \ref{lem-phi-bp} shows that $\{ \Phi(\alpha), \Phi(\beta)\}$ is also a BP in $S$, and thus $i(\Phi(\alpha), \Phi(\beta))=0$.

Suppose that $\alpha$ is non-separating in $S$ and $\beta$ is separating in $S$.
Note that $\alpha$ is in the component of $S_{\beta}$ of genus one.
Unless $\beta$ is an h-curve in $S$, we can find a non-separating curve $\gamma$ in $S$ such that $i(\beta, \gamma)=0$ and the pair $\{ \alpha, \gamma \}$ is a BP in $S$.
It follows from $i(\{ \alpha, \gamma \}, \beta)=0$ that we have $i(\phi(\{ \alpha, \gamma \}), \phi(\beta))=0$ and thus $i(\Phi(\alpha), \Phi(\beta))=0$.

We now assume that $\beta$ is an h-curve in $S$.
Choose separating curves $\gamma_1$, $\gamma_2$ in $S$ and non-separating curves $\alpha_1$, $\alpha_2$ in $S$ as described in Figure \ref{fig-simp}.
\begin{figure}
\begin{center}
\includegraphics[width=7cm]{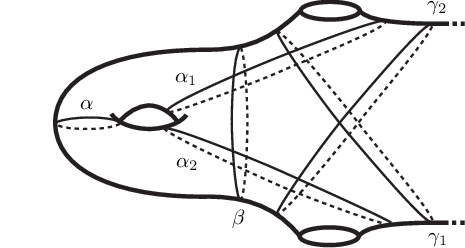}
\caption{}\label{fig-simp}
\end{center}
\end{figure}
Note that $\beta$ is the only h-curve in $S$ disjoint from both of $\gamma_1$ and $\gamma_2$. 
By Proposition \ref{prop-top-pre}, the same property holds for the image of these separating curves via $\phi$. 
Since $\phi(\{ \alpha, \alpha_1\})=\{ \Phi(\alpha), \Phi(\alpha_1)\}$ and $\phi(\gamma_2)$ are disjoint and since $\phi(\{ \alpha, \alpha_2\})=\{ \Phi(\alpha), \Phi(\alpha_2)\}$ and $\phi(\gamma_1)$ are disjoint, the curve $\Phi(\alpha)$ is disjoint from $\phi(\gamma_1)$ and $\phi(\gamma_2)$. 
It follows that $\Phi(\alpha)$ is disjoint from $\Phi(\beta)$.
\end{proof}

\begin{lem}\label{lem-Phi-inj}
The simplicial map $\Phi \colon \calc(S)\rightarrow \calc(S)$ is injective.
\end{lem}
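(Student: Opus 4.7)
The plan is to partition the analysis by topological type of the vertices and handle each case separately. First, on the set of separating vertices $V_{s}(S)$, the map $\Phi$ coincides with $\phi$, which is injective (any superinjective map on $\calt(S)$ is injective, as noted in Section~\ref{subsec-comp}), and $\Phi$ sends $V_{s}(S)$ into $V_{s}(S)$ by Proposition~\ref{prop-top-pre}. On the other hand, for non-separating $\alpha$, the curve $\Phi(\alpha)$ is one of the two curves of a BP $\phi(\{\alpha,\alpha'\})$ and is therefore non-separating. Thus $\Phi$ respects the separating/non-separating dichotomy and cannot conflate vertices of different type, so the remaining task will be to verify injectivity on the non-separating vertices.

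For two distinct non-separating curves $\alpha,\beta$ with $i(\alpha,\beta)=0$, I will argue as follows: the genus-one hypothesis forces $\{\alpha,\beta\}$ to be a BP, whence Lemma~\ref{lem-phi-bp} gives $\phi(\{\alpha,\beta\})=\{\Phi(\alpha),\Phi(\beta)\}$, a BP whose two curves are necessarily distinct.

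The main case is $i(\alpha,\beta)>0$. Assuming toward a contradiction that $\Phi(\alpha)=\Phi(\beta)=\gamma_{0}$, I plan to derive a contradiction via a counting argument on BPs. First I will observe, by a short homology computation in $H_{1}(S;\mathbb{Z})$ projected to $H_{1}$ of the closed torus obtained by capping off boundaries, that the torus-projections of $[\alpha]$ and $[\beta]$ are linearly independent; otherwise $[\alpha]\mp[\beta]$ would lie in the span of boundary loops, so $\alpha$ and $\beta$ would cobound a planar subsurface and be isotopable to disjoint representatives, forcing $i(\alpha,\beta)=0$. This linear independence then forces the torus-projection of any non-separating curve disjoint from both $\alpha$ and $\beta$ to vanish, which would make such a curve separating --- impossible. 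Therefore no non-separating curve is disjoint from both $\alpha$ and $\beta$, and consequently the sets $B_{\alpha}=\{b\in V_{bp}(S):\alpha\in b\}$ and $B_{\beta}$ are disjoint as subsets of $V_{bp}(S)$. Lemma~\ref{lem-phi-bp} together with injectivity of $\phi$ on $\calt(S)$ then yields an injection $B_{\alpha}\sqcup B_{\beta}\hookrightarrow B_{\gamma_{0}}$. A direct count completes the argument: BP-mates of a non-separating curve in $S_{1,p}$ are in bijection with essential simple closed curves in $S_{0,p+2}$ separating two prescribed boundary components, of which there are exactly $2^{p}-2$, so $|B_{\alpha}|=|B_{\beta}|=|B_{\gamma_{0}}|=2^{p}-2$; since $p\geq 3$ the inequality $2(2^{p}-2)>2^{p}-2$ is strict, ruling out the claimed injection and giving a contradiction.

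\textbf{Main obstacle.} The intersecting case is the core of the argument. The critical input is the homological fact --- truly available only in the genus-one setting --- that the torus summand of $H_{1}(S)$ has rank two, so two independent disjointness conditions already collapse the torus-projection of a third non-separating curve to zero; once this is in hand, counting BP-mates in the planar complement $S_{0,p+2}$ provides the numerical obstruction.
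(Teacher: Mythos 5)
Your handling of the first two cases (two separating curves; two disjoint non-separating curves) is fine and matches the paper in spirit. The failure is in the third and central case, and it is twofold.

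First, the homological claim is false. You assert that if the images of $[\alpha]$ and $[\beta]$ in $H_1$ of the capped-off torus are linearly dependent, then $\alpha$ and $\beta$ can be isotoped to be disjoint. This is not true: take $\gamma$ a separating curve intersecting $\alpha$ and set $\beta = t_\gamma(\alpha)$. Since $t_\gamma$ acts trivially on homology, $[\beta]=[\alpha]$ in $H_1(S;\mathbb{Z})$ (hence their torus-projections coincide), yet $i(\alpha,\beta) = i(\alpha,\gamma)^2 > 0$. So intersecting non-separating curves in $S_{1,p}$ can perfectly well be homologous, and the subsequent conclusion that no non-separating curve is disjoint from both $\alpha$ and $\beta$ (hence that $B_\alpha \cap B_\beta = \emptyset$) does not follow.

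Second, and independently fatal, the cardinality argument collapses: the sets $B_\alpha$, $B_\beta$, $B_{\gamma_0}$ are all countably \emph{infinite}. BP-mates of $\alpha$ correspond to isotopy classes of essential simple closed curves in $S_{0,p+2}$ separating two marked boundary components, and for $p\geq 3$ there are infinitely many of these (twisting about any curve meeting a fixed one produces infinitely many non-isotopic curves in the same separation type). The number $2^p-2$ you compute is (roughly) the number of \emph{topological types} of such curves, not the number of isotopy classes. An injection $B_\alpha \sqcup B_\beta \hookrightarrow B_{\gamma_0}$ among countably infinite sets yields no contradiction whatsoever. The paper instead cuts along $\alpha$ and applies Shackleton's Theorem~\ref{thm-sha} to the induced superinjective map $\Phi_\alpha \colon \calc(S_\alpha)\to \calc(S_{\Phi(\alpha)})$ to conclude it is an \emph{isomorphism}; this surjectivity gives the equality $\phi(V_s(S)\cap V(S_\alpha)) = V_s(S)\cap V(S_{\Phi(\alpha)})$, and then injectivity of $\phi$ forces $V_s(S)\cap V(S_\alpha)=V_s(S)\cap V(S_\beta)$, hence $\alpha=\beta$. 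That surjectivity input is precisely what your count was trying to replace, and it cannot be replaced by a finite count.
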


\begin{proof}
Since $\Phi$ preserves separating curves and non-separating curves, respectively, and since the restriction of $\Phi$ to $V_s(S)$ is superinjective, it is enough to show that for any two non-separating curves $\alpha$, $\beta$ in $S$, the equality $\Phi(\alpha)=\Phi(\beta)$ implies $\alpha =\beta$. 
Note that each curve in $S_{\alpha}$ is either separating in $S$ or BP-equivalent to $\alpha$ in $S$. 
The map $\Phi$ induces a superinjective map $\Phi_{\alpha}\colon \calc(S_{\alpha})\rightarrow \calc(S_{\Phi(\alpha)})$ because $\phi$ is superinjective. 
Since both $S_{\alpha}$ and $S_{\Phi(\alpha)}$ are homeomorphic to $S_{0, p+2}$, Theorem \ref{thm-sha} implies that $\Phi_{\alpha}$ is an isomorphism. 
We also obtain an isomorphism $\Phi_{\beta}\colon \calc(S_{\beta})\rightarrow \calc(S_{\Phi(\beta)})$. If $\Phi(\alpha)=\Phi(\beta)$, then we have the equality
\begin{align*}
\phi(V_s(S)\cap V(S_{\alpha}))&=\Phi(V_s(S)\cap V(S_{\alpha}))=V_s(S)\cap V(S_{\Phi(\alpha)})\\
&=V_s(S)\cap V(S_{\Phi(\beta)})=\Phi(V_s(S)\cap V(S_{\beta}))=\phi(V_s(S)\cap V(S_{\beta})).
\end{align*}
Since $\phi$ is injective, the equality $V_s(S)\cap V(S_{\alpha})=V_s(S)\cap V(S_{\beta})$ holds. 
We thus have $\alpha =\beta$.
\end{proof}

Applying Theorem \ref{thm-sha}, we obtain the following conclusion.

\begin{thm}\label{thm-g-1-si}
Let $S=S_{1, p}$ be a surface with $p\geq 3$, and let $\phi \colon \calt(S)\rightarrow \calt(S)$ be a superinjective map. 
Then there exists an automorphism $\Phi$ of $\calc(S)$ such that we have $\Phi(\alpha)=\phi(\alpha)$ for any separating curve $\alpha$ in $S$ and $\{ \Phi(\beta), \Phi(\gamma)\} =\phi(\{ \beta, \gamma \})$ for any BP $\{ \beta, \gamma \}$ in $S$. 
\end{thm}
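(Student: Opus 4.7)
The plan is to assemble the pieces developed in Sections~\ref{subsec-si-p3}--\ref{subsec-simp-inj} and then invoke Theorem~\ref{thm-sha} to upgrade injectivity into automorphism. The candidate map $\Phi\colon V(S)\to V(S)$ has already been constructed: on $V_s(S)$ we simply set $\Phi=\phi$, while on a non-separating curve $\alpha$ we pick any rooted $1$-simplex $\{a,b\}$ of $\calt(S)$ with root curve $\alpha$ and define $\Phi(\alpha)$ to be the root curve of the rooted $1$-simplex $\{\phi(a),\phi(b)\}$ (well-definedness is precisely the content of Lemmas~\ref{lem-pen-chain}--\ref{lem-pen-root} combined with Lemmas~\ref{lem-bp-a-conn} and~\ref{lem-bp-2-conn}, together with Lemma~\ref{lem-rooted}).

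So I would organize the proof in four short steps. First, record that $\Phi$ agrees with $\phi$ on separating vertices by definition, and that $\phi(\{\alpha,\beta\})=\{\Phi(\alpha),\Phi(\beta)\}$ for every BP $\{\alpha,\beta\}$ in $S$ by Lemma~\ref{lem-phi-bp}. Second, cite the simpliciality statement established just after Lemma~\ref{lem-phi-bp}: for disjoint curves in $S$ one verifies case-by-case (two separating, two non-separating forming a BP, one separating non-$h$, and the $h$-curve case using the configuration of Figure~\ref{fig-simp}) that $\Phi$ sends $\Sigma(S)$ to $\Sigma(S)$. Third, invoke Lemma~\ref{lem-Phi-inj} to conclude that the simplicial map $\Phi\colon\calc(S)\to\calc(S)$ is injective.

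Finally, since $S=S_{1,p}$ with $p\geq 3$ satisfies $3g+p-4=p-1>0$, Theorem~\ref{thm-sha} applies and tells us that any injective simplicial self-map of $\calc(S)$ is surjective; hence $\Phi$ is an automorphism of $\calc(S)$. The two agreement properties stated in the theorem are then exactly what was recorded in the first step.

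There is essentially no remaining obstacle at this point: every substantive ingredient (well-definedness of $\Phi$, simpliciality, injectivity, and existence of a superinjective extension to the $\mathcal{C}$-level via rooted simplices) has been handled in the lemmas above. The only delicate moment is double-checking that the definition of $\Phi$ really produces a map $V(S)\to V(S)$ rather than merely to unordered pairs; this is ensured because in each rooted $1$-simplex $\{\phi(a),\phi(b)\}$ the root curve (the unique common non-separating curve of the two BPs) is unambiguously singled out by Lemma~\ref{lem-rooted}. Once this point is noted, the proof is just a one-line citation of Theorem~\ref{thm-sha}.
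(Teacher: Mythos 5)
Your proposal follows the paper's own argument exactly: define $\Phi$ via root curves of rooted $1$-simplices, verify well-definedness, simpliciality, and injectivity through Lemmas~\ref{lem-rooted}, \ref{lem-phi-bp}, \ref{lem-Phi-inj}, and the intermediate lemma, then close with Theorem~\ref{thm-sha}. The decomposition into the same four steps and the same citations means this is not a different route but a faithful assembly of the paper's proof.
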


\begin{rem}
The construction of the simplicial map $\Phi \colon \calc(S)\rightarrow \calc(S)$ associated with a superinjective map $\phi \colon \calt(S)\rightarrow \calt(S)$ is valid for a surface $S=S_{g, p}$ with $g+p\geq 5$ as well after establishing connectivity of an appropriate complex as in Proposition \ref{prop-d-conn}. 
However, when $g\geq 2$, one cannot show injectivity of $\Phi$ along the proof of Lemma \ref{lem-Phi-inj}.
This is because for each non-separating curve $\alpha$ in $S$, the full subcomplex of $\calt(S)$ spanned by all vertices corresponding to either a separating curve in $S$ disjoint from $\alpha$ or a BP in $S$ containing $\alpha$ can be identified with a {\it proper} subcomplex of $\calc(S_{\alpha})$.  
If $\phi$ is an automorphism of $\calt(S)$, then one can conclude that $\Phi$ is an automorphism of $\calc(S)$ by using $\phi^{-1}$.

In Theorem \ref{thm-g-2}, we will prove that if $S=S_{g, p}$ is a surface with $g\geq 2$ and $|\chi(S)|\geq 4$, then any automorphism of $\calt(S)$ is induced by an automorphism of $\calc(S)$, after proving the same conclusion for any automorphism of $\calc_s(S)$.
\end{rem}


\section{Automorphisms of the complex of separating curves}\label{sec-aut-sep}

Let $S=S_{g, p}$ be a surface in Theorems \ref{thm-tor-comm} and \ref{thm-jo-comm}. 
Given an automorphism $\phi$ of $\calc_s(S)$, we construct an automorphism $\Phi$ of $\calc(S)$ which extends $\phi$. 
When $g=1$, we first focus on the case $p=3$. 
Results of this case are used in the case $p\geq 4$ in an inductive argument on $p$. 
When $g\geq 2$, the construction of $\Phi$ follows the argument, due to Brendle-Margalit \cite{bm}, using sharing pairs and spines in $S$.

\subsection{The case $g=1$ and $p=3$}\label{subsec-hex}

We put $S=S_{1, 3}$ throughout this subsection. 
Note that the link of each h-vertex in $\calc_s(S)$ consists of only p-vertices and that the link of each p-vertex in $\calc_s(S)$ consists of only h-vertices. 
It follows that there exists no pentagon in $\calc_s(S)$. 
We mean by a {\it hexagon} in $\calc_s(S)$ the full subgraph of $\calc_s(S)$ spanned by six vertices $v_1,\ldots, v_6$ with $i(v_j, v_{j+1})=0$, $i(v_j, v_{j+2})\neq 0$ and $i(v_j, v_{j+3})\neq 0$ for each $j$ mod $6$ (see Figure \ref{fig-hex}).
\begin{figure}
\begin{center}
\includegraphics[width=12cm]{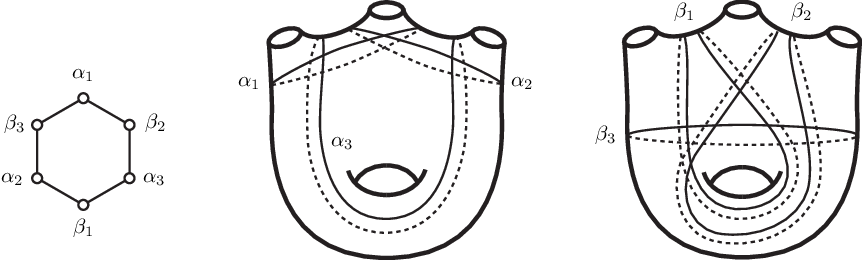}
\caption{A hexagon in $\calc_s(S_{1, 3})$}\label{fig-hex}
\end{center}
\end{figure}
In this case, we say that the hexagon is defined by the 6-tuple $(v_1,\ldots, v_6)$.

To prove the next lemma, we need the following terminology.
We say that a simple arc $l$ in a surface $R$ with $\partial R\neq \emptyset$ is said to be {\it essential} in $R$ if
\begin{itemize}
\item $\partial l$ consists of two distinct points of $\partial R$;
\item $l$ meets $\partial R$ only at its end points; and
\item $l$ is not isotopic relative to $\partial l$ to an arc in $\partial R$.
\end{itemize}
For a simple closed curve $C$ in $S$, we denote by $S_C$ the surface obtained by cutting $S$ along $C$.
If $C$ is separating in $S$, then each component of $S_C$ is naturally identified with a subsurface of $S$.

\begin{lem}\label{lem-hex}
Suppose that we have a 6-tuple $(\alpha_1, \beta_3, \alpha_2, \beta_1, \alpha_3, \beta_2)$ of vertices of $\calc_s(S)$ defining a hexagon in $\calc_s(S)$ with $\alpha_j$ a p-vertex for each $j=1, 2, 3$.
For each $j=1, 2, 3$, choose representatives $A_j$ and $B_j$ of $\alpha_j$ and $\beta_j$, respectively, such that any two of the six curves intersect minimally.
Then the following assertions hold:
\begin{enumerate}
\item For each $j=1, 2, 3$, let $Q_j$ denote the component of $S_{A_j}$ homeomorphic to $S_{1, 2}$, and let $\partial_j$ denote the component of $\partial S$ contained in $Q_j$. 
Then $\partial_1$, $\partial_2$ and $\partial_3$ are mutually distinct.
\item For each $j=1, 2, 3$, let $R_j$ denote the component of $S_{B_j}$ homeomorphic to $S_{0, 4}$. 
Then for any distinct $j, k=1, 2, 3$, the intersection $B_j\cap R_k$ consists of essential simple arcs in $R_k$ which are mutually isotopic, where isotopy of essential simple arcs in $R_k$ may move their end points, keeping them staying in $\partial R_k$.
Moreover, if $l\in \{ 1, 2, 3\}$ is the number distinct from $j$ and $k$, then each component of $B_j\cap R_k$ cuts off an annulus containing $\partial_l$ from $R_k$.
\item For each $k$ mod $3$, a component of $B_{k+1}\cap R_k$ and a component of $B_{k+2}\cap R_k$ can be isotoped so that they are disjoint. 
\end{enumerate}
\end{lem}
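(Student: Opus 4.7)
My first step is to identify the topological types of the $\beta_j$. I claim each $\beta_j$ is an h-curve. In $S_{1,3}$ every separating curve is either a p-curve or an h-curve, and a short case analysis shows that two disjoint distinct p-curves cannot coexist: if they cut off the same pair of components of $\partial S$ then the second would have to be boundary-parallel in the common $S_{1,2}$-side, and if they cut off different pairs the required disposition of $\partial S$ on either side leads to a contradiction. Since each $\beta_j$ is disjoint from the p-vertex $\alpha_k$ for $k \neq j$ (as read from the hexagon), $\beta_j$ is an h-curve, which justifies the statement's notation $R_j \cong S_{0,4}$.

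\textbf{Proof of (i).} Suppose $\partial_j = \partial_k$ for distinct $j, k$, and let $m$ be the third index. Then $\alpha_j$ and $\alpha_k$ both cut off pairs of pants containing the same two components of $\partial S$. Since $\beta_m$ is an h-curve disjoint from both $\alpha_j$ and $\alpha_k$, and $A_j, A_k$ are separating in $S$, the representatives lie in $R_m \cong S_{0,4}$. Both induce the same $2$-$2$ partition of the four boundary components of $R_m$, namely separating $\partial S \setminus \{\partial_j\}$ from $\{\partial_j, B_m\}$. Since essential simple closed curves in $S_{0,4}$ are determined up to isotopy by their induced partition, $A_j$ and $A_k$ are isotopic in $R_m$ and hence in $S$, contradicting distinctness.

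\textbf{Proof of (ii).} Fix distinct $j, k$ and let $l$ be the third index. The key observation is that $A_l$, being disjoint from $B_j$ and $B_k$ and separating in $S$, lies in $R_k$. There $A_l$ separates off two sub-pairs-of-pants: $P_l$ with boundary $\{A_l, \partial_j, \partial_k\}$ (the pair of pants cut off by $\alpha_l$ in $S$) and $Q_l$ with boundary $\{A_l, \partial_l, B_k\}$. Each component of $B_j \cap R_k$ is a simple arc with both endpoints on $B_k$, essential in $R_k$ by minimality of $B_j \cap B_k$. These arcs are disjoint from $A_l$ and have endpoints on $B_k \subset \partial Q_l$, hence they all lie in $Q_l$. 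In $Q_l$, essential simple arcs from $B_k$ to itself (with endpoints free on $B_k$) are unique up to isotopy, as each separates the remaining boundary circles $A_l$ and $\partial_l$ of $Q_l$; thus the components of $B_j \cap R_k$ are mutually isotopic. Cutting $Q_l$ along such an arc produces an annulus bounded by the arc, a sub-arc of $B_k$, and $\partial_l$. Re-attaching $P_l$ across $A_l$ affects only the opposite piece, so this annulus survives in $R_k$ and contains $\partial_l$, establishing (ii).

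\textbf{Proof of (iii).} Applying (ii) to $(j, k, l) = (k+1, k, k+2)$ shows that every component of $B_{k+1} \cap R_k$ cuts off an annulus in $R_k$ containing $\partial_{k+2}$, and applying it to $(k+2, k, k+1)$ shows that every component of $B_{k+2} \cap R_k$ cuts off an annulus containing $\partial_{k+1}$. By (i), $\partial_{k+1}$ and $\partial_{k+2}$ are distinct components of $\partial S$, so small annular neighborhoods of them inside $R_k$ are disjoint, and one may isotope a component of $B_{k+1} \cap R_k$ into the first and a component of $B_{k+2} \cap R_k$ into the second to obtain disjoint representatives. The main obstacle in the argument is the opening identification of each $\beta_j$ as an h-curve; once that and the inclusion $A_l \subset R_k$ in (ii) are in place, the geometric conclusions are forced by the rigidity of separating curves in $S_{0,4}$ and of essential arcs in a pair of pants.
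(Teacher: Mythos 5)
Your preliminary observation that each $\beta_j$ is an h-curve is correct (it is also noted in the paper's subsection introduction), and your proof of assertion (ii) is essentially the same argument as the paper's. However, there are genuine gaps in your proofs of (i) and (iii).

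\textbf{Assertion (i).} The step ``essential simple closed curves in $S_{0,4}$ are determined up to isotopy by their induced partition'' is false. In $S_{0,4}$ the set of isotopy classes of essential separating curves inducing a fixed $2$\nobreakdash-$2$ partition of the boundary components is infinite (they form an orbit of a free group of rank two acting by Dehn twists; this is precisely why the curve complex of $S_{0,4}$ is the Farey graph). So from $\partial_j=\partial_k$ you cannot conclude $A_j$ is isotopic to $A_k$. The paper's proof of (i) is genuinely different and substantially deeper: it maps the hexagon into $\calc^*(\bar S)$, where $\bar S$ is obtained by filling in $\partial$, collapses five of the six vertices to a common image, and then invokes the fact (Theorem 7.1 of \cite{kls}) that the fiber of this map over a vertex of $\calc_s(\bar S)$ is a \emph{tree}, hence contains no hexagon. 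Without some input of this strength the argument does not close.

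\textbf{Assertion (iii).} Your argument is also incomplete for essentially the same underlying reason. Knowing that each component of $B_{k+1}\cap R_k$ cuts off an annulus containing $\partial_{k+2}$ does not pin down its isotopy class in $R_k$; there are infinitely many such arcs (they differ by Dehn twists about curves that meet the arc), and they need not be isotopable into a small annular neighborhood of $\partial_{k+2}$. In particular, the endpoints of a component of $B_{k+1}\cap R_k$ and a component of $B_{k+2}\cap R_k$ can interleave along $B_k$, forcing intersection regardless of which boundary component each arc ``goes around.'' Your argument nowhere uses the additional hexagon constraint, whereas the paper proceeds by contradiction: if the two arcs cannot be made disjoint, subarcs of them form a simple closed curve isotopic to $\partial_1$, and this is shown to contradict the fact that $A_3$ is a boundary component of a regular neighborhood of $B_1\cup B_2$ and cuts off a pair of pants containing $\partial_1$. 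That extra piece of structure, absent from your proposal, is what makes (iii) go through.
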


\begin{proof}
We first prove assertion (i). 
Suppose that assertion (i) is not true. 
We may assume $\partial_1=\partial_2$, and denote it by $\partial$. 
Let $\bar{S}$ be the surface obtained by attaching a disk to $\partial$, and let $\pi \colon \calc(S)\rightarrow \calc^*(\bar{S})$ be the simplicial map associated with the inclusion of $S$ into $\bar{S}$, where $\calc^*(\bar{S})$ is the simplicial cone over $\calc(\bar{S})$ with the cone point $\ast$. 
Note that $\pi^{-1}(\{ \ast \})$ consists of all p-curves in $S$ cutting off a pair of pants containing $\partial$. 
If $\alpha$ is a p-curve in $S$ such that $\partial$ is contained in the component of $S_{\alpha}$ homeomorphic to $S_{1, 2}$ and if $\beta$ is an h-curve in $S$ disjoint from $\alpha$, then $\pi(\alpha)=\pi(\beta)$. 
This implies that $\pi(\beta_2)=\pi(\alpha_1)=\pi(\beta_3)=\pi(\alpha_2)=\pi(\beta_1)$. 
If $\partial_3\neq \partial$, then the equality $\pi(\beta_2)=\pi(\beta_1)$ implies $\beta_2=\beta_1$, and this is a contradiction. 
We therefore have $\partial_3=\partial$ and $\pi(\alpha_3)$ is equal to the image via $\pi$ of the other five vertices. 
This contradicts the fact that $\pi^{-1}(\{ \gamma \})$ is a tree for each $\gamma \in V(\bar{S})$ (see Theorem 7.1 of \cite{kls}).
Assertion (i) is proved.

The minimality of $|B_2\cap B_3|$ implies that $B_2\cap R_3$ consists of essential simple arcs in $R_3$.
Since $A_1$ cuts off a pair of pants containing $\partial_2$ and $\partial_3$ from $R_3$, the intersection $B_2\cap R_3$ consists of mutually isotopic, essential simple arcs in $R_3$ each of which cuts off an annulus containing $\partial_1$, assertion (ii) follows when $j=2$, $k=3$ and $l=1$.
The other cases can be proved similarly.

Suppose that assertion (iii) is not true for $k=3$. 
The other cases are discussed similarly. 
As noted in the previous paragraph, $B_2\cap R_3$ consists of mutually isotopic, essential simple arcs in $R_3$ each of which cuts off an annulus containing $\partial_1$. 
Similarly, $B_1\cap R_3$ consists of mutually isotopic, essential simple arcs in $R_3$ each of which cuts off an annulus containing $\partial_2$.
Choose a component $l_2$ of $B_2\cap R_3$ and a component $l_1$ of $B_1\cap R_3$. 
Since $l_1$ and $l_2$ can not be isotoped so that they are disjoint, there exist a subarc of $l_1$ and a subarc of $l_2$ such that the union of them is a simple closed curve in $R_3$ isotopic to $\partial_1$. 
This contradicts the fact that $A_3$ is a boundary component of a regular neighborhood of $B_1\cup B_2$ and is a p-curve cutting off a pair of pants containing $\partial_1$. 
Assertion (iii) follows. 
\end{proof}

Let $R$ be a surface such that $\partial R$ consists of at least two components.
Given two distinct components $\partial$, $\partial'$ of $\partial R$, we say that an essential simple arc $l$ in $R$ {\it connects $\partial$ and $\partial'$} if one of the end points of $l$ lies in $\partial$ and another in $\partial'$.

We note that there is a one-to-one correspondence between the isotopy classes of p-curves in $R$ and of essential simple arcs in $R$ connecting two distinct components of $\partial R$, where isotopy of essential simple arcs in $R$ may move the end points of arcs, keeping them staying in $\partial R$.
In fact, one associates to a p-curve $C$ in $R$ an essential simple arc in $R$ disjoint from $C$ and connecting the two components of $\partial R$ contained in the pair of pants cut off by $C$ from $R$.
Conversely, for each essential simple arc $l$ in $R$ connecting two distinct components $\partial$, $\partial'$ of $\partial R$, the p-curve in $R$ corresponding to $l$ is obtained as a boundary component of a regular neighborhood of the union $l\cup \partial \cup \partial'$ in $R$.

\begin{thm}\label{thm-hex}
The action of $\pmod(S)$ on the set of hexagons in $\calc_s(S)$ is transitive.
\end{thm}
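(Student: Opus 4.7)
The plan is to apply the change of coordinates principle for pure mapping class groups: I will argue that any two hexagons in $\calc_s(S)$ give topologically equivalent labeled configurations on $S$ with matching boundary data, hence are related by an element of $\pmod(S)$.

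First I observe that every hexagon in $\calc_s(S_{1,3})$ has alternating $p$- and h-vertices: two disjoint h-curves would require genus at least two, and two disjoint $p$-curves would require the ``outer'' one to lie in the $S_{1,2}$ side of the ``inner'' one yet contain two of the three boundary components of $S$. So I write
\[ H=(\alpha_1,\beta_3,\alpha_2,\beta_1,\alpha_3,\beta_2), \qquad H'=(\alpha_1',\beta_3',\alpha_2',\beta_1',\alpha_3',\beta_2') \]
with $\alpha_j$ $p$-vertices and $\beta_j$ h-vertices, and choose minimally intersecting representatives as in Lemma \ref{lem-hex}. By (i), the $p$-vertices of $H$ canonically correspond to $\{\partial_1,\partial_2,\partial_3\}$ via $\alpha_j\leftrightarrow \partial_j$, where $\partial_j$ is the unique boundary component contained in the $S_{1,2}$ side of $\alpha_j$, and similarly for $H'$. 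After reindexing $H'$, I may assume the two labelings agree.

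I would then show that the labeled topological configuration $(S; A_j,B_j,\partial_j)$ is determined up to the action of $\pmod(S)$. In each four-holed sphere $R_k\cong S_{0,4}$, Lemma \ref{lem-hex}(ii)--(iii) specifies that the two arc families $B_{k+1}\cap R_k$ and $B_{k+2}\cap R_k$ are standard (parallel essential arcs encircling $\partial_l$) and may be realized disjoint. After using powers of Dehn twists along curves of the hexagon lying in $\pmod(S)$ to reduce all pairwise intersections to a canonical minimum (each such twist sends hexagons to hexagons), the resulting cell decomposition of $S$ from $\bigcup A_j\cup\bigcup B_j$ has a combinatorially determined type. The labeled change of coordinates principle then produces a homeomorphism $S\to S$ matching the two normalized configurations; this homeomorphism lies in $\pmod(S)$ by the canonical boundary-component correspondence. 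Undoing the normalizing twists yields the desired $f\in \pmod(S)$ with $f(H)=H'$.

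The main obstacle is the normalization step: one must verify that excess pairwise intersections among the six curves can be removed by $\pmod(S)$-elements without disturbing the hexagon structure, leaving a rigid configuration matching the one described in Lemma \ref{lem-hex}. The resolution should use bigon-removal arguments inside each $R_k$ controlled by the rigidity of the arc families in (ii)--(iii), together with a matching of the cyclic orders of arc endpoints on each $B_k$, which the joint-disjointness in (iii) pins down. Once the normalized form is reached, transitivity is immediate from the labeled change of coordinates principle.
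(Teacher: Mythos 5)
Your high-level strategy (reduce a hexagon to a labeled configuration with canonical boundary data and invoke change of coordinates) is in the same spirit as the paper, but there is a genuine gap at the heart of the argument. You assert that after some unspecified Dehn-twist normalization ``the resulting cell decomposition of $S$ from $\bigcup A_j\cup\bigcup B_j$ has a combinatorially determined type,'' but this rigidity claim is exactly what needs to be proved, and you give no justification for it. Moreover the normalization step is ill-posed: the $A_j$ and $B_j$ are already chosen in pairwise minimal position by assumption, so it is unclear what ``reducing pairwise intersections to a canonical minimum'' by twisting would mean, and twisting would replace $H$ by a different hexagon rather than putting $H$ itself in a standard form.

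The paper avoids all of this with a clean reduction that your proposal misses. Each p-curve $A_j$ corresponds (via the bijection recalled just before the theorem) to an essential simple arc $l_j$ in $S$ connecting the two boundary components of the pair of pants that $A_j$ cuts off, i.e.\ $l_j$ connects $\partial_{j+1}$ and $\partial_{j+2}$. Lemma~\ref{lem-hex}~(iii) is invoked precisely to show that $l_1,l_2,l_3$ can be isotoped to be pairwise disjoint even though the $A_j$ themselves pairwise intersect. Transitivity of $\pmod(S)$ on such labeled triples of disjoint arcs is then the routine change-of-coordinates step. The crucial final observation is that this arc triple determines the whole hexagon: $[l_j]$ determines $\alpha_j$, and $\beta_j$ is a boundary component of a regular neighborhood of $A_{j+1}\cup A_{j+2}$. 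That determination step is the rigidity you need, and it is what substitutes for the ``combinatorially determined cell decomposition'' you assert without proof. So the missing idea is precisely the passage from curves to disjoint arcs and the fact that the arcs determine everything else.
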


\begin{proof}
Let $(\alpha_1, \beta_3, \alpha_2, \beta_1, \alpha_3, \beta_2)$ be a 6-tuple of vertices of $\calc_s(S)$ defining a hexagon in $\calc_s(S)$ with $\alpha_j$ a p-vertex for each $j=1, 2, 3$.
For each $j=1, 2, 3$, we choose representatives $A_j$ and $B_j$ of $\alpha_j$ and $\beta_j$, respectively, such that any two of the six curves intersect minimally.
For each $j=1, 2, 3$, let $l_j$ be an essential simple arc in $S$ corresponding to the p-curve $A_j$.
By Lemma \ref{lem-hex}, for any distinct $j, k=1, 2, 3$, $l_j$ and $l_k$ can be isotoped so that they are disjoint.

For each $j=1, 2, 3$, let $\partial_j$ denote the component of $\partial S$ contained in the component of $S_{A_j}$ homeomorphic to $S_{1, 2}$.
We denote by $A(S)$ the set of isotopy classes of essential simple arcs in $S$ and denote by $[r]\in A(S)$ the isotopy class of an essential simple arc $r$ in $S$.
We note that $\pmod(S)$ acts transitively on the set of triplets $([r_1], [r_2], [r_3])$ of elements in $A(S)$ such that
\begin{itemize}
\item for each $j$ mod $3$, $r_j$ connects $\partial_{j+1}$ and $\partial_{j+2}$; and
\item $r_1$, $r_2$ and $r_3$ are pairwise disjoint.
\end{itemize}
The theorem then follows because $[l_1]$, $[l_2]$ and $[l_3]$ determine $\alpha_1$, $\alpha_2$ and $\alpha_3$ and because $B_j$ is obtained as a boundary component of a regular neighborhood of the union $A_{j+1}\cup A_{j+2}$ in $S$ for each $j$ mod $3$.  
\end{proof}

For each hexagon $\Pi$ in $\calc_s(S)$, we can find a non-separating curve in $S$ disjoint from any curve corresponding to a vertex of $\Pi$ by using Figure \ref{fig-hex} and Theorem \ref{thm-hex}.
Such a curve in $S$ is unique up to isotopy because for each essential simple arc $l$ in a handle $H$, an essential simple closed curve in $H$ disjoint from $l$ uniquely exists up to isotopy.
We denote by $c(\Pi)\in V(S)$ that non-separating curve in $S$.

\begin{lem}\label{lem-equal-c}
If $\Pi_1$ and $\Pi_2$ are hexagons in $\calc_s(S)$ sharing two edges which share a p-vertex, then we have $c(\Pi_1)=c(\Pi_2)$.
\end{lem}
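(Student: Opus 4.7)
The plan is to show that $c(\Pi)$ depends only on the path $(\beta_3, \alpha, \beta_2)$ of length two shared by $\Pi_1$ and $\Pi_2$, where $\alpha$ is the common p-vertex and $\beta_2, \beta_3$ are the h-vertices adjacent to it in both hexagons; then $c(\Pi_1) = c(\Pi_2)$ follows automatically.

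Since $c(\Pi_i)$ is a non-separating curve in $S$ disjoint from the p-curve $\alpha$, it must lie in the component $Q_\alpha$ of $S_\alpha$ homeomorphic to $S_{1,2}$ (the pair of pants cut off by $\alpha$ contains no non-separating curves of $S$), and is non-separating there. By the bijection recalled before Theorem \ref{thm-hex}, the h-curve $\beta_j$ ($j=2,3$) in $Q_\alpha$ corresponds to the isotopy class of an essential arc $\gamma_j$ joining the two boundary components of $Q_\alpha$ (namely $\alpha$ and the unique component of $\partial S$ lying in $Q_\alpha$). Applying Lemma \ref{lem-hex}(iii) inside either hexagon, the arcs $\gamma_2, \gamma_3$ may be realized disjointly in $Q_\alpha$.

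The crux is the following uniqueness statement: up to isotopy, there is a unique non-separating simple closed curve $c$ in $Q_\alpha$ disjoint from $\beta_2\cup\beta_3$. For existence, cut $Q_\alpha$ along $\gamma_2\cup\gamma_3$; a direct Euler characteristic count (each arc cut raises $\chi$ by $1$, starting from $\chi(Q_\alpha)=-1$) together with the observation that the result is connected produces an annulus $S_{0,2}$, whose core $c$ is a non-separating SCC in $Q_\alpha$ lying outside a small regular neighborhood of $\partial Q_\alpha\cup\gamma_2\cup\gamma_3$ and thus disjoint from $\beta_2$ and $\beta_3$. For uniqueness, let $c'$ be any non-separating SCC in $Q_\alpha$ disjoint from $\beta_2$; it cannot lie on the pair-of-pants side of $\beta_2$ (where every essential curve is boundary-parallel in $Q_\alpha$ and hence separating in $S$), so $c'$ lies in the handle $H_{\beta_2}\cong S_{1,1}$ cut off by $\beta_2$. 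Since $\gamma_3$ meets $\beta_2$ transversely in exactly two points (once entering, once exiting the regular neighborhood defining $\beta_2$), the intersection $\gamma_3\cap H_{\beta_2}$ is a single essential arc in $H_{\beta_2}$, and the requirement that $c'$ be disjoint from $\beta_3$ forces it to be disjoint from this arc. Since cutting $S_{1,1}$ along an essential arc yields an annulus, there is a unique isotopy class of non-separating SCC in $H_{\beta_2}$ disjoint from $\gamma_3\cap H_{\beta_2}$, and this is precisely $c$. Hence $c'=c$.

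Applying this uniqueness to both $c(\Pi_1)$ and $c(\Pi_2)$, each a non-separating curve disjoint from $\alpha, \beta_2, \beta_3$, gives $c(\Pi_1)=c=c(\Pi_2)$. The main obstacle is the uniqueness step: it rests on the arc/h-curve bijection in $Q_\alpha$, the fact that the arcs $\gamma_2,\gamma_3$ coming from a hexagon admit a common disjoint realization, and the key observation that $\gamma_3$ crosses $\beta_2$ minimally in only two points, so that inside the handle $H_{\beta_2}$ the obstruction to disjointness from $\beta_3$ reduces to a single essential arc, collapsing the candidate region to a unique annulus.
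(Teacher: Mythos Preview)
Your overall strategy --- showing that $c(\Pi)$ is already determined by the three shared vertices $\alpha, \beta_2, \beta_3$ --- is exactly the paper's, and is correct. But the execution has two gaps.

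First, the appeal to Lemma~\ref{lem-hex}(iii) does not give what you claim. That lemma controls arcs of $B_{k+1}$ and $B_{k+2}$ inside $R_k$, the $S_{0,4}$ component cut off by the \emph{h-curve} $B_k$; you are working instead in $Q_\alpha \cong S_{1,2}$, the component cut off by the \emph{p-curve} $\alpha$, with arcs $\gamma_2,\gamma_3$ associated to the h-curves $\beta_2,\beta_3$. These are different objects, so the citation does not apply and the disjointness of $\gamma_2,\gamma_3$ is left unestablished.

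Second, the claim that $\gamma_3$ meets $\beta_2$ in exactly two points is not justified. Even granting $\gamma_2,\gamma_3$ disjoint and realising $\beta_2$ as the boundary of a regular neighbourhood of $\gamma_2 \cup \partial Q_\alpha$, there is no a priori reason $\gamma_3$ cannot enter and exit the handle $H_{\beta_2}$ several times.

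Both detours through $\gamma_2,\gamma_3$ are unnecessary. Work directly with $\beta_3$: put $\beta_2$ and $\beta_3$ in minimal position, so that $\beta_3 \cap H_{\beta_2}$ is a nonempty collection of essential arcs in the handle $H_{\beta_2}$ (nonempty since $i(\beta_2,\beta_3)\neq 0$, essential by minimality). Any non-separating $c'$ disjoint from $\alpha,\beta_2,\beta_3$ lies in $H_{\beta_2}$ (as you correctly argue) and is disjoint from each of these arcs. Now invoke the fact recorded just before the lemma: in a handle, an essential simple closed curve disjoint from a given essential arc is unique up to isotopy. Applied to any single arc of $\beta_3 \cap H_{\beta_2}$, this pins down $c'$ uniquely --- no count of how many arcs there are is needed. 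This is precisely the paper's (one-line) argument.
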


\begin{proof}
Let $\beta_1$ and $\beta_2$ be two h-vertices of a hexagon $\Pi$ in $\calc_s(S)$, and let $\alpha$ be the p-vertex of $\Pi$ adjacent to both of $\beta_1$ and $\beta_2$.
The lemma follows from the fact that $c(\Pi)$ is the only non-separating curve in $S$ disjoint from any of $\alpha$, $\beta_1$ and $\beta_2$.  
\end{proof}

\begin{lem}\label{lem-hex-chain}
Let $c$ be a non-separating curve in $S$, and let $\Pi$ and $\Delta$ be hexagons in $\calc_s(S)$ with $c(\Pi)=c(\Delta)=c$. 
Then there exists a sequence of hexagons in $\calc_s(S)$, $\Pi_1,\ldots, \Pi_n$, satisfying the following two conditions:
\begin{enumerate}
\item[(a)] $\Pi_1=\Pi$ and $\Pi_n=\Delta$.
\item[(b)] For each $j$, $\Pi_j$ and $\Pi_{j+1}$ share two edges which share a p-vertex. 
\end{enumerate}
\end{lem}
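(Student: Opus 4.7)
The plan is to use the transitive action of the stabilizer of $c$ in $\pmod(S)$ on the set of hexagons $\Pi$ with $c(\Pi)=c$, combined with a Putman-style connectivity argument for a graph of hexagons.

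First I would observe that the assignment $\Pi\mapsto c(\Pi)$ is $\pmod(S)$-equivariant, because $c(\Pi)$ is characterized uniquely as the non-separating curve in $S$ disjoint from every vertex of $\Pi$ (Theorem \ref{thm-hex} together with the discussion immediately preceding Lemma \ref{lem-equal-c}). Combined with the transitivity of $\pmod(S)$ on hexagons from Theorem \ref{thm-hex}, this implies that the stabilizer $\Gamma$ of the isotopy class of $c$ in $\pmod(S)$ acts transitively on $\mathcal{H}_c:=\{\Pi : c(\Pi)=c\}$.

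Second, let $G$ denote the graph with vertex set $\mathcal{H}_c$ in which two hexagons are adjacent if they share two edges at a common p-vertex; the lemma asserts that $G$ is connected. Since $\Gamma$ acts on $G$ by simplicial automorphisms, a standard Putman-trick argument reduces the question to the following: there exist a base hexagon $\Pi_0\in\mathcal{H}_c$ and a generating set $\{g_1,\dots,g_N\}$ of $\Gamma$ such that $\Pi_0$ and $g_i(\Pi_0)$ lie in the same component of $G$ for every $i$. Indeed, if $\phi\in\Gamma$ is written as a word $g_{i_1}\cdots g_{i_k}$, then transporting by the prefix subwords produces a chain $\Pi_0,g_{i_1}(\Pi_0),g_{i_1}g_{i_2}(\Pi_0),\dots,\phi(\Pi_0)$ each step of which is a $\Gamma$-translate of an edge of $G$.

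Third, I would produce such a generating set by cutting $S$ along $c$ to obtain $R\cong S_{0,5}$ and invoking the Birman exact sequence comparing $\pmod(S)$ with $\pmod(R)$. The group $\Gamma$ is generated by the Dehn twist $t_c$, by Dehn twists about essential simple closed curves in $R$, and possibly by a mapping class exchanging the two boundary copies of $c$. Choosing the base hexagon $\Pi_0=\{\alpha_1,\beta_3,\alpha_2,\beta_1,\alpha_3,\beta_2\}$ so that its p-curves $A_j$ and h-curves $B_j$ lie in a standard position with respect to $c$, each generator can be taken supported in a subsurface disjoint from one pair $(A_j, B_{j+1}\cup B_{j+2})$. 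Such a generator fixes $\alpha_j,\beta_{j+1},\beta_{j+2}$ and hence sends $\Pi_0$ to a hexagon sharing with $\Pi_0$ the two edges $(\alpha_j,\beta_{j+1})$ and $(\alpha_j,\beta_{j+2})$, giving an edge of $G$.

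The main obstacle will be the bookkeeping for the choice of generators: for each candidate Dehn twist one must verify that it is supported in the appropriate complementary subsurface of $A_j$, $B_{j+1}$, $B_{j+2}$, and collectively these generators together with $t_c$ and the exchanging half-twist generate all of $\Gamma$. An alternative route that avoids the Birman-sequence machinery is to reformulate the lemma as a connectivity statement for the graph of triples of pairwise disjoint essential arcs in $R$ with prescribed endpoints on $\partial_1,\partial_2,\partial_3$, and to prove connectivity directly by an argument modelled on Proposition \ref{prop-d-conn}.
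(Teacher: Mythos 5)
Your overall strategy---a single Putman-style connectivity argument for the graph $G$ of hexagons, driven by the transitivity of $\mathrm{Stab}_{\pmod(S)}(c)$ on $\mathcal{H}_c$---is a legitimate route, and your first two steps (equivariance of $\Pi\mapsto c(\Pi)$ and the reduction to a suitable generating set) are correctly set up. It is, however, a genuinely different route from the paper's. The paper does not attack the full stabilizer of $c$ at once: it first proves the statement when $\Pi$ and $\Delta$ share an h-vertex $\beta_3$, running the Putman trick only for the much smaller stabilizer of $\{\beta_3, c\}$ in $\mod(S)$, which it exhibits explicitly as generated by the involution $h$, two half-twists $h_1, h_2$, and $t_c$; then, for the general case, it invokes Proposition \ref{prop-d-conn2} (connectivity of the graph $\mathcal{E}$ of h-curves disjoint from $c$ with edges at intersection number $4$) to produce a chain of h-vertices interpolating between the two hexagons, and patches the resulting pieces with the common-h-vertex case. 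Splitting the argument in two is precisely what makes the generator bookkeeping tractable.

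The gap in your third step is concrete and, I think, nontrivial: you assert that a generating set of $\Gamma=\mathrm{Stab}_{\pmod(S)}(c)$ can be taken with each generator supported in the complement of $A_j\cup B_{j+1}\cup B_{j+2}$ for some $j$, hence fixing $\alpha_j,\beta_{j+1},\beta_{j+2}$ and giving an edge of $G$. But natural elements of $\Gamma$ do not satisfy this. For instance, $t_{\beta_1}\in\Gamma$ fixes $\beta_1,\alpha_2,\alpha_3$ (the curves disjoint from $\beta_1$) and moves $\alpha_1,\beta_2,\beta_3$ (which all intersect $\beta_1$), so $\Pi_0$ and $t_{\beta_1}(\Pi_0)$ share two edges at the \emph{h-vertex} $\beta_1$ rather than at a p-vertex---this is not an edge of $G$. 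Thus it is not enough to observe that $\Gamma$ is generated by $t_c$, Dehn twists in $R$, and a swap of the two boundary copies of $c$; you must exhibit a specific generating set and check each generator, and it is exactly this verification, not routine, that the paper sidesteps by first passing to $\mathrm{Stab}(\beta_3,c)$ and then using the connectivity of $\mathcal{E}$. Your ``alternative route'' (connectivity of a complex of triples of arcs in $R\cong S_{0,5}$) is in the same spirit as Proposition \ref{prop-d-conn2}, and if you pursue that instead you should note that the arcs' endpoints lie on the three boundary components of $R$ that come from $\partial S$ (not on the two copies of $c$), and you will still need to work out what adjacency of triples corresponds to adjacency in $G$, which involves the h-curves $\beta_j$ and not just the arcs themselves.
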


To prove this lemma, we need the following proposition, which can be proved along the same idea as in the proof of Proposition \ref{prop-d-conn}.

\begin{prop}\label{prop-d-conn2}
We put $R=S_{0, 5}$ and choose two components $\partial_1$, $\partial_2$ of $\partial R$. 
We define $\cal{E}$ as the simplicial graph so that
\begin{itemize}
\item vertices are elements of $V(R)$ corresponding to a curve in $R$ cutting off a pair of pants containing $\partial_1$ and $\partial_2$; and
\item two such vertices $\alpha$, $\beta$ are connected by an edge if and only if $i(\alpha, \beta)=4$.
\end{itemize}
Then the graph $\cal{E}$ is connected.
\end{prop}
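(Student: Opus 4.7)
The plan is to imitate the proof of Proposition~\ref{prop-d-conn} by applying Putman's method to the action of $\pmod(R)$ on $\cal{E}$. Fix a base vertex $\alpha_0 \in V(\cal{E})$. Every vertex of $\cal{E}$ is a simple closed curve in $R$ separating $\{\partial_1, \partial_2\}$ from $\{\partial_3, \partial_4, \partial_5\}$, so by the change-of-coordinates principle $\pmod(R)$ acts transitively on $V(\cal{E})$. Since the action preserves geometric intersection numbers, it acts on $\cal{E}$ by simplicial automorphisms. It therefore suffices to produce a generating set $T$ of $\pmod(R)$ such that $\alpha_0$ and $t(\alpha_0)$ lie in the same connected component of $\cal{E}$ for every $t \in T$; the usual induction on word length then connects every $g(\alpha_0)$ to $\alpha_0$, and transitivity finishes the job.

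I would take $T = \{t_{jk} \mid (j, k) \in N\}$, the generating set used in the proof of Proposition~\ref{prop-d-conn}, which for $R = S_{0, 5}$ is $N = \{(2, 3), (3, 4), (4, 5), (2, 4), (3, 5)\}$. For $(j, k) \in \{(3, 4), (4, 5), (3, 5)\}$ the curve $\delta_{jk}$ is either disjoint from $\alpha_0$ or isotopic to $\alpha_0$, so $t_{jk}(\alpha_0) = \alpha_0$. For the remaining generators $t_{23}$ and $t_{24}$, the twisting curve $\delta_{jk}$ meets $\alpha_0$ in exactly two points (since $\partial_2$ is on the opposite side of $\alpha_0$ from $\partial_3$ and $\partial_4$). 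The standard inequality $i(t_\gamma(\alpha), \alpha) \geq i(\alpha, \gamma)^2$ gives $i(\alpha_0, t_{jk}(\alpha_0)) \geq 4$, and an explicit picture of $t_{jk}(\alpha_0)$ in a disc-with-holes model of $R$ exhibits a representative meeting $\alpha_0$ in exactly four points with no bigons; hence $i(\alpha_0, t_{jk}(\alpha_0)) = 4$ and $\alpha_0$, $t_{jk}(\alpha_0)$ are joined by an edge of $\cal{E}$.

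The main technical point is verifying bigon-freeness for the two generators that actually move $\alpha_0$, i.e.\ confirming that the natural geometric representative of $t_{jk}(\alpha_0)$ sits in minimal position against $\alpha_0$. This is a routine inspection in the planar model, using the fact that any candidate bigon would have to enclose one of the boundary components encircled by $\delta_{jk}$ and thus fails to be a bigon in $R$. Once this check is in place, the Putman-style propagation is immediate: given $g = t_{i_1} \cdots t_{i_n} \in \pmod(R)$, one inductively concatenates the paths from $\alpha_0$ to $t_{i_l}(\alpha_0)$ with their translates under $t_{i_1} \cdots t_{i_{l-1}}$ to obtain a path in $\cal{E}$ from $\alpha_0$ to $g(\alpha_0)$. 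Combined with the transitivity of the $\pmod(R)$-action on $V(\cal{E})$, this completes the proof of connectivity.
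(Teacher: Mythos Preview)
Your proof is correct and follows essentially the same route as the paper: Putman's method applied to the transitive $\pmod(R)$-action on $V(\cal{E})$, with the standard generators $t_{\delta_{jk}}$ and base vertex $\alpha_0=\delta_{35}$, checking that each generator either fixes $\alpha_0$ or moves it to an adjacent vertex. One small remark: you should name $\alpha_0$ explicitly (your disjointness claims already force $\alpha_0=\delta_{35}$), and you can bypass the bigon inspection entirely by invoking the exact identity $i(t_\gamma(\alpha),\alpha)=i(\alpha,\gamma)^2$ rather than an inequality plus a picture.
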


\begin{proof}
Let $\partial_1,\ldots, \partial_5$ denote the boundary components of $R$.
For two integers $j$, $k$ with $2\leq j<k\leq 5$ and $1\leq k-j\leq 2$, let $\delta_{jk}\in V(R)$ be the curve described in Figure \ref{fig-braid} (b).
As noted in the proof of Proposition \ref{prop-d-conn}, $\pmod(R)$ is generated by all $t_{\delta_{jk}}$.
One can check that $\delta_{35}$ is a vertex of $\cal{E}$ and that for any other $\delta_{jk}$, we have either $t_{\delta_{jk}}(\delta_{35})=\delta_{35}$ or $i(t_{\delta_{jk}}(\delta_{35}), \delta_{35})=4$.
Since $\pmod(R)$ acts transitively on the set of vertices of $\cal{E}$, connectivity of $\cal{E}$ follows.
\end{proof}

\begin{proof}[Proof of Lemma \ref{lem-hex-chain}]
We first prove the lemma when $\Pi$ and $\Delta$ have a common h-vertex. 
Let $(\alpha_1, \beta_3, \alpha_2, \beta_1, \alpha_3, \beta_2)$ be a 6-tuple of vertices of $\calc_s(S)$ defining $\Pi$ with $\alpha_j$ a p-vertex for each $j=1, 2, 3$, as described in Figure \ref{fig-hex}. 
Assume that $\Delta$ contains $\beta_3$. 
If we denote by $h$ the involution that is described in Figure \ref{fig-inv} (a) and exchanges $\alpha_1$ and $\alpha_2$, then we have $h(\Pi)=\Pi$.
\begin{figure}
\begin{center}
\includegraphics[width=12cm]{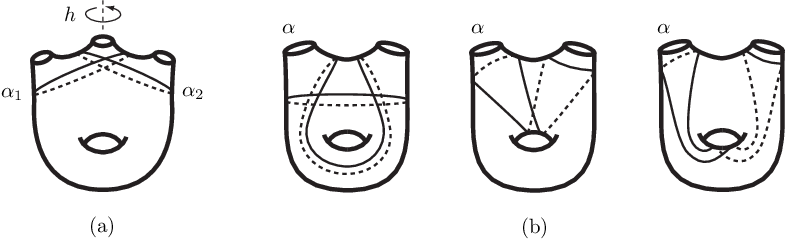}
\caption{(b) Four h-curves in $S$ any two of which are connected by an edge in the graph $\cal{G}(\alpha)$}\label{fig-inv}
\end{center}
\end{figure}
For $j=1, 2$, we denote by $P_j$ the pair of pants cut off by $\alpha_j$ from $S$ and define $h_j$ as the half twist about $\alpha_j$ with $h_j^2=t_{\alpha_j}$ that exchanges the two components of $\partial S$ contained in $P_j$ and is the identity on the complement of $P_j$ in $S$.
Let $H$ be the handle cut off by $\beta_3$ from $S$.
One can check that the product $hh_1h_2h_1$ fixes $\beta_3$, $\alpha_1$ and $\alpha_2$ and that its restriction to $H$ is the involution in the center of $\mod(H)$.
Moreover, we have $t_{\beta_3}=(hh_1h_2h_1)^2$.
It follows that $h$, $h_1$, $h_2$ and $t_c$ generate the stabilizer of $\beta_3$ and $c$ in $\mod(S)$. 
Theorem \ref{thm-hex} implies that $\Pi$ is sent to $\Delta$ by a product of elements in $\{ h, h_1^{\pm 1}, h_2^{\pm 1}, t_c^{\pm 1}\}$. 
As in the proof of Lemma \ref{lem-pen-chain}, we can find a sequence of hexagons in $\calc_s(S)$ satisfying conditions (a) and (b).

In general, Proposition \ref{prop-d-conn2} shows that there exists a sequence of vertices in $\calc_s(S)$, $a_1, a_2,\ldots, a_n$, such that
\begin{itemize}
\item $a_j\neq a_{j+1}$ and $i(a_j, a_{j+1})=i(a_j, c)=0$ for each $j$;
\item $a_1$ is an h-vertex in $\Pi$, and $a_n$ is an h-vertex in $\Delta$; and
\item for each $j$, if $a_j$ is a p-vertex, then the three vertices $a_{j-1}$, $a_j$ and $a_{j+1}$ lie in a hexagon of $\calc_s(S)$.
\end{itemize}
By using this sequence and the fact proved in the previous paragraph, we obtain a desired sequence of hexagons in $\calc_s(S)$.
\end{proof}

\begin{lem}\label{lem-pre-h-p}
Any superinjective map from $\calc_s(S)$ into itself preserves h-vertices and p-vertices, respectively.
\end{lem}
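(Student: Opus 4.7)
The plan is to verify that $\calc_s(S)$ is a connected bipartite graph with the unique bipartition into h-vertices and p-vertices, conclude that any simplicial self-map $\phi$ either preserves or swaps these two classes, and rule out the swap using the asymmetry between p- and h-vertices exposed in Lemma \ref{lem-equal-c}.

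First I would verify the bipartite structure. In $S = S_{1,3}$, two disjoint non-isotopic p-curves would cut off two disjoint pairs of pants using four distinct boundary components (impossible since $p=3$), and two disjoint h-curves would cut off two disjoint handles (impossible since $g=1$). Hence every edge of $\calc_s(S)$ joins a p-vertex to an h-vertex. Using Theorem \ref{thm-hex} together with the fact that every separating curve lies in some hexagon, one sees that $\calc_s(S)$ is connected, so the bipartition is unique and $\phi$ preserves the two-coloring up to a global swap.

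Now suppose for contradiction that $\phi$ exchanges the two types. Since superinjectivity preserves both $i(\cdot,\cdot) = 0$ and $i(\cdot,\cdot)\neq 0$, $\phi$ sends hexagons to hexagons. Fix a hexagon $\Pi = (\alpha_1, \beta_3, \alpha_2, \beta_1, \alpha_3, \beta_2)$ with $\alpha_j$ p-vertices and $\beta_j$ h-vertices. By the uniqueness statement in the proof of Lemma \ref{lem-equal-c}, the central curve $c(\Pi)$ is the unique non-separating curve in $S$ disjoint from the triple $\{\alpha_1, \beta_2, \beta_3\}$ consisting of a p-vertex and its two adjacent h-vertices in $\Pi$. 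By contrast, for the triple $\{\beta_1, \alpha_2, \alpha_3\}$ consisting of an h-vertex and its two adjacent p-vertices, every essential simple closed curve in the handle cut off by $\beta_1$ is non-separating in $S$ and disjoint from all three; hence infinitely many such non-separating curves exist.

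The remaining step is to convert this geometric asymmetry into a simplicial invariant detectable by $\phi$. I would characterize, purely inside $\calc_s(S)$, the family of hexagons containing a prescribed length-two path $u_1 - v - u_2$. When $v$ is a p-vertex, every such hexagon has the same central non-separating curve $c$ by Lemma \ref{lem-equal-c}, and Lemma \ref{lem-hex-chain} combined with Proposition \ref{prop-d-conn2} forces this family into a rigid combinatorial shape. When $v$ is an h-vertex, different hexagons through the path can realize different central curves $c$, producing a strictly richer family. The main obstacle is to phrase this rigidity difference as a property internal to $\calc_s(S)$ so that $\phi$ must respect it; once this is accomplished, the swap would carry a p-centered path with its rigid hexagon family to an h-centered path with a non-rigid one, contradicting the compatibility of $\phi$ with hexagons and proving the lemma.
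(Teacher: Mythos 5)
Your proposal correctly observes the bipartite structure and spots a real geometric asymmetry, but it has a genuine gap at exactly the step you flag: converting the fact that a p-vertex with its two hexagon-neighbors determines a \emph{unique} disjoint non-separating curve, while an h-vertex with its two hexagon-neighbors admits infinitely many, into a property of the abstract complex $\calc_s(S)$ that a superinjective $\phi$ must respect. You acknowledge this as ``the main obstacle'' and leave it unresolved, so the swap is never actually ruled out. A smaller issue: the claimed connectivity of $\calc_s(S_{1,3})$ ``using Theorem \ref{thm-hex}'' also needs an argument, since transitivity of the $\pmod(S)$-action on hexagons does not by itself imply the complex is connected.

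The paper closes precisely this gap by a different, short invariant. For each $\alpha \in V_s(S)$ it introduces the graph $\cal{G}(\alpha)$ on the link of $\alpha$ in $\calc_s(S)$, declaring $\beta_1 \sim \beta_2$ whenever some hexagon passes through $\beta_1, \alpha, \beta_2$ in that order. Hexagons are defined purely by which pairs of vertices are disjoint and which are not, so a superinjective $\phi$ embeds $\cal{G}(\alpha)$ into $\cal{G}(\phi(\alpha))$. When $\alpha$ is an h-vertex, $\cal{G}(\alpha)$ is the Farey graph, which contains no complete subgraph on four vertices; when $\alpha$ is a p-vertex, $\cal{G}(\alpha)$ does contain a $K_4$ (see Figure \ref{fig-inv} (b)). Hence $\phi$ cannot carry a p-vertex to an h-vertex, and the bipartite observation finishes the argument without needing global connectivity at all. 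This $K_4$ versus no-$K_4$ distinction is exactly the intrinsic simplicial invariant your sketch was reaching for but did not produce.
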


\begin{proof}
Pick $\alpha \in V_s(S)$. 
We define a simplicial graph $\cal{G}(\alpha)$ as follows: Vertices of $\cal{G}(\alpha)$ are vertices in the link of $\alpha$ in $\calc_s(S)$. 
Two distinct vertices $\beta_1$, $\beta_2$ in $\cal{G}(\alpha)$ are connected by an edge if and only if there exists a hexagon in $\calc_s(S)$ containing $\beta_1$, $\alpha$ and $\beta_2$ in this order. 
Note that when $\alpha$ is an h-vertex (resp.\ a p-vertex), $\beta_1$ and $\beta_2$ are connected if and only if $i(\beta_1, \beta_2)=2$ (resp.\ $4$).

If $\alpha$ is an h-vertex, then it is known that $\cal{G}(\alpha)$ is isomorphic to the Farey graph (see Section 3.2 in \cite{luo}). 
If $\alpha$ is a p-vertex, then $\cal{G}(\alpha)$ contains at least four vertices such that any two of them are connected by an edge (see Figure \ref{fig-inv} (b)). 
The lemma follows because the Farey graph does not contain such a subgraph and because the link of any h-vertex (resp.\ p-vertex) in $\calc_s(S)$ consists of only p-vertices (resp.\ h-vertices).
\end{proof}

\begin{thm}\label{thm-ext-p3}
We put $S=S_{1, 3}$. 
Then for any automorphism $\phi$ of $\calc_s(S)$, there exists an automorphism $\Phi$ of $\calc(S)$ extending $\phi$.
\end{thm}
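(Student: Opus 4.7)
The strategy is to extend $\phi$ from separating curves to all vertices of $\calc(S)$ using the hexagon invariant $c(\Pi)$ introduced before Lemma \ref{lem-equal-c}. Since $\phi$ is an automorphism of $\calc_s(S)$, both $\phi$ and $\phi^{-1}$ are simplicial, so $\phi$ preserves disjointness and non-disjointness among separating curves; combined with Lemma \ref{lem-pre-h-p}, this implies that $\phi$ sends each hexagon in $\calc_s(S)$ to a hexagon, preserving the alternating pattern of h- and p-vertices.

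Define $\Phi\colon V(S)\rightarrow V(S)$ by $\Phi=\phi$ on $V_s(S)$, and for a non-separating curve $\alpha$, choose any hexagon $\Pi$ in $\calc_s(S)$ with $c(\Pi)=\alpha$ and set $\Phi(\alpha):=c(\phi(\Pi))$. Such a $\Pi$ exists by Theorem \ref{thm-hex} since $\pmod(S)$ acts transitively both on non-separating curves and on hexagons. Well-definedness follows from Lemmas \ref{lem-equal-c} and \ref{lem-hex-chain}: any two hexagons with $c$-invariant $\alpha$ are joined by a chain of hexagons whose consecutive pairs share two edges meeting at a p-vertex, and $\phi$ preserves this sharing pattern (since it preserves p-vertices), so Lemma \ref{lem-equal-c} gives that $c(\phi(\cdot))$ is constant along the image chain. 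Bijectivity of $\Phi$ follows by applying the construction to $\phi^{-1}$: for any $\Pi$ with $c(\Pi)=\alpha$, the hexagon $\phi^{-1}(\phi(\Pi))=\Pi$ shows the resulting map inverts $\Phi$ on non-separating vertices.

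The heart of the argument is simpliciality: $i(\alpha,\beta)=0\Rightarrow i(\Phi(\alpha),\Phi(\beta))=0$. The case of two separating curves is immediate. For a non-separating $\alpha$ disjoint from a separating $\beta$, the plan is to choose the hexagon $\Pi$ with $c(\Pi)=\alpha$ so that $\beta$ itself appears as a vertex; then $\Phi(\beta)=\phi(\beta)$ is a vertex of $\phi(\Pi)$ and so is disjoint from $\Phi(\alpha)=c(\phi(\Pi))$ by the very definition of the $c$-invariant. This requires a $\pmod(S)$-orbit analysis of separating curves disjoint from a non-separating $\alpha$ in $S_{1,3}$: such a curve is either an h-curve whose handle contains $\alpha$, or a p-curve cutting off a pair of pants containing two of the three original boundary components of $S$, and in each type one exhibits a hexagon $\Pi$ with $c(\Pi)=\alpha$ containing $\beta$, relying on Theorem \ref{thm-hex} for the necessary transitivity. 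For the remaining case where $\alpha,\beta$ are both non-separating and disjoint (hence a bounding pair, since $S$ has genus one), pick a p-curve $\gamma$ disjoint from both $\alpha$ and $\beta$ — such $\gamma$ exists because the BP cuts $S_{1,3}$ into $S_{0,3}\sqcup S_{0,4}$ and the $S_{0,4}$-piece contains a p-curve of $S$; the mixed case just handled places $\Phi(\alpha)$ and $\Phi(\beta)$ in the same $S_{1,2}$-component of $S\setminus \phi(\gamma)$, and a second hexagon argument relative to this component pins them down to be disjoint.

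The main obstacle will be the simpliciality check in the mixed and BP cases: the hexagon invariant only directly gives disjointness of $\Phi(\alpha)$ from the images of vertices of one hexagon at a time, so controlling disjointness with an arbitrary vertex disjoint from $\alpha$ requires enough flexibility in the choice of $\Pi$. This flexibility ultimately rests on Theorem \ref{thm-hex} together with a careful topological enumeration of configurations, which is where the bulk of the work lies.
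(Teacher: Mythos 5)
Your construction of $\Phi$, the well-definedness argument via Lemmas \ref{lem-equal-c}, \ref{lem-hex-chain} and \ref{lem-pre-h-p}, the bijectivity via $\phi^{-1}$, and the mixed-case simpliciality argument (choosing a hexagon $\Pi$ with $c(\Pi)=\alpha$ that contains $\beta$ as a vertex, justified by $\pmod(S)$-transitivity) all match the paper's proof in substance. The gap is in the final step, the case where $\alpha,\beta$ are a bounding pair.

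Your plan there is to localize $\Phi(\alpha),\Phi(\beta)$ into the $S_{1,2}$-component $Q'$ of $S\setminus\phi(\gamma)$ and then run ``a second hexagon argument relative to this component.'' But no such argument is available: the only p-curve of $S$ disjoint from a given p-curve $\gamma$ is $\gamma$ itself (any other separating curve disjoint from $\gamma$ lies in $Q'$ and can only be an h-curve of $S$), while a hexagon in $\calc_s(S)$ needs three distinct p-vertices. Hence no hexagon has all its vertices disjoint from $\phi(\gamma)$, and $\calc_s(Q')$ is zero-dimensional and contains no hexagons at all, so localization to $Q'$ by itself cannot force $i(\Phi(\alpha),\Phi(\beta))=0$. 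What is needed instead, and what the paper uses, is a combinatorial characterization of disjointness: two non-separating curves $c_1,c_2$ in $S_{1,3}$ are disjoint if and only if there are infinitely many p-curves disjoint from both and no h-curve disjoint from both; if instead $c_1,c_2$ intersect, then either they fill a handle (and there is an h-curve disjoint from both) or they do not (and at most one p-curve is disjoint from both). Applying the mixed case to $\phi$ and to $\phi^{-1}$ transports these conditions to $\Phi(\alpha),\Phi(\beta)$ and yields a contradiction in either intersecting scenario: an h-curve disjoint from $\Phi(\alpha),\Phi(\beta)$ would pull back under $\phi^{-1}$ to an h-curve disjoint from $\alpha,\beta$ (impossible for a BP), and the infinitely many $\phi(\gamma)$ rule out the at-most-one-p-curve scenario. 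Your localization step is a correct observation but is not sufficient; you should replace the ``second hexagon argument'' by this p-curve/h-curve dichotomy.
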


\begin{proof}
Let $\phi $ be an automorphism of $\calc_s(S)$. 
Since $\phi$ preserves hexagons in $\calc_s(S)$, we define the extension $\Phi \colon V(S)\rightarrow V(S)$ of $\phi$ by putting $\Phi(c)=c(\phi(\Pi))$ for each non-separating curve $c$ in $S$, where $\Pi$ is a hexagon in $\calc_s(S)$ with $c=c(\Pi)$. 
This is well-defined thanks to Lemmas \ref{lem-equal-c}, \ref{lem-hex-chain} and \ref{lem-pre-h-p}. 
The map from $V(S)$ into itself associated to $\phi^{-1}$ is equal to the inverse of $\Phi$.
It follows that $\Phi$ is bijective.

We next prove that $\Phi$ is simplicial. 
It is easy to see that for any two curves $c_1$, $c_2$ in $S$ with $i(c_1, c_2)=0$, we have $i(\Phi(c_1), \Phi(c_2))=0$ unless both $c_1$ and $c_2$ are non-separating in $S$. 
Let $c_1$ and $c_2$ be distinct non-separating curves in $S$. 
We note that if $c_1$ and $c_2$ are disjoint, then there exist infinitely many p-curves in $S$ disjoint from $c_1$ and $c_2$, and there is no h-curve in $S$ disjoint from $c_1$ and $c_2$. 

Assume that $c_1$ and $c_2$ intersect. 
If the subsurface of $S$ filled by $c_1$ and $c_2$ is a handle, then there exists an h-curve in $S$ disjoint from $c_1$ and $c_2$. 
We claim that otherwise, there exists at most one p-curve in $S$ disjoint from $c_1$ and $c_2$.
Assume that there exist two distinct p-curves $d_1$, $d_2$ in $S$ disjoint from $c_1$ and $c_2$.
Let $D_1$ and $D_2$ be representatives of $d_1$ and $d_2$, respectively, with $|D_1\cap D_2|=i(d_1, d_2)$, which is positive since we have $d_1\neq d_2$.
We denote by $R$ the component of $S_{D_1}$ homeomorphic to $S_{1, 2}$.
The intersection $D_2\cap R$ consists of essential simple arcs in $R$ and contains a non-separating one $l$ in $R$, i.e., an essential simple arc in $R$ whose complement in $R$ is connected, since the subsurface of $S$ filled by $c_1$ and $c_2$ is not a handle.
Two boundary components of a regular neighborhood of $D_1\cup l$ in $R$ form a BP in $S$ cutting off a pair of pants from $S$.
This is a contradiction because $c_1$ and $c_2$ intersect and are disjoint from $d_1$ and $d_2$.
 
The observations in the previous two paragraphs show that $\Phi$ preserves two disjoint non-separating curves in $S$ and that $\Phi$ is simplicial.
\end{proof}


\subsection{The case $g=1$ and $p\geq 4$}

Let $S=S_{1, p}$ be a surface with $p\geq 4$, and fix an automorphism $\phi$ of $\calc_s(S)$. 
We define an automorphism of $\calc(S)$ extending $\phi$ by induction on $p$. 
For an integer $q$ with $2\leq q\leq p$, we refer as a $q$-{\it HBC (hole bounding curve)} in $S$ a separating curve $\alpha$ in $S$ such that the component of $S_{\alpha}$ of genus zero contains exactly $q$ components of $\partial S$.

Let $\alpha$ be a $q$-HBC in $S$ with $2\leq q\leq p-2$. 
By the hypothesis of the induction, we obtain an isomorphism $\phi_{\alpha}\colon \lk(\alpha)\rightarrow \lk(\phi(\alpha))$ extending the restriction of $\phi$ to $\lk(\alpha)\cap \calc_s(S)$, where for each $\gamma \in V(S)$, $\lk(\gamma)$ denotes the link of $\gamma$ in $\calc(S)$.

We next assume that $\alpha$ is a $(p-1)$-HBC in $S$. 
Let $Q_1$ and $Q_2$ denote the two components of $S_{\alpha}$ with $Q_1$ of genus one. 
Choosing a separating curve $\beta$ in $Q_2$, we define an isomorphism $\phi_{\alpha}\colon \lk(\alpha)\rightarrow \lk(\phi(\alpha))$ as $\phi_{\alpha}=\phi_{\beta}$ on $V(Q_1)$ and $\phi_{\alpha}=\phi$ on $V(Q_2)$. 
Note that $\beta$ is a $q$-HBC in $S$ with $2\leq q\leq p-2$ and that $V(Q_2)$ is contained in $V_s(S)$. 
This definition is independent of the choice of $\beta$ by the following:

\begin{lem}\label{lem-uni}
We put $R=S_{1, p}$ with $p\geq 2$. 
Then any two automorphisms of $\calc(R)$ that preserve $V_s(R)$ and are equal on $V_s(R)$ are equal on $V(R)$.  
\end{lem}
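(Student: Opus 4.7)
The plan is to reduce to showing that any automorphism $\Psi$ of $\calc(R)$ preserving $V_s(R)$ and fixing it pointwise must be the identity on $V(R)$: setting $\Psi = \Phi_2^{-1} \circ \Phi_1$ then gives the lemma, since it suffices to check that $\Psi$ fixes every non-separating vertex. I will proceed by induction on $p \geq 2$.

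For the inductive step $p \geq 3$, given a non-separating curve $c$, I pick a p-curve $\alpha \in V_s(R)$ disjoint from $c$ (possible because $p \geq 3$ lets one encircle two components of $\partial R$ by a small curve chosen away from $c$). Since $\Psi(\alpha) = \alpha$, $\Psi$ preserves $\lk(\alpha) \subset \calc(R)$; as the pair of pants cut off by $\alpha$ contains no essential curves, the vertices of $\lk(\alpha)$ are precisely $V(Q)$ for $Q \cong S_{1, p-1}$ the other component of $R \setminus \alpha$. The restriction $\Psi|_Q$ is then an automorphism of $\calc(Q)$ preserving $V_s(Q) = V_s(R) \cap V(Q)$ and fixing it pointwise, so the induction hypothesis applied to $S_{1, p-1}$ yields $\Psi|_Q = \mathrm{id}$, and in particular $\Psi(c) = c$.

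The main obstacle is the base case $p = 2$, where every separating curve in $R = S_{1, 2}$ is an h-curve. Here I will show that for any non-separating curve $c$, the set $\mathcal{H}(c) = \{\alpha \in V_s(R) : i(\alpha, c) = 0\}$ determines $c$; combined with $\Psi$ fixing $V_s(R)$ pointwise, this gives $\mathcal{H}(\Psi(c)) = \mathcal{H}(c)$ and hence $\Psi(c) = c$. Suppose $c_1 \neq c_2$ are non-separating with $\mathcal{H}(c_1) = \mathcal{H}(c_2)$. If $i(c_1, c_2) = 0$, the Euler characteristic constraint $\chi(R) = -2$ forces $R \setminus (c_1 \cup c_2)$ to split as two pairs of pants (a connected complement would demand negative genus, and an annulus component would force $c_1, c_2$ to be isotopic or peripheral), so $\{c_1, c_2\}$ is a BP. The two pair-of-pants pieces contain no essential curves, so $\mathcal{H}(c_1) \cap \mathcal{H}(c_2) = \emptyset$, contradicting $\mathcal{H}(c_1) \neq \emptyset$.

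If $i(c_1, c_2) > 0$, the genuinely hard case, I cut $R$ along a minimal-position representative of $c_1$ to obtain $R_{c_1} \cong S_{0, 4}$ with boundaries $\partial_1, \partial_2, C_1^+, C_1^-$. Representatives of elements of $\mathcal{H}(c_1)$ are precisely the essential simple closed curves in $R_{c_1}$ separating $\{\partial_1, \partial_2\}$ from $\{C_1^+, C_1^-\}$, and $c_2$ restricts to a nonempty collection of essential arcs with endpoints on $C_1^+ \cup C_1^-$. The key observation is a natural bijection between $\mathcal{H}(c_1)$ and isotopy classes (with endpoint-sliding) of essential arcs in $R_{c_1}$ from $C_1^+$ to $C_1^-$: an h-curve $\alpha$ corresponds to the unique essential arc class in the pair of pants $P_\alpha \subset R_{c_1}$ bounded by $\alpha, C_1^+, C_1^-$; conversely, an arc class is sent to the boundary curve (other than $C_1^\pm$) of a regular neighborhood of $C_1^+ \cup \mathrm{arc} \cup C_1^-$. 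For every $\alpha \in \mathcal{H}(c_1) = \mathcal{H}(c_2)$, every arc of $c_2$ in $R_{c_1}$ must be disjoint from $\alpha$ up to isotopy, hence lie in $P_\alpha$, hence represent the arc class corresponding to $\alpha$. Since $\mathcal{H}(c_1)$ is infinite and the bijection is injective, no single arc class can be the image of every element, giving the needed contradiction. This completes the base case and the induction.
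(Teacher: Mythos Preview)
Your proof is correct and takes a genuinely different route from the paper's. The paper dispatches the lemma in two lines by invoking Theorem~\ref{thm-cc}: since both automorphisms preserve $V_s(R)$, they are induced by elements $g,h\in\mod^*(R)$; for any non-separating curve $\alpha$ one then chooses two separating curves $\beta,\gamma$ disjoint from $\alpha$ that fill $S_\alpha$, so that $\alpha$ is the \emph{unique} curve in $R$ disjoint from both, and the mapping class $g^{-1}h$ (which fixes $\beta$ and $\gamma$) must therefore fix $\alpha$.

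Your argument avoids Theorem~\ref{thm-cc} entirely, giving an elementary combinatorial proof by induction on $p$ with a direct arc analysis in $S_{0,4}$ for the base case $p=2$. This is more self-contained, at the cost of being substantially longer. One small point worth tightening: in the $i(c_1,c_2)>0$ case you implicitly assume every arc of $c_2$ in $R_{c_1}$ runs from $C_1^+$ to $C_1^-$. This is indeed true here---both $c_1$ and $c_2$ lie in a common handle $H_\alpha\cong S_{1,1}$ (for any $\alpha\in\mathcal{H}(c_1)$), where algebraic and geometric intersection numbers coincide up to sign, forcing all crossings to have the same sign---but it deserves a sentence of justification. Alternatively, your contradiction goes through unchanged for an arc with both endpoints on $C_1^+$ (or on $C_1^-$): such an arc, being essential in the pair of pants $P_\alpha$, still pins down $\alpha$ uniquely, and the same ``one arc, one $\alpha$, infinitely many $\alpha$'' contradiction applies.
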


\begin{proof}
Let $\phi$ and $\psi$ be two such automorphisms of $\calc(R)$.
By Theorem \ref{thm-cc}, $\phi$ and $\psi$ are induced by elements $g$ and $h$ of $\mod^*(R)$, respectively.
For any non-separating curve $\alpha$ in $R$, we choose two separating curves $\beta$, $\gamma$ in $R$ disjoint from $\alpha$ and filling $S_{\alpha}$.
Since $g^{-1}h$ fixes $\beta$ and $\gamma$, it also fixes $\alpha$ because $\alpha$ is the unique curve in $R$ disjoint from $\beta$ and $\gamma$.
It follows that $g^{-1}h$ fixes any element of $V(R)$.
\end{proof}

Let $U$ be the set of all $q$-HBCs in $S$ with $2\leq q\leq p-1$. 
Lemma \ref{lem-uni} also shows that if $\alpha_1, \alpha_2\in U$ are disjoint curves, then $\phi_{\alpha_1}=\phi_{\alpha_2}$ on $\lk(\alpha_1)\cap \lk(\alpha_2)$. 
By using the following proposition, we obtain an automorphism $\Phi$ of $\calc(S)$ as an extension of $\phi_{\alpha}$ for any $\alpha \in U$.

\begin{prop}
We put $R=S_{0, p}$ with $p\geq 6$ and choose two components $\partial_1$, $\partial_2$ of $\partial R$. 
We define $\cal{F}$ as the full subcomplex of $\calc(R)$ spanned by all vertices corresponding to a curve $\alpha$ in $R$ such that one component of $R_{\alpha}$ contains both $\partial_1$ and $\partial_2$ and contains at least three components of $\partial R$. 
Then $\cal{F}$ is connected.
\end{prop}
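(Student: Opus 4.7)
The plan is to mimic the Putman-style argument used in the proof of Proposition~\ref{prop-d-conn}. Write $J = \{1, \ldots, p\}$; a vertex of $\cal{F}$ is determined by a decomposition $J = J_1 \sqcup J_2$ with $\{1, 2\} \subseteq J_1$, $|J_1| \geq 3$, and $|J_2| \geq 2$, and two such vertices lie in a common $\pmod(R)$-orbit precisely when they realize the same decomposition. I will fix as basepoint the vertex $\alpha_0$ corresponding to $J_1 = \{1, 2, 3\}$ and $J_2 = \{4, \ldots, p\}$. Recall that $\pmod(R)$ is generated by the Dehn twists $t_{jk}$ about the curves $\delta_{jk}$ of Figure~\ref{fig-braid}(b), for pairs $(j, k)$ with $2 \leq j < k \leq p$ and $1 \leq k - j \leq p - 3$, and observe that $\cal{F}$ is $\pmod(R)$-invariant since its defining condition is a statement about boundary labels.

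The first step is the analogue of the first claim in Proposition~\ref{prop-d-conn}: for each admissible decomposition $J = J_1' \sqcup J_2'$ there is a path in $\cal{F}$ from $\alpha_0$ to the vertex realizing $(J_1', J_2')$. When $3 \in J_1'$ the two decompositions are already nested, so a single edge suffices. When $3 \in J_2'$, I would insert an intermediate vertex: for $|J_2'| \geq 3$ use $(J_1' \cup \{3\},\, J_2' \setminus \{3\})$, and for $|J_2'| = 2$, writing $J_2' = \{3, y\}$, use $(\{1, 2, 3, y\},\, J \setminus \{1, 2, 3, y\})$. One verifies in each case that the intermediate vertex lies in $\cal{F}$ and is nested with both $\alpha_0$ and the target. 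The boundary subcase $|J_2'| = 2$ is precisely where the hypothesis $p \geq 6$ is used: it ensures $|J \setminus \{1,2,3,y\}| = p - 4 \geq 2$.

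The second step is the analogue of the second claim in Proposition~\ref{prop-d-conn}: for each generator $t_{jk}$ there is a path in $\cal{F}$ from $\alpha_0$ to $t_{jk}(\alpha_0)$. Comparing the sides of $\alpha_0$ and $\delta_{jk}$, these curves are disjoint in all cases except $j \in \{2, 3\}$ and $4 \leq k \leq p - 1$; otherwise the twist fixes $\alpha_0$ and there is nothing to prove. For each of the remaining pairs $(j, k)$ I will exhibit a vertex $\beta \in \cal{F}$ disjoint from both $\alpha_0$ and $\delta_{jk}$: the vertex realizing $(\{1, \ldots, k\},\, \{k+1, \ldots, p\})$ works whenever $k \leq p - 2$, and for the edge case $k = p - 1$ the vertex realizing $(J \setminus \{4, 5\},\, \{4, 5\})$ works because $p \geq 6$ forces $5 \leq p - 1 = k$. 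Since $\beta$ is disjoint from $\delta_{jk}$ it is fixed by $t_{jk}$, hence also disjoint from $t_{jk}(\alpha_0)$, giving a length-two path $\alpha_0 \text{---} \beta \text{---} t_{jk}(\alpha_0)$ in $\cal{F}$.

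To finish, the second step together with $\pmod(R)$-invariance of $\cal{F}$ shows, by the standard Putman argument, that the entire $\pmod(R)$-orbit of $\alpha_0$ lies in a single connected component of $\cal{F}$; the first step then shows that every vertex of $\cal{F}$ is connected in $\cal{F}$ to a vertex of this orbit, proving that $\cal{F}$ is connected. The main obstacle will be arranging the two boundary subcases $|J_2'| = 2$ and $k = p - 1$ so that the intermediate and auxiliary vertices remain essential and in $\cal{F}$; these subcases are exactly where the bound $p \geq 6$ is essential.
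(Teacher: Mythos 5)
Your proposal is correct and follows the paper's intended route: the paper simply notes that this proposition is verified by the same Putman-style argument as Proposition \ref{prop-d-conn}, and your proposal carries out exactly that argument. You supply the basepoint $\alpha_0$ realizing $(\{1,2,3\},\{4,\ldots,p\})$, the nested intermediate vertices for the first claim, the auxiliary vertices $\beta$ for the second claim, and you correctly isolate the two boundary subcases ($|J_2'|=2$ and $k=p-1$) where the hypothesis $p\geq 6$ is actually used, exactly as the paper's reference to Proposition \ref{prop-d-conn} intends.
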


This proposition is also verified along the same idea as in the proof of Proposition \ref{prop-d-conn}. 
Combining Theorem \ref{thm-ext-p3}, we proved the following:

\begin{thm}\label{thm-g-1-s}
Let $S=S_{1, p}$ be a surface with $p\geq 3$. 
Then for any automorphism $\phi$ of $\calc_s(S)$, there exists an automorphism $\Phi$ of $\calc(S)$ extending $\phi$.
\end{thm}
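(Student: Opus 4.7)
The plan is to prove Theorem~\ref{thm-g-1-s} by induction on $p$, with Theorem~\ref{thm-ext-p3} furnishing the base case $p = 3$. Assume $p \geq 4$ and that the theorem holds for every $S_{1, p'}$ with $3 \leq p' < p$; fix an automorphism $\phi$ of $\calc_s(S)$. My strategy is to construct, for each HBC $\alpha$ in the family $U$ of $q$-HBCs with $2 \leq q \leq p - 1$, a local isomorphism $\phi_\alpha \colon \lk(\alpha) \to \lk(\phi(\alpha))$ extending $\phi|_{\lk(\alpha) \cap \calc_s(S)}$, and then to glue these local maps into a single automorphism $\Phi$ of $\calc(S)$.

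For the local construction I would split on $q$. If $2 \leq q \leq p - 2$, then the genus-one component $Q_1$ of $S_\alpha$ is homeomorphic to $S_{1, p-q+1}$ with $3 \leq p-q+1 < p$, so the inductive hypothesis applied to $Q_1$ (via the automorphism of $\calc_s(Q_1)$ induced by $\phi$) supplies an extension to $\lk(\alpha) \cap V(Q_1)$; combined with $\phi$ on $V(Q_2) \subset V_s(S)$ this yields $\phi_\alpha$. For $q = p-1$, however, $Q_1 \cong S_{1, 2}$ is too small for induction to apply; instead one chooses an auxiliary separating curve $\beta \subset Q_2$, which is itself a $q'$-HBC with $2 \leq q' \leq p-2$, and sets $\phi_\alpha = \phi_\beta$ on $V(Q_1)$ and $\phi_\alpha = \phi$ on $V(Q_2)$. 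Lemma~\ref{lem-uni} guarantees that this is independent of the choice of $\beta$.

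The gluing step rests on two ingredients. First, Lemma~\ref{lem-uni} yields the compatibility $\phi_{\alpha_1} = \phi_{\alpha_2}$ on $\lk(\alpha_1) \cap \lk(\alpha_2)$ whenever $\alpha_1, \alpha_2 \in U$ are disjoint, since both restrictions are automorphisms of a copy of $\calc(S_{1, m})$ that agree on separating curves. Second, the connectivity proposition preceding Theorem~\ref{thm-g-1-s} (about the complex $\cal{F}$ in $S_{0, p'}$) allows me, for a fixed non-separating $c \in V(S)$, to join any two HBCs in $U$ containing $c$ in their links by a chain of HBCs whose consecutive members are disjoint; propagating the previous compatibility along this chain defines $\Phi(c)$ unambiguously. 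Setting $\Phi = \phi$ on $V_s(S)$ then produces a well-defined map $\Phi \colon V(S) \to V(S)$. Simpliciality on each edge of $\calc(S)$ follows because every such edge lies in $\lk(\alpha)$ for some $\alpha \in U$ (two disjoint curves in $S_{1, p}$ with $p \geq 4$ fill a proper subsurface, leaving room for an HBC disjoint from both), and each $\phi_\alpha$ is simplicial by construction. Bijectivity of $\Phi$ is obtained by running the identical construction on $\phi^{-1}$ and invoking Lemma~\ref{lem-uni} one last time to identify the composite with the identity.

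The main obstacle I anticipate is the $q = p-1$ case of the local construction: here the inductive hypothesis is unavailable, and $\phi_\alpha$ must be bootstrapped through an auxiliary HBC $\beta$ of smaller type. Verifying that this indirect definition is independent of $\beta$, and that it is compatible on every overlap with the $\phi_{\alpha'}$'s coming from $q' \leq p-2$, is the most delicate point, and it is precisely where the rigidity supplied by Lemma~\ref{lem-uni} (and ultimately by Theorem~\ref{thm-cc}) is indispensable.
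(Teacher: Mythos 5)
Your proposal reproduces the paper's argument in its essentials: induction on $p$ with Theorem~\ref{thm-ext-p3} as base case, local isomorphisms $\phi_\alpha$ on links of $q$-HBCs (built by induction for $2 \leq q \leq p-2$ and bootstrapped through an auxiliary $\beta \subset Q_2$ for $q = p-1$), with Lemma~\ref{lem-uni} providing well-definedness and overlap compatibility, and the connectivity of $\mathcal{F}$ supplying the gluing. Your added remarks on simpliciality and bijectivity are correct elaborations of steps the paper leaves implicit, but the route is the same.
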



\subsection{The case $g\geq 2$}

The idea for the construction of $\Phi$ due to Brendle-Margalit \cite{bm} is to use sharing pairs defined below.
Let $S=S_{g, p}$ be a surface with $g\geq 2$ and $|\chi(S)|\geq 3$.
For an h-curve $C$ in $S$, we denote by $H_C$ the handle cut off by $C$ from $S$, which is naturally identified with a subsurface of $S$.

\begin{defn}\label{defn-share}
Let $S=S_{g, p}$ be a surface with $g\geq 2$ and $|\chi(S)|\geq 3$.
Let $a, b\in V_s(S)$ be h-curves in $S$ and $\beta \in V(S)$ a non-separating curve in $S$. 
We say that $a$ and $b$ {\it share} $\beta$ if there exist representatives $A$, $B$ and $\frak{b}$ of $a$, $b$ and $\beta$, respectively, such that we have $|A\cap B|=i(a, b)$, $H_A\cap H_B$ is an annulus with its core curve $\frak{b}$, and $S\setminus (H_A\cup H_B)$ is connected. 
In this case, $\{ a, b\}$ is called a {\it sharing pair} for $\beta$ or a {\it sharing pair} in $S$ if $\beta$ is not specified (see Figure \ref{fig-spine} (a)).
\begin{figure}
\begin{center}
\includegraphics[width=12cm]{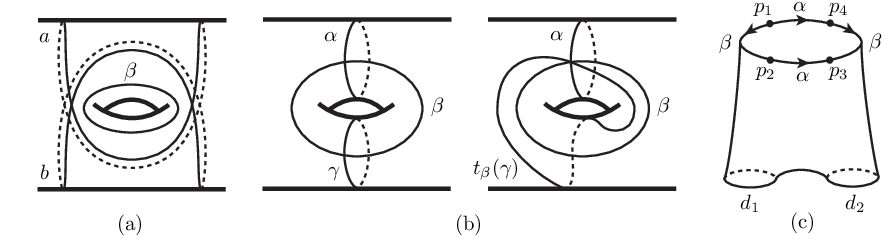}
\caption{}\label{fig-spine}
\end{center}
\end{figure}
\end{defn}

The action of $\pmod(S)$ on the set of sharing pairs in $S$ is transitive.
In fact, in the notation of Definition \ref{defn-share}, $A$ and $B$ have to intersect, and $B\cap H_A$ consists of essential simple arcs in $H_A$ which are mutually isotopic because $\frak{b}$ is a curve in $H_A$ disjoint from $B$.
Since $H_A\cap H_B$ is an annulus, $B\cap H_A$ consists of exactly two essential simple arcs $l_1$, $l_2$ in $H_A$.
We put $\partial l_j=\{ p_j, q_j\}$ for each $j=1, 2$ so that $p_1$, $q_1$, $q_2$ and $p_2$ appear along $A$ in this order.
Let $Q$ denote the complement of $H_A$ in the surface obtained by cutting $S$ along $A$.
Since $B$ is separating in $S$, $B\cap Q$ consists of two essential simple arcs $r_1$, $r_2$ in $Q$ such that $r_1$ connects $p_1$ with $p_2$ and $r_2$ connects $q_1$ with $q_2$.
Connectivity of $S\setminus (H_A\cup H_B)$ implies that $r_1$ and $r_2$ are non-separating in $Q$.
Since $B$ cuts off a handle from $S$, $r_1$ and $r_2$ are isotopic.
Transitivity of the action of $\pmod(S)$ on the set of sharing pairs in $S$ thus follows.

The claim in the previous paragraph shows that if $\{ a, b\}$ is a sharing pair for a non-separating curve $\beta$ in $S$, then we have $i(a, b)=4$, and the subsurface filled by $a$ and $b$ is homeomorphic to $S_{0, 4}$ and has two boundary components corresponding to $\beta$. 
Note that when $S$ is a surface of genus one, there exists no pair $\{ a, b\}$ of h-curves in $S$ satisfying the condition in Definition \ref{defn-share}. 
The following lemma characterizes sharing pairs in terms of disjointness and non-disjointness.

\begin{lem}\label{lem-share-cha}
Let $S=S_{g, p}$ be a surface with $g\geq 2$ and $|\chi(S)|\geq 4$, and let $a$ and $b$ be h-curves in $S$. 
Then $a$ and $b$ form a sharing pair in $S$ if and only if there exist separating curves $w$, $x$, $y$ and $z$ in $S$ satisfying the following six conditions:
\begin{itemize}
\item $z$ cuts off a surface $Q$ homeomorphic to $S_{2, 1}$ from $S$;
\item $a, b\in V(Q)$ and $i(a, b)\neq 0$;
\item $i(x, y)=0$;
\item $i(w, a)=0$, $i(w, b)=0$ and $i(w, z)\neq 0$;
\item $i(x, a)\neq 0$, $i(x, b)=0$ and $i(x, z)\neq 0$; and
\item $i(y, a)=0$, $i(y, b)\neq 0$ and $i(y, z)\neq 0$.
\end{itemize}
\end{lem}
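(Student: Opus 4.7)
The proof splits into the two implications. For the forward direction, I would exploit transitivity of $\pmod(S)$ on sharing pairs in $S$ (noted in the discussion after Definition \ref{defn-share}): the six bullet conditions are preserved by the $\pmod(S)$-action, so it suffices to exhibit $w, x, y, z$ for a single explicit model of a sharing pair $\{a, b\}$ for some non-separating curve $\beta$. In the model, take $Q$ to be a regular neighborhood of $H_a \cup H_b$ (a genus-one subsurface with boundary components $a, b$) enlarged by a pair of pants in $S \setminus (H_a \cup H_b)$ whose boundary consists of $a$, $b$, and a new curve $z$; then $Q \cong S_{2,1}$ and $z$ is a separating curve of $S$, verifying the first two bullets. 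Construct $x$ from an essential arc dual to $a$ inside $H_a$ (placed disjoint from the annulus $H_a \cap H_b$) pushed through $z$ and closed up inside $S \setminus Q$; build $y$ symmetrically inside $H_b$, placing the exterior portions of $x$ and $y$ in disjoint regions of $S \setminus Q$ so that $i(x, y) = 0$. Take $w$ to be a separating curve of $S$ that runs through the shared annulus $H_a \cap H_b$ (so is disjoint from $a$ and $b$) and crosses $z$. The hypothesis $|\chi(S)| \geq 4$ guarantees that $S \setminus Q$ is topologically rich enough for these extensions to exist as separating curves of $S$. All six bullet conditions can then be verified by inspection.

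For the reverse direction, the hypothesis yields a subsurface $Q \cong S_{2,1}$ cut off by $z$ and containing both intersecting h-curves $a$ and $b$. The plan is to enumerate the topological configurations of two intersecting h-curves in $S_{2,1}$, distinguished by the shape of $H_a \cap H_b$ (an annulus with non-separating core, an annulus with separating core, or a more complicated subsurface), and to use the constraints imposed by $x, y, w$ to eliminate all but the sharing-pair configuration. Concretely, $i(x, b) = 0$ confines the $Q$-part of $x$ to one side of $b$ in $Q$, while $i(x, a) \neq 0$ forces it to meet $a$; the symmetric analysis for $y$ pins down the relative position of $a$ and $b$ inside $Q$. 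The compatibility $i(x, y) = 0$ then further restricts the configuration, and one checks that only the sharing-pair model admits the prescribed intersection pattern. Finally, the existence of $w$ supplies the connectivity ingredient in the definition of a sharing pair: one argues that if $S \setminus (H_a \cup H_b)$ were disconnected, no separating curve of $S$ could enter $Q$ through $z$, avoid both $a$ and $b$, and close up into a single simple loop, contradicting the properties of $w$.

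The main obstacle is the case analysis in the reverse direction: showing rigorously that the combination $i(x, a) \neq 0$, $i(x, b) = 0$, $i(y, a) = 0$, $i(y, b) \neq 0$, $i(x, y) = 0$ rules out every non-sharing configuration of intersecting h-curve pairs in $S_{2,1}$. Each of these conditions must be used essentially to exclude a distinct potential arrangement, and translating the existence of $w$ into connectivity of $S \setminus (H_a \cup H_b)$ is a further delicate topological step relying on the global structure of $S$ beyond $Q$.
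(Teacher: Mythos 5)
Your overall strategy matches the paper's: the paper itself does not give a self-contained argument but refers the reader to Lemma 4.1 of \cite{bm} and Lemma 4 of \cite{bm-add} for the ``if'' direction, and proves the ``only if'' direction by exhibiting explicit curve configurations (Figure \ref{fig-share}, taken from Figure 2 of \cite{bm-add}) together with the transitivity of $\pmod(S)$ on sharing pairs — exactly the plan you lay out. However, your write-up has one outright topological error and a substantial undischarged burden.

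The concrete error is in the construction of $w$ for the ``only if'' direction. You propose taking $w$ to be a separating curve ``that runs through the shared annulus $H_a \cap H_b$''. This is impossible: the boundary $\partial(H_a \cap H_b)$ consists precisely of the arcs of $a$ lying inside $H_b$ together with the arcs of $b$ lying inside $H_a$, so any curve entering or leaving that annulus must cross $a$ or $b$, violating $i(w,a)=i(w,b)=0$. Likewise $w$ cannot sit inside the annulus, since the only essential simple closed curve there is the core $\beta$, which is non-separating. In fact $w$ is forced to lie entirely in $S\setminus(H_a\cup H_b)$. A related slip: for a sharing pair $a$ and $b$ intersect in $4$ points, so neither $a$ nor $b$ is a boundary component of $H_a\cup H_b$; its boundary consists of two circles assembled from alternating arcs of $a$ and $b$. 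The pair of pants you attach to reach $Q\cong S_{2,1}$ must be glued along those two piecewise circles, not along $a$ and $b$. You also do not distinguish the cases $g\geq 3$ and $g=2$; the paper uses two different figures precisely because when $g=2$ the complement $S\setminus Q$ has genus zero and the exterior portions of $x$ and $y$ must be arranged more carefully to stay disjoint while remaining separating in $S$.

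In the ``if'' direction your outline is plausible, and the observation that the existence of $w$ forces $S\setminus(H_a\cup H_b)$ to be connected is a genuine and usable point: if the component of $Q\setminus(H_a\cup H_b)$ containing $\partial Q$ were an annulus, every arc of $w\cap Q$ would be inessential and $w$ could be isotoped off $z$, contradicting $i(w,z)\neq 0$. But the heart of the direction — ``one checks that only the sharing-pair model admits the prescribed intersection pattern'' — is not an argument. As stated, the enumeration of configurations of two intersecting h-curves in $S_{2,1}$ is not even a priori finite: nothing in the hypotheses directly bounds $i(a,b)$, and the shapes of $H_a\cap H_b$ are not classified. The whole content of the ``if'' implication lies in organizing this case analysis and using each of $x$, $y$, $w$, and $i(x,y)=0$ to kill a specific bad configuration; your proposal names this as the ``main obstacle'' but does not surmount it. As it stands, the proposal is an outline consistent with the cited Brendle--Margalit proof rather than a complete argument.
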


This lemma for closed surfaces is proved in Lemma 4.1 of \cite{bm} and in Lemma 4 of \cite{bm-add}.
The same proof of the ``if" part is also valid for general surfaces.
The ``only if" part is proved by using Figure \ref{fig-share} (a) and (b) for surfaces with $g\geq 3$ and $g=2$, respectively.
The choice of the curves in Figure \ref{fig-share} (a) appears in Figure 2 of \cite{bm-add}.
\begin{figure}
\begin{center}
\includegraphics[width=12cm]{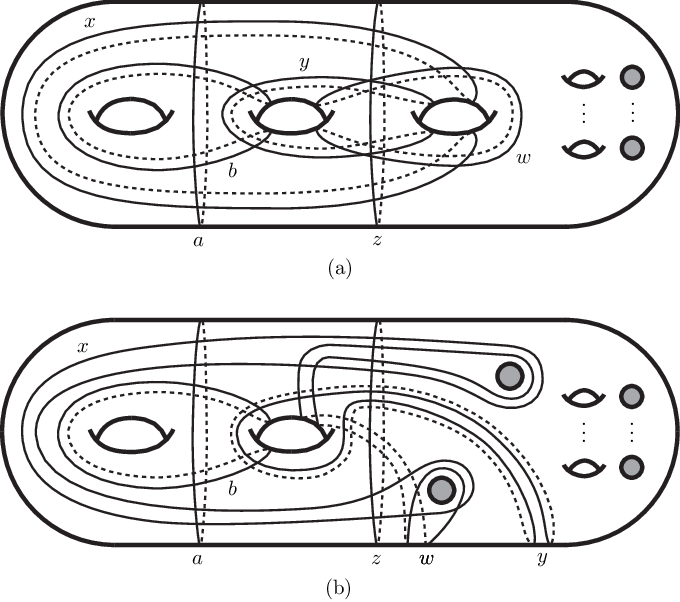}
\caption{}\label{fig-share}
\end{center}
\end{figure}

Let $\phi$ be an automorphism of $\calc_s(S)$. 
By Lemmas \ref{lem-top-pre-s} and \ref{lem-share-cha}, $\phi$ preserves sharing pairs in $S$. 
We will define an automorphism $\Phi$ of $\calc(S)$ extending $\phi$ so that for each non-separating curve $\alpha$ in $S$, $\Phi(\alpha)$ is the curve shared by $\phi(a)$ and $\phi(b)$, where $\{ a, b\}$ is a sharing pair for $\alpha$. 
Spines for sharing pairs, defined below, were introduced in \cite{bm} to show that $\Phi$ is well-defined. 
For two curves $\alpha, \beta \in V(S)$ with $i(\alpha, \beta)=1$, we denote by $H(\alpha, \beta)$ the handle filled by $\alpha$ and $\beta$.

\begin{defn}\label{defn-spine}
Let $S=S_{g, p}$ be a surface with $g\geq 2$ and $|\chi(S)|\geq 3$. 
A triplet of distinct non-separating curves in $S$, $\alpha\textrm{-}\beta\textrm{-}\gamma$, is called a {\it spine} in $S$ if the following three conditions are satisfied:
\begin{enumerate}
\item[(a)] $i(\alpha, \beta)=i(\beta, \gamma)=1$ and $i(\gamma, \alpha)\leq 1$.
\item[(b)] Let $a$ and $b$ denote the boundary components of the handles $H(\alpha, \beta)$ and $H(\beta, \gamma)$, respectively. 
Then $\{ a, b\}$ is a sharing pair for $\beta$.
\item[(c)] There exist representatives $\frak{a}$, $\frak{b}$ and $\frak{c}$ of $\alpha$, $\beta$ and $\gamma$, respectively, such that any two of them intersect minimally and $S\setminus (\frak{a} \cup \frak{b} \cup \frak{c})$ is connected. 
\end{enumerate}
In this case, $\alpha\textrm{-}\beta\textrm{-}\gamma$ is called a spine for the sharing pair $\{ a, b\}$ (see Figure \ref{fig-spine} (b)).

A {\it move} between two spines in $S$ is defined to be a change of the form, $\alpha\textrm{-}\beta\textrm{-}\gamma \mapsto \alpha\textrm{-}\beta\textrm{-}\gamma'$, with $\gamma\textrm{-}\beta\textrm{-}\gamma'$ a spine.
\end{defn}

In what follows, we describe some basic properties of spines and moves between them, which will be used to prove that $\Phi$ is well-defined.

\begin{lem}\label{lem-spine}
Let $S=S_{g, p}$ be a surface with $g\geq 2$ and $|\chi(S)|\geq 3$. 
Suppose that we are given a spine $\alpha\textrm{-}\beta\textrm{-}\gamma$ in $S$, and let $\frak{a}$, $\frak{b}$ and $\frak{c}$ be representatives of $\alpha$, $\beta$ and $\gamma$, respectively, satisfying condition (c) in Definition \ref{defn-spine}. 
Let $R$ denote the surface obtained by cutting $S$ along $\frak{a}$ and $\frak{b}$. 
Let $p_1$, $p_2$, $p_3$ and $p_4$ denote the identified points on the cut end that correspond to the single point of $\frak{a}\cap \frak{b}$ and are located as in Figure \ref{fig-spine} (c). 
Then
\begin{enumerate}
\item[(i)] if $i(\gamma, \alpha)=1$, then $|\frak{a}\cap \frak{b}\cap \frak{c}|=1$. 
In this case, $\frak{c}$ is given by an essential simple arc in $R$ connecting either $p_1$ and $p_3$ or $p_2$ and $p_4$.
\item[(ii)] if $i(\gamma, \alpha)=0$, then $\frak{c}$ is given by an essential simple arc in $R$ connecting a point in the interior of an arc corresponding to $\frak{b}$ with a point in the interior of another arc corresponding to $\frak{b}$.    
\end{enumerate}
\end{lem}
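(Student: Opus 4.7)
The plan is to analyze the curve $\frak{c}$ in the cut surface $R$ by counting its intersection points with $\frak{a}\cup\frak{b}$ and identifying where the resulting arcs terminate. Since $i(\beta,\gamma)=1$, the representative $\frak{c}$ meets $\frak{b}$ transversely in a single point, and $i(\gamma,\alpha)$ determines whether $\frak{c}$ also crosses $\frak{a}$; the two parts of the lemma correspond to the two values $0$ and $1$.

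For part (ii), $\frak{c}$ is disjoint from $\frak{a}$ and crosses $\frak{b}$ exactly once, so cutting $S$ along $\frak{a}\cup\frak{b}$ breaks $\frak{c}$ at one point only, producing a single simple arc in $R$. Its two endpoints, being the two boundary copies of the transverse intersection $\frak{c}\cap\frak{b}$, lie in the interiors of the two distinct $\frak{b}$-arcs of $\partial R$. That the arc is essential in $R$ follows from $\frak{c}$ being an essential simple closed curve in $S$ combined with the connectivity condition (c) of Definition \ref{defn-spine}, which prevents the arc from being boundary-parallel.

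For part (i), the heart of the matter is proving $|\frak{a}\cap \frak{b}\cap \frak{c}|=1$. I would invoke the sharing-pair structure from Definition \ref{defn-spine}(b): the handles $H_A=H(\alpha,\beta)$ and $H_B=H(\beta,\gamma)$ overlap in an annulus $N$ whose core is $\frak{b}$, by Definition \ref{defn-share}. Since $\frak{a}\subset H_A$ and $\frak{c}\subset H_B$, the point $\frak{a}\cap\frak{c}$ lies inside $N$; by minimality the restrictions $\frak{a}\cap N$ and $\frak{c}\cap N$ are each a single essential arc of the annulus, each crossing $\frak{b}$ once. I would then argue by contradiction: if the three pairwise intersection points in $N$ are distinct (i.e.\ no triple point), I would enumerate the components of $(H_A\cup H_B)\setminus (\frak{a}\cup\frak{b}\cup\frak{c})$ and observe that gluing through the connected exterior $S\setminus(H_A\cup H_B)$ can merge at most one pair of these components, so $S\setminus(\frak{a}\cup\frak{b}\cup\frak{c})$ remains disconnected, contradicting (c). Once the triple point is secured, $\frak{c}$ is cut at a single corner of $R$, yielding one arc between two of the $p_i$; local transversality at the common point (where $\gamma$ crosses both $\alpha$ and $\beta$ transversely) forces the endpoints to be diagonally opposite, i.e.\ $\{p_1,p_3\}$ or $\{p_2,p_4\}$. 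Essentiality of the arc is deduced as in part (ii).

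The main obstacle will be the component-counting argument that rules out the no-triple-point configuration. The delicate point is to show quantitatively that the connectedness of $S\setminus(H_A\cup H_B)$ furnished by the sharing-pair definition, combined with the local picture in $N$ and the fact that in each handle $\alpha\cup\beta$ (resp.\ $\beta\cup\gamma$) already cuts the handle into a fixed annular piece, is insufficient to absorb the extra complementary region produced when the pairwise intersections are distinct. I expect this to reduce to a short Euler-characteristic computation on the graph $\frak{a}\cup\frak{b}\cup\frak{c}$ exhibiting exactly one more face in the distinct-point case than in the triple-point case, which combined with the constraint from the connected exterior yields the desired contradiction.
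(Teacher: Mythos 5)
Your plan for part (i) stops short of the key step, and you say so yourself: the component-counting argument that rules out the ``no triple point'' configuration is left as a prediction (``I expect this to reduce to a short Euler-characteristic computation\dots''). That is a genuine gap. As a plan it is sound --- one can check that $Q := H_A\cup H_B$ is the subsurface filled by $\frak{a}$, $\frak{b}$, $\frak{c}$, is homeomorphic to $S_{1,2}$ with two essential boundary curves $D_1, D_2$, and that if the pairwise intersections are distinct then $Q\setminus(\frak{a}\cup\frak{b}\cup\frak{c})$ has three components (two annuli touching $D_1$, $D_2$ respectively, and a disk trapped between the two arcs of $\frak{c}$), so attaching the connected exterior along $D_1\cup D_2$ absorbs only the two annuli and leaves the disk as a second component of $S\setminus(\frak{a}\cup\frak{b}\cup\frak{c})$. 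But none of this is carried out in your proposal, and without it the heart of part (i), the equality $|\frak{a}\cap\frak{b}\cap\frak{c}|=1$, is unproved. A secondary issue: you invoke ``$H_A\cap H_B$ is an annulus with core $\frak{b}$'' directly from Definition \ref{defn-share}, but that definition only guarantees this for \emph{some} representatives of $a$, $b$, $\beta$, which need not be the representatives already fixed by condition (c); one has to argue these choices can be made compatibly, or else avoid the issue.

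The paper's route is the same in spirit but considerably cleaner, and worth comparing to. It passes immediately to the cut surface $R$ and identifies the pair of pants $P\subset R$ cut off by $D_1\cup D_2$ and containing the new boundary $\partial$; then $\frak{c}$ becomes a disjoint union of arcs in $P$ with endpoints on $\partial$. Connectivity of $S\setminus(\frak{a}\cup\frak{b}\cup\frak{c})$ forces each such arc to be essential, and since in a pair of pants all essential arcs from $\partial$ to $\partial$ are isotopic and two disjoint isotopic such arcs bound a disk, there can be only one arc $l$. Both parts then drop out of a single case analysis on the endpoints of $l$ (corners $p_i$ vs.\ interiors of the $\frak{b}$-arcs), with the non-diagonal corner pairs excluded because they would let $\frak{c}$ be isotoped off $\frak{b}$ or $\frak{a}$, contradicting $i(\beta,\gamma)=1$ or minimality of $|\frak{c}\cap\frak{a}|$. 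Your local transversality observation (a triple crossing forces diagonally opposite corners) is a correct alternative to that last exclusion step, but it only applies after the single-arc/triple-point fact is established, which is exactly what is missing.
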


\begin{proof}
Let $Q$ denote the subsurface of $S$ filled by $\frak{a}$, $\frak{b}$ and $\frak{c}$, which contains $H(\alpha, \beta)$ and $H(\beta, \gamma)$ and is homeomorphic to $S_{1, 2}$.
Let $D_1$ and $D_2$ denote the boundary curves of $Q$.
Note that both $D_1$ and $D_2$ are essential simple closed curves in $S$.
Let $\partial$ denote the boundary component of $R$ consisting of arcs corresponding to $\frak{a}$ and $\frak{b}$.
Let $P$ denote the pair of pants cut off by $D_1\cup D_2$ from $R$ and containing $\partial$.
The curve $\frak{c}$ is given by several simple arcs in $P$ connecting two points of $\partial$.
Since $S\setminus (\frak{a}\cup \frak{b}\cup \frak{c})$ is connected, any of those arcs is an essential simple arc in $P$.
The curve $\frak{c}$ is given by a single essential simple arc $l$ in $P$ because any two essential simple arcs in $P$ connecting two points of $\partial$ are isotopic and the union of any two disjoint such arcs cuts off a disk from $P$.
The arc $l$ connects either two of $p_1$, $p_2$, $p_3$ and $p_4$ or two points in the interior of the two arcs corresponding to $\frak{b}$ because we have $|\frak{b}\cap \frak{c}|=1$. 
If $l$ connected either $p_1$ and $p_2$ or $p_3$ and $p_4$, then $\frak{c}$ could be moved into a curve disjoint from $\frak{b}$. 
This contradicts $i(\beta, \gamma)=1$.
If $l$ connected either $p_1$ and $p_4$ or $p_2$ and $p_3$, then $\frak{c}$ could be moved into a curve disjoint from $\frak{a}$.
This contradicts minimality of $|\frak{c}\cap \frak{a}|$.
The lemma thus follows.
\end{proof}

\begin{lem}\label{lem-move-sep}
Let $S=S_{g, p}$ be a surface with $g\geq 2$ and $|\chi(S)|\geq 3$. 
Let $\alpha\textrm{-}\beta\textrm{-}\gamma \mapsto \alpha\textrm{-}\beta\textrm{-}\gamma'$ be a move between two spines in $S$. 
Then the following two assertions hold:
\begin{enumerate}
\item Let $d_1$ and $d_2$ denote the boundary components of the subsurface of $S$ filled by $\alpha$, $\beta$ and $\gamma$. 
Then $\gamma'$ intersects either $d_1$ or $d_2$. 
\item If $|\chi(S)|\geq 4$, then there exists a separating curve in $S$ which intersects $\gamma'$, but is disjoint from any of $\alpha$, $\beta$ and $\gamma$.
\end{enumerate}
\end{lem}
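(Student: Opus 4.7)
I would split the proof along parts (i) and (ii), handling (i) by a direct application of Lemma~\ref{lem-spine} and (ii) by an explicit topological construction inside $S\setminus Q$, where $Q$ denotes the subsurface of $S$ filled by $\alpha,\beta,\gamma$ (so $Q\cong S_{1,2}$ with $\partial Q=d_1\cup d_2$).

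For part (i) I would argue by contradiction. Suppose $\gamma'$ is disjoint from both $d_1$ and $d_2$. Since $i(\beta,\gamma')=1$ and $\beta$ lies in the interior of $Q$, the curve $\gamma'$ must lie (up to isotopy) in $Q$. Apply Lemma~\ref{lem-spine} to both spines $\alpha\textrm{-}\beta\textrm{-}\gamma$ and $\alpha\textrm{-}\beta\textrm{-}\gamma'$: the curves $\gamma$ and $\gamma'$ are represented by essential simple arcs $l_\gamma$ and $l_{\gamma'}$ in the cut surface $R=S\setminus\mathrm{int}(H_A)$, with endpoints on the boundary circle $\partial\subset\partial R$ obtained from cutting along $\alpha\cup\beta$. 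Because $\gamma,\gamma'\subset Q$, both arcs lie inside the pair of pants $P:=Q\cap R$, whose three boundary components are $d_1$, $d_2$, and $\partial$. The key observation is that in any pair of pants, essential simple arcs with both endpoints free on one prescribed boundary component form a single isotopy class (namely the arc separating the two remaining boundary components). Hence $l_\gamma$ is isotopic to $l_{\gamma'}$ in $P$, and therefore in $R$; regluing gives $\gamma=\gamma'$ in $V(S)$. This contradicts the distinctness of the three curves of the spine $\gamma\textrm{-}\beta\textrm{-}\gamma'$, and so $\gamma'$ must meet at least one of $d_1,d_2$.

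For part (ii) I would use (i) to assume, without loss of generality, that $\gamma'$ crosses $d_1$. Pick an arc $\eta$ of $\gamma'\cap(S\setminus Q)$ with at least one endpoint on $d_1$. The hypothesis $|\chi(S)|\geq 4$ translates, via the identity $\chi(S\setminus Q)=\chi(S)+2$, into $|\chi(S\setminus Q)|\geq 2$, so $S\setminus Q\cong S_{g-2,p+2}$ has enough topological complexity to accommodate a separating curve in $S$. Form a regular neighborhood $U\subset S$ of a suitable connected graph built from $d_1$, $\eta$ (and $d_2$ if $\eta$'s other endpoint lies on $d_2$). Then $T:=Q\cup U$ is a subsurface of $S$ whose boundary contains a simple closed curve $z$ lying in the interior of $S\setminus Q$; an Euler-characteristic count using $|\chi(S\setminus Q)|\geq 2$ shows that $T$ is a proper subsurface of $S$, so $z$ separates $S$ nontrivially. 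By construction $z$ is disjoint from $\alpha,\beta,\gamma$, and $z$ meets $\gamma'$ because $\gamma'$ must exit the subsurface $T$ through a boundary component other than $d_1\cup d_2$.

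The main obstacle is part (ii): verifying that the produced $z$ actually intersects $\gamma'$ in every configuration. If $\gamma'\cap(S\setminus Q)$ consists of a single arc fully absorbed into $U$, the naive choice of $z$ may fail to cross $\gamma'$, and one must refine the construction, either by reshaping the graph defining $U$ or by replacing $z$ with an auxiliary separating curve drawn in the complement of $U$ inside $S\setminus Q$ (whose existence is underwritten by $|\chi(S\setminus Q)|\geq 2$). Part (i), by contrast, reduces essentially to the uniqueness of essential arcs in a pair of pants with endpoints on one boundary and should be routine once Lemma~\ref{lem-spine} is in place.
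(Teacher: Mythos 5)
Your argument for part (i) has a genuine gap at its key step. You are right that $\gamma'$ must lie in $Q$, that (taking representatives as in Lemma \ref{lem-spine}) $\gamma$ and $\gamma'$ are each represented by a single essential simple arc in the pair of pants $P$ with both endpoints on $\partial$, and that there is exactly one \emph{free} isotopy class of such arcs in $P$. But the jump from ``$l_\gamma$ and $l_{\gamma'}$ are freely isotopic in $P$'' to ``regluing gives $\gamma = \gamma'$ in $V(S)$'' is unwarranted: the isotopy class of $\frak{c}$ in $S$ is \emph{not} determined by the free isotopy class of its arc in $P$. When $S$ is cut along $\frak{a}\cup\frak{b}$, the boundary circle $\partial$ carries a decomposition into arcs coming from $\frak{a}$ and $\frak{b}$ together with the four marked points $p_1,\dots,p_4$, and the gluing map identifies these arcs in pairs. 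A free isotopy of $l$ that slides endpoints across one of the $p_i$ does not descend to an isotopy of $\frak{c}$ in $S$, because it fails to be compatible with the gluing. Concretely, an arc from $p_1$ to $p_3$, an arc from $p_2$ to $p_4$, and an arc joining two interior points of the two $\frak{b}$-arcs are all freely isotopic in $P$, yet they close up to pairwise non-isotopic curves in $S$. This is exactly why Lemma \ref{lem-spine} distinguishes these configurations, and why the paper's proof proceeds by a case analysis on where the endpoints of $r$ and $r'$ land among these positions, eliminating each combination using $i(\gamma,\gamma')\le 1$, the FLP intersection criterion, and the computation $i(b,b')=8$ (or $b=b'$) in the residual cases. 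Without that case analysis your argument collapses the three situations and proves too much.

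For part (ii), the paper simply observes that it follows from (i); you attempt a direct topological construction of the separating curve and, as you yourself note, it does not close in all configurations (e.g.\ when $\gamma'\cap(S\setminus Q)$ is a single arc swallowed by your neighborhood $U$). The intended route is shorter: once (i) says $\gamma'$ meets, say, $d_1$, you only need a separating curve $z$ with $i(z,d_1)\neq 0$, $i(z,\gamma')\neq 0$, and $z$ disjoint from $Q$, and when $|\chi(S)|\ge 4$ the complement of $Q$ has $|\chi|\ge 2$, which gives enough room to push an arc of $\gamma'$ into a subsurface on which a separating curve crossing it can be drawn directly. Either way, the substantive issue is part (i): your uniqueness-of-arcs argument needs to be replaced by (or augmented with) tracking where the arc endpoints sit relative to $p_1,\dots,p_4$.
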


\begin{proof}
Suppose that $\gamma'$ intersects neither $d_1$ nor $d_2$.
Let $\frak{a}$, $\frak{b}$, $\frak{c}$, $\frak{c}'$, $D_1$ and $D_2$ be representatives of $\alpha$, $\beta$, $\gamma$, $\gamma'$, $d_1$ and $d_2$, respectively, such that any two of them, except the two of $\frak{c}$ and $\frak{c}'$, intersect minimally; and $S\setminus (\frak{a}\cup \frak{b}\cup \frak{c})$ and $S\setminus (\frak{a}\cup \frak{b}\cup \frak{c}')$ are connected.
We define $R$, $p_1$, $p_2$, $p_3$ and $p_4$ as in Lemma \ref{lem-spine}.
Let $\partial$ denote the boundary component of $R$ consisting of arcs corresponding to $\frak{a}$ and $\frak{b}$, and let $P$ denote the pair of pants cut off by $D_1\cup D_2$ from $R$ and containing $\partial$.
By Lemma \ref{lem-spine}, $\frak{c}$ and $\frak{c}'$ are given by essential simple arcs $r$, $r'$ in $P$, respectively, connecting two points of $\partial$.
Each of $r$ and $r'$ connects either (1) $p_1$ and $p_3$; (2) $p_2$ and $p_4$; or (3) a point in the interior of an arc corresponding to $\frak{b}$ and a point in the interior of another such arc.
In what follows, we repeatedly use the criterion in Expos\'e 3, Proposition 10 of \cite{flp} to know the geometric intersection number of two curves in $S$.
It is impossible that the same case holds for $r$ and $r'$ because otherwise the condition $i(\gamma, \gamma')\leq 1$ would imply that $\frak{c}$ and $\frak{c}'$ are isotopic.
It is also impossible that $r$ satisfies (1) (resp.\ (2)) and $r'$ satisfies (2) (resp.\ (1)) because we have $i(\gamma, \gamma')\leq 1$.
If $r$ satisfies (1) and $r'$ satisfies (3), then the condition $i(\gamma, \gamma')\leq 1$ implies that we have the two possibilities indicated in Figure \ref{fig-pants} (a) and (b).
Let $b$ and $b'$ denote the boundary curves of the handles $H(\beta, \gamma)$ and $H(\beta, \gamma')$, respectively.
In case (a), $b$ and $b'$ are isotopic.
This contradicts the assumption that $\{ b, b'\}$ is a sharing pair in $S$.
In case (b), we have $i(b, b')=8$ (see Figure \ref{fig-pants} (c)).
\begin{figure}
\begin{center}
\includegraphics[width=12cm]{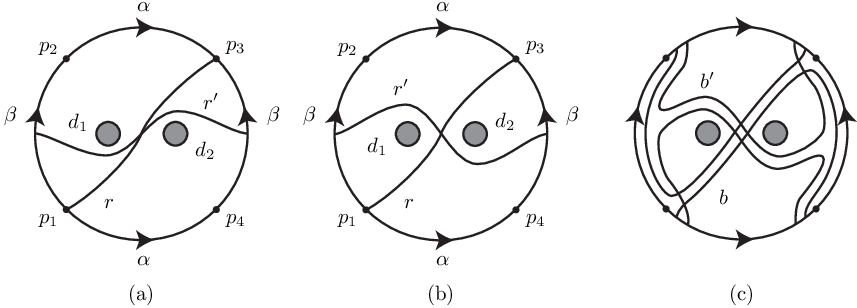}
\caption{}\label{fig-pants}
\end{center}
\end{figure}
This also contradicts the same assumption.
In the other cases, we can deduce a contradiction along verbatim argument.
Assertion (i) thus follows.
Assertion (ii) follows from assertion (i).
\end{proof}

The following lemma can be proved along the same idea as in the proof of Lemma 4.4 in \cite{bm} by using Lemma \ref{lem-move-sep}.

\begin{lem}\label{lem-move-well}
Let $S=S_{g, p}$ be a surface with $g\geq 2$ and $|\chi(S)|\geq 4$, and let $\phi \colon \calc_s(S)\rightarrow \calc_s(S)$ be a superinjective map. 
Suppose that we have a move between two spines in $S$, $\alpha\textrm{-}\beta\textrm{-}\gamma \mapsto \alpha\textrm{-}\beta\textrm{-}\gamma'$. 
Let $a$, $b$ and $b'$ denote the boundary curves of the handles $H(\alpha, \beta)$, $H(\beta, \gamma)$ and $H(\beta, \gamma')$, respectively. 
Then $\{ \phi(a), \phi(b)\}$ and $\{ \phi(a), \phi(b')\}$ are sharing pairs for the same non-separating curve in $S$.
\end{lem}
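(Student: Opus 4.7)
The plan is to adapt the argument of Brendle--Margalit \cite[Lemma 4.4]{bm}, with Lemma \ref{lem-move-sep} serving as the replacement for their closed-surface geometric input. By definition of the two spines, $\{a,b\}$ and $\{a,b'\}$ are sharing pairs for the common curve $\beta$ in $S$, and since $\gamma\text{-}\beta\text{-}\gamma'$ is itself a spine, $\{b,b'\}$ is also a sharing pair for $\beta$. By Lemma \ref{lem-top-pre-s} (preservation of topological types), the images $\phi(a), \phi(b), \phi(b')$ are all h-curves; by Lemma \ref{lem-share-cha} applied in both directions, $\{\phi(a), \phi(b)\}$ and $\{\phi(a), \phi(b')\}$ are sharing pairs. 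Let $\delta$ and $\delta'$ denote their respective shared non-separating curves; both lie in the handle $H_{\phi(a)}$. The goal is to show $\delta = \delta'$, for which it suffices to prove $i(\delta, \delta')=0$, since two disjoint non-separating curves in a handle are isotopic.

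First I would use Lemma \ref{lem-move-sep}(ii) (and its symmetric version, obtained by applying the lemma to the reverse move $\alpha\text{-}\beta\text{-}\gamma'\mapsto \alpha\text{-}\beta\text{-}\gamma$) to select two separating curves $w, w'\in V_s(S)$ with the following properties: $w$ is disjoint from $\alpha,\beta,\gamma$ but intersects $\gamma'$, while $w'$ is disjoint from $\alpha,\beta,\gamma'$ but intersects $\gamma$. Since $a$ lies in a regular neighborhood of $\alpha\cup\beta$ and $b$ lies in a regular neighborhood of $\beta\cup\gamma$, both $w$ and $w'$ can be arranged to be disjoint from $a$, while $w$ is disjoint from $b$ and $w'$ is disjoint from $b'$. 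These curves will play the role of witnesses that pin down, in the image, which non-separating curve of $H_{\phi(a)}$ is shared with $\phi(b)$ and which is shared with $\phi(b')$.

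The second step is to extract from the existence of $w,w'$ a configuration that is characteristic enough to survive under $\phi$. Concretely, I would identify a separating curve $c$ disjoint from $a, b, b'$ (which exists because these three h-curves sit inside a subsurface homeomorphic to a small neighborhood of the spine, whose complement has positive genus or enough boundary when $|\chi(S)|\geq 4$) together with the auxiliary data of $w$ and $w'$, and check via Lemma \ref{lem-share-cha} that this data realizes $\beta$ as the common curve of the sharing pairs $\{a,b\}$ and $\{a,b'\}$. Applying $\phi$ and using that $\phi$ preserves all the relevant intersection/disjointness relations, the image configuration certifies, again through Lemma \ref{lem-share-cha}, that $\delta$ and $\delta'$ must be disjoint (and in fact isotopic) inside $H_{\phi(a)}$.

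The main obstacle is ensuring enough room to place the auxiliary curve $c$ and to carry out the Lemma \ref{lem-share-cha}--type verification in the smallest surfaces in the hypothesis; for those boundary cases one expects a case analysis broken up by the genus (namely $g=2$ versus $g\geq 3$) paralleling the split that already appears in Figure \ref{fig-share}, in order to exhibit concrete witness curves for the equality $\delta=\delta'$.
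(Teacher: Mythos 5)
Your proposal correctly identifies the paper's high-level strategy: the paper itself offers only the one-line remark that the lemma ``can be proved along the same idea as in the proof of Lemma 4.4 in \cite{bm} by using Lemma \ref{lem-move-sep}'', and your plan mirrors this. Your setup is also right—reducing to $i(\delta,\delta')=0$ inside the handle $H_{\phi(a)}$—and invoking Lemma \ref{lem-move-sep}(ii) (and its symmetric version) to produce witness separating curves $w$ and $w'$, with $w$ disjoint from $a,b$ but meeting $b'$ and $w'$ disjoint from $a,b'$ but meeting $b$, is exactly the geometric input the paper flags.

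The gap is in the mechanism of your final step. You propose that a configuration consisting of $w$, $w'$, and an auxiliary separating curve $c$ disjoint from $a,b,b'$ can be fed into Lemma \ref{lem-share-cha} to ``realize $\beta$ as the common curve'' and, after applying $\phi$, to ``certify that $\delta$ and $\delta'$ must be disjoint.'' But Lemma \ref{lem-share-cha} only detects \emph{which} pairs of h-curves are sharing pairs; it carries no information at all about \emph{what} non-separating curve a given sharing pair shares. Knowing that $\{\phi(a),\phi(b)\}$ and $\{\phi(a),\phi(b')\}$ are sharing pairs (already guaranteed by Lemma \ref{lem-share-cha} alone, without the witnesses) does not pin down $\delta$ or $\delta'$, and no clause of Lemma \ref{lem-share-cha} forces $\delta=\delta'$. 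The substantive content of the lemma—how the witness curves from Lemma \ref{lem-move-sep}, together with the particular geometry of the move, rule out $i(\delta,\delta')\neq 0$—is precisely what Brendle--Margalit's Lemma 4.4 supplies and what your outline leaves unargued. As written, your proposal asserts the conclusion at the point where the real work should begin.
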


The following proposition can also be proved along the argument in the proof of Proposition 4.5 in \cite{bm}.

\begin{prop}\label{prop-move}
Let $S=S_{g, p}$ be a surface with $g\geq 2$ and $|\chi(S)|\geq 4$. 
For each non-separating curve $\beta$ in $S$, any two spines $\alpha\textrm{-}\beta\textrm{-}\gamma$ and $\delta \textrm{-}\beta\textrm{-}\epsilon$ in $S$ differ by finitely many moves.
\end{prop}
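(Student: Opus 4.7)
The plan is to follow the argument of Brendle-Margalit \cite{bm}, Proposition 4.5, adapted to surfaces with boundary. Observe first that conditions (a), (b) and (c) of Definition \ref{defn-spine} are symmetric under swapping the first and third curves of a spine, so a move may be applied on either side of a spine. Hence the proposition is equivalent to the connectedness of the graph $\mathcal{S}(\beta)$ whose vertices are spines in $S$ with middle curve $\beta$ and whose edges are moves.

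To translate the problem into a more tractable setting, I would cut $S$ along a representative $\frak{b}$ of $\beta$ to obtain a surface $S_\beta$ of genus $g-1$ with $p+2$ boundary components, two of which, say $\partial_1$ and $\partial_2$, come from $\beta$. Any non-separating curve $\alpha$ in $S$ with $i(\alpha,\beta)=1$ lifts to an essential simple arc in $S_\beta$ joining $\partial_1$ and $\partial_2$, so a spine $\alpha\textrm{-}\beta\textrm{-}\gamma$ through $\beta$ corresponds to an (unordered) pair of such arcs that satisfies a translation of conditions (a)--(c). Lemma \ref{lem-spine} already provides the dictionary between the third curve of a spine and an essential arc in $S_\beta$, and will be used to recognize candidates for moves at the level of arcs.

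Next, I would apply a Putman-style connectivity argument. The stabilizer $\mod(S,\beta)$ of $\beta$ in $\mod(S)$ acts on $\mathcal{S}(\beta)$; by the change-of-coordinates principle together with the classification of sharing pairs used above Lemma \ref{lem-share-cha}, this action is transitive. It therefore suffices to fix a basepoint spine $\Sigma_0$ through $\beta$ and to exhibit, for each element of a chosen generating set of $\mod(S,\beta)$, a finite sequence of moves connecting $\Sigma_0$ to its image. The stabilizer $\mod(S,\beta)$ is generated by $t_\beta$ (which fixes $\Sigma_0$) together with Dehn twists about curves disjoint from $\beta$, certain half-twists about curves cutting $\beta$ in two points, and boundary-permuting elements lifted from $S_\beta$. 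Generators whose support misses the subsurface filled by $\Sigma_0$ fix $\Sigma_0$; the remaining generators are handled by constructing explicit intermediate spines, guided by Lemma \ref{lem-move-sep}.

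The main obstacle will be verifying condition (b) of Definition \ref{defn-spine}, the sharing-pair condition, at every intermediate step: the boundaries of the handles $H(\alpha,\beta)$ and $H(\beta,\gamma)$ must intersect in exactly one annulus with core $\beta$ and have connected complement. This forces a careful case analysis for each generator, and is where the hypothesis $|\chi(S)|\geq 4$ enters decisively. The bound $|\chi(S)|\geq 4$ provides enough room in $S$ both to realize the local modifications that effect each generator and to exhibit the auxiliary separating curve required by Lemma \ref{lem-move-sep}(ii). Once (b) is verified along each step, (a) and (c) are straightforward to check by inspection of intersection numbers and by the connectivity criterion used already in the proof of Lemma \ref{lem-spine}, and the proposition follows.
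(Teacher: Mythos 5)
Your proposal replaces the direct combinatorial argument of \cite{bm}, Proposition 4.5 --- which the paper simply cites --- with a Putman-style connectivity argument (transitivity of the stabilizer of $\beta$ plus a move-check on a generating set). That is a legitimate and arguably cleaner strategy; the paper itself uses exactly this technique in Proposition \ref{prop-d-conn}. However, the transitivity claim on which your argument rests is incorrect as stated. Spines $\alpha\textrm{-}\beta\textrm{-}\gamma$ through $\beta$ fall into two topological types according to whether $i(\alpha, \gamma) = 0$ or $i(\alpha, \gamma) = 1$; both possibilities occur, and this dichotomy is precisely the one handled separately in parts (i) and (ii) of Lemma \ref{lem-spine}. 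A homeomorphism fixing $\beta$ cannot convert one type into the other, since it preserves geometric intersection numbers, so $\mod(S, \beta)$ does \emph{not} act transitively on the set of spines through $\beta$ and Putman's criterion does not apply directly. The gap is fixable --- for instance, show each orbit is move-connected and then exhibit a single move $\alpha\textrm{-}\beta\textrm{-}\gamma \mapsto \alpha\textrm{-}\beta\textrm{-}\gamma'$ that changes $i(\alpha, \gamma)$, or run Putman's argument with one base point per orbit --- but your proposal does not acknowledge the issue.

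Beyond that, the proposal is a plan rather than a proof. The heart of the matter --- exhibiting, for each generator of the stabilizer of $\beta$, an explicit finite chain of spines realizing it by moves, and verifying the sharing-pair condition (b) of Definition \ref{defn-spine} at every intermediate step --- is deferred to ``a careful case analysis'' that you do not carry out. This is precisely where the hypothesis $|\chi(S)| \geq 4$ must be invoked and where the adaptation from the closed case of \cite{bm} has actual content, so the proposal as written does not yet establish the proposition.
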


\begin{thm}\label{thm-g-2}
Let $S=S_{g, p}$ be a surface with $g\geq 2$ and $|\chi(S)|\geq 4$. Then the following assertions hold:
\begin{enumerate}
\item For any automorphism $\phi$ of $\calt(S)$, there exists an automorphism $\Phi$ of $\calc(S)$ such that we have $\Phi(\alpha)=\phi(\alpha)$ for any separating curve $\alpha$ in $S$ and $\{ \Phi(\beta), \Phi(\gamma)\} =\phi(\{ \beta, \gamma \})$ for any BP $\{ \beta, \gamma \}$ in $S$.
\item For any automorphism $\psi$ of $\calc_s(S)$, there exists an automorphism $\Psi$ of $\calc(S)$ extending $\psi$.
\end{enumerate} 
\end{thm}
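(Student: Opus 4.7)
The plan is to prove assertion (ii) first using the sharing-pair and spine machinery developed in this section, and then to deduce (i) by restriction. For (ii), given an automorphism $\psi$ of $\calc_s(S)$, I would set $\Psi = \psi$ on $V_s(S)$ and, for each non-separating curve $\beta$ in $S$, choose a sharing pair $\{a,b\}$ for $\beta$ and define $\Psi(\beta)$ to be the non-separating curve shared by $\{\psi(a),\psi(b)\}$. That $\{\psi(a),\psi(b)\}$ is again a sharing pair follows from Lemma \ref{lem-top-pre-s} together with the characterization of sharing pairs purely inside $\calc_s(S)$ given by Lemma \ref{lem-share-cha}.

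The critical step is to show that $\Psi(\beta)$ does not depend on the choice of sharing pair. Given two sharing pairs $\{a_1,b_1\}$ and $\{a_2,b_2\}$ for the same $\beta$, I would realize them as the handle-boundary pairs of two spines $\alpha_1\textrm{-}\beta\textrm{-}\gamma_1$ and $\alpha_2\textrm{-}\beta\textrm{-}\gamma_2$. Proposition \ref{prop-move} connects these by finitely many moves, and Lemma \ref{lem-move-well} ensures that each move preserves the shared curve of the image sharing pair. Hence $\Psi(\beta)$ is well-defined. Simpliciality reduces to showing $i(\Psi(\alpha),\Psi(\beta))=0$ whenever $i(\alpha,\beta)=0$: the case of two separating curves is immediate, and in the remaining cases I would pick a sharing pair for the non-separating curve whose h-curves are also disjoint from the other curve (room being provided by $|\chi(S)|\geq 4$), so that the shared-curve definition of $\Psi$ places the image inside a handle disjoint from the image of the other curve. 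Bijectivity of $\Psi$ follows by applying the same construction to $\psi^{-1}$ and checking the two maps are mutually inverse.

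For (i), Proposition \ref{prop-top-pre} ensures that an automorphism $\phi$ of $\calt(S)$ restricts to an automorphism $\psi$ of $\calc_s(S)$, and (ii) produces an automorphism $\Phi$ of $\calc(S)$ extending $\psi$. It remains to verify $\phi(\{\beta,\gamma\}) = \{\Phi(\beta),\Phi(\gamma)\}$ for every BP $\{\beta,\gamma\}$ in $S$. I would choose a sharing pair $\{a,b\}$ for $\beta$ with both $a$ and $b$ disjoint from $\gamma$, so that the sharing pair and the BP $\{\beta,\gamma\}$ are disjoint vertices of $\calt(S)$. Applying $\phi$, the image BP is disjoint from the sharing pair $\{\psi(a),\psi(b)\}$; a topological analysis of which non-separating curves can sit inside $H_{\psi(a)}\cap H_{\psi(b)}$, combined with the fact that the image BP must have exactly one of its curves located in that annular intersection, then forces one curve of $\phi(\{\beta,\gamma\})$ to coincide with the shared curve $\Phi(\beta)$, and injectivity of $\Phi$ together with the symmetric role of $\gamma$ identifies the other as $\Phi(\gamma)$.

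The main obstacle is the well-definedness step in (ii): this is where the full Brendle--Margalit input, encoded in Proposition \ref{prop-move} and Lemma \ref{lem-move-well}, really enters and must be used delicately. Once well-definedness is secured, the simpliciality, bijectivity, and BP-compatibility checks reduce to comparatively direct topological arguments that are made possible by the hypothesis $|\chi(S)|\geq 4$, which guarantees enough room in $S$ to produce sharing pairs disjoint from whatever auxiliary curves appear.
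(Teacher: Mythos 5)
Your overall strategy matches the paper's: define $\Psi = \psi$ on $V_s(S)$, define $\Psi$ on a non-separating $\beta$ via sharing pairs, invoke Lemma \ref{lem-move-well} and Proposition \ref{prop-move} for well-definedness, check simpliciality, and derive (i) from (ii). However, there is a genuine gap at the simpliciality step (and the parallel BP-compatibility step in (i)), and it is precisely the point where the paper has to work hardest.

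The problematic case is when $\alpha$ and $\beta$ are disjoint non-separating curves forming a BP. You propose to choose a sharing pair $\{a,b\}$ for $\beta$ with both $a$ and $b$ disjoint from $\alpha$, but no such sharing pair (indeed, no such h-curve) exists. If $a$ is an h-curve with $\beta \subset H_a$, then $H_a$ contains a curve $\mu$ meeting $\beta$ once; since $\{\alpha,\beta\}$ is a BP, $[\alpha]=\pm[\beta]$ in $H_1(S,\mathbb{Z})$, so $\mu$ has odd algebraic intersection with $\alpha$ and in particular meets $\alpha$. As $\alpha$ cannot lie inside $H_a$ (two disjoint non-separating curves in a handle are isotopic), $\alpha$ must cross $a=\partial H_a$. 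The same obstruction breaks your proof of (i): for a BP $\{\beta,\gamma\}$ there is no sharing pair for $\beta$ disjoint from $\gamma$, so the ``disjoint vertices of $\calt(S)$'' you want do not exist. The paper sidesteps both problems: for simpliciality in the non-BP case it does not use a sharing pair for either curve but rather two \emph{individually chosen} disjoint h-curves $a$, $b$ with $\alpha\subset H_a$, $\beta\subset H_b$ (which always exist when $|\chi(S)|\geq 4$ and $\{\alpha,\beta\}$ is not a BP), together with the separate observation that $\alpha\subset H_a$ forces $\Psi(\alpha)\subset H_{\psi(a)}$; and for the BP case it runs an entirely different argument, cutting off a subsurface $Q\cong S_{1,2}$ by curves $\gamma_1,\gamma_2$ containing $\alpha$ and $\beta$, choosing a diagnostic separating curve $\delta$, and showing that $\Psi(\alpha)$ meeting $\Psi(\beta)$ would contradict the topological-type preservation of Lemma \ref{lem-top-pre-s}. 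For (i), the paper similarly falls back on the separate argument of Section 6 of \cite{bm} rather than a direct disjointness-of-sharing-pairs argument. You would need to supply replacements for both of these steps; as written the proof does not go through.

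Two smaller points: even in the non-BP non-separating case, your sketch does not explain why $\Psi(\alpha)$ ends up disjoint from the annulus $H_{\psi(a)}\cap H_{\psi(b)}$, since $\Psi(\alpha)$ is defined by a different sharing pair and some argument (e.g., choosing a single h-curve $c$ with $\alpha\subset H_c$ and $c$ disjoint from $a,b$, then using that $\Psi(\alpha)\subset H_{\psi(c)}$ and $H_{\psi(c)}\cap H_{\psi(a)}=\emptyset$) is needed; and your claim ``room being provided by $|\chi(S)|\geq 4$'' is invoked without the topological check that the required disjoint configuration actually exists, which is delicate exactly in the boundary case $|\chi(S)|=4$.
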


\begin{proof}
Let $\psi$ be an automorphism of $\calc_s(S)$.
We define a map $\Psi \colon V(S)\rightarrow V(S)$ as follows: For each separating curve $\alpha$ in $S$, we put $\Psi(\alpha)=\psi(\alpha)$.
For each non-separating curve $\beta$ in $S$, we define $\Psi(\beta)$ as the non-separating curve shared by $\psi(a)$ and $\psi(b)$, where $\{ a, b\}$ is a sharing pair for $\beta$. 
This is well-defined thanks to Lemma \ref{lem-move-well} and Proposition \ref{prop-move}.
For an h-curve $a$ in $S$, we denote by $H_a$ the handle cut off by $a$ from $S$. 
By the definition of $\Psi$, if $a$ is an h-curve in $S$ and $\beta$ is a non-separating curve in $H_a$, then $\Psi(a)$ is also an h-curve in $S$, and $\Psi(\beta)$ is a curve in the handle $H_{\Psi(a)}$.

We next prove that $\Psi$ defines a simplicial map from $\calc(S)$ into itself. 
Let $\alpha$ and $\beta$ be disjoint and distinct curves in $S$. 
If both $\alpha$ and $\beta$ are separating in $S$, then $\Psi(\alpha)$ and $\Psi(\beta)$ are disjoint since $\psi$ is simplicial. 
If $\alpha$ is separating in $S$ and $\beta$ is non-separating in $S$, then there exists an h-curve $a$ in $S$ such that $i(a, \alpha)=0$ and $\beta$ is a curve in $H_a$. 
Since $\alpha$ is either equal to $a$ or in the complement of $H_a$, $\Psi(\alpha)$ and $\Psi(\beta)$ are disjoint.

Finally, we suppose that both $\alpha$ and $\beta$ are non-separating in $S$. 
If there exist distinct and disjoint h-curves $a$, $b$ in $S$ such that $\alpha$ lies in $H_a$ and $\beta$ lies in $H_b$, then $\Psi(\alpha)$ and $\Psi(\beta)$ are disjoint because $\Psi(\alpha)$ lies in $H_{\Psi(a)}$ and $\Psi(\beta)$ lies in $H_{\Psi(b)}$. 
Otherwise, $\alpha$ and $\beta$ form a BP in $S$.
Assuming that the BP $\{ \alpha, \beta \}$ does not cut off a pair of pants from $S$, we show that $\Psi(\alpha)$ and $\Psi(\beta)$ are disjoint.
A similar argument can also be applied in the other case.
Choose separating curves $\gamma_1$, $\gamma_2$ in $S$ whose union cuts off the surface $Q$ homeomorphic to $S_{1, 2}$ and containing $\alpha$ and $\beta$ (see Figure \ref{fig-bp}).
\begin{figure}
\begin{center}
\includegraphics[width=3.9cm]{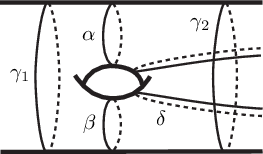}
\caption{}\label{fig-bp}
\end{center}
\end{figure}
Lemma \ref{lem-top-pre-s} implies that the union of $\Psi(\gamma_1)$ and $\Psi(\gamma_2)$ also cuts off the surface $Q'$ homeomorphic to $S_{1, 2}$ and containing $\Psi(\alpha)$ and $\Psi(\beta)$. 
Choose a separating curve $\delta$ in $S$ which intersects exactly one of $\gamma_1$ and $\gamma_2$ and is disjoint from $\alpha$ and $\beta$. 
Suppose that $\Psi(\alpha)$ and $\Psi(\beta)$ intersect. 
Let $R$ be the subsurface of $Q'$ filled by $\Psi(\alpha)$ and $\Psi(\beta)$. 
If $R$ is a handle, then let $\epsilon$ denote the boundary curve of $R$.
Since $\psi$ preserves h-curves in $S$, $\psi^{-1}(\epsilon)$ is an h-curve in $Q$.
Applying the argument until the previous paragraph to $\psi^{-1}$, we see that $\psi^{-1}(\epsilon)$ is disjoint from $\alpha$ and $\beta$.
This is a contradiction because any h-curve in $Q$ intersects either $\alpha$ or $\beta$.
It follows that $R$ is either equal to $Q'$ or a complement in $Q'$ of a tubular neighborhood of a non-separating curve in $Q'$.
This contradicts the existence of the curve $\Psi(\delta)$ which intersects exactly one of $\Psi(\gamma_1)$ and $\Psi(\gamma_2)$ and is disjoint from $\Psi(\alpha)$ and $\Psi(\beta)$.
We therefore proved that $\Psi$ defines a simplicial map from $\calc(S)$ into itself. 
Assertion (ii) is proved.

Let $\phi$ be an automorphism of $\calt(S)$.
Since $\phi$ preserves $V_s(S)$ by Lemma \ref{lem-bp-s} (i), we obtain an automorphism $\Phi$ of $\calc(S)$ extending the restriction of $\phi$ to $\calc_s(S)$ by using assertion (ii). 
Along the argument in Section 6 of \cite{bm} to find separating curves defining a BP, we can show the equality $\{ \Phi(\beta), \Phi(\gamma)\} =\phi(\{ \beta, \gamma \})$ for each BP $\{ \beta, \gamma \}$ in $S$.
\end{proof}



\section{Twisting elements of the Torelli group}\label{sec-twist}

Using the results obtained so far on automorphisms of the Torelli complex and the complex of separating curves, we compute the commensurators of the Torelli group and the Johnson kernel. 
We first present a few facts on abelian subgroups of the Torelli group and a characterization of Dehn twists about separating curves and of BP twists.

\subsection{Abelian subgroups of the Torelli group}

The argument of this subsection heavily depends on \cite{bbm}, \cite{v} and \cite{v-t}, where closed surfaces are dealt with. 
Let $S=S_{g, p}$ be a surface and let $\sigma$ be a simplex of $\calc(S)$. 
Pick a curve $\alpha$ in $\sigma$. 
We say that $\alpha$ is of {\it a-type} in $\sigma$ if $\alpha$ is separating in $S$. 
We say that $\alpha$ is of {\it b-type} in $\sigma$ if $\alpha$ is non-separating in $S$ and is contained in a BP-equivalence class in $\sigma$ consisting of at least two curves. 
Otherwise, i.e., if $\alpha$ is non-separating in $S$ and any other curve in $\sigma$ is not BP-equivalent to $\alpha$, then $\alpha$ is said to be of {\it c-type} in $\sigma$. 
Each curve of $\sigma$ is classified into these three types. 
This terminology follows that in \cite{v}. 
We denote by $D_{\sigma}$ the subgroup of $\mod(S)$ generated by Dehn twists about all curves in $\sigma$.
For a vertex $\beta$ of $\calc(S)$, we denote by $D_{\beta}$ the subgroup of $\mod(S)$ generated by $t_{\beta}$.
The following theorem relies on \cite{bbm} and \cite{v}.

\begin{thm}\label{thm-vau}
Let $S=S_{g, p}$ be a surface with $3g+p-4\geq 0$. 
Let $\sigma$ be a simplex of $\calc(S)$ if $3g+p-4>0$, and let $\sigma$ be a vertex of $\calc(S)$ otherwise. 
Then
\begin{enumerate}
\item $D_{\sigma}\cap \cali(S)$ is generated by Dehn twists about curves in $\sigma \cap V_s(S)$ and BP twists about BPs in $S$ of two curves in $\sigma$. 
\item $D_{\sigma}\cap \calk(S)$ is generated by Dehn twists about curves in $\sigma \cap V_s(S)$.
\end{enumerate}
\end{thm}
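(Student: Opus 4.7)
The plan is to reduce both assertions to linear-algebraic constraints on the exponents in a general element of $D_\sigma$, the input being the action on $H_1(S,\mathbb{Z})$ for (i) and Proposition \ref{prop-jo-bp} enhanced via the Johnson homomorphism for (ii). The Dehn twists $\{t_\alpha : \alpha \in \sigma\}$ pairwise commute, and it is standard that $D_\sigma$ is free abelian of rank $|\sigma|$, so any element can be written uniquely as $f = \prod_{\alpha \in \sigma} t_\alpha^{n_\alpha}$ with $n_\alpha \in \mathbb{Z}$.

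For assertion (i), suppose $f = \prod_{\alpha \in \sigma} t_\alpha^{n_\alpha}$ lies in $\cali(S)$. The twist $t_\alpha$ acts on $H_1(S,\mathbb{Z})$ as the symplectic transvection by $[\alpha]$: trivially if $\alpha$ is separating and nontrivially otherwise. Since $\cali(S)$ acts trivially on $H_1(S,\mathbb{Z})$, the contributions from the non-separating curves in $\sigma$ must cancel in the induced action. Using the standard homological characterization of bounding pairs (two disjoint, non-isotopic non-separating curves satisfy $[\alpha] = \pm[\beta]$ if and only if $\{\alpha,\beta\}$ is a BP in $S$), this cancellation implies: (a) $n_\alpha = 0$ for every c-type $\alpha \in \sigma$, and (b) within each b-type BP-equivalence class $\{a_1,\ldots,a_k\}$ of $\sigma$ a linear relation $\sum_i \varepsilon_i n_i = 0$ holds for suitable signs $\varepsilon_i \in \{\pm 1\}$. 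Each balanced factor $\prod_i t_{a_i}^{n_i}$ subject to such a relation rewrites as an integer combination of the commuting BP twists $t_{a_i}t_{a_j}^{-1}$, so $f$ is of the form asserted in (i).

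For assertion (ii), apply (i) to $f \in D_\sigma \cap \calk(S) \subset D_\sigma \cap \cali(S)$ and factor $f = A \cdot B$, where $A = \prod_{\alpha \in \sigma \cap V_s(S)} t_\alpha^{n_\alpha}$ automatically lies in $\calk(S)$ and $B$ is a product of commuting BP twists about BPs of two curves in $\sigma$. Then $B \in \calk(S)$, and one must show $B = 1$. Proposition \ref{prop-jo-bp} handles the case of a single BP twist. The general case reduces to showing that the abelian subgroup generated by the BP twists associated to BPs of two curves in $\sigma$ intersects $\calk(S)$ trivially. I would establish this via the Johnson homomorphism $\tau \colon \cali(S) \to \Lambda^3 H / H$, with the appropriate modification for bordered surfaces as in \cite{v} and \cite{v-t}; its kernel is $\calk(S)$, and a direct calculation of $\tau$ on BP twists shows that the images of BP twists coming from distinct BPs within the simplex $\sigma$ are linearly independent, forcing all exponents of $B$ to vanish.

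The principal obstacle is exactly this last step: upgrading Proposition \ref{prop-jo-bp} from single BP twists to nontrivial products of commuting BP twists. The cleanest route is through the Johnson homomorphism, but care is needed in the bordered case; alternatively, one can cap off boundary components to reduce to the closed setting handled in \cite{bbm} and \cite{v}, at the cost of verifying that the chosen capping preserves the required independence among BP twists supported in the simplex $\sigma$.
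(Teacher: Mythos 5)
Your approach is genuinely different from the paper's, which handles the easy cases ($g=0$, and $(g,p)=(1,1)$) directly, cites Theorem 3.1 of \cite{v} and Theorem A.1 of \cite{bbm} for the base case $g\geq 2$, $p=0$, and then inducts on $p$ using the capping homomorphism $q\colon \pmod(S)\to\pmod(R)$. The two key observations there are that BP-equivalence in $S$ corresponds to BP-equivalence in $R$ under the associated map $\pi\colon\calc(S)\to\calc^*(R)$, and that $\pi(\alpha)=\pi(\beta)$ forces $\{\alpha,\beta\}$ to cut off a pair of pants containing the capped boundary component. You instead propose a direct argument on $H_1$ for (i) and via the Johnson homomorphism for (ii), which would be more self-contained if it worked. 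But there are two genuine gaps.

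In (i), the step from ``the induced action on $H_1(S,\mathbb{Z})$ is trivial'' to the exponent constraints is not an immediate consequence of the homological characterization of bounding pairs. The cancellation condition is that the operator $\sum_\alpha n_\alpha\langle\,\cdot\,,[\alpha]\rangle[\alpha]$ vanishes, which is a linear relation among the rank-one tensors $[\alpha]\otimes[\alpha]$ in $\mathrm{Sym}^2 H_1(S;\mathbb{Q})$. What you actually need is that, taken over distinct BP-equivalence classes in $\sigma$, these tensors are linearly independent. Knowing that $[\alpha]\neq\pm[\beta]$ for non-BP-equivalent disjoint $\alpha,\beta$ gives only pairwise non-proportionality of the $[\alpha]$, and pairwise non-proportional vectors $v_i$ can perfectly well satisfy a nontrivial relation $\sum c_i\, v_i\otimes v_i=0$ (already four vectors in a plane do). Ruling this out requires using the disjointness of the curves and the combinatorics of the dual graph of $S_\sigma$; this is precisely the content and the difficulty of Vautaw's theorem in the closed case, not something that ``follows.''

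In (ii), the Johnson-homomorphism route does not cover genus one, which is squarely in the scope of the theorem (the paper applies it to $S_{1,p}$, $p\geq 2$). For $g=1$ the usual target $\Lambda^3 H/H$ vanishes, and there is no standard bordered Johnson homomorphism with kernel $\calk(S)$ in that range; the genus-two case is also delicate. Even for $g\geq 3$, the asserted linear independence of the $\tau$-images of BP twists about BPs inside a single simplex $\sigma$ is a nontrivial computation you have not supplied. You flag these issues honestly and offer capping off as a fallback; carried out carefully, that fallback becomes the paper's induction, whose crucial point is that after capping one boundary component the leftover BP twist is about a BP cutting off a pair of pants around $\partial$, which Proposition \ref{prop-jo-bp} then kills. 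As written, (ii) is a plan rather than a proof.
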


\begin{proof}
When $g=0$, the theorem follows because we have $\cali(S)=\calk(S)=\pmod(S)$ and $V_s(S)=V(S)$. 
When $(g, p)=(1, 1)$, the theorem follows because both $\cali(S)$ and $\calk(S)$ are trivial and any vertex of $\calc(S)$ corresponds to a non-separating curve in $S$. 
When $g\geq 2$ and $p=0$, assertions (i) and (ii) are proved in Theorem 3.1 of \cite{v} and Theorem A.1 of \cite{bbm}, respectively.

We prove the theorem by induction on the number of boundary components of $S$. 
We assume either $g=1$ and $p\geq 2$ or $g\geq 2$ and $p\geq 1$. 
Let $R$ denote the surface obtained by attaching a disk to a boundary component $\partial$ of $S$.
We then have the simplicial map $\pi \colon \calc(S)\rightarrow \calc^*(R)$ associated with the inclusion of $S$ into $R$, where $\calc^*(R)$ is the simplicial cone over $\calc(R)$ with the cone point $\ast$. 
Note that $\pi^{-1}(\{ \ast \})$ consists of all p-curves in $S$ cutting off a pair of pants containing $\partial$ from $S$. 
We have the natural homomorphism $q\colon \pmod(S)\rightarrow \pmod(R)$ satisfying $q(\cali(S))=\cali(R)$ and $q(\calk(S))=\calk(R)$.

Let $\alpha$ and $\beta$ be distinct and disjoint curves in $S$ which are non-separating in $S$.
The following facts then hold:
\begin{enumerate}
\item[(a)] $\alpha$ and $\beta$ are BP-equivalent in $S$ if and only if $\pi(\alpha)$ and $\pi(\beta)$ are BP-equivalent in $R$.
\item[(b)] If $\pi(\alpha)=\pi(\beta)$, then the union of $\alpha$ and $\beta$ cuts off a pair of pants containing $\partial$ from $S$.
\end{enumerate}

Let $\sigma$ be a simplex of $\calc(S)$.
To prove assertion (i), we suppose that there exists an element $x$ of $D_{\sigma}\cap \cali(S)$ which is not in the group generated by Dehn twists about curves in $\sigma \cap V_s(S)$ and BP twists about BPs in $S$ of two curves in $\sigma$.
We may assume that $x$ is a product of Dehn twists and their inverses about curves in $\sigma$ which are non-separating in $S$ and any two of which are not BP-equivalent in $S$.
We can then deduce a contradiction by using fact (a) and the hypothesis of the induction.
Assertion (i) is proved.

To prove assertion (ii), we suppose that there exists an element $y$ of $D_{\sigma}\cap \calk(S)$ which is not in the group generated by Dehn twists about curves in $\sigma \cap V_s(S)$. 
We may assume that $y$ is a product of Dehn twists and their inverses about curves in $\sigma$ which are non-separating in $S$. 
By assertion (i), $y$ is a product of BP-twists about BPs in $S$ of two curves in $\sigma$. 
By fact (b) and the hypothesis of the induction, $y$ is a non-zero power of the BP-twist about a BP in $S$ cutting off a pair of pants containing $\partial$ from $S$. 
This contradicts Proposition \ref{prop-jo-bp}.
Assertion (ii) is proved.
\end{proof}

For a finitely generated abelian group $A$, we denote by ${\rm rank}(A)$ the rank of $A$.
Note that for any surface $S$, any abelian subgroup of $\mod(S)$ is finitely generated by Theorem A in \cite{blm}.
We say that a subgroup of $\mod(S)$ is {\it reducible} if it fixes a simplex of $\calc(S)$.

\begin{lem}\label{lem-abe}
Let $S=S_{g, p}$ be a surface with $3g+p-4>0$, and let $\sigma$ be a simplex of $\calc(S)$.
We denote by $\nu=\nu(\sigma)$ the number of components of $S_{\sigma}$ and denote by $\Omega =\Omega(\sigma)$ the number of components of $S_{\sigma}$ which are neither a handle nor a pair of pants. 
Then
\begin{enumerate}
\item the inequality ${\rm rank}(D_{\sigma}\cap \cali(S))\leq \nu -1$ holds.
\item the inequality ${\rm rank}(D_{\sigma}\cap \cali(S))+\Omega \leq 2g+p-3$ holds. If the equality holds, then each component $Q$ of $S_{\sigma}$ satisfies $|\chi(Q)|\leq 2$.
\item If $A$ is an abelian reducible subgroup of $\cali(S)$ whose canonical reduction system is equal to $\sigma$, then ${\rm rank}(A)\leq {\rm rank}(D_{\sigma}\cap \cali(S))+\Omega$.
\end{enumerate}
\end{lem}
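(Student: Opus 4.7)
The plan is to prove the three assertions in order, combining Theorem \ref{thm-vau} with Euler-characteristic bookkeeping and the structure theory of abelian subgroups of mapping class groups from \cite{blm}.

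For (i), I would first apply Theorem \ref{thm-vau}(i) to describe $D_\sigma\cap\cali(S)$ explicitly. Writing $a$ for the number of separating curves in $\sigma$ and $m_1,\ldots,m_l$ for the sizes of the BP-equivalence classes in $\sigma$ of size at least two, a direct computation in the free abelian group $D_\sigma\cong\mathbb{Z}^{\sigma}$ yields $\operatorname{rank}(D_\sigma\cap\cali(S))=a+\sum_j(m_j-1)=|\sigma|-c-l$, where $c$ counts the c-type curves of $\sigma$. The inequality then reduces to $|\sigma|-\nu+1\le c+l$, which I would prove by induction on $|\sigma|$. Passing from $\sigma\setminus\{\alpha\}$ to $\sigma$: when $\alpha$ is separating or b-type, both sides of the inequality are unchanged, because $\nu$ increases by exactly $1$ in both cases (for separating $\alpha$, since $\alpha$ separates its ambient component of $S_{\sigma\setminus\{\alpha\}}$; for b-type $\alpha$, by applying Lemma \ref{lem-bc}(i) to the BP-equivalence class $b\ni\alpha$, which forces the two sides of $\alpha$ to lie in distinct components of $S_b$ and hence of $S_{\sigma\setminus\{\alpha\}}$). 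When $\alpha$ is c-type, $c+l$ grows by $1$ while $|\sigma|-\nu+1$ grows by at most $1$, so the inequality is preserved. The base case $\sigma=\emptyset$ is trivial.

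For (ii), I would combine (i) with a direct Euler-characteristic estimate. Decomposing the $\nu$ components of $S_\sigma$ into $h$ handles, $\pi$ pairs of pants, and $\Omega$ remaining components (each with $|\chi|\ge2$, since annuli and tori cannot occur), summing gives $2g+p-2=|\chi(S)|\ge h+\pi+2\Omega=\nu+\Omega$. Hence $\nu-1+\Omega\le 2g+p-3$, and combining with (i) proves the inequality. Equality forces $|\chi(Q)|=2$ for every $\Omega$-component, as required.

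For (iii), I would use that $A\subset\cali(S)$ consists of pure mapping classes (Appendix) with canonical reduction system $\sigma$. The restriction homomorphism $\rho\colon A\to\prod_i\pmod(Q_i)$ to the components of $S_\sigma$ has kernel exactly $A\cap D_\sigma$. By Nielsen--Thurston together with \cite{blm}, the image of $\rho(A)$ in each $\pmod(Q_i)$ is either trivial or infinite cyclic generated by a pseudo-Anosov element. Pants components contribute nothing because $\pmod(S_{0,3})$ is trivial; handle components contribute nothing either, since the two homology generators of a handle subsurface $Q\subset S$ remain independent in $H_1(S)$, so the Torelli condition forces $A$ to act trivially on $H_1(Q)$, while $\mod(S_{1,1})\cong SL(2,\mathbb{Z})$ acts faithfully on this homology. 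Only the $\Omega$ remaining components can contribute, each with rank at most $1$, yielding $\operatorname{rank}\rho(A)\le\Omega$ and hence $\operatorname{rank}(A)\le\operatorname{rank}(A\cap D_\sigma)+\Omega\le\operatorname{rank}(D_\sigma\cap\cali(S))+\Omega$. The principal obstacle is the b-type step of the induction in (i), where one must verify that the two sides of a single b-type curve lie in distinct components of $S_{\sigma\setminus\{\alpha\}}$; this rests on the combinatorial structure of $S_b$ encoded in Lemma \ref{lem-bc}(i).
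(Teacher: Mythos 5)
Your argument is correct, and it supplies a self-contained proof where the paper simply defers: assertions (i) and (ii) are cited to Vautaw's Lemmas 3.1 and 3.2 in \cite{v} (stated there for closed surfaces), and (iii) to Corollary 7.18 of Ivanov \cite{iva-subgr}. Your explicit rank formula ${\rm rank}(D_\sigma\cap\cali(S))=|\sigma|-c-l$ from Theorem \ref{thm-vau}(i), the induction tracking the first Betti number $|\sigma|-\nu+1$ of the reduction-system graph, the Euler-characteristic bound $\nu+\Omega\leq|\chi(S)|$, and the restriction-and-count argument via $\rho\colon A\to\prod_i\pmod(Q_i)$ (with $\ker\rho=A\cap D_\sigma$ and each factor contributing rank at most $1$, zero for pants and handles) all go through.

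Two small points in the b-type step of the induction for (i). First, Lemma \ref{lem-bc}(i) is stated with the hypothesis that there exists a \emph{second} BP-equivalence class $c$ of size $\geq 2$, so as written it does not literally apply when $\sigma$ contains only one such class. The conclusion is still true (the proof of Lemma \ref{lem-bc}(i) never uses $c$), but a cleaner route avoids the citation entirely: since $\alpha$ is b-type, pick $\beta\in\sigma\setminus\{\alpha\}$ forming a BP with $\alpha$; then $\alpha$ separates $S_\beta$, and the component of $S_{\sigma\setminus\{\alpha\}}$ containing $\alpha$ is a subsurface of $S_\beta$, so cutting it along $\alpha$ disconnects it, which is exactly $\nu(\sigma)=\nu(\sigma\setminus\{\alpha\})+1$. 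Second, the phrase ``distinct components of $S_b$ and hence of $S_{\sigma\setminus\{\alpha\}}$'' should read ``hence of $S_\sigma$'': $S_\sigma$ is a further cut of $S_b$ (as $b\subset\sigma$), whereas $S_{\sigma\setminus\{\alpha\}}$ is not, and the statement you actually need is that the two boundary circles of $\alpha$ lie in distinct components of $S_\sigma$.
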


Assertions (i) and (ii) can be verified along argument in the proof of Lemmas 3.1 and 3.2 in \cite{v}, respectively.
Assertion (iii) follows from Corollary 7.18 in \cite{iva-subgr}. 
We refer to \cite{iva-subgr} for the definition of canonical reduction systems for subgroups of $\mod(S)$. 
We can show the following proposition by using Lemma \ref{lem-abe} and following the proof of Theorem 3.3 in \cite{v}.

\begin{prop}\label{prop-v}
Let $S=S_{g, p}$ be a surface with $3g+p-4>0$. 
If $A$ is an abelian subgroup of $\cali(S)$, then ${\rm rank}(A)\leq 2g+p-3$ and this equality is attained for some $A$. 
The same conclusion holds for abelian subgroups of $\calk(S)$.
\end{prop}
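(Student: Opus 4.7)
The plan is to derive the upper bound from Lemma \ref{lem-abe} by way of Ivanov's theory of canonical reduction systems, and then to exhibit an explicit example that attains the bound. By the appendix, every element of $\cali(S)$ is pure in the sense of Ivanov, so any subgroup $A \subset \cali(S)$ is pure and admits a canonical reduction system $\sigma_A$, which is either a simplex of $\calc(S)$ or empty. When $\sigma_A$ is non-empty, I would apply Lemma \ref{lem-abe}(iii) followed by Lemma \ref{lem-abe}(ii) to obtain
\[
{\rm rank}(A) \;\leq\; {\rm rank}(D_{\sigma_A} \cap \cali(S)) + \Omega(\sigma_A) \;\leq\; 2g+p-3.
\]

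When $\sigma_A$ is empty, the abelian group $A$ is irreducible, so, being pure, every non-trivial element of $A$ is pseudo-Anosov. The standard fact that the centralizer of a pseudo-Anosov element in $\mod(S)$ is virtually infinite cyclic then forces ${\rm rank}(A) \leq 1$, which is at most $2g+p-3$ under the hypothesis $3g+p-4 > 0$. The inclusion $\calk(S) \subset \cali(S)$ transports the same upper bound to any abelian subgroup of the Johnson kernel at no extra cost.

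For sharpness, I would use Lemma \ref{lem-sep-number}(i) to produce a simplex $\sigma_0$ of $\calc_s(S)$ with $|\sigma_0| = 2g+p-3$, consisting of pairwise disjoint and pairwise non-isotopic separating curves in $S$. The Dehn twists about the curves in $\sigma_0$ pairwise commute and generate a free abelian subgroup of $\mod(S)$ of rank $|\sigma_0| = 2g+p-3$, which is contained in $\calk(S) \subset \cali(S)$ by the definition of the Johnson kernel. This single example realizes the upper bound for both $\cali(S)$ and $\calk(S)$ simultaneously.

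The main obstacle is the irreducible case $\sigma_A = \emptyset$, since Lemma \ref{lem-abe} itself is tailored to reducible subgroups; there one must fall back on Ivanov's structure theory for pure subgroups, in particular on the virtual cyclicity of pseudo-Anosov centralizers. The remaining steps are essentially bookkeeping via Lemmas \ref{lem-abe} and \ref{lem-sep-number}, paralleling the proof of Theorem 3.3 in \cite{v} for closed surfaces.
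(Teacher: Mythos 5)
Your proposal is correct and matches the paper's approach: the paper proves this proposition precisely by citing Lemma \ref{lem-abe} and Vautaw's Theorem 3.3 in \cite{v}, which splits into the reducible case (handled via canonical reduction systems and the rank inequalities) and the irreducible case (handled via virtual cyclicity of pseudo-Anosov centralizers), and exhibits sharpness with Dehn twists about a maximal system of disjoint separating curves. Your write-up fills in the details of that same argument, with the added observation that purity (Theorem \ref{thm-pure}) and torsion-freeness make the case split clean.
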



\subsection{Characterization of twisting elements}

For a group $\Gamma$, we denote by $Z(\Gamma)$ the center of $\Gamma$.
For an element $x$ of $\Gamma$, we denote by $Z_{\Gamma}(x)$ the centralizer of $x$ in $\Gamma$.
The conclusions in Lemmas \ref{lem-cha-i}--\ref{lem-cha} for closed surfaces are announced in \cite{farb-ivanov} and are proved in \cite{v-t}.

\begin{lem}\label{lem-cha-i}
Let $S=S_{g, p}$ be a surface with $g\geq 1$ and $|\chi(S)|\geq 3$, and let $\Gamma$ be a finite index subgroup of $\cali(S)$.
Pick $x\in \Gamma$.
If $x$ is a non-zero power of either the Dehn twist about a separating curve in $S$ or the BP twist about a BP in $S$, then
\begin{enumerate}
\item[(a)] $Z(Z_{\Gamma}(x))$ is isomorphic to $\mathbb{Z}$; and
\item[(b)] $x$ is contained in a subgroup of $\Gamma$ isomorphic to $\mathbb{Z}^{2g+p-3}$.
\end{enumerate}
\end{lem}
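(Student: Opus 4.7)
The plan is to prove (b) first and bootstrap to (a). Write $\tau$ for the configuration supporting $x$: $\tau=\{a\}$ with $a$ separating when $x=t_a^n$, and $\tau=\{a,b\}$ with $\{a,b\}$ a BP when $x=(t_a t_b^{-1})^n$; set $t_\tau:=t_a$ respectively $t_a t_b^{-1}$, which lies in $\cali(S)$.

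For (b), the idea is to produce a free abelian subgroup $A\subset\cali(S)$ of rank $2g+p-3$ containing $t_\tau$ and then intersect with $\Gamma$. Let $Q_1,\ldots,Q_k$ be the components of $S_\tau$, with $Q_i=S_{g_i,p_i}$. Each $\cali(Q_i)$ contains a free abelian subgroup $A_i$ of rank $2g_i+p_i-3$: for $|\chi(Q_i)|\geq 3$ this is Proposition \ref{prop-v}; for the borderline $Q_i\in\{S_{0,4},S_{1,2},S_{2,0}\}$ the Torelli group is free and rank $1$ is attained directly; and for $Q_i$ a pair of pants or handle $\cali(Q_i)=1$ and one sets $A_i:=1$. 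Extending each $A_i$ to $\cali(S)$ by the identity, the product $A:=\langle t_\tau\rangle\oplus A_1\oplus\cdots\oplus A_k$ is abelian; the identities $\sum g_i=g$, $\sum p_i=p+2$ in the separating case and $\sum g_i=g-1$, $\sum p_i=p+2$ in the BP case yield $\operatorname{rank}(A)=2g+p-3$. Since $x\in A\cap\Gamma$ and $\Gamma$ has finite index in $\cali(S)$, the group $A\cap\Gamma$ is free abelian of the same rank and contains $x$, settling (b).

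For (a), the containment $\langle x\rangle\subset Z(Z_\Gamma(x))$ is automatic, so it suffices to bound $Z(Z_\Gamma(x))$ above by a cyclic group. Let $A_\Gamma:=A\cap\Gamma$ and take $y\in Z(Z_\Gamma(x))$; since $A_\Gamma$ is abelian and contains $x$, $A_\Gamma\subset Z_\Gamma(x)$, so $y$ centralizes $A_\Gamma$. Because $A_\Gamma$ contains non-zero powers of the Dehn and BP twists generating $A$, the classical fact that mapping classes commuting with non-zero powers of Dehn twists preserve their core curves, combined with the purity of elements of $\cali(S)$ established in the Appendix, forces $y$ to fix those curves with orientation and each component of the underlying decomposition. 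Moreover, $y$ commutes with each (image of a) finite index subgroup $\cali(Q_i)\cap\Gamma$, which lies in $Z_\Gamma(x)$ since its elements are supported in $Q_i$ and automatically commute with $x$. Each $\cali(Q_i)$ with $|\chi(Q_i)|\geq 2$ has trivial center, so the restriction $y|_{Q_i}$ must be trivial; pants and handle pieces contribute nothing. Hence $y$ is supported on an annular neighborhood of $\tau$, so $y\in\langle t_a,t_b\rangle$ (reducing to $\langle t_a\rangle$ in the separating case), and the constraint $y\in\cali(S)$ cuts this to a power of $t_\tau$. Therefore $Z(Z_\Gamma(x))$ embeds into $\langle t_\tau\rangle\cap\Gamma\cong\mathbb{Z}$ and (a) follows.

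The main obstacle is the rigidification step: showing that if $y\in\mod(Q_i)$ centralizes a finite index subgroup of $\cali(Q_i)$ then $y$ is trivial. In the borderline cases where $\cali(Q_i)$ is a free group of rank at least two this is immediate from centerlessness of free groups, but for higher-complexity $Q_i$ this is a commensurator-rigidity-type statement that must be verified separately or invoked from known results on Torelli groups of smaller surfaces. The BP case additionally requires ruling out elements $t_a^i t_b^j$ with $i+j\neq 0$ from $Z(Z_\Gamma(x))$, which holds because such products act non-trivially on $H_1(S;\mathbb{Z})$ and hence cannot lie in $\cali(S)$.
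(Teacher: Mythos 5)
Your route to (b) is genuinely different from the paper's (which simply takes a maximal simplex $\sigma$ of $\calt(S)$ through the vertex $b$ or $\alpha$ and invokes Theorem~\ref{thm-vau} to see that $D_\sigma\cap\Gamma$ has rank $2g+p-3$), but as written it has a real gap. When $\tau$ is a BP $\{a,b\}$, the natural map $\pmod(Q_i)\to\pmod(S)$ does \emph{not} send $\cali(Q_i)$ into $\cali(S)$: if $\gamma$ is a separating curve in $Q_i$ whose two sides contain the two boundary circles coming from $a$ and $b$ respectively, then $\gamma$ is non-separating in $S$ (indeed $\{a,\gamma\}$ is a BP in $S$), so $t_\gamma\in\cali(Q_i)$ but $t_\gamma\notin\cali(S)$. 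Thus ``extending $A_i$ to $\cali(S)$ by the identity'' is not justified: the free abelian group of Proposition~\ref{prop-v} inside $\cali(Q_i)$ need not produce Torelli elements of $S$, and you would have to show separately that a rank-$(2g_i+p_i-3)$ abelian subgroup supported on $Q_i$ can be found inside $\cali(S)$. (A minor point: in the BP case $\sum p_i=p+4$, not $p+2$, since each of $a,b$ contributes one boundary circle to each side; with $p+2$ your count gives rank $2g+p-5$, not $2g+p-3$.)

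For (a), your outline is in the same spirit as the paper's but the step you yourself flag as the ``main obstacle'' is indeed a substantial gap: you invoke that $y|_{Q_i}$ is trivial because ``each $\cali(Q_i)$ with $|\chi(Q_i)|\geq 2$ has trivial center,'' but what you actually need is that the centralizer \emph{in $\pmod(Q_i)$} of a finite-index subgroup of $\cali(Q_i)$ is trivial — a much stronger statement that is essentially a small case of the theorem the paper is building toward. The paper sidesteps this entirely: it passes to the homomorphism $q\colon Z_\Gamma(x)\to\pmod(Q_1)\times\pmod(Q_2)$ and shows the center of the \emph{image} $q_i(Z_\Gamma(x))$ is trivial, because a central element must commute with (powers of) $t_\gamma$ for every separating curve $\gamma$ in $Q_i$ keeping the two $\tau$-boundary circles together — and those Dehn twist powers are manifestly in $Z_\Gamma(x)$ — forcing it to fix all such $\gamma$ and hence to be trivial. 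This is a more elementary and self-contained mechanism than the rigidity statement you would need, and it also handles the handle and pair-of-pants cases uniformly (in the handle case using triviality of the action on $H_1(\bar S;\mathbb Z)$). To complete your version you would either need to prove the centralizer statement directly or replace it with the paper's finer curve-by-curve argument.
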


\begin{proof}
We note that by Theorem \ref{thm-pure}, any element of $\cali(S)$ is pure in the sense of Ivanov \cite{iva-subgr}.
We put $Z(x)=Z_{\Gamma}(x)$.

We first assume that $x$ is a non-zero power of the BP twist about a BP $b$ in $S$.
The group $Z(x)$ is then equal to the stabilizer of $b$ in $\Gamma$. 
Since any element of $\cali(S)$ is pure, any element of $Z(x)$ fixes each curve in $b$ and each component of $S_b$ as a set.
We have the natural homomorphism
\[q\colon Z(x)\rightarrow \pmod(Q_1)\times \pmod(Q_2),\]
where $Q_1$ and $Q_2$ are the two components of $S_b$.
For each $j=1, 2$, let $q_j\colon Z(x)\rightarrow \pmod(Q_j)$ denote the composition of $q$ with the projection onto $\pmod(Q_j)$.
If $Q_1$ is a pair of pants, then $q_1(Z(x))$ is trivial because it fixes each curve in $b$.
Otherwise, the center of $q_1(Z(x))$ is trivial because any element in it fixes any separating curve $a$ in $Q_1$ such that the two boundary components of $Q_1$ corresponding to $b$ are contained in a component of $(Q_1)_a$.
The same property holds if $Q_1$ is replaced with $Q_2$.
It follows that the center of $q(Z(x))$ is trivial. 
The center of $Z(x)$ is thus equal to $\ker q$, which is contained in the cyclic group generated by the BP twist about $b$. 
Condition (a) is obtained.
Choose a simplex $\sigma$ of $\calt(S)$ of maximal dimension containing $b$.
The group $D_{\sigma}\cap \Gamma$ is of rank $2g+p-3$.
Condition (b) thus follows.

We next assume that $x$ is a non-zero power of the Dehn twist about a separating curve $\alpha$ in $S$.
The group $Z(x)$ is then equal to the stabilizer of $\alpha$ in $\Gamma$.
We have the natural homomorphism
\[r\colon Z(x)\rightarrow \pmod(R_1)\times \pmod(R_2),\]
where $R_1$ and $R_2$ are the two components of $S_{\alpha}$.
For each $j=1, 2$, let $r_j\colon Z(x)\rightarrow \pmod(R_j)$ denote the composition of $r$ with the projection onto $\pmod(R_j)$.
If $R_1$ is a pair of pants, then $r_1(Z(x))$ is trivial because any element of $\cali(S)$ is pure and fixes each component of $\partial S$ as a set.
If $R_1$ is a handle, then $r_1(Z(x))$ is trivial because any element of $\cali(S)$ acts trivially on the homology group $H_1(\bar{S}, \mathbb{Z})$, where $\bar{S}$ is the closed surface obtained by attaching disks to all components of $\partial S$.
Otherwise, the center of $r_1(Z(x))$ is shown to be trivial as in the previous paragraph.
The same property holds if $R_1$ is replaced with $R_2$.
Condition (a) is then obtained as in the previous paragraph.
If $\tau$ is a simplex of $\calc_s(S)$ of maximal dimension containing $\alpha$, then $D_{\tau}\cap \Gamma$ is of rank $2g+p-3$.
Condition (b) thus follows.
\end{proof}

\begin{lem}\label{lem-cha-k}
Let $S=S_{g, p}$ be a surface with $g\geq 1$ and $|\chi(S)|\geq 3$, and let $\Gamma$ be a finite index subgroup of $\calk(S)$.
Pick $x\in \Gamma$.
If $x$ is a non-zero power of the Dehn twist about a separating curve in $S$, then
\begin{enumerate}
\item[(a)] $Z(Z_{\Gamma}(x))$ is isomorphic to $\mathbb{Z}$; and
\item[(b)] $x$ is contained in a subgroup of $\Gamma$ isomorphic to $\mathbb{Z}^{2g+p-3}$.
\end{enumerate}
\end{lem}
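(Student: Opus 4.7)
The plan is to adapt the argument given for separating curves in the proof of Lemma \ref{lem-cha-i}, using that $\Gamma$ sits inside $\cali(S)$. Write $x=t_\alpha^n$ with $\alpha$ a separating curve and $n\neq 0$. Since every element of $\cali(S)$ is pure (Theorem \ref{thm-pure}), an element of $\Gamma$ commutes with $x$ exactly when it fixes $\alpha$ and preserves each side of $\alpha$, so $Z_\Gamma(x)$ equals the stabilizer of $\alpha$ in $\Gamma$. Let $R_1$, $R_2$ be the components of $S_\alpha$ and consider the restriction homomorphism
\[ r \colon Z_\Gamma(x) \longrightarrow \pmod(R_1)\times \pmod(R_2), \qquad r=(r_1,r_2). \]

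For assertion (a), I will argue that each $r_j(Z_\Gamma(x))$ has trivial center, so that $Z(Z_\Gamma(x))\subset \ker r$, and then observe that $\ker r$ is contained in the cyclic group $\langle t_\alpha\rangle$. If $R_j$ is a pair of pants, then $r_j(Z_\Gamma(x))$ is trivial because pure mapping classes fix each component of $\partial R_j$ as a set. If $R_j$ is a handle, so that $\alpha$ is an h-curve, then $r_j(Z_\Gamma(x))$ is trivial because elements of $\cali(S)$ act trivially on $H_1(\bar S,\mathbb{Z})$ and $H_1(R_j,\mathbb{Z})$ injects into $H_1(\bar S,\mathbb{Z})$ under capping. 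In the remaining case $|\chi(R_j)|\geq 3$, the key observation is that a curve separating $R_j$ is automatically separating in $S$, so its Dehn twist lies in $\calk(S)$, commutes with $t_\alpha$, and (after taking a suitable power to land in the finite-index subgroup $\Gamma$) appears in $r_j(Z_\Gamma(x))$. Any element of the center of $r_j(Z_\Gamma(x))$ must then commute with non-zero powers of the Dehn twists about every separating curve in $R_j$; by purity it fixes each such curve, and as in the corresponding step of the proof of Lemma \ref{lem-cha-i} this forces it to be trivial in $\pmod(R_j)$. Combining the cases yields $Z(Z_\Gamma(x))\subset \langle t_\alpha\rangle \cap \Gamma\cong \mathbb{Z}$.

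For assertion (b), I will choose a simplex $\tau$ of $\calc_s(S)$ of maximal dimension containing $\alpha$; by Lemma \ref{lem-sep-number}, $|\tau|=2g+p-3$. All Dehn twists about curves in $\tau$ lie in $\calk(S)$, pairwise commute, and are independent, so $D_\tau\cong \mathbb{Z}^{2g+p-3}$. Since $\Gamma$ has finite index in $\calk(S)$, the intersection $D_\tau\cap \Gamma$ has finite index in $D_\tau$ and hence is itself isomorphic to $\mathbb{Z}^{2g+p-3}$. It contains a non-zero power of $t_\alpha$, and since the statement allows replacement of $x$ by any non-zero power, we conclude that $x$ lies in a subgroup of $\Gamma$ isomorphic to $\mathbb{Z}^{2g+p-3}$.

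The main obstacle I anticipate is the case $|\chi(R_j)|\geq 3$ of assertion (a): one must check, working entirely within $\calk(S)$, that the image $r_j(Z_\Gamma(x))$ contains enough Dehn twists about separating curves of $R_j$ for the purity argument to rule out any nontrivial central element. All necessary ingredients are those employed in Lemma \ref{lem-cha-i}, supplemented by the observation that a separating curve in a component of $S_\alpha$ is separating in $S$, which guarantees that the relevant twists are genuinely elements of $\calk(S)$ and hence contribute to $Z_\Gamma(x)$ after passage to a finite-index subgroup.
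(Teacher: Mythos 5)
Your proof is correct and follows exactly the paper's approach: the paper's proof consists of the single remark that the argument for the separating-curve case of Lemma \ref{lem-cha-i} applies verbatim, and your writeup reproduces that argument, correctly identifying that the one point requiring verification in the Johnson-kernel setting is that a curve separating a component $R_j$ of $S_\alpha$ is automatically separating in $S$, so the needed twists lie in $\calk(S)$. (One tiny slip: the third case should be stated as ``$R_j$ neither a pair of pants nor a handle,'' i.e.\ $|\chi(R_j)|\geq 2$ rather than $\geq 3$, but your argument covers that case just as well.)
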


\begin{proof}
A verbatim argument of the proof of Lemma \ref{lem-cha-i} can be applied.
\end{proof}

To prove the following lemma, let us recall reduction system graphs for simplices of $\calc(S)$, which were introduced in \cite{v}. 
Let $S=S_{g, p}$ be a surface with $3g+p-4>0$, and let $\tau$ be a simplex of $\calc(S)$. 
The {\it reduction system graph} $G(\tau)$ is then defined as follows: Vertices of $G(\tau)$ are components of $S_{\tau}$. 
Edges of $G(\tau)$ are curves in $\tau$. 
The two ends of the edge corresponding to a curve $c$ in $\tau$ are defined to be vertices corresponding to components of $S_{\tau}$ which lie in the left and right hand sides of $c$ in $S$. 
Note that $G(\tau)$ may have a loop. 
The reader should consult \cite{v} for basics of reduction system graphs.

\begin{lem}\label{lem-cha}
Let $S=S_{g, p}$ be a surface with $g\geq 1$ and $|\chi(S)|\geq 3$.
Pick $x\in \cali(S)$. 
Then $x$ is a non-zero power of either the Dehn twist about a separating curve in $S$ or the BP twist about a BP in $S$ if the following two conditions hold:
\begin{enumerate}
\item[(a)] $Z(Z_{\cali(S)}(x))$ is isomorphic to $\mathbb{Z}$; and
\item[(b)] $x$ is contained in a subgroup of $\cali(S)$ isomorphic to $\mathbb{Z}^{2g+p-3}$.
\end{enumerate}
\end{lem}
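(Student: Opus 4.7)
The plan is to follow Vautaw's strategy from \cite{v-t}, adapting the closed-surface argument to surfaces with boundary by exploiting the structure of the Torelli group and the purity result of the appendix. Put $G=\cali(S)$ and $H=Z_G(x)$. By Theorem \ref{thm-pure} every element of $G$ is pure in Ivanov's sense, so $x$ admits a Thurston decomposition
\[ x = \Big(\prod_{c\in\sigma} t_c^{n_c}\Big)\cdot\prod_j \phi_{Q_j}, \]
where $\sigma=\sigma_x$ is the canonical reduction system of $x$ and each $\phi_{Q_j}$ is pseudo-Anosov on a component $Q_j$ of $S_\sigma$. The goal is to show $\sigma$ is a single separating curve or a single BP, each $\phi_{Q_j}$ is trivial, and only one twist exponent is nonzero.

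First I would exploit condition (b) to control the topology of $\sigma$. Let $A\cong\mathbb{Z}^{2g+p-3}$ contain $x$ and let $\tau$ be the canonical reduction system of $A$, so $\sigma\subseteq\tau$. Lemma \ref{lem-abe}(iii) followed by Lemma \ref{lem-abe}(ii) gives
\[ 2g+p-3=\mathrm{rank}(A)\leq \mathrm{rank}(D_\tau\cap G)+\Omega(\tau)\leq 2g+p-3, \]
so equalities hold throughout: every component of $S_\tau$ has $|\chi|\leq 2$, and in particular is one of $S_{0,3}$, $S_{0,4}$, $S_{1,1}$, $S_{1,2}$, or $S_{2,0}$. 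The same bound applies to every component of $S_\sigma$.

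Second I would rule out any pseudo-Anosov part of $x$. A nontrivial $\phi_Q$ uses up one unit of the rank budget of $A$ on the component $Q$, which (since equality is attained above) forces some curves of $\tau$ to lie strictly outside $\sigma$; these ``extra'' curves supply Dehn twists about separating curves of $S$ or BP twists that commute with $x$ (they are supported away from the support of $\phi_Q$ and fix every $t_c$ in the twist part of $x$). Together with a further such independent twist produced in the complement $S\setminus Q$ (which has $|\chi|\geq 1$ since $|\chi(S)|\geq 3$ and $|\chi(Q)|\leq 2$), one exhibits two $\mathbb{Z}$-independent elements of $Z(H)$ that commute with every element of $H$ by purity, contradicting (a). Hence $x$ is a product of Dehn twist powers about curves of $\sigma$.

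Third, with $x$ now in $D_\sigma\cap G$, Theorem \ref{thm-vau}(i) rewrites $x$ as a product of Dehn twists about separating curves of $\sigma$ and BP twists about BPs formed from curves of $\sigma$. Purity (Theorem \ref{thm-pure}) ensures every $y\in H$ fixes each curve of $\sigma$ and each BP-equivalence class individually, so each constituent twist in this decomposition lies in $Z(H)$. If two or more linearly independent such twists appear with nonzero exponent in $x$, they generate a rank $\geq 2$ subgroup of $Z(H)$, contradicting (a). Therefore $x$ is a nonzero power of a single separating-curve Dehn twist or a single BP twist, as required.

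The main obstacle is the second step: ruling out pseudo-Anosov parts requires a case analysis based on the homeomorphism type of the component $Q$ supporting the pseudo-Anosov (handle, $S_{0,4}$, $S_{1,2}$, or $S_{2,0}$) and on whether $\partial Q$ consists of separating curves of $S$ or of curves belonging to BPs, together with careful bookkeeping between the canonical reduction systems $\sigma\subseteq\tau$ to locate the second independent generator of $Z(H)$ in each case.
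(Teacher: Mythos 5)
Your Step 1 is correct (minus a tiny slip: $S_{2,0}$ cannot occur as a component of $S_{\tau}$ when $\tau\neq\emptyset$, so the list should be $S_{0,3}$, $S_{0,4}$, $S_{1,1}$, $S_{1,2}$ only), and your Step 3 is a valid way to finish once you know $x\in D_{\sigma}$: curves of $\sigma$ are fixed by every element of $H=Z_{\cali(S)}(x)$ because $H$ normalizes $\langle x\rangle$ and hence preserves the canonical reduction system $\sigma$ of $\langle x\rangle$, and purity upgrades set-preservation to componentwise fixing, so the constituent twists do land in $Z(H)$.

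The problem is Step 2, and it is a genuine gap. You want to produce two $\mathbb{Z}$-independent elements of $Z(H)$ by using Dehn or BP twists about the ``extra'' curves of $\tau\setminus\sigma$ and about curves in $S\setminus Q$. Such twists do commute with $x$ (so lie in $H$), but they are generally not in $Z(H)$: an arbitrary $y\in H$ only normalizes $\langle x\rangle$, hence preserves $\sigma$, but $y$ need not preserve $\tau$ (which is the reduction system of the chosen auxiliary group $A$, not of $\langle x\rangle$), nor the additional auxiliary curves you introduce in the complement of $Q$. Purity of $y$ gives you componentwise fixing only for curve systems that $y$ already preserves, so ``commute with every element of $H$ by purity'' does not follow. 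Consequently the contradiction with condition (a) is not established. In fact this failure mode is exactly the hard case: if some boundary curve of $Q$ lying in $\sigma$ is of a-type or b-type, your idea works (one obtains a twist in $Z(H)$ independent of $x$); but if all of $\sigma$ is of c-type, no twist about a single curve of $\sigma$ is even in $\cali(S)$, and there is simply no obvious twist to place in $Z(H)$. The paper handles precisely this all-c-type case by a different mechanism: it shows a pseudo-Anosov component $Q$ must be an $S_{0,4}$, then runs a counting argument on maximal trees in the reduction system graph $G(\tau)$ to show $\mathrm{rank}(A)\leq\mathrm{rank}(D_{\tau}\cap\cali(S))+\Omega(\tau)<2g+p-3$, contradicting condition (b) rather than condition (a). To repair your proof you would need to replace Step 2 with such a rank/combinatorial argument (or some other device) for the all-c-type case; producing central elements of $Z_{\cali(S)}(x)$ from $\tau\setminus\sigma$ does not work.
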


\begin{proof}
We follow argument in the proof of Theorem 3.5 in \cite{v-t}, where closed surfaces are dealt with. 
We assume that an element $x$ of $\cali(S)$ satisfies conditions (a) and (b). 
Condition (a) implies that $x$ is not neutral.
Condition (b) implies that $x$ is a reducible element of infinite order because we have $2g+p-3\geq 2$. 
Let $\sigma \in \Sigma(S)$ be the canonical reduction system for the cyclic group $\langle x\rangle$ generated by $x$. 
Since $\sigma$ is fixed by any element in the normalizer of $\langle x\rangle$ in $\mod(S)$, condition (a) implies that if $\sigma$ contains a curve of either a-type or b-type, then $x$ is a non-zero power of either the Dehn twist about a curve in $\sigma \cap V_s(S)$ or the BP twist about a BP in $S$ of two curves in $\sigma$.

We assume that $\sigma$ consists of only curves of c-type and deduce a contradiction. 
By Theorem \ref{thm-vau} (i), there exists a pseudo-Anosov component $Q$ of $S_{\sigma}$ for $x$. 
Let $\sigma_Q$ denote the set of all curves in $\sigma$ corresponding to a component of $\partial Q$. 
Let $A$ be a subgroup of $\cali(S)$ isomorphic to $\mathbb{Z}^{2g+p-3}$ and containing $x$. 
We denote by $\tau \in \Sigma(S)$ the canonical reduction system for $A$. 
Note that $\tau$ contains $\sigma$ and that $Q$ is a component of $S_{\tau}$. 
By Lemma \ref{lem-abe} (ii), (iii), each component of $S_{\tau}$ is homeomorphic to one of $S_{0, 3}$, $S_{0, 4}$, $S_{1, 1}$ and $S_{1, 2}$. 
If $Q$ were homeomorphic to $S_{1, 1}$ or $S_{1, 2}$, then at least one curve in $\sigma_Q$ would be a curve of either a-type or b-type in $\sigma$. 
This is a contradiction.
Since $Q$ is a pseudo-Anosov component for $x$, it is not homeomorphic to $S_{0, 3}$.
It follows that $Q$ is homeomorphic to $S_{0, 4}$. 
Note that there is no curve in $\sigma_Q$ whose both sides are contained in $Q$ because otherwise the other curve(s) in $\sigma_Q$ would be of a-type or b-type in $\sigma$. 
Each curve of $\sigma_Q$ is a curve of either b-type or c-type in $\tau$. 
Moreover, if two curves in $\sigma_Q$ are of b-type in $\tau$, then they are not BP-equivalent in $S$.

Let us assume that there is a curve $c\in \sigma_Q$ which is a curve of c-type in $\tau$. 
One can construct a maximal tree $T$ in the reduction system graph $G(\tau)$ for $\tau$ containing the edge corresponding to $c$ because the edge is not a loop. 
We write
\[\tau =\{ b_{11},\ldots, b_{1r_1}, b_{21},\ldots, b_{2r_2},\ldots, b_{q1},\ldots, b_{qr_q}, c_1,\ldots, c_s\}\]
so that
\begin{itemize}
\item each $b_{ij}$ is of b-type in $\tau$, and the family $\{ b_{i1},\ldots, b_{iq_i}\}$ is a BP-equivalence class in $\tau$ for each $i$; and
\item each $c_k$ is of c-type in $\tau$.
\end{itemize}
For each $i$, the tree $T$ contains at least $r_i-1$ of edges corresponding to $b_{i1},\ldots, b_{ir_i}$ because otherwise $T$ could not be connected.
Let $\nu=\nu(\tau)$ and $\Omega =\Omega(\tau)$ be the numbers defined in Lemma \ref{lem-abe}. 
Since the number of edges of $T$ is equal to $\nu-1$, we obtain the inequality
\[\nu-1\geq \sum_{i=1}^q(r_i-1)+1>\sum_{i=1}^q(r_i-1)={\rm rank}(D_{\tau}\cap \cali(S)),\]
where the last equality holds by Theorem \ref{thm-vau} (i).
We also obtain the inequality
\[{\rm rank}(A)\leq {\rm rank}(D_{\tau}\cap \cali(S))+\Omega <\nu+\Omega -1\leq |\chi(S)|-1=2g+p-3,\]
where the first inequality holds by Lemma \ref{lem-abe} (iii). 
This is a contradiction.

Finally, we suppose that $\sigma_Q$ consists of curves of b-type in $\tau$. 
We write $\tau$ as in the previous paragraph. 
For each curve $\alpha$ in $\sigma_Q$, we choose any curve $\beta_{\alpha}$ in $\tau$ BP-equivalent to $\alpha$.
If we cut $S$ along all $\beta_{\alpha}$ for $\alpha \in \sigma_Q$, then $S$ is decomposed into two connected components.
It follows that for any maximal tree $T$ in $G(\tau)$, there exists a curve $\gamma$ in $\sigma_Q$ such that $T$ contains all edges corresponding to curves in $\tau$ BP-equivalent to $\gamma$. 
We may assume $\gamma =b_{11}$. 
We then obtain the inequality
\[\nu-1\geq r_1+\sum_{i=2}^q(r_i-1)>\sum_{i=1}^q(r_i-1)={\rm rank}(D_{\tau}\cap \cali(S))\]
and deduce a contradiction as in the previous paragraph.
\end{proof}


\subsection{Computation of commensurators}

The argument of this subsection has already appeared in many works to compute commensurators of mapping class groups and their subgroups and to describe injective homomorphisms between those groups (see e.g., \cite{be-m}, \cite{bm-ar}, \cite{bm}, \cite{bm-add}, \cite{farb-ivanov}, \cite{irmak1}, \cite{irmak2}, \cite{irmak-ns}, \cite{iva-aut}, \cite{kork-aut}, \cite{mv} and \cite{sha}). 
It is outlined as follows: To an injective homomorphism $f$ between $\cali(S)$ and $\calk(S)$, we associate a superinjective map $\phi$ between $\calt(S)$ and $\calc_s(S)$ by using the characterization of twisting elements in Lemmas \ref{lem-cha-i}--\ref{lem-cha}. 
Applying the result that $\phi$ is induced by an element $\gamma_0$ of $\mod^*(S)$, we conclude that the homomorphism $f$ is the conjugation by $\gamma_0$.

\begin{prop}\label{prop-homo-si}
Let $S=S_{g, p}$ be a surface with $g\geq 1$ and $|\chi(S)|\geq 3$.
Let $(G, X)$ be either $(\cali(S), \calt(S))$ or $(\calk(S), \calc_s(S))$. 
For any finite index subgroup $\Gamma$ of $G$ and any injective homomorphism $f\colon \Gamma \rightarrow G$, there exists a unique superinjective map $\phi \colon X\rightarrow X$ such that for each vertex $v$ of $X$, we have
\[f(T_v \cap \Gamma)<T_{\phi(v)},\]
where for each vertex $u$ of $X$, if $u\in V_s(S)$, then $T_u$ denotes the group generated by $t_u$, and otherwise $T_u$ denotes the group generated by the BP twist about the BP $u$. 
Moreover, if $f(\Gamma)$ is of finite index in $G$, then $\phi$ is an isomorphism.
\end{prop}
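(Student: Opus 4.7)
The plan is to exploit the algebraic characterization of twist powers developed in Lemmas \ref{lem-cha-i}--\ref{lem-cha}. For each vertex $v$ of $X$, since $\Gamma$ has finite index in $G$, the intersection $T_v \cap \Gamma$ is a non-trivial infinite cyclic group; let $x_v$ denote a generator, so $x_v = t_v^{k_v}$ for some $k_v \geq 1$, where $t_v$ is the Dehn twist (if $v \in V_s(S)$) or the BP twist (if $v \in V_{bp}(S)$). By Lemma \ref{lem-cha-i} (when $G = \cali(S)$) or Lemma \ref{lem-cha-k} (when $G = \calk(S)$), the element $x_v$ satisfies in $\Gamma$ the two conditions (a) $Z(Z_\Gamma(x_v)) \cong \mathbb{Z}$ and (b) $x_v$ lies in an abelian subgroup of $\Gamma$ isomorphic to $\mathbb{Z}^{2g+p-3}$. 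I will show that $f(x_v)$ is itself a non-zero power of a Dehn twist about a separating curve or of a BP twist, and take $\phi(v)$ to be the associated vertex of $X$.

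Condition (b) transports directly to $G$ via injectivity of $f$: the image $f$ sends the $\mathbb{Z}^{2g+p-3}$ subgroup of $\Gamma$ containing $x_v$ to a $\mathbb{Z}^{2g+p-3}$ subgroup of $G$ containing $f(x_v)$. The main technical step is the transfer of condition (a) to $G$, and here I plan to combine three ingredients: first, purity of $\cali(S)$ (Theorem \ref{thm-pure}) identifies $Z_G(f(x_v))$ with the pointwise stabilizer of the canonical reduction system $\sigma$ of $\langle f(x_v)\rangle$; second, the argument in the proof of Lemma \ref{lem-cha}, which uses only (b), rules out that $\sigma$ consists solely of curves of c-type and so forces $\sigma$ to contain at least one curve of a-type or b-type; third, a centralizer-rank comparison, in which an assumption that $\sigma$ contained two independent a- or b-type twists would, after passing to suitable powers lying in the normal core of $\Gamma$ in $G$ and pulling back via $f^{-1}$ applied to their intersection with $f(\Gamma)$, produce two independent elements in $Z(Z_\Gamma(x_v))$, contradicting (a). Thus $\sigma$ consists of a single a-type curve or a single b-type BP-equivalence class, and a direct calculation mirroring the proof of Lemma \ref{lem-cha-i} identifies $f(x_v)$ as a non-zero power of the corresponding twist; the associated vertex $\phi(v)$ is uniquely determined because $T_{\phi(v)}$ is generated by any of its non-identity elements.

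To verify superinjectivity of $\phi$ and its uniqueness: for any two distinct vertices $v, v'$ of $X$, the relation $i(v, v') = 0$ holds if and only if $x_v$ and $x_{v'}$ commute in $\Gamma$, if and only if $f(x_v)$ and $f(x_{v'})$ commute in $G$ (by injectivity of $f$), if and only if powers of $t_{\phi(v)}$ and $t_{\phi(v')}$ commute, which by the standard disjointness criterion for twists in $\cali(S)$ is equivalent to $i(\phi(v), \phi(v')) = 0$. This shows $\phi$ is simultaneously simplicial and superinjective. Uniqueness of $\phi$ is automatic from the defining inclusion $f(T_v \cap \Gamma) \subset T_{\phi(v)}$, since any non-identity element of $T_{\phi(v)}$ determines the vertex $\phi(v)$. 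For the ``moreover'' clause, when $f(\Gamma)$ has finite index in $G$, the inverse $f^{-1}\colon f(\Gamma) \to \Gamma \hookrightarrow G$ is itself an injective homomorphism from a finite-index subgroup, so the same construction applied to $f^{-1}$ produces a superinjective map $\phi'$ satisfying $\phi' \circ \phi = \phi \circ \phi' = \mathrm{id}_X$, making $\phi$ an automorphism. I expect the centralizer-transfer step for (a) to be the most delicate, as it requires careful interplay between the finite-index inclusion $\Gamma \subset G$ and the a priori non-finite-index inclusion $f(\Gamma) \subset G$, mediated by purity of $\cali(S)$.
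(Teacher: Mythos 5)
Your overall architecture is close to the paper's, but the crucial step — transferring condition~(a) of Lemma~\ref{lem-cha} from $x_v$ to $f(x_v)$ — is where your sketch has a genuine gap. You propose to argue by contradiction that the canonical reduction system $\sigma$ of $\langle f(x_v)\rangle$ cannot contain two independent a- or b-type twisting directions, by producing the corresponding two twist elements $t_1, t_2 \in Z(Z_G(f(x_v)))$, passing to ``suitable powers,'' and pulling them back via $f^{-1}$ to get two independent elements of $Z(Z_\Gamma(x_v))$. The difficulty is that the powers you need must lie in $f(\Gamma)$, and since the proposition makes no assumption that $f(\Gamma)$ has finite index in $G$, nothing guarantees that any nonzero power of $t_1$ or $t_2$ lies in $f(\Gamma)$ at all. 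Passing to the normal core of $\Gamma$ in $G$ controls only $\Gamma$, not $f(\Gamma)$, so it does not fix the problem. This is precisely the ``delicate interplay'' you flag at the end, but the sketch does not actually resolve it.

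The paper sidesteps the issue with Lemma~\ref{lem-rank}, a rank-comparison argument that never pulls elements of $G$ back to $\Gamma$. Writing $Z = Z(Z_G(f(x_v)))$, taking a maximal-rank free abelian $A_0 \leq \Gamma$ containing $x_v$, and setting $C = f(A_0) \cap Z$, $D = \langle f(A_0), Z\rangle$ (which is abelian because $Z$ centralizes $Z_G(f(x_v)) \supseteq f(A_0)$), one gets $\mathrm{rank}(f(A_0)) + \mathrm{rank}(Z) = \mathrm{rank}(C) + \mathrm{rank}(D)$ and $\mathrm{rank}(D) = \mathrm{rank}(G)$ since $\mathrm{rank}(f(A_0))$ is already maximal; hence $\mathrm{rank}(Z) = \mathrm{rank}(C)$, and $C \subseteq f(Z(Z_\Gamma(x_v)))$ forces $\mathrm{rank}(Z) \leq 1$. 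Combined with $f(x_v) \in Z$, torsion-freeness, and finite generation of abelian subgroups, this yields $Z \cong \mathbb{Z}$ and Lemma~\ref{lem-cha} applies directly. Notice that this argument uses $\Gamma$ only through its rank (equal to $\mathrm{rank}(G)$ because $\Gamma$ is of finite index) and works entirely inside $G$. You should replace your ingredient~3 with something of this form, or cite an argument of this type; otherwise the centralizer transfer is not established. Two smaller points: in the $(\calk(S), \calc_s(S))$ case you still need Proposition~\ref{prop-jo-bp} to rule out $f(x_v)$ being a power of a BP twist, which your sketch does not mention; and $T_{\phi(v)}$ is not generated by an arbitrary non-identity element (it is $\cong \mathbb{Z}$), though the vertex $\phi(v)$ is of course determined by any non-identity twist power, which is what you actually need.
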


To prove this proposition, we use the following lemma.
For a group $H$, we denote by ${\rm rank}(H)$ the supremum of the ranks of finitely generated, free abelian subgroups in $H$.

\begin{lem}\label{lem-rank}
Let $A$ and $B$ be groups with ${\rm rank}(A)={\rm rank}(B)<\infty$, and suppose that any abelian subgroup of $B$ is finitely generated.
Let $\eta \colon A\rightarrow B$ be an injective homomorphism.
If $a$ is an element of $A$ contained in a finitely generated, free abelian subgroup of $A$ with its rank equal to ${\rm rank}(A)$, then we have the inequality
\[{\rm rank}(Z(Z_B(\eta(a))))\leq {\rm rank}(Z(Z_A(a))).\]
\end{lem}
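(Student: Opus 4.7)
The plan is to exhibit $\eta^{-1}(C) \subset A$, where $C := Z(Z_B(\eta(a)))$, as an abelian subgroup of $Z(Z_A(a))$ whose free abelian rank matches that of $C$. First I would verify the inclusion $\eta^{-1}(C) \subset Z(Z_A(a))$: given $x \in \eta^{-1}(C)$ and $y \in Z_A(a)$, we have $\eta(y) \in Z_B(\eta(a))$ because $\eta$ preserves commutation, so $\eta(x)$ and $\eta(y)$ commute by the defining property of $C$; injectivity of $\eta$ then yields $xy = yx$, so $x \in Z(Z_A(a))$.

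Next I would compare ranks, which is the step where the hypotheses enter. Set $n := {\rm rank}(A) = {\rm rank}(B)$ and $m := {\rm rank}(C)$, and choose a free abelian subgroup $A' \leq A$ of rank $n$ containing $a$; this exists by hypothesis on $a$. Then $\eta(A')$ is free abelian of rank $n$ and lies in $Z_B(\eta(a))$, so every element of $C$ commutes with every element of $\eta(A')$. Pick a free abelian subgroup $C_0 \leq C$ of rank $m$, which exists because $C$ is abelian and finitely generated by assumption. The product $\eta(A') \cdot C_0$ is then abelian, finitely generated, and of free abelian rank at most $n = {\rm rank}(B)$. Since it contains $\eta(A')$, which already has rank $n$, the quotient $(\eta(A') \cdot C_0)/\eta(A')$ is torsion, and therefore $C_0 \cap \eta(A')$ has finite index in $C_0$; in particular, it is free abelian of rank $m$.

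To finish I would observe that $C_0 \cap \eta(A') \subset C \cap \eta(A) = \eta(\eta^{-1}(C))$, so pulling back through the injection $\eta$ shows that $\eta^{-1}(C)$ contains a free abelian subgroup of rank $m$. Combined with the inclusion from the first step, this gives
\[
{\rm rank}(Z(Z_A(a))) \;\geq\; {\rm rank}(\eta^{-1}(C)) \;\geq\; m \;=\; {\rm rank}(Z(Z_B(\eta(a)))),
\]
which is the desired inequality. The main subtle point is the rank comparison in the middle step: it relies crucially on the hypothesis that $a$ lies in a rank-$n$ free abelian subgroup $A'$, since that is what forces $\eta(A') \cdot C_0$ to saturate the rank of $B$ and thereby controls how little of $C_0$ can fall outside $\eta(A)$.
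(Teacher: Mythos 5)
Your argument is correct and follows essentially the same route as the paper's: the crux in both is that $\eta(A')\cdot C$ (with $C=Z(Z_B(\eta(a)))$, which commutes with $\eta(A')\subset Z_B(\eta(a))$) is an abelian subgroup of $B$ of rank exactly $n={\rm rank}(B)$, forcing nearly all of $C$ to meet $\eta(A')$, and this intersection pulls back through the injection $\eta$ into $Z(Z_A(a))$. The only cosmetic difference is that you realize the rank comparison via the torsion-quotient/finite-index argument on a free abelian $C_0\leq C$, whereas the paper applies rank additivity to the short exact sequence $0\to\eta(A_0)\cap Z\to\eta(A_0)\oplus Z\to\langle\eta(A_0),Z\rangle\to 0$.
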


\begin{proof}
This lemma is essentially verified in Lemma 5.2 of \cite{irmak1}.
Choose a finitely generated, free abelian subgroup $A_0$ of $A$ with $a\in A_0$ and ${\rm rank}(A_0)={\rm rank}(A)$.
We put $Z=Z(Z_B(\eta(a)))$ and $C=\eta(A_0)\cap Z$ and define $D$ as the subgroup of $B$ generated by $\eta(A_0)$ and $Z$.
Any of $Z$, $C$ and $D$ is abelian and is therefore fintiely generated by assumption.
We have the short exact sequence
\[0\rightarrow C\rightarrow \eta(A_0)\oplus Z\rightarrow D\rightarrow 0.\]
It gives the equality
\[{\rm rank}(\eta(A_0))+{\rm rank}(Z)={\rm rank}(C)+{\rm rank}(D).\]
The inequality
\[{\rm rank}(B)={\rm rank}(A)={\rm rank}(\eta(A_0))\leq {\rm rank}(D)\leq {\rm rank}(B)\]
implies the equality ${\rm rank}(Z)={\rm rank}(C)$.
On the other hand, since $C$ is contained in $Z(Z_{\eta(A)}(\eta(a)))=\eta(Z(Z_A(a)))$, we have the inequality ${\rm rank}(C)\leq {\rm rank}(Z(Z_A(a)))$. 
The lemma thus follows.
\end{proof}

\begin{proof}[Proof of Proposition \ref{prop-homo-si}]
We first assume $(G, X)=(\cali(S), \calt(S))$.
Let $x\in \Gamma$ be a non-zero power of either the Dehn twist about a separating curve in $S$ or the BP twist about a BP in $S$.
We now check conditions (a) and (b) in Lemma \ref{lem-cha} for $f(x)$.
We put $Z=Z(Z_{\cali(S)}(f(x)))$.
By Lemma \ref{lem-cha-i},
\begin{itemize}
\item $Z(Z_{\Gamma}(x))$ is isomorphic to $\mathbb{Z}$; and
\item $f(x)$ is contained in a subgroup of $\cali(S)$ isomorphic to $\mathbb{Z}^{2g+p-3}$.
\end{itemize}
By Lemma \ref{lem-rank}, the rank of $Z$ is at most one, and it is equal to one because the group generated by $f(x)$ is contained in $Z$.
The group $Z$ is isomorphic to $\mathbb{Z}$ because it is torsion-free by Theorem \ref{thm-pure} and is finitely generated by Theorem A in \cite{blm}.
We now apply Lemma \ref{lem-cha} and conclude that $f(x)$ is a non-zero power of either the Dehn twist about a separating curve in $S$ or the BP twist about a BP in $S$.
We can define a map $\phi \colon V_t(S)\rightarrow V_t(S)$ so that the inclusion $f(T_v \cap \Gamma)<T_{\phi(v)}$ holds for any $v\in V_t(S)$.

For each vertex $v$ of $\calt(S)$ and each non-zero integer $n$, we define $T_v^n$ as the subgroup of $T_v$ generated by $s^n$, where $s$ is a generator of $T_v$.
Simplicity and superinjectivity of $\phi$ follows from the fact that for any two distinct vertices $u$, $v$ of $\calt(S)$ and for any non-zero integers $m$, $n$, the group generated by $T_u^m$ and $T_v^n$ is abelian if and only if $u$ and $v$ are adjacent in $\calt(S)$.
Uniqueness of $\phi$ is obvious.
By using $f^{-1}$, we can prove that $\phi$ is an isomorphism if $f(\Gamma)$ is of finite index in $G$.

We next assume $(G, X)=(\calk(S), \calc_s(S))$.
Let $y\in \Gamma$ be a non-zero power of the Dehn twist about a separating curve in $S$.
Along the same argument as in the first paragraph of the proof, we can show that $f(y)$ is a non-zero power of either the Dehn twist about a separating curve in $S$ or the BP twist about a BP in $S$.
Proposition \ref{prop-jo-bp} implies that the latter case is impossible.
The rest of the proof is the same as that in the case $(G, X)=(\cali(S), \calt(S))$.
\end{proof}

We can also prove the following proposition along verbatim argument, which will not be used in the sequel.

\begin{prop}
Let $S=S_{g, p}$ be a surface with $g\geq 1$ and $|\chi(S)|\geq 3$.
For any finite index subgroup $\Gamma$ of $\calk(S)$ and any injective homomorphism $f\colon \Gamma \rightarrow \cali(S)$, there exists a unique superinjective map $\phi \colon \calc_s(S)\rightarrow \calt(S)$ such that for each $v\in V_s(S)$, we have
\[f(T_v \cap \Gamma)<T_{\phi(v)}.\]
\end{prop}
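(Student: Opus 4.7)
My plan is to mimic, step by step, the proof of Proposition~\ref{prop-homo-si}, modified for the asymmetric situation where the source sits in $\calk(S)$ but the target is all of $\cali(S)$. The key difference is that we will not (and cannot) rule out BP-twists in the image, so the resulting superinjective map genuinely lands in $\calt(S)$ rather than $\calc_s(S)$.

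First I would fix $v\in V_s(S)$. Since $\Gamma$ has finite index in $\calk(S)$, the intersection $T_v\cap \Gamma$ is non-trivial and cyclic, generated by some $x=t_v^n$ with $n\ne 0$. Applying Lemma~\ref{lem-cha-k} to the finite index subgroup $\Gamma<\calk(S)$, I obtain that $Z(Z_\Gamma(x))\cong \mathbb{Z}$ and that $x$ lies in a free abelian subgroup of $\Gamma$ of rank $2g+p-3$. Under $f$, this free abelian subgroup is sent injectively to one of the same rank in $\cali(S)$, which shows condition (b) of Lemma~\ref{lem-cha} for $f(x)$. To check condition (a), I appeal to Proposition~\ref{prop-v}, which gives ${\rm rank}(\Gamma)={\rm rank}(\calk(S))={\rm rank}(\cali(S))=2g+p-3$; then Lemma~\ref{lem-rank} applied to $f\colon \Gamma\to \cali(S)$ and to the element $x$ yields
\[{\rm rank}\bigl(Z(Z_{\cali(S)}(f(x)))\bigr)\leq {\rm rank}\bigl(Z(Z_\Gamma(x))\bigr)=1.\]
Since the group on the left contains $\langle f(x)\rangle$, its rank equals one; it is torsion-free by Theorem~\ref{thm-pure} (applied in the appendix) and finitely generated by \cite{blm}, hence isomorphic to $\mathbb{Z}$. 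Lemma~\ref{lem-cha} now guarantees that $f(x)$ is a non-zero power of either the Dehn twist about a separating curve or a BP twist, so $f(x)$ lies in a unique $T_w$ with $w\in V_t(S)$. I set $\phi(v)=w$.

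Well-definedness of $\phi$ is immediate because $T_v\cap \Gamma$ is cyclic (any two generators give the same $w$), and the uniqueness of $w$ follows from the standard fact that distinct $T_{w}$'s have trivial intersection. Uniqueness of $\phi$ overall is forced by the defining inclusion $f(T_v\cap \Gamma)<T_{\phi(v)}$.

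For simpliciality and superinjectivity, I would use the criterion invoked in the proof of Proposition~\ref{prop-homo-si}: for any two distinct vertices $u,w$ of $\calt(S)$ and any non-zero $m,n$, the group generated by $T_u^m$ and $T_w^n$ is abelian if and only if $u$ and $w$ are adjacent in $\calt(S)$. If $u,v\in V_s(S)$ are disjoint (i.e.\ adjacent in $\calc_s(S)$), then $T_u\cap \Gamma$ and $T_v\cap \Gamma$ commute, so their images under $f$ commute, forcing $\phi(u)$ and $\phi(v)$ to be adjacent in $\calt(S)$ (or equal). Injectivity of $\phi$ rules out the equality, and if $u,v$ have non-zero intersection, then any non-zero powers of their twists fail to commute, so the same must hold after applying $f$, giving $\phi(u)$ and $\phi(v)$ non-adjacent in $\calt(S)$. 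I expect the only delicate point to be making this last implication airtight: showing that two cyclic groups $\langle f(x_u)\rangle$, $\langle f(x_v)\rangle$ fail to commute precisely when $\phi(u)$ and $\phi(v)$ intersect; this is exactly where the criterion recalled above, combined with the fact that $f$ is a homomorphism, gives the conclusion. Unlike in the case $(G,X)=(\calk(S),\calc_s(S))$ of Proposition~\ref{prop-homo-si}, Proposition~\ref{prop-jo-bp} plays no role here, which is what allows—and requires—the image of $\phi$ to land in $\calt(S)$ rather than merely in $\calc_s(S)$.
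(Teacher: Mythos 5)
Your proposal is correct and follows essentially the same route the paper itself indicates (the paper defers to a ``verbatim argument'' based on Proposition~\ref{prop-homo-si}): apply Lemma~\ref{lem-cha-k} to $\Gamma<\calk(S)$, push condition (b) forward through $f$, deduce condition (a) for $f(x)$ via Lemma~\ref{lem-rank} together with the rank equality from Proposition~\ref{prop-v}, invoke Lemma~\ref{lem-cha}, and then get simpliciality/superinjectivity from the commutation criterion for twist powers; and you correctly observe that the absence of Proposition~\ref{prop-jo-bp} in this asymmetric setting is exactly why the image lies in $V_t(S)$ rather than $V_s(S)$. One tiny presentational wrinkle: the clause ``injectivity of $\phi$ rules out the equality'' is both circular (injectivity only follows after superinjectivity is established) and unnecessary, since for simpliciality one only needs $i(\phi(u),\phi(v))=0$, which holds whether $\phi(u)=\phi(v)$ or $\phi(u)$ and $\phi(v)$ are adjacent.
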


\begin{proof}[Proof of Theorem \ref{thm-g-1}]
Assertion (i) follows from Theorems \ref{thm-cc} and \ref{thm-g-1-si}.
We prove assertion (ii).
Let $\Gamma$ be a finite index subgroup of $\cali(S)$, and let $f\colon \Gamma \rightarrow \cali(S)$ be an injective homomorphism. 
By Proposition \ref{prop-homo-si}, there exists a superinjective map $\phi \colon \calt(S)\rightarrow \calt(S)$ such that for any vertex $v$ of $\calt(S)$, we have $f(T_v \cap \Gamma)<T_{\phi(v)}$. 
Assertion (i) shows that $\phi$ is induced by an element $\gamma_0$ of $\mod^*(S)$. 
Pick $\gamma \in \Gamma$. 
For each separating curve $a$ in $S$, we then have
\begin{align*}
f(T_{\gamma a}\cap \Gamma)&=f(\gamma T_a\gamma^{-1} \cap \Gamma)=f(\gamma)f(T_a \cap \Gamma)f(\gamma)^{-1}\\
&<f(\gamma)T_{\gamma_0a} f(\gamma)^{-1}=T_{f(\gamma)\gamma_0a},
\end{align*}
and thus $\gamma_0\gamma a=f(\gamma)\gamma_0a$.
For any non-separating curve $c$ in $S$, we can find two separating curves $a_1$, $a_2$ in $S$ such that they are disjoint from $c$ and fill $S_c$.
Since $\gamma^{-1}\gamma_0^{-1}f(\gamma)\gamma_0$ fixes $a_1$ and $a_2$, it also fixes $c$.
It follows that $\gamma^{-1}\gamma_0^{-1}f(\gamma)\gamma_0$ is the neutral element.
The equality $f(\gamma)=\gamma_0\gamma \gamma_0^{-1}$ thus holds.
\end{proof}

\begin{proof}[Proof of Theorem \ref{thm-tor-comm}]
Let
\[\pi \colon \mod^*(S)\rightarrow \aut(\calt(S))\]
be the natural homomorphism.
By Theorem \ref{thm-cc}, Theorem \ref{thm-g-1-si} and Theorem \ref{thm-g-2} (i), $\pi$ is surjective.
Following the last part in the proof of Theorem \ref{thm-g-1}, we can show that any element of $\mod^*(S)$ fixing any separating curve in $S$ is neutral.
Injectivity of $\pi$ then follows.
Assertion (i) is proved.

We prove assertion (ii).
Let
\[{\bf i}\colon \mod^*(S)\rightarrow \comm(\cali(S))\]
be the homomorphism defined by conjugation.
Surjectivity of ${\bf i}$ can be shown along argument in the proof of Theorem \ref{thm-g-1} (ii).
Let $\Gamma$ be a finite index subgroup of $\cali(S)$.
If $\gamma_0$ is an element of $\mod^*(S)$ with $\gamma_0 \gamma \gamma_0^{-1}=\gamma$ for any $\gamma \in \Gamma$, then $\gamma_0$ fixes any separating curve in $S$ because for any separating curve $a$ in $S$, there exists a non-zero integer $n$ with $t_a^n\in \Gamma$ and thus $\gamma_0t_a^n\gamma_0^{-1}=t_a^n$.
We therefore conclude that $\gamma_0$ is the neutral element and that ${\bf i}$ is injective.
\end{proof}

Theorem \ref{thm-jo-comm} can be verified in a similar manner by using Theorem \ref{thm-g-1-s} and Theorem \ref{thm-g-2} (ii).


\section{Commensurators of torus braid groups}\label{sec-torus}

Let $S$ be a surface and $n$ a positive integer. 
Choose $n$ base points $p_1,\ldots, p_n$ in $S$. 
We denote by $E(n, S)$ the space of embeddings of the set of $n$ points, $\{ 1,\ldots, n\}$, into $S$ with the compact-open topology. 
The {\it pure braid group of $n$-strands} on $S$, denoted by $PB_n(S)$, is defined to be the fundamental group $\pi_1(E(n, S))$. 
We refer the reader to \cite{birman-braid}, \cite{birman} and \cite{pr} for basic facts on braid groups on surfaces.

Let $T$ denote the closed torus, and let $S=S_{1, n}$ be a surface of genus one with $n$ boundary components. 
By attaching disks to all components of $\partial S$, we obtain the Birman exact sequence
\[PB_n(T)\stackrel{j}{\rightarrow}\pmod(S)\stackrel{\pi}{\rightarrow}\mod(T)\rightarrow 1.\]
A description of the homomorphism $j$ shows that $\ker \pi$ is equal to $\cali(S)$. 
On the other hand, $\ker j$ is equal to the center of $PB_n(T)$, denoted by $Z$, which is isomorphic to $\mathbb{Z}^{2}$ (see Proposition 4.2 in \cite{pr} for a precise description). 
We refer to Chapter 4 of \cite{birman} and Section 2.8 of \cite{iva-mcg} for details of the Birman exact sequence. 
Note that $PB_1(T)$ is isomorphic to $\pi_1(T)\simeq \mathbb{Z}^{2}$.

In this final section, we describe the commensurator of the braid group of $n$-strands on the torus $T$ with $n\geq 2$. 
The following computation of commensurators of central extensions is discussed in Section 3 of \cite{lm} in a general framework, and the reader should consult the reference for more details. 
We note that the automorphism groups of (pure) braid groups on $T$ are described in \cite{zhang}. 
To carry out the computation in \cite{lm}, we need the following lemma on braid groups on $T$.

\begin{lem}\label{lem-split}
For each integer $n\geq 2$, we have a homomorphism $p\colon PB_n(T)\rightarrow \mathbb{Z}^2$ such that $p$ is injective on the center $Z$ of $PB_n(T)$ and the image $p(Z)$ is the subgroup of $\mathbb{Z}^2$ generated by $(n, 0)$ and $(0, n)$.
\end{lem}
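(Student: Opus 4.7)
The plan is to take $p$ to be the sum of the $n$ ``forget all but one strand'' homomorphisms. For each $i\in\{1,\dots,n\}$, evaluation at the $i$-th point defines a map $E(n,T)\to T$, which induces a homomorphism $\pi_i\colon PB_n(T)\to \pi_1(T,p_i)$. Since $T$ is a $K(\pi,1)$ with abelian fundamental group, each $\pi_1(T,p_i)$ is canonically identified with $\mathbb{Z}^2$ via the deck group of the universal cover $\mathbb{R}^2\to T$, so each $\pi_i$ may be regarded as taking values in a fixed copy of $\mathbb{Z}^2$. I would then set
\[p:=\sum_{i=1}^n\pi_i\colon PB_n(T)\to \mathbb{Z}^2,\]
which is a homomorphism because the target is abelian. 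Equivalently, $p$ is the composition of the inclusion-induced map $PB_n(T)\to\pi_1(T^n)=\mathbb{Z}^{2n}$ with the sum homomorphism $\mathbb{Z}^{2n}\to\mathbb{Z}^2$.

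To compute $p$ on $Z$, I would invoke the explicit description of the center. Fix lifts $\tilde p_1,\ldots,\tilde p_n\in\mathbb{R}^2$ of the base points. For each $v\in\mathbb{Z}^2$, the simultaneous rigid translation of all lifts along the straight segment from $\tilde p_i$ to $\tilde p_i+v$ descends to a pure braid $\beta_v\in PB_n(T)$; the $n$ strands never collide because they move in parallel, and $\beta_v$ commutes with every element of $PB_n(T)$ since it is induced by a global rigid motion of the configuration. The resulting map $v\mapsto\beta_v$ is an isomorphism $\mathbb{Z}^2\xrightarrow{\sim}Z$ by Proposition 4.2 of \cite{pr}.

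Evaluating: the $i$-th strand of $\beta_v$ is precisely the loop at $p_i$ whose lift to $\mathbb{R}^2$ is the straight segment from $\tilde p_i$ to $\tilde p_i+v$, so $\pi_i(\beta_v)=v$ under the chosen identification $\pi_1(T,p_i)\cong\mathbb{Z}^2$. Summing over $i$ yields $p(\beta_v)=nv$, so $p|_Z$ is multiplication by $n$ on $\mathbb{Z}^2$. This is injective and has image the subgroup generated by $(n,0)$ and $(0,n)$, exactly what the lemma requires. The only point requiring care is the canonical identification of all the $\pi_1(T,p_i)$ with a single copy of $\mathbb{Z}^2$; once this is done via the deck transformation group (or equivalently via the factorization through $\pi_1(T^n)$), everything else is a direct computation and there is no serious obstacle.
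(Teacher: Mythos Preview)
Your proof is correct and takes a genuinely different route from the paper's. The paper defines $p$ algebraically on all of $B_n(T)$ using Bellingeri's presentation, sending each $\sigma_i$ to $(0,0)$ and the two surface generators $a,b$ to $(1,0),(0,1)$; it then quotes the explicit word generators of $Z$ from Paris--Rolfsen and reads off that their images are $(n,0)$ and $(0,n)$. You instead build $p$ geometrically as the sum of the strand-forgetting maps (equivalently, the composite $PB_n(T)\to\pi_1(T^n)=\mathbb{Z}^{2n}\to\mathbb{Z}^2$), and you use the geometric description of $Z$ as simultaneous rigid translations $\beta_v$ to compute $p(\beta_v)=nv$ directly. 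The two constructions yield the same homomorphism on $PB_n(T)$, but your approach avoids checking the relations in Bellingeri's presentation and makes the computation on $Z$ essentially tautological; the paper's approach has the minor advantage of extending $p$ to the full braid group $B_n(T)$, though this is not needed for the lemma.
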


\begin{proof}
It is known that the braid group $B_n(T)$ of $n$-strands on $T$ admits the following presentation (see Theorem 1.2 in \cite{bellingeri}):
\begin{itemize}
\item Generators: $\sigma_1,\ldots, \sigma_{n-1}$, $a$, $b$.
\item Braid relations:
\begin{enumerate}
\item[(BR1)] $\sigma_i\sigma_j=\sigma_j\sigma_i$ for each $i$, $j$ with $|i-j|\geq 2$;
\item[(BR2)] $\sigma_i\sigma_{i+1}\sigma_i=\sigma_{i+1}\sigma_i\sigma_{i+1}$ for each $i$ with $1\leq i\leq n-2$; 
\end{enumerate}
\item Mixed relations:
\begin{enumerate}
\item[(R1)] $a\sigma_i=\sigma_ia$ and $b\sigma_i=\sigma_ib$ for each $i$ with $1<i\leq n-1$;
\item[(R2)] $\sigma_1^{-1}a\sigma_1^{-1}a=a\sigma_1^{-1}a\sigma_1^{-1}$ and $\sigma_1^{-1}b\sigma_1^{-1}b=b\sigma_1^{-1}b\sigma_1^{-1}$;
\item[(R3)] $\sigma_1^{-1}a\sigma_1^{-1}b=b\sigma_1^{-1}a\sigma_1$
\item[(TR)] $[a, b^{-1}]=\sigma_1\cdots \sigma_{n-2}\sigma_{n-1}^2\sigma_{n-2}\cdots \sigma_1$.
\end{enumerate}
\end{itemize}
We can define a homomorphism $p\colon B_n(T)\rightarrow \mathbb{Z}^{2}$ by putting $p(\sigma_i)=(0, 0)$ for each $i=1,\ldots, n-1$ and putting $p(a)=(1, 0)$ and $p(b)=(0, 1)$. 
Note that $PB_n(T)$ is a normal subgroup of $B_n(T)$ and that the quotient group is isomorphic to the symmetric group of $n$ letters. 
It follows from geometric description of $\sigma$, $a$ and $b$ in Section 2.2 of \cite{bellingeri} and of the center of $B_n(T)$ in Proposition 4.2 of \cite{pr} that the two elements
\[a(\sigma_1^{-1}a\sigma_1^{-1})(\sigma_2^{-1}\sigma_1^{-1}a\sigma_1^{-1}\sigma_2^{-1})\cdots (\sigma_{n-1}^{-1}\sigma_{n-2}^{-1}\cdots \sigma_1^{-1}a\sigma_1^{-1}\cdots \sigma_{n-2}^{-1}\sigma_{n-1}^{-1})\]
and
\[b(\sigma_1^{-1}b\sigma_1^{-1})(\sigma_2^{-1}\sigma_1^{-1}b\sigma_1^{-1}\sigma_2^{-1})\cdots (\sigma_{n-1}^{-1}\sigma_{n-2}^{-1}\cdots \sigma_1^{-1}b\sigma_1^{-1}\cdots \sigma_{n-2}^{-1}\sigma_{n-1}^{-1})\]
generate the center of $B_n(T)$, which is also equal to the center $Z$ of $PB_n(T)$. 
The image $p(Z)$ is therefore generated by $(n, 0)$ and $(0, n)$.
\end{proof}

Let $S=S_{1, n}$ be a surface with $n\geq 2$. 
Lemma \ref{lem-split} implies that the exact sequence
\[1\rightarrow Z\rightarrow PB_n(T)\rightarrow \cali(S)\rightarrow 1\]
virtually split. 
More precisely, the exact sequence
\[1\rightarrow Z\rightarrow p^{-1}(p(Z))\rightarrow j(p^{-1}(p(Z)))\rightarrow 1\]
splits. 
We can thus find a finite index subgroup of $PB_n(T)$ isomorphic to $\mathbb{Z}^2\times \Gamma$, where $\Gamma$ is a finite index subgroup of $\cali(S)$. 
Using the fact that the center of any finite index subgroup of $\cali(S)$ is trivial, we obtain the split exact sequence
\[1\rightarrow {\rm Tv}\rightarrow \comm(PB_n(T))\rightarrow \comm(\Gamma)\rightarrow 1.\]
The group ${\rm Tv}$, called the transvection subgroup, fits into the split exact sequence
\[1\rightarrow \lim_iH^1(\Gamma_i, \mathbb{Z}^2)\rightarrow {\rm Tv}\rightarrow \comm(\mathbb{Z}^2)\rightarrow 1,\]
where $\lim_i$ is the direct limit taken over all finite index subgroups $\Gamma_i$ of $\Gamma$. 
As a conclusion, we have
\[\comm(PB_n(T))\simeq \comm(\cali(S_{1, n}))\ltimes (GL(2, \mathbb{Q})\ltimes H),\]
where we put $H=\lim_iH^1(\Gamma_i, \mathbb{Z}^2)$. 
The group $H$ is abelian and torsion-free because so is $H^1(\Gamma_i, \mathbb{Z}^2)$ for any finite index subgroup $\Gamma_i$ of $\Gamma$.
Along the proof of Proposition 4 in \cite{lm}, we can show that $H$ is divisible, that is, for any $x\in H$ and any positive integer $m$, there exists $y\in H$ with $x=y^m$.
It follows that $H$ is a vector space over $\mathbb{Q}$.
Since $\cali(S_{1, 2})$ is isomorphic to $\pi_1(S_{1, 1})$ and to the free group of rank two, there exists a surjective homomorphism from $\cali(S)$ onto the free group of rank two.
Using this fact, we can show that $H$ is isomorphic to the countably infinite dimensional vector space $\mathbb{Q}^{\infty}$ over $\mathbb{Q}$ (see the aforementioned reference). 
Theorem \ref{thm-tor-comm} (ii) shows that $\comm(\cali(S_{1, n}))$ is naturally isomorphic to $\mod^*(S_{1, n})$ when $n\geq 3$.


\appendix

\section{Pureness of elements of the Torelli group}\label{sec-app}

The following theorem is stated in Theorem 3.1 of \cite{iim} for the pure braid group on a surface $S$, which is contained in the Torelli group $\cali(S)$.

\begin{thm}\label{thm-pure}
Let $S=S_{g, p}$ be a surface with $g\geq 1$. 
Then each element of $\cali(S)$ is pure. 
Namely, for each element $f\in \cali(S)$, there exist $\sigma \in \Sigma(S)\cup \{ \emptyset \}$ and representatives $F$ of $f$ and $C$ of $\sigma$ such that
\begin{itemize}
\item $F(C)=C$;
\item $F$ does not exchange components of $C$ and components of the surface $S_C$ obtained by cutting $S$ along $C$; and
\item $F$ induces either a homeomorphism isotopic to the identity or a pseudo-Anosov homeomorphism on each component of $S_C$.
\end{itemize}
Moreover, if $\tau \in \Sigma(S)$ is fixed by $f$, then any curve in $\tau$ and any component of $S_{\tau}$ are fixed by $f$. 
\end{thm}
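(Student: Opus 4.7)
My plan is to reduce the statement to Ivanov's pureness theorem by showing that $\cali(S)$ is contained in a level-$m$ subgroup of $\mod(S)$ for some $m\geq 3$. Writing $\Gamma_m(S)=\ker(\mod(S)\to \aut(H_1(S;\mathbb{Z}/m\mathbb{Z})))$, I first verify that each defining generator of $\cali(S)$ acts trivially on $H_1(S;\mathbb{Z})$. For a separating curve $c$, one has $[c]=0$ in $H_1(S;\mathbb{Z})$, so the transvection formula $t_c(x)=x+\hat{\imath}(x,c)[c]$ shows that $t_c$ is trivial on homology. For a BP $\{a,b\}$, the union $a\cup b$ cobounds a subsurface of $S$, hence $[a]$ and $[b]$ coincide up to sign in $H_1(S;\mathbb{Z})$; together with the corresponding equality $\hat{\imath}(\cdot,a)=\pm\,\hat{\imath}(\cdot,b)$, this gives $t_a t_b^{-1}=\mathrm{id}$ on $H_1(S;\mathbb{Z})$. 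Therefore $\cali(S)\subset \Gamma_m(S)$ for every $m\geq 2$.

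Next I invoke Ivanov's theorem \cite{iva-subgr} that every element of $\Gamma_m(S)$ with $m\geq 3$ is pure. This provides, for each $f\in \cali(S)$, a canonical reduction system $\sigma=\sigma(f)$ together with representatives $F$ of $f$ and $C$ of $\sigma$ satisfying the three bulleted conditions: $F(C)=C$ setwise, $F$ permutes neither components of $C$ nor components of $S_C$, and the restriction of $F$ to each component of $S_C$ is isotopic to either the identity or a pseudo-Anosov homeomorphism. This establishes the first assertion.

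For the concluding ``moreover'' clause, suppose $\tau\in \Sigma(S)$ is $f$-invariant. Then $\tau$ is itself a reduction system for $f$, and a standard consequence of purity is that $f$ cannot cyclically permute distinct curves within any invariant multicurve, nor cyclically permute components of the cut-open surface. A direct argument runs as follows: if $F(\alpha)$ were isotopic to some $\beta\neq \alpha$ in $\tau$, then $\alpha$ would be a periodic but non-fixed curve of $f$; however, on any component of $S_{\sigma(f)}$ on which $F$ restricts to the identity every curve is fixed, while on a pseudo-Anosov component no essential simple closed curve is periodic, a contradiction. The same reasoning rules out cyclic permutation of components of $S_\tau$. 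The main obstacle, in my view, is the passage from the canonical reduction system $\sigma(f)$ to an arbitrary invariant simplex $\tau$; this is handled most cleanly by quoting Ivanov's classification of pure elements rather than redeveloping the needed structural results from scratch.
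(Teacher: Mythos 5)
Your strategy is genuinely different from the paper's: you reduce everything to Ivanov's purity theorem for the level subgroups $\Gamma_m(S)$, whereas the paper instead passes to the closed surface $\bar S$ obtained by capping off $\partial S$, applies Ivanov's results only there (Theorem 1.2 of \cite{iva-subgr}) and in the case $p\le 1$ (Lemma 1.6 of \cite{iva-subgr}), and then transfers the conclusion back to $S$ via Lemmas \ref{lem-pre}, \ref{lem-ori-pre}, and an induction on $p$ in Lemma \ref{lem-id}. The first stage of your argument, $\cali(S)\subset\Gamma_m(S)$, is correct: separating twists kill homology since $[c]=0$, and for an ordered BP $(a,b)$ one has $[a]=\pm[b]$ with the matching relation $\hat\imath(\cdot,a)=\pm\hat\imath(\cdot,b)$, so $t_a t_b^{-1}$ is homologically trivial.

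The gap is in the step ``I invoke Ivanov's theorem that every element of $\Gamma_m(S)$ with $m\geq 3$ is pure.'' That citation is not available off the shelf for surfaces with $p\geq 2$ boundary components, and the structure of the paper's proof is strong evidence for this: the author cites Ivanov's Theorem 1.2 only on the closed surface $\bar S$, cites Lemma 1.6 only for $p\le 1$ (precisely where the intersection form on $H_1(S;\mathbb{Z})$ is nondegenerate), and supplies a separate inductive argument (Lemma \ref{lem-id}) for $p\geq 2$. Had Ivanov's purity statement applied verbatim to bounded surfaces, none of Lemmas \ref{lem-pre}--\ref{lem-id} would have been needed. You need either a precise reference proving purity for $\Gamma_m$ of a surface with $p\geq 2$ boundary components, or an argument (such as the paper's disk-capping and induction) filling this in. There is a second, smaller gap in the ``moreover'' clause: your contradiction argument implicitly assumes every curve of the invariant simplex $\tau$ lies inside a component of $S_{\sigma(f)}$, i.e.\ that $i(\tau,\sigma(f))=0$. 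This is true --- $\sigma(f)$ is contained in every maximal reduction system, so extending $\tau$ to a maximal one shows the two are disjoint --- but the deduction needs to be stated rather than absorbed into ``quoting Ivanov's classification.'' The paper avoids this issue entirely by proving the ``moreover'' assertion directly for an arbitrary $f$-invariant simplex in Lemmas \ref{lem-pre} and \ref{lem-ori-pre}, without ever invoking the canonical reduction system.
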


This theorem implies that $\cali(S)$ is torsion-free when the genus of $S$ is positive.
The proof of the theorem will be given after the following three lemmas.

\begin{lem}\label{lem-pre}
Let $S=S_{g, p}$ be a surface with $g\geq 1$ and pick $f\in \cali(S)$. 
Suppose that $\sigma \in \Sigma(S)$ is fixed by $f$. 
Choose representatives $F$ of $f$ and $C$ of $\sigma$ such that $F(C)=C$ and $F$ is the identity on $\partial S$. 
Then $F$ preserves each component of $C$.
\end{lem}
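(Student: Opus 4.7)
The plan is to argue by contradiction. Suppose $F$ does not preserve each component of $C$; then $F$ induces a nontrivial permutation on the set of components of $C$, and choosing a nontrivial orbit produces a cycle $c_1\to c_2\to\cdots\to c_k\to c_1$ of components of $C$ with $k\ge 2$. The key input is that every element of $\cali(S)$ acts trivially on $H_1(S;\mathbb{Z})$; this is verified on generators, since a Dehn twist about a separating curve acts as a transvection along a null-homologous class, and a BP twist $t_at_b^{-1}$ acts trivially because $[a]=[b]$ in $H_1(S;\mathbb{Z})$ for the coherent cobounding orientations. The goal is to contradict this homological triviality, combined with the hypothesis that $F$ is the identity on $\partial S$.

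The two cases are distinguished by the topological type of the $c_i$. If $c_1$ is non-separating, then triviality of $F_*$ on $H_1(S;\mathbb{Z})$ forces $[c_i]=\pm[c_1]$ for all $i$, so any two of the $c_i$ form a bounding pair; the complementary subsurfaces of each such bounding pair that meet $\partial S$ must be individually preserved by $F$ (since $F$ fixes $\partial S$ pointwise), after which $F$ would have to swap two boundary components of such a subsurface, and one detects this swap via the action on an appropriate relative homology to reach a contradiction. If instead all the $c_i$ are separating, then the $[c_i]$ vanish and one argues as follows: the curve $c_1$ partitions $\partial S$ into two subsets according to which component of $S\setminus c_1$ contains each boundary component of $S$, and since $F$ fixes $\partial S$ pointwise while $F(c_i)=c_{i+1}$, all the $c_i$ induce the same partition $\{P,P'\}$. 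The dual graph of $\bigcup c_i$ in $S$ is a tree in which the $c_i$ are precisely the edges along the unique path from the vertex containing $P$ to the vertex containing $P'$; since $F$ must fix these two endpoints and a finite path admits no nontrivial cyclic permutation of its edges fixing its endpoints, this is a contradiction whenever $\partial S\ne\emptyset$.

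The main obstacle I anticipate is the closed case $p=0$, where the boundary-anchoring used above is unavailable. In that case I would replace it with a purely homological argument based on Mayer--Vietoris: the subgroups of $H_1(S;\mathbb{Z})$ arising as the inclusion-induced images of $H_1$ of the subsurfaces complementary to $\bigcup c_i$ assemble into a structure that is $F$-equivariant, and triviality of $F_*$ forces these subgroups to match under the nontrivial permutation of complementary pieces, which turns out to be inconsistent with the distinct isotopy classes of the $c_i$. This final step is the most delicate, since the closed Torelli group admits no direct boundary anchor and one must track how inclusion data of subsurfaces refines the trivial $H_1$-action to detect the permutation.
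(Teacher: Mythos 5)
Your central premise, that every element of $\cali(S)$ acts trivially on $H_1(S;\mathbb{Z})$, is false once $p\geq 2$, and your justification for BP twists is exactly where it fails. If $\{a,b\}$ is a bounding pair and each side of $a\cup b$ contains at least one component of $\partial S$, then with coherent cobounding orientations one has $[a]-[b]$ equal to a nonzero sum of boundary classes, so $[a]\neq[b]$ in $H_1(S;\mathbb{Z})$; the Picard--Lefschetz formula then gives $(t_at_b^{-1})_*(x)=x+\langle x,a\rangle\bigl([a]-[b]\bigr)$, which is nontrivial for any $x$ with $\langle x,a\rangle\neq 0$ (e.g.\ a BP in $S_{1,2}$ separating the two boundary components already sends the dual genus class $\beta$ to $\beta\pm[\partial_1]$). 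Likewise a separating curve in a bordered surface is not null-homologous; $t_c$ acts trivially only because $[c]$ lies in the radical of the intersection form. The correct statement, and the one the paper actually uses, is that $\cali(S)$ acts trivially on $H_1(\bar S;\mathbb{Z})$ for the capped-off closed surface $\bar S$. Your non-separating case can be salvaged by passing to $\bar S$, but as written it rests on a false computation, and the ``appropriate relative homology'' detecting the swap is never specified.

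The separating case has a further gap even when $p\geq 1$: the dual-tree argument requires the partition $\{P,P'\}$ of $\partial S$ to be nontrivial. If every $c_i$ cuts off a subsurface containing no component of $\partial S$, your path argument does not apply, and a tree automorphism fixing a single vertex can still permute the incident edges cyclically (a star), so the tree structure alone gives no contradiction; one must then argue homologically that the cut-off positive-genus subsurfaces $R_i$ cannot be cyclically permuted by a map trivial on $H_1(\bar S;\mathbb{Z})$. This is, in effect, the argument the paper makes uniformly by working with the simplicial map $\pi\colon\calc(S)\to\calc^*(\bar S)$ to the coned-off curve complex of $\bar S$: it first shows $\pi([c_1])=\pi([c_2])$ (via Ivanov's result for $g\geq 2$, directly for $g=1$), then splits according to whether $\pi([c_1])=\ast$ (the curves cut off genus-zero pieces, and which boundary components they enclose pins them down) or $\pi([c_1])\in V(\bar S)$ (the curves cobound a holed annulus, and an orientation count plus the $H_1(\bar S)$-triviality yields the contradiction). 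That organization covers $p=0$ with no special treatment, whereas your Mayer--Vietoris sketch for the closed case is not worked out.
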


\begin{proof}
Let $\bar{S}$ denote the surface obtained by attaching disks to all components of $\partial S$. 
We define the homeomorphism $\bar{F}$ of $\bar{S}$ by extending $F$ so that $\bar{F}$ is the identity on all attached disks. 
We denote by $\calc^*(\bar{S})$ the simplicial cone over $\calc(\bar{S})$ with the cone point $\ast$. 
Let $\pi \colon \calc(S)\rightarrow \calc^*(\bar{S})$ be the simplicial map associated with the inclusion of $S$ into $\bar{S}$, where $\pi^{-1}(\{ \ast \})$ consists of all separating curves in $S$ cutting off a surface of genus zero.

Assume that there are components $c_1$, $c_2$ of $C$ with $F(c_1)=c_2$ and $c_1\neq c_2$. 
We first claim that the equality $\pi([c_1])=\pi([c_2])$ holds, where $[c]$ denotes the isotopy class of a curve $c$ in $S$. 
When $g=1$, the claim follows because any two disjoint curves in the torus are isotopic. 
Assume $g\geq 2$. 
Since $\bar{F}$ acts on $H_1(\bar{S}, \mathbb{Z})$ trivially, Theorem 1.2 of \cite{iva-subgr} implies that $\bar{F}$ fixes each element of $\pi(\sigma)$. 
We thus have $\pi([c_1])=\pi([c_2])$.

If $\pi([c_1])=\pi([c_2])=\ast$, then the set of components of $\partial S$ contained in the surface of genus zero cut off by $c_1$ and that by $c_2$ are distinct. 
Since $f([c_1])=[c_2]$, this contradicts the fact that $f$ does not exchange components of $\partial S$. 
Let us assume $\pi([c_1])=\pi([c_2])\in V(\bar{S})$. 
It then follows that $c_1$ and $c_2$ cut off a holed annulus $A$ from $S$. 
Since $f$ does not exchange components of $\partial S$, we have $F(A)=A$. 
Orient $c_1$ and $c_2$ so that they are parallel when disks are attached to all components of $A\cap \partial S$. 
It follows from $F(c_1)=c_2$ and $F(A)=A$ that the orientations of $F(c_1)$ and $c_2$ are distinct. 
If $c_1$ is non-separating in $S$, then so is $c_2$, and both $c_1$ and $c_2$ are non-zero as an element of $H_1(\bar{S}, \mathbb{Z})$. 
This contradicts the fact that $\bar{F}$ acts on $H_1(\bar{S}, \mathbb{Z})$ trivially. 
If $c_1$ is separating in $S$, then so is $c_2$. 
For each $i=1, 2$, let $R_i$ be the component of $S_{c_i}$ that does not contain $A$. 
We then have $F(R_1)=R_2$. 
In particular, $R_1$ and $R_2$ are homeomorphic and are of positive genus. 
This also contradicts the fact that $\bar{F}$ acts on $H_1(\bar{S}, \mathbb{Z})$ trivially.
\end{proof}

\begin{lem}\label{lem-ori-pre}
In the notation of Lemma \ref{lem-pre}, $F$ preserves an orientation of each component of $C$. 
Moreover, $F$ preserves each component of $S_C$.
\end{lem}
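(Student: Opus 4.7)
The plan is to prove the two assertions in sequence; by Lemma~\ref{lem-pre}, each component $c$ of $C$ is setwise fixed, so for the first assertion the only issue is ruling out orientation reversal. I would split on whether $c$ is separating in $S$.

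If $c$ is non-separating in $S$, then $c$ remains non-separating in the closed surface $\bar S$ obtained by capping off each component of $\partial S$ with a disk, so $[c]\neq 0$ in $H_1(\bar S;\mathbb{Z})$. The extension $\bar F$ (by the identity on the new disks) satisfies $\bar F_*=\mathrm{id}$ on $H_1(\bar S;\mathbb{Z})$, because $\cali(S)$ is generated by Dehn twists about separating curves and by BP twists, which act trivially on $H_1(\bar S;\mathbb{Z})$: separating curves become null-homologous after capping, and the two curves of a BP become homologous. An orientation reversal of $c$ would force $\bar F_*[c]=-[c]$, hence $2[c]=0$, contradicting torsion-freeness of $H_1(\bar S;\mathbb{Z})$.

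If $c$ is separating in $S$, let $R_1,R_2$ be the two components of $S_c$. Since $F$ is orientation-preserving on $S$ and fixes $c$ setwise, it suffices to show $F(R_i)=R_i$. When $p\geq 1$, at least one $R_i$ contains a component of $\partial S$ and is therefore preserved by $F$ (as $F|_{\partial S}=\mathrm{id}$), forcing preservation of the other side as well. When $p=0$, the surface $S$ is closed, $c$ is null-homologous, and Mayer--Vietoris yields the splitting $H_1(S;\mathbb{Z})=H_1(R_1;\mathbb{Z})\oplus H_1(R_2;\mathbb{Z})$ (the connecting map from $H_1(c)$ vanishes because $c$ bounds in each $R_i$). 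A swap of $R_1$ and $R_2$ would interchange these summands; combined with $F_*=\mathrm{id}$ this would force each $H_1(R_i;\mathbb{Z})$ to vanish and hence some $R_i$ to be a disk, contradicting essentiality of $c$.

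For the second assertion, the first assertion together with orientation-preservation of $F$ on $S$ gives that $F$ preserves each of the two local sides of every $c\in C$. Passing to $S_C$, the induced homeomorphism $F_C$ then sends every boundary circle of $S_C$ to itself: the circles from $\partial S$ are fixed pointwise, and the two circles arising from each $c\in C$ are identified with the two sides of $c$. Because $\sigma$ is non-empty and $S$ is connected, each component $Q$ of $S_C$ carries at least one boundary circle $c'$; since $F_C(c')=c'$ lies in both $Q$ and $F_C(Q)$, connectedness gives $F_C(Q)=Q$. The main technical hurdle in this plan is the closed-surface subcase of the first assertion, where $[c]=0$ prevents direct use of the curve's homology and one must instead exploit the Mayer--Vietoris summand decomposition together with triviality of the $H_1$-action of $\cali(S)$; every other step reduces to bookkeeping around Lemma~\ref{lem-pre} and the identity $F|_{\partial S}=\mathrm{id}$.
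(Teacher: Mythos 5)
Your proof is correct and follows essentially the same approach as the paper: the non-separating case via triviality of the induced action on $H_1(\bar S;\mathbb{Z})$, the separating case by showing orientation reversal would swap the two sides of $c$ (contradicting $F|_{\partial S}=\mathrm{id}$ or the Mayer--Vietoris splitting when $S$ is closed), and the second assertion by preservation of normal orientations once each $c$ is preserved with orientation. The paper's separating case just cites ``a contradiction as in the proof of Lemma~\ref{lem-pre}''; your split into $p\geq 1$ and $p=0$ makes that reference explicit without changing the underlying argument.
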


\begin{proof}
Let $c$ be a component of $C$. 
If $c$ is non-separating in $S$, then $F$ preserves an orientation of $c$ because $\bar{F}$ acts on $H_1(\bar{S}, \mathbb{Z})$ trivially, where $\bar{F}$ and $\bar{S}$ are the symbols in the proof of Lemma \ref{lem-pre}. 
Suppose that $c$ is separating in $S$. 
If $F$ reversed an orientation of $c$, then $F$ would exchange the two components of $S_c$. 
We can then deduce a contradiction as in the proof of Lemma \ref{lem-pre}. 
The latter assertion of the lemma follows from the former assertion.
\end{proof}

\begin{lem}\label{lem-id}
Let $S=S_{g, p}$ be a surface with $g\geq 1$, and pick $f\in \cali(S)$ and $\sigma \in \Sigma(S)\cup \{ \emptyset \}$ with $f\sigma =\sigma$. 
Choose representatives $F$ of $f$ and $C$ of $\sigma$ such that $F(C)=C$ and $F$ is the identity on $\partial S$. 
Let $Q$ be a component of $S_C$, and suppose that the mapping class of the homeomorphism $F_Q$ on $Q$ induced by $F$ is of finite order as an element of $\pmod(Q)$. 
Then $F_Q$ is isotopic to the identity.
\end{lem}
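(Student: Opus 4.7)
My plan is to use Nielsen realization to replace $F_Q$ by a finite-order homeomorphism $h$ of $Q$, then use that $\cali(S)$ acts trivially on $H_1(S,\mathbb{Z})$ to force the natural extension $\bar h$ of $h$ to the closed surface $\bar Q$ (obtained by capping off $\partial Q$) to act trivially on $H_1(\bar Q,\mathbb{Z})$. A classical Lefschetz-type argument then yields $\bar h\simeq\mathrm{id}$, and the finite-order hypothesis promotes this to $h\simeq\mathrm{id}$ on $Q$.

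First, I would appeal to Nielsen's realization theorem for finite cyclic subgroups of $\pmod(Q)$ to pick a finite-order homeomorphism $h\colon Q\to Q$ representing $F_Q$, and modify $F$ by an isotopy supported in a collar of $\partial Q$ in $Q$ so that $F|_Q=h$. By Lemmas \ref{lem-pre} and \ref{lem-ori-pre}, $h$ fixes each component of $\partial Q$ setwise and preserves its orientation, so I extend $h$ to a finite-order homeomorphism $\bar h$ of $\bar Q$ by coning its rotations over the capping disks. I would also verify directly from the generators that $\cali(S)$ acts trivially on $H_1(S,\mathbb{Z})$: separating curves are null-homologous in $S$, and the two curves of a BP represent opposite homology classes, so both Dehn twists about separating curves and BP twists act trivially.

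Next, for each $\alpha\in H_1(Q,\mathbb{Z})$, naturality of the inclusion $Q\hookrightarrow S$ combined with the trivial action of $F$ on $H_1(S,\mathbb{Z})$ gives $F_{Q,*}(\alpha)-\alpha\in\ker(H_1(Q,\mathbb{Z})\to H_1(S,\mathbb{Z}))$. A Mayer--Vietoris decomposition $S=Q\cup\overline{S\setminus Q}$ identifies this kernel with a subgroup of the image of $H_1(\partial Q,\mathbb{Z})\to H_1(Q,\mathbb{Z})$, i.e., the span of the boundary classes of $Q$. Hence in the quotient $H_1(\bar Q,\mathbb{Z})=H_1(Q,\mathbb{Z})/\langle[\partial Q]\rangle$, the induced action $\bar h_*$ is the identity. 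When $\bar Q$ has genus at least one, the classical Lefschetz fixed-point argument forces $\bar h\simeq\mathrm{id}$ in $\mod(\bar Q)$; since the kernel of the capping homomorphism $\pmod(Q)\to\mod(\bar Q)$ is generated by boundary Dehn twists (a free abelian group), the finite-order assumption on $h$ then forces $h\simeq\mathrm{id}$ in $\pmod(Q)$. The degenerate cases where $\bar Q$ is a sphere, or $Q$ is an annulus, a disk, or a pair of pants, are handled directly by noting that $\pmod(Q)$ is torsion-free in each such case.

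The main obstacle is the Mayer--Vietoris identification of $\ker(H_1(Q,\mathbb{Z})\to H_1(S,\mathbb{Z}))$ with a subgroup of boundary classes; one must carefully track how the curves in $\partial Q$ (some coming from $C$, some from $\partial S$) map into $H_1(\overline{S\setminus Q},\mathbb{Z})$, since a curve of $\partial Q$ null-homologous there produces a genuine boundary-class kernel element. A secondary technical point is the promotion from $\bar h\simeq\mathrm{id}$ on $\bar Q$ to $h\simeq\mathrm{id}$ on $Q$, which relies on the well-known identification of the kernel of the capping homomorphism with boundary twists together with the finite-order hypothesis on $h$.
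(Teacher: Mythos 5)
Your approach is genuinely different from the paper's: you cap all boundary components of $Q$ at once and invoke the classical Hurwitz/Serre--Lefschetz rigidity for finite-order homeomorphisms of closed surfaces, whereas the paper inducts on $p$ and caps a single boundary component of $S$ at a time. Your Mayer--Vietoris step is correct and shows cleanly that the induced action on $H_1(\bar Q,\mathbb{Z})$ is trivial; the paper does not isolate this computation, instead invoking Ivanov's Lemma 1.6 in the base case $p\leq 1$ and pushing the rest through the induction.

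There is, however, a genuine gap in the promotion step from $\bar Q$ back to $Q$. You assert that the kernel of the capping homomorphism $\pmod(Q)\to\mod(\bar Q)$ is generated by boundary Dehn twists and is a free abelian group. This is not correct in the conventions of the paper: isotopies are allowed to move points on the boundary, so Dehn twists about boundary-parallel curves are already \emph{trivial} in $\pmod(Q)$. The actual kernel of $\pmod(Q)\to\mod(\bar Q)$ is (a central quotient of) the surface braid group $\pi_1(C_k(\bar Q))$, where $k$ is the number of boundary components of $Q$ --- the point-pushing subgroup furnished by the Birman exact sequence --- and this is far from free abelian. Your conclusion can be salvaged, since this braid group is torsion-free when $\bar Q$ has positive genus (configuration spaces of aspherical surfaces are aspherical, with an extra word needed for the central quotient when $\bar Q$ is a torus), but the justification as written is wrong, and the torsion-freeness you would need is not as elementary as ``free abelian.'' This is exactly the difficulty the paper's one-disk-at-a-time induction sidesteps: at each step the kernel of $\pmod(Q)\to\pmod(Q_1)$ is simply $\pi_1(Q_1)$, a surface group, whose torsion-freeness is immediate. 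If you want to cap all boundary components simultaneously, you must replace the free-abelian claim with the correct identification of the kernel as a surface braid group and then prove (or cite) that it is torsion-free.
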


\begin{proof}
We prove this lemma by induction on $p$, the number of components of $\partial S$. 
When $p=0, 1$, the lemma follows from Lemma 1.6 of \cite{iva-subgr} because $f$ acts on $H_1(S, \mathbb{Z})$ trivially. 
Assume $p\geq 2$, and let $Q$ be a component of $S_C$. 
If $Q$ is a pair of pants, then $F_Q$ is isotopic to the identity because $\pmod(Q)$ is trivial. 
We thus assume that $Q$ is not a pair of pants.

We first assume that $Q$ contains a component of $\partial S$. 
By attaching a disk $D_1$ to that component of $\partial S$, one obtains the surfaces $Q_1=Q\cup D_1$ and $S_1=S\cup D_1$ with $\chi(Q_1)<0$ since $Q$ is not a pair of pants. 
Let $F_1$ be the homeomorphism of $S_1$ defined by the extention of $F$ that is the identity on $D_1$. 
Note that $C$ either determines a simplex of $\calc(S_1)$ or is empty and that $p_1(f)$ belongs to $\cali(S_1)$, where $p_1\colon \pmod(S)\rightarrow \pmod(S_1)$ is the natural homomorphism. 
Since the mapping class, denoted by $f_Q$, of $F_Q$ is of finite order, so is the mapping class, denoted by $f_{Q_1}$, of the restriction of $F_1$ to $Q_1$. 
The hypothesis of the induction implies that $f_{Q_1}$ is the identity. 
It then follows that $f_Q$ lies in the kernel of the natural homomorphism from $\pmod(Q)$ into $\pmod(Q_1)$, which is isomorphic to $\pi_1(Q_1)$ and is torsion-free. 
Therefore, $f_Q$ is the identity.

We next assume that $Q$ contains no component of $\partial S$. 
By attaching a disk $D_2$ to a component $\partial$ of $\partial S$, we obtain the surface $S_2=S\cup D_2$. 
It is then possible either that there are two components of $C$ which are isotopic in $S_2$ to each other or that there is a component of $C$ which is isotopic in $S_2$ to a component of $\partial S_2$. 
If the former is the case, then delete one of those two components of $C$. 
If the latter is the case, then delete that component of $C$. 
Otherwise, we do nothing. 
We then obtain the family, denoted by $C_0$, of essential simple closed curves in $S_2$ which are pairwise non-isotopic in $S_2$. 
We denote by $Q_0$ the component of $(S_2)_{C_0}$ containing $Q$. 
Note that the complement of the interior of $Q$ in $Q_0$ is an annulus containing $D_2$ if it is non-empty. 
Let $F_2$ denote the homemomorphism of $S_2$ defined by the extension of $F$ which is the identity on $D_2$. 
Since the mapping class of $F_Q$ is of finite order, so is the mapping class of the restriction of $F_2$ to $Q_0$, denoted by $F_{Q_0}$. 
By the hypothesis of the induction, $F_{Q_0}$ is isotopic to the identity, and thus $F_Q$ is isotopic to the identity.
\end{proof}

\begin{proof}[Proof of Theorem \ref{thm-pure}]
The latter assertion of the theorem follows from Lemmas \ref{lem-pre} and \ref{lem-ori-pre}. 
Let $C$ be a representative of the canonical reduction system $\sigma \in \Sigma(S)\cup \{ \emptyset \}$ for the cyclic group generated by $f$ with $F(C)=C$. 
We may assume that $F$ is the identity on $\partial S$. 
Lemmas \ref{lem-pre} and \ref{lem-ori-pre} imply that $F$ preserves each component of $C$ and each component of $S_C$. 
It follows from Theorem 7.16 in \cite{iva-subgr} that the mapping class of the restriction of $F$ to each component of $S_C$ is either of finite order or pseudo-Anosov. 
By Lemma \ref{lem-id}, it is either trivial or pseudo-Anosov.
\end{proof}


\end{document}